\documentclass[a4paper, 11pt, final]{article}
\usepackage[left=2cm,right=2cm,top=2cm,bottom=2.5cm]{geometry}
\usepackage[utf8]{inputenc}
\usepackage[T1]{fontenc}
\usepackage[english]{babel}	
\usepackage{amsmath,amsthm,amsfonts,amssymb,amscd,mathtools}
\usepackage{array, booktabs, cite, float, footmisc, kvoptions,multicol, nag,
newtxmath, newtxtext, pdf14, pdftexcmds, ragged2e, upref, url, xcolor, xpatch, zref-base}
\usepackage{microtype}
\usepackage{tikz}
\usepackage{svg}
\usepackage{adjustbox}
\usepackage{upgreek}

\usepackage[backref=page,colorlinks=false]{hyperref}
\renewcommand*{\backref}[1]{}
\renewcommand*{\backrefalt}[4]{\quad\small
  \ifcase #1 (\textbf{NOT CITED.})%
  \or (Cited on page~#2.)%
  \else (Cited on pages~#2.)%
  \fi}
\usepackage{faktor}
\usepackage{xfrac}
\usepackage{cleveref}
\usepackage{graphics}
\usepackage{mathrsfs}
\usepackage[normalem]{ulem}
\usepackage{pgfplots}
\pgfplotsset{compat=1.18}
\usepackage{mathalfa}

\usetikzlibrary{cd}
\usepackage{libertine}
\def\Hol{\operatorname{Hol}}

\def\D{\mathcal{D} }

\def\Im{\mathfrak{Im} \ }
\def\Re{\mathfrak{Re} \ }
\def\R{\mathbb R}
\def\id{\mathrm{id}}

\def\T{\mathbb T}
\def\Q{\mathbb{Q}}
\def\Z{\mathbb Z}
\def\V{\mathbb{V}} 
\def\K{\mathbb{K}}
\def\S{\mathbb S}
\def\A{\mathbb{A}}
\def\N{\mathbb N}
\def\C{\mathbb C}

\def\E{\mathbb{E}}

\def\qand{\quad \text{and} \quad}

\def\e{\operatorname{Exp}}

\def\ev{\mathrm{even}}

\def\Green{\mathcal{G}}
\DeclareRobustCommand{\stirling}{\genfrac\{\}{0pt}{}}
\def\KK{\mathcal{K}_{\gamma,\tau}}
\def\AA{\mathcal{A}_{\gamma,\tau}}
\def\DC{\mathcal{D}_{\gamma,\tau}}
\def\Leb{\operatorname{Leb}}
\def\rmid{\rho^{\mathrm{mid}}}

\def\sig{\mathfrak{s}} 
\def\Vect{\operatorname{Vect}}
\renewcommand{\chi}{\upchi}
\theoremstyle{plain}
\newtheorem{thm}{Theorem}[section]
\newtheorem{theorem}[thm]{Theorem}
\newtheorem{proposition}[thm]{Proposition}
\newtheorem{fact}[thm]{Fact}
\newtheorem{lemma}[thm]{Lemma}
\newtheorem{corollary}[thm]{Corollary}
\newtheorem{definition}[thm]{Definition}
\newtheorem{remark}[thm]{Remark}
\newtheorem{example}[thm]{Example}

\newtheorem{question}[thm]{Question}
\newtheorem{theo}{Theorem}
\newtheorem*{theo*}{Theorem}
\newtheorem{coro}{Corollary}

\title{Jet rigidity of Mather's $\beta$-function and complexified KAM curves holomorphic in potential for analytic standard maps}
\author{Mathieu Helfter\footnote{Institute of Science and Technology Austria (ISTA); Am Campus 1, 3400 Klosterneuburg, Austria. Partially supported by ERC SPERIG $\#885707$. \ {\em Email address}: mathieu.helfter@ist.ac.at
}}
\date{\today}
\begin{document}
\selectlanguage{english}
\maketitle

\begin{abstract} 
Mather's $\beta$-function associates with each rotation number the least
average action carried by an orbit of a twist map. For analytic standard
maps in the KAM regime, we prove the following. For any family of $d \ge 1$ directions of perturbation satisfying a natural symmetry condition, the jet of the $\beta$-function  at a single algebraic Diophantine
rotation number locally determines the potential for an
open and prevalent, hence dense, set of base potentials in the KAM domain. In particular, we obtain that for a residual and prevalent set of even potentials in the KAM domain, every real analytic deformation  with finite Fourier support that preserves the full jet of the $\beta$-function at a fixed algebraic Diophantine rotation number is constant. These results are non perturbative within the KAM domain. 
  As an intermediate result of independent interest, we establish a KAM theorem giving joint $C^\infty$--holomorphic dependence of the invariant curves, and hence of
the $\beta$-function, on complex holomorphic potentials and on a domain of complexified rotation numbers whose boundary contains real Diophantine numbers. 
\end{abstract}

{ \small \tableofcontents}

\section{Introduction and results}\label{lieu:introduction}


For families of dynamical systems defined by a parameter, for instance a potential, it is natural to ask whether information about the orbits of the underlying dynamics, such as their length or action, allows one to recover the underlying parameter.
This kind of inverse problem was popularized by Kac's famous question ``Can one hear the shape of a drum?''. 
This precise question asks whether the spectrum of the Laplacian determines the domain inside which the sound propagates. While the answer turned out to be negative, thanks to the counterexample constructed by Gordon, Webb and Wolpert~\cite{GWW}, Kac's question nevertheless sowed the seeds of inverse problems in the dynamical community. 

Notably spectral-rigidity results have been obtained in symmetry-restricted classes.  Wave-trace invariants and the Laplace spectrum are closely related to the lengths of periodic trajectories.
 For instance, Zelditch in~\cite{Z2} proved rigidity for domains with Neumann or Dirichlet boundary conditions within the class of 
simply connected analytic $\mathbb Z_2$-symmetric planar domains. This was then extended to any dimension by Hezari and Zelditch~\cite{HezariHamid}. A breakthrough was then performed by the same authors in~\cite{HZ22}, essentially using the technology of~\cite{DKW}, where they have shown that an ellipse with small enough eccentricity is, up to isometries, uniquely spectrally determined among all smooth domains without any symmetry assumption. 

Such spectral information is encoded by the length of periodic trajectories. 
For discrete symplectic systems, the role of length of trajectories is replaced by their action. 
This article proposes to bring new techniques to address inverse problems in the discrete setting of area preserving maps displaying persistent invariant curves: the so-called KAM curves, named after Kolmogorov, Arnold and Moser. We consider a specific class of twist maps that are \emph{generalized standard maps}. Generalized standard maps describe the motion of a pendulum subjected to periodic impulses in discrete time. The impulse only depends on the angle and is given by a potential. Precisely, the corresponding map is 
\begin{equation}
F \colon (\theta ,r)  \mapsto   ( \theta + r +  V ' (\theta)  ,  r + V'(\theta) ) 
\end{equation}
 where $ \theta$ is an angle, $r$ the momentum and $ V$ the potential. This is a classical twist map of the cylinder. It admits a generating function in configuration space defined as 
 \begin{equation}
 \mathcal{L} (\theta , \theta ' ) = \frac{1}{2} (\theta' - \theta)^2 + V (\theta)  \;. 
\end{equation}   
Orbits correspond then to sequences of angles and momenta $ (\theta_n, r_n)_{ n \in \Z} $  such that $(\theta_n)_{n\in\Z}$ is a critical point of the formal action obtained by summing $\mathcal L(\theta_n,\theta_{n+1})$ over $n$.
An orbit is moreover said to be minimal, or to have minimal action, if it minimizes the average of $ \mathcal{L}$. 
Trajectories with minimal action are often physically realized and hence observable. Aubry--Mather theory organizes them according to their rotation number. Given a rotation number $\omega \in \mathbb{R}$, Mather's $\beta$-function is defined as
\begin{equation} \label{eq:def-beta-intro}
\beta(\omega) = \inf_{\{\theta_k\} \in \mathcal{C}_\omega} \liminf_{N \to \infty} \frac{1}{2N+1} \sum_{k=-N}^{+N}  \mathcal{L}(\theta_k, \theta_{k+1}),
\end{equation}
where $\mathcal{C}_\omega$ is the set of all successive angles $\{\theta_k\}_{k \in \mathbb{Z}}$ of an orbit of rotation number $ \omega$, that is, such that $$\lim_{|k| \to \infty} \frac{\theta_k}{k} = \omega . $$ 
 See for instance ~\cite{Bangert,mather2006action,siburg2004principle,sorrentino2015action} for introductions to Aubry--Mather theory and the $\beta$-function. More details and another definition based on invariant measures are also given in Section~\ref{subsec:aubry-mather}.
 
The general question this article addresses is whether the mapping
\begin{equation*}
V \longmapsto \beta
\end{equation*}
is one-to-one, or at least locally invertible, after removing trivial symmetries of the system, such as translations of the potential. 
We give the following answers.  For a fixed $\rho>0$, by analytic potentials we mean real-valued zero-mean functions on the circle $\T$ that extend holomorphically to the complex strip $ \{z\in\C/\Z:|\operatorname{Im}z|<\rho\}$. They form a Fréchet space denoted by $\Hol_\rho^\R(\T)$. See Section~\ref{subsec:potential}. 
In Theorem~\ref{thm:prevalent-affine} we show the existence of a universal neighbourhood $\mathcal U^\R$ of the zero potential such that an open and prevalent subset of $\mathcal U^\R\times(\Hol_\rho^\R(\T))^d$ consists of a base potential and perturbation directions for which the map assigning a potential to its $\beta$-function is locally injective on the corresponding affine space.
A stronger answer is given under a general symmetry assumption that includes for instance even potentials. 
We fix a set of analytic potentials $\mathscr{V}$ that is \emph{Fourier separated} and \emph{stable under Fourier truncation}. See Definitions~\ref{def:fourier-separated} and \ref{def:truncation}.
Then in Theorem~\ref{thm:main-rig}, we  show that for a fixed set of directions of perturbations in $\mathscr{V}$ of finite dimension the set of base potentials in $ \mathcal{U}^\R\cap \mathscr{V}$  for which the $ \beta$-function is locally functionally injective on the corresponding affine space is open and dense in $\mathcal{U}^\R \cap \mathscr{V}$. 
 Even more, we are actually able to observe this rigidity phenomenon locally at a given algebraic number verifying some usual Diophantine condition. 
Indeed the proof relies on a \emph{``fully holomorphic''} KAM
theorem (Theorem~\ref{thm:hol-KAM}). The neighborhood $ \mathcal{U}^\R$
mentioned above is actually provided by the same theorem without any
additional shrinking.
 
In the case of an area-preserving twist map of the annulus, the classical
KAM theorem takes a particularly simple form. Let us first recall that such
a system is said to be integrable when the annulus is foliated by invariant
circles, on each of which the dynamics is a rigid rotation whose rotation
number varies from one circle to another. The classical KAM theorem states
that, for a fixed set $\mathcal D$ of rotation numbers satisfying a
classical Diophantine condition, see~\eqref{eq:dc}, under a sufficiently
small analytic exact symplectic perturbation of the system, the circles
with rotation number in $\mathcal D$ persist as nearby analytic invariant
circles. Then a Cantor subset of the annulus remains integrable in the
sense that it is foliated by invariant curves with these rotation numbers.
The parametrizations of these curves depend $C^\infty$-Whitney-smoothly
on the rotation number. See~\cite{Poschel,Poschel2001}.

While the destruction of rational invariant curves cannot generically be
avoided, a stronger regularity of the surviving curves is provided, in the
case of standard maps at least, by considering a closed complex extension
$\mathcal A$ of $\mathcal D$ such that
$\mathcal D\subset\partial\mathcal A$. We prove that, in the interior of
$\mathcal A$, the extended parametrizations are holomorphic in the usual
sense and satisfy the invariance equation of the complexified standard
map. Moreover, all their derivatives with respect to the rotation number
extend continuously to the boundary and satisfy the Whitney conditions.
This complex extension is also essential to our proof of a second
holomorphic dependence, namely holomorphic dependence on complex
potentials in the usual Fréchet sense.

As a direct application, Theorem~\ref{thm:kam-main-beta} shows that the
normalized $\beta$-function admits an extension which is tame
holomorphic in the potential $V$ and $C^\infty$--Whitney holomorphic in
the complexified rotation variable $q = \exp (2i \pi \omega) $ where  $ \omega \in \mathcal{A}$. In particular, its jet
at every Diophantine rotation number is well defined.

This regularity presents a natural connection with \emph{linear response} as introduced by Ruelle~\cite{Ruelle}. Linear response concerns the
differentiability, under perturbations of the dynamics, of averages of fixed observables against the associated invariant measures.
See also the surveys~\cite{Ruellereview,Baladi} for more details.
To make this relation precise in our context, fix
$V\in\mathcal U\cap\Hol_\rho^\R(\T)$ and
$\omega\in\mathcal D\cap[0,1)$, and put 
$q:=\exp(2\pi i\omega)$. Let $h_{V,q}$ be the angular parametrization
provided by the KAM theorem, chosen so that the corresponding
parametrization of the invariant curve conjugates the restriction of
$F_V$ to the rotation $\tau_\omega:\theta\mapsto\theta+\omega$ on $\T$.

Let $\widetilde h_{V,q}$ be any lift of $h_{V,q}$. On the cylinder of
configurations $  \mathcal C :=
\mathbb R^2/
\left\{
(\theta,\theta')\sim(\theta+k,\theta'+k):k\in\mathbb Z
\right\}$,
consider the map
\begin{equation*}
\begin{array}{rcl}
\iota_{V,q}\colon\T &\longrightarrow& \mathcal C\\[2pt]
\theta &\longmapsto&
\left[
\widetilde h_{V,q}(\theta),
\widetilde h_{V,q}(\theta+\omega)
\right].
\end{array}
\end{equation*}
If $\widetilde F_V$ denotes the dynamics on $\mathcal C$ induced by
$F_V$, the invariance equation gives
\begin{equation*}
\widetilde F_V\circ\iota_{V,q}
=
\iota_{V,q}\circ\tau_\omega.
\end{equation*}
Consequently, the pushforward measure
$\mu_{V,q}:=(\iota_{V,q})_*\Leb_\T$ is a
$\widetilde F_V$-invariant ergodic probability measure, since $\omega$
is irrational. Note that this measure does not depend on the chosen lift.
We then write
\begin{equation*}
\Phi(V)(q)
:=
\beta_V(\omega)-\frac{\omega^2}{2}
=
\int_{\mathcal C}
\left(\mathcal L_V-\frac{\omega^2}{2}\right)\,d\mu_{V,q}.
\end{equation*}

Holomorphy, thus differentiability, in the potential provides a direct
interpretation in the sense of linear response. Indeed, for every perturbation direction
$W\in\Hol_\rho^\R(\T)$, a simple computation  provides that
\begin{equation*}
D\Phi(V)[W](q)
=
\int_{\mathcal C}W(\theta_0)\,d\mu_{V,q}.
\end{equation*}
Consequently, for another direction $H$,
\begin{equation*}
D^2\Phi(V)[H,W](q)
=
\left.\frac{d}{dt}\right|_{t=0}
\int_{\mathcal C}W(\theta_0)\,d\mu_{V+tH,q},
\end{equation*}
which is the linear response of the average of the fixed observable
$W $  in the direction $H$. The tame holomorphic dependence on
$V$ therefore provides response coefficients of every order, together
with their $C^\infty$--Whitney-holomorphic dependence on $q$.
A related  result  on  linear response in the KAM setting for invariant measures of circle diffeomorphisms was obtained by Galatolo and
Sorrentino~\cite{GS}.
It is however important to note that, for non real potentials or non real rotation number, there is no interpretation in terms of invariant measures. 

This precise regularity enables our rigidity statements. 
Theorem~\ref{thm:main-rig} provides that for every $d \ge 1$ and for an open and dense choice of base potentials in $\mathcal U^\R$, the  $(d-1)$-jet of the $ \beta$-function at a single Diophantine and algebraic rotation number is locally injective on a fixed subspace of $ \mathscr{V}$ of directions of perturbations. Moreover on this affine space, mapping a potential to the corresponding jet of the $\beta$-function is 
a local real analytic diffeomorphism outside its critical set. In particular level sets on this space of perturbations have zero measure. A third answer is obtained by a direct application in
Example~\ref{ex:def-rig}. It provides that for a residual and prevalent set of even potentials in $ \mathcal{U}^\R$, every real analytic deformation  with finite Fourier support and which preserves the full jet of the $\beta$-function at a fixed algebraic Diophantine rotation number is constant.
In Corollary~\ref{cor:def-rig-gen} we provide a more general statement that directly provides such examples. 

A last independent result in Theorem~\ref{thm:one-frequency-rigidity} gives that on the KAM domain $ \mathcal{U}^\R$, two potentials that share the same $ \beta$--function simply evaluated at any fixed Diophantine rotation number, for all nearby perturbations, must be equal.  

These rigidity results echo a long standing tradition of rigidity problems in symplectic dynamics. Systems on surfaces such as billiard maps, geodesic flows or mechanical systems share common features with the standard map and present strong motivation for this work and future outcomes of the developed techniques. 
In short, a billiard map describes the dynamics of a point moving freely inside a convex planar domain and being reflected  elastically. It is structurally closer to Kac's initial problem than standard maps.
The geometry of the domain replaces the potential of generalized standard maps. A related question to the inverse problem of Kac is Birkhoff's famous conjecture. It claims that ellipses are the only integrable strictly convex planar billiards. This conjecture is at the heart of spectral invariance questions.  For generalized standard maps, the corresponding conjecture is false. Suris  in~\cite{suris} exhibited non trivial potentials, now known as Suris potentials, for which the corresponding dynamics are integrable. However, the rigidity question near such potentials has a positive answer. Fierobe and Tsodikovich in~\cite[Corollary 1]{FT} have shown that, for Suris potentials of sufficiently small eccentricity, sufficiently small deformations preserving the $\beta$-function must stay in the class of Suris potentials.

The present study for generalized standard maps leads naturally to the question of whether the $\beta$-function of a billiard map determines the boundary of a convex domain.  In that direction let us mention De Simoi, Kaloshin and Wei~\cite{DKW} who proved dynamical spectral rigidity near the circle in the $\mathbb Z_2$-symmetric class. Then Huang, Kaloshin and Sorrentino~\cite{HuangKaloshinSorrentino2018} have shown that, for a generic strictly convex billiard domain, the maximal marked length spectrum determines the Lyapunov exponent of Aubry--Mather sets for periodic orbits. Building on~\cite{DKW}, Fierobe, Kaloshin and Sorrentino~\cite{fierobe2025billiard} extended deformational spectral rigidity from nearly circular domains to nearly elliptical domains under the stronger assumption of dihedral symmetry.
 Koval~\cite{koval2026local} considers  a stronger notion of
integrability allowing one to prove a local version of
Birkhoff's conjecture without any symmetry condition. 
 
 For billiard maps, the relation between the $\beta$-function and the maximal marked length spectrum is developed further in~\cite{Sor14,Bialy2022}. In the very recent work of Fierobe-Kaloshin-Trujillo ~\cite{FKT} the first steps of rigidity of the Mather $\beta$-function for billiard maps are addressed.

For geodesic flows on closed surfaces, the standard rigidity concerns whether the lengths of closed geodesics, marked by their free homotopy classes, determine the metric of a surface.
In this setting, Mather's $\beta$-function corresponds to minimal action as a function of the homological rotation vector, whereas the marked length spectrum corresponds to minimal geodesic length in each free homotopy class. The relation between the $\beta$-function and integrability of the flow was studied by Massart and Sorrentino~\cite{MS}; see also~\cite{SVeselov}.
Guillarmou, Lefeuvre and Paternain in~\cite{guillarmou2025marked}, following a previous result of Guillarmou and Lefeuvre~\cite{GL}, have shown that, when the genus is at least $2$, two metrics with the same marked length spectrum and Anosov geodesic flows are isometric via an isometry isotopic to the identity.
It is then natural to ask what happens outside of the Anosov regime: near a nondegenerate elliptic closed geodesic, is the germ of the metric determined by the $\beta$-function of a transverse Poincar\'e return map? In that direction Abbondandolo and Mazzucchelli in~\cite{AbbondandoloMazzucchelli} proved, under strong symmetry assumptions, rigidity for a notion of marked length spectrum that they introduce.

Back on generalized standard maps, the regularity of the maps involved must be made precise. Two types of regularity are of particular interest: the dependence on the rotation number and the dependence on the potential $V$. Under the twist condition and near the integrable regime, for a rotation number satisfying a Diophantine condition, the corresponding invariant curve supporting orbits with that prescribed number persists under small perturbations. These curves are the so-called KAM curves, named after Kolmogorov, Arnold and Moser. The appropriate regularity of their parametrization is usually Whitney regularity, as shown by Lazutkin~\cite{lazutkin1973existence} and then Pöschel~\cite{Poschel}.

In another direction, and as an inspiration for the present work, Carminati, Marmi and Sauzin~\cite{CMS} have shown that, for sufficiently small analytic generalized standard maps, these KAM curves depend $C^1$-holomorphically on the rotation number on an appropriate complex extension of a fixed set of Diophantine numbers. This was then used by Carminati, Marmi, Sauzin and Sorrentino~\cite{CMSS} to show that the $\beta$-function is $C^1$-holomorphic on the same complex extension of rotation numbers. This $C^1$-holomorphy is unfortunately insufficient to solve the precise inverse problem in which we are interested. However, their techniques, based on the Lagrangian formulation of the KAM theorem given by Levi--Moser~\cite{levi2001lagrangian}, provide the foundation for obtaining a $C^\infty$-holomorphic extension of the $\beta$-function. This means that it is holomorphic in the interior of the complex Diophantine set and that all its derivatives extend to the boundary as a $C^\infty$-Whitney jet.

Such regularity was conjectured in their paper and reconciles their result with Lazutkin and Pöschel's point of view on the regularity of KAM curves. It comes with an additional and necessary improvement: tame holomorphic dependence on the potential, with holomorphy understood in the usual sense for maps between Fréchet spaces. We also stress that the density of resonances obstructs ordinary holomorphy. Carminati, Marmi, Sauzin and Sorrentino have also shown the quasianalyticity of this extension of the $\beta$-function. That is, its values on a set of positive measure within the set of Diophantine numbers lying on the boundary of the complex extension determine the whole $\beta$-function.

This precise regularity enables arguments based on infinite jets of the $\beta$-function at a given rotation number and on the shyness of exceptional potentials. Generalizing this KAM theorem to other settings, such as billiard maps or suitable Poincaré maps, is natural and would enable similar rigidity results. In this direction, let us mention the very recent work of Fierobe--Kaloshin--Trujillo~\cite{FKT}, where, for billiard maps, they prove a KAM theorem similar to that of~\cite{CMS}, but with $C^\infty$-holomorphic dependence on the rotation number over a similar complex extension. Holomorphic dependence on the analytic billiard domain remains open.

\paragraph{Generalized standard maps}

Let us recall that a \emph{twist map} of the cylinder $\A := \T \times \R$ is an exact
symplectic diffeomorphism
$$ F : (\theta_0, r_0) \mapsto (\theta_1, r_1)$$ 
which admits  a \emph{generating function} $\mathcal L:\R^2\to\R$. This means that $F (\theta_0 , r_0 ) = (\theta_1, r_1) $ if and only if 
\begin{equation*}
  r_0 = -\partial_{\theta_0} \mathcal{L}(\theta_0,\theta_1) \qand 
  r_1 = \partial_{\theta_1} \mathcal{L}(\theta_0,\theta_1),
\end{equation*}
with \emph{twist condition} $\partial_{\theta_0}\partial_{\theta_1}
\mathcal L<0$.
 We assume furthermore that the generating function, and thus the dynamics in configuration space,  is invariant under this diagonal $\Z$-action, and therefore descends to $\mathcal{C} :=(\R\times\R)/\Z$ under the $\Z$-action $ k\cdot(\theta_0,\theta_1):=(\theta_0+k,\theta_1+k)$ for $ k\in\Z$ and $ (\theta_0,\theta_1) \in \R \times \R$.
 
In this setting, orbits of $F$ correspond to critical points of the formal action
$\sum_n \mathcal{L}(\theta_n, \theta_{n+1})$.

A \emph{generalized standard map} $F_V$ given by a periodic potential $ V : \T \to \R$ that is at least differentiable is then given by
\begin{equation}\label{def:standard-map} 
 F_V (  \theta , r) =  ( \theta + r +  V ' (\theta)  ,  r + V'(\theta) ) 
\end{equation}
Then, its generating function takes the form 
\begin{equation*} \mathcal{L}_V(\theta_0,\theta_1) = \tfrac{1}{2}(\theta_1 - \theta_0)^2 + V(\theta_0),
\end{equation*}

Then, recall that a \emph{KAM curve} of rotation number $\omega  \in  \R \setminus \Q$ is an
$F$-invariant embedded circle $\Gamma \subset \A$ on which the dynamics is conjugate to
the rigid rotation $T_\omega : \theta \mapsto \theta + \omega$.  
The existence and destruction of such curves as $V$ is perturbed are well known. Their persistence was first studied by Chirikov with the Chirikov  standard  map (or simply standard map) where the potential is simply $ K \cos (2\pi\theta) $ with $ K$ as a free parameter. This model was first introduced by Chirikov in \cite{chirikov1971research}. We have known, since the work of  Moser~\cite{moser1962invariant}, that, for not too large parameters, invariant curves with Diophantine rotation numbers persist. 
\begin{figure}[H]
    \centering
     \includegraphics[width=1\linewidth]{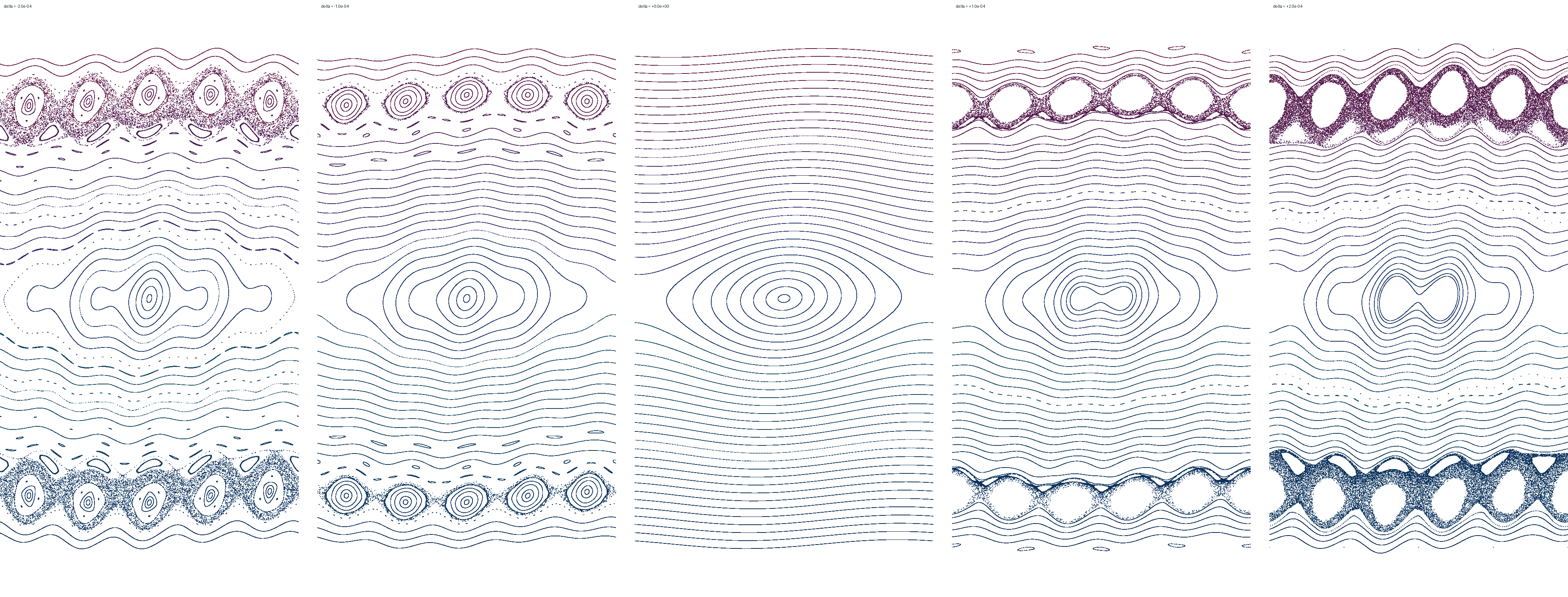}
\caption{Phase portraits of the generalized standard maps associated with the even, zero-mean potentials
\(
V (\theta)
= \frac{1}{2}\log\!\left(
1 - 2\cdot 10^{-3}\cos(2\pi\theta) + 10^{-6}
\right)
+ \delta\cos(10\pi\theta), 
\)
from left to right, with parameter values \(\delta \) taking successive values of $ (-2,-1,0,1,2) \cdot 10^{-4} \).
}
  \label{fig:kam}
\end{figure}

Figure~\ref{fig:kam} illustrates the deformation of curves with small perturbations. The potential is chosen to be even, with zero mean and all its Fourier coefficients nonzero while being not too far from the zero potential. The perturbation is even with  a single Fourier mode. 
 Let us just observe how the geometry visually shifts as the parameter $\delta$ varies.

We now reduce geometric information of invariant curves with prescribed rotation number of a twist map $F$  by considering the $ \beta$-function as given in \eqref{eq:def-beta-intro}.  
Mather proved that, for every exact twist map, the $\beta$-function is
differentiable at each irrational rotation number, whereas for a generic
such map it is not differentiable at rational rotation numbers
\cite{mather1990differentiability}. This apparent difficulty in obtaining a holomorphic $\beta$-function is overcome in the present KAM theorem.

\paragraph{Space of zero-mean analytic potentials} \label{subsec:potential}
Let us introduce a few notations and specify the set of potentials we consider. 
For $  \rho >0 $, set
\begin{equation*}
    \T_\rho := \left\{z \in \C / \Z : \lvert \Im z \rvert < \rho \right\}.
\end{equation*} 
This is a cylinder around the circle $  \T$ in the complex set $ \C / \Z $ as displayed in Figure~\ref{fig:hol-ext}. 
 
For a function $V$ that is holomorphic on $\T_\rho$ and for $0<\rho'<\rho$, set
\begin{equation*}
    |V|_{\rho'} := \sup_{z\in\T_{\rho'}} |V(z)|.
\end{equation*}
\begin{figure}[H]
    \centering
 \includegraphics[width=0.25\linewidth]{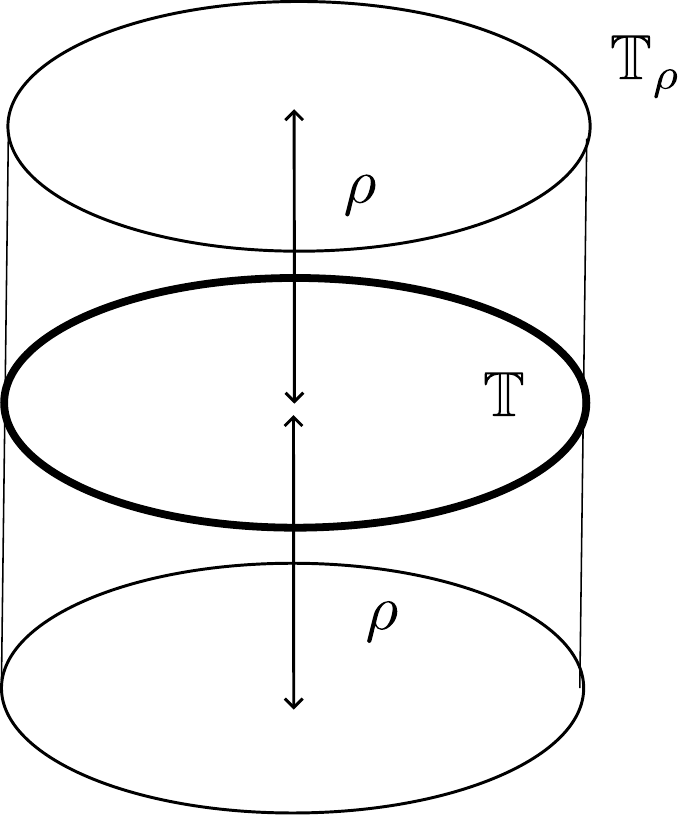}
    \caption{Representation of the cylinder $\T_\rho$ in $\C/\Z$.}
    \label{fig:hol-ext}
\end{figure}

We normalize the Haar measure of $\T=\R/\Z$ to have total mass one. For
every integrable function $f$ on $\T$, the notation
$\int_\T f$ therefore denotes its corresponding mean for that normalized Haar measure.
 For aesthetic purposes, depending on the
context, we interchangeably use the notation 
\begin{equation*}
    \langle f\rangle:=\int_\T f
\end{equation*}
for the very same quantity.

 We denote by $\Hol_\rho(\T)$ the space of \emph{zero-mean
analytic potentials} admitting an extension to $ \T_\rho$. These are potentials $ V : \T \to \C$ satisfying the following conditions. 
\begin{enumerate}
    \item[(a)] \textsc{Zero mean. }$\langle V\rangle=0$, 
    \item[(b)] \textsc{Holomorphicity. } $V$ admits a holomorphic extension to $\T_\rho$.
\end{enumerate}
While the real-valued potentials are the only ones that are dynamically meaningful, it is necessary for running the holomorphic KAM theorem to consider complex potentials in full generality.  
The set $ \Hol_\rho (\T)$ is a Fréchet space  with topology generated by the family of seminorms $( |\cdot|_{\rho'})_{  0 < \rho' < \rho} $. 

The corresponding subspace of real potentials is then denoted by 
\begin{equation*}
    \Hol_{\rho}^{\R}(\T)
    :=\left\{V\in\Hol_\rho(\T):V(\theta)\in\R
    \text{ for every }\theta\in\T\right\}
\end{equation*}
which is also a Fréchet space with the induced canonical topology. 

We propose the following notion of rigidity of the $\beta$-function in the precise setting of generalized standard maps. 
\begin{definition}[$\beta$-rigidity]
A subset $S\subset\Hol_\rho^\R(\T)$ is \emph{$\beta$-rigid} if the map
$V\mapsto\beta_V$ is injective on $S$. It is \emph{locally $\beta$-rigid
at $V_0\in S$} if this map is injective on $S\cap\mathcal O$ for some
neighbourhood $\mathcal O$ of $V_0$.
\end{definition}
Obviously one could replace the potential $V$ for instance by the generating function of a more general symplectic map or a Hamiltonian in continuous time settings. 
 To our knowledge, no general nontrivial local recovery theorem from Mather's $\beta$-function to the underlying parameter of the system has yet been proved.  
  The present paper shows that such locally rigid families
are abundant within the class of generalized standard maps. 
This must be expressed in the infinite-dimensional space of potentials. To replace the notion of almost everywhere in finite dimension, we use the notion of \emph{prevalence} as introduced by Hunt--Sauer--Yorke in~\cite{hunt1992prevalence}.
 Recall that for an ambient topological vector space $X$ (e.g. a Fréchet space), a Borel subset $A\subset X$ is said to be \emph{shy} if
there exists a compactly supported Borel probability measure $\mu$ on $X$ such that $\mu(x+A)=0$ for every $x \in X$.  Conversely, a subset $ \mathcal{P}$ is said to be prevalent in $X$ if its complement is shy in $X$. 
If $\mathcal O\subset X$ is open, we say that $\mathcal P\subset\mathcal O$ is prevalent in $\mathcal O$ when $\mathcal O\setminus\mathcal P$, regarded as a subset of $X$ and thus empty outside $\mathcal O$, is shy in $X$. We also recall that shy Borel subsets of a separable Fréchet space form a $\sigma$-ideal.
It was proven in \cite{hunt1992prevalence} that a  shy set has empty interior. Hence any (open) prevalent set is (open and) dense.

\begin{theo}
\label{thm:prevalent-beta-rigidity}
There exists a neighbourhood $\mathcal U^\R$ of the zero potential such
that, for every fixed $d \geq 1$, there is an open and prevalent subset of  $\mathcal U^\R\times(\Hol_\rho^\R(\T))^d$, consisting of tuples $(V_0,h^{(1)},\ldots,h^{(d)})$ such that the affine subspace $V_0+\Vect_\R(h^{(1)},\ldots,h^{(d)})$ is locally $\beta$-rigid at $V_0$.
\end{theo}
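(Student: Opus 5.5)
The plan is to reduce local $\beta$-rigidity on the affine space $V_0+\Vect_\R(h^{(1)},\ldots,h^{(d)})$ to an injectivity statement for a finite-dimensional real analytic map. I will use the fully holomorphic KAM theorem (Theorem~\ref{thm:hol-KAM}) and its consequence Theorem~\ref{thm:kam-main-beta}. Fix an algebraic Diophantine $\omega_0$ and put $q_0=\exp(2\pi i\omega_0)$. For $t\in\R^d$ near $0$ define
\[
\Psi(t):=\Bigl(\partial_q^k\Phi\bigl(V_0+\textstyle\sum_j t_jh^{(j)}\bigr)(q_0)\Bigr)_{0\le k\le d-1}\in\C^d ,
\]
the $(d-1)$-jet of the normalized $\beta$-function at $q_0$. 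This jet is well defined by the $C^\infty$--Whitney holomorphy in $q$. Since two potentials with the same $\beta$ have the same jet, local injectivity of $\Psi$ near $t=0$ gives local $\beta$-rigidity at $V_0$. By tame holomorphy in $V$, the map $\Psi$ is real analytic in $t$. So it suffices that the Jacobian $\det\bigl(\partial_{t_j}\Psi_k(0)\bigr)$, or the determinant of a suitable real $d\times d$ minor, does not vanish, and then apply the inverse function theorem.

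By the linear-response formula, $\partial_{t_j}\Psi_k(0)=\partial_q^k\bigl(D\Phi(V_0)[h^{(j)}]\bigr)(q_0)$, which is linear in $h^{(j)}$. Define the continuous linear functionals $\ell_{k}(V_0)\colon W\mapsto \partial_q^k D\Phi(V_0)[W](q_0)$ (real parts suffice). The nondegeneracy condition becomes
\[
\det\bigl(\ell_{k}(V_0)[h^{(j)}]\bigr)_{k,j}\neq0,
\]
and it depends continuously on $(V_0,h)$, by continuity of $V\mapsto D\Phi(V)$ coming from tame holomorphy. Hence the good set $\mathcal G$ is open in $\mathcal U^\R\times(\Hol_\rho^\R(\T))^d$. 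Because the local injectivity neighbourhood is produced by the inverse function theorem, openness of the determinant condition is all that is needed.

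For prevalence I argue by Fubini. First I show that for every $V_0\in\mathcal U^\R$ the functionals $\ell_0(V_0),\ldots,\ell_{d-1}(V_0)$ are linearly independent. Given this, for fixed $V_0$ the set of $(h^{(1)},\ldots,h^{(d)})$ with vanishing determinant is a proper algebraic (polynomial) hypersurface pulled back through a surjective linear map $(\Hol_\rho^\R)^d\to\R^{d\times d}$. Take as probe the law of $h^{(j)}=\sum_{i} s_{ij}e_i$ with $s$ uniform in a cube, where $e_1,\ldots,e_d$ are chosen dual to the $\ell_k$ (for instance at $V_0=0$). The probe measure puts no mass on the bad set, first on fibres and then by Fubini after translation in the $V_0$-component, using the zero-measure property of analytic hypersurfaces. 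This gives shyness of the complement.

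The main obstacle is the independence of the $\ell_k(V_0)$ for every $V_0\in\mathcal U^\R$, not just near $0$. At $V_0=0$ it is explicit: $D\Phi(0)[W]=\langle W\rangle=0$ at first order, so one instead uses $D\Phi(V)[W]=\int W\circ h_{V,q}$, which involves the nonzero Fourier modes $\widehat W(n)$ weighted through $h_{V,q}$. I expect this independence to fail generically only on an analytic subset. My fallback is therefore to run the prevalence argument jointly in $(V_0,h)$, using finite-dimensional probes in $V_0$ and $h$ along Fourier modes. The real-analytic determinant function is then not identically zero on each probe translate: it is nonzero near $V_0=0$ via the explicit first-order perturbative computation, and analytic continuation along the complex Fréchet holomorphy propagates this. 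Its zero set therefore has measure zero, which yields shyness.
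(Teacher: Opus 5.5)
Your architecture matches the paper's: a $(d-1)$-jet at one frequency, the inverse function theorem, openness by continuity, and prevalence as the complement of the zero set of the real-analytic Wronskian $\mathfrak P(V,h^{(1)},\ldots,h^{(d)})=\det[\partial_\omega^{k-1}D\beta(V)[h^{(j)}](\omega_0)]$ on $\mathscr P_d$. However, the two steps that carry the proof are not supplied.

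\textbf{First, you never show that $\mathfrak P\not\equiv0$.} Your primary route is false. By Corollary~\ref{cor:D1-at-zero}, $D\Phi(0)=0$, so every $\ell_k(0)$ vanishes. Hence the $\ell_k(V_0)$ are not independent for all $V_0$, and there is no dual basis ``at $V_0=0$''. Your fallback asserts nonvanishing near $0$ ``via the explicit first-order perturbative computation'', but that first-order term is zero. What is actually needed is:
\begin{itemize}
\item the second-order expansion $D\Phi(V)[h]=D^2\Phi(0)[V,h]+O(V^2)$;
\item the diagonal formula $D^2\Phi(0)[V,h](q)=\sum_{n\neq0}\sigma_n(q)\widehat V_{-n}\widehat h_n$, with $\sigma_n(q)=4\pi^2n^2q^n/(q^n-1)^2$;
\item the choice $V=tV^*$, which makes the linearized jet matrix equal to $t\,\Sigma(q_0)H$;
\item above all, a proof that $\det[\partial_q^{k-1}\sigma_{n_j}(q_0)]_{k,j}\neq0$.
\end{itemize}
That last point is the real content. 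The paper clears denominators to get a polynomial in $\pi^{2d}\Q[q]$ and shows its leading coefficient is nonzero via a Vandermonde determinant. It then uses Gelfond--Schneider (transcendence of $q_0$) to conclude. This is the only place algebraicity of $\omega_0$ enters, and your proposal assumes it without using it. For Theorem~\ref{thm:prevalent-beta-rigidity} alone, you could instead pick $\omega_0\in\DC$ avoiding the finitely many zeros of that polynomial. Some such argument is indispensable, though: otherwise nothing rules out that the jets of the linear responses are degenerate at $q_0$.

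\textbf{Second, the shyness step does not follow from what you say.} With a Lebesgue probe on a finite-dimensional subspace $P$, you need $\mathfrak P$ to be not identically zero on every translate $x+P$ within the domain. Analytic continuation only gives $\mathfrak P\not\equiv0$ on the connected set $\mathscr P_d$. It says nothing about slices far from where nonvanishing is known, and this genuinely fails for natural probes. For $d=1$, $\mathfrak P(V,h)=\int_\T h\circ(\id_\T+U(V,q_0))$. When $V$ is even, the Levi--Moser scheme is equivariant under $u\mapsto-u(-\,\cdot)$, so $U(V)$ is odd. Hence $\mathfrak P$ vanishes on every pair with $V$ even and $h$ odd, and a probe built on cosine modes in $V$ and sine modes in $h$ charges the zero set at every such translate.

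The paper handles this with Theorem~\ref{thm:shyness}, which uses:
\begin{itemize}
\item a Gaussian measure whose Cameron--Martin space is dense;
\item near each zero, a Cameron--Martin direction $h$ with $D^nf[h,\ldots,h]\neq0$, which exists because the jet of a nonzero analytic function on a connected domain vanishes nowhere;
\item Fubini along $h$, together with Cameron--Martin quasi-invariance.
\end{itemize}
That argument needs only $\mathfrak P\not\equiv0$ on $\mathscr P_d$, which is connected since $\mathcal U^\R$ is star-shaped. This is exactly what the first step must provide.
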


In particular, within each such affine space, the Mather $\beta$-functions of two potentials sufficiently close to the base potential $V_0$  coincide only if
the potentials are equal. The exceptional subset, regarded as a subset of $(\Hol_\rho^\R(\T))^{d+1}$, is shy by Theorem~\ref{thm:shyness}, since it is the zero set of a real-analytic map on a connected open subset of this Fréchet space.

The neighbourhood $\mathcal U^\R$ is the real part of the neighbourhood provided by the KAM Theorem~\ref{thm:hol-KAM}. In particular, the rigidity argument requires no further shrinking of the KAM domain.
Taking an infinite sequence of spaces of perturbations allows one to obtain the following corollary.

\begin{coro}
\label{cor:simultaneous-affine}
There is a residual and prevalent subset of  $\mathcal U^\R\times(\Hol_\rho^\R(\T))^{\N^*}$, consisting of choices of a base potential
$V_0\in\mathcal U^\R$ together with an infinite sequence of analytic 
perturbation directions, such that every finite initial collection of
these directions generates a locally $\beta$-rigid affine family at
$V_0$.
\end{coro}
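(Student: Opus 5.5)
The plan is to deduce the corollary from Theorem~\ref{thm:prevalent-beta-rigidity} by a countable intersection, combined with a projection argument. Put $X:=\Hol_\rho^\R(\T)$ and $Y:=\mathcal U^\R\times X^{\N^*}$, where $X^{\N^*}$ carries the product topology. Since $X$ is a separable Fréchet space, $X^{\N^*}$ is again a separable Fréchet space and $Y$ is an open subset of it. For each $d\ge1$, let $\mathcal P_d\subset\mathcal U^\R\times X^d$ be the open prevalent set given by Theorem~\ref{thm:prevalent-beta-rigidity}. Let $\pi_d\colon Y\to\mathcal U^\R\times X^d$ be the projection onto the base potential and the first $d$ directions, and set $\mathcal Q_d:=\pi_d^{-1}(\mathcal P_d)$. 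The set described in the corollary is exactly $\mathcal Q:=\bigcap_{d\ge1}\mathcal Q_d$, since a tuple lies in $\mathcal Q$ precisely when, for every $d$, the affine space $V_0+\Vect_\R(h^{(1)},\dots,h^{(d)})$ is locally $\beta$-rigid at $V_0$.

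For residuality, note that $\pi_d$ is continuous and open, so each $\mathcal Q_d$ is open. It is also dense: a basic open set of $Y$ constrains only finitely many coordinates, say the first $N\ge d$. Its image under $\pi_d$ is open, hence meets the dense set $\mathcal P_d$, and the remaining coordinates are unconstrained. Hence $\mathcal Q_d$ is open and dense. The space $Y$ is an open subset of a completely metrizable space, so it is Baire, and $\mathcal Q$ is a countable intersection of open dense sets, i.e. residual.

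For prevalence, I would show that each complement $Y\setminus\mathcal Q_d=\pi_d^{-1}(E_d)$, with $E_d:=(\mathcal U^\R\times X^d)\setminus\mathcal P_d$, is shy in $X^{\N^*}$. The $\sigma$-ideal property then gives shyness of $Y\setminus\mathcal Q$. Let $\mu_d$ be a compactly supported probability measure on $X^{d+1}$ transverse to $E_d$, and take $\mu:=\mu_d\otimes\delta_0$, with $\delta_0$ the Dirac mass at $0$ in the remaining factor $X^{\{d+1,d+2,\dots\}}$. This measure has compact support. For any translate $x=(x',x'')$, we get $\mu(x+\pi_d^{-1}(E_d))=\mu_d(x'+E_d)=0$. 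Borel measurability holds because $\pi_d^{-1}(E_d)$ is relatively closed in the open set $Y$.

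The only delicate point is this transfer of shyness through the product, together with the convention that sets in $\mathcal O$ are regarded as subsets of the ambient space and are empty outside $\mathcal O$. In fact it is routine: preimages of shy sets under coordinate projections are shy, via the product measure with a Dirac mass described above.
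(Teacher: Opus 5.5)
Your proof is correct and follows essentially the same route as the paper. You intersect the preimages under $\pi_d$ of the open prevalent sets of Theorem~\ref{thm:prevalent-beta-rigidity} (the paper uses the equivalent sets from Theorem~\ref{thm:prevalent-affine}), obtain shyness of each cylinder by placing the transverse measure in the first $d+1$ coordinates, and conclude prevalence from the $\sigma$-ideal property and residuality from Baire. One small correction: $\mathcal Q$ is contained in, not necessarily equal to, the set of all tuples with the rigidity property, and containment is all the corollary requires.
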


Here the product is endowed with its product Fréchet topology, and prevalence is understood in  the ambient space $(\Hol_\rho^\R(\T))^\N$. 
Its proof is given after Theorem~\ref{thm:prevalent-affine}.

As mentioned earlier, rigidity need not be detected from $\beta$ as an entire function of the rotation number. It surprisingly appears that it can be detected locally
at a single rotation number. This leads us to the following paragraph.

\subsection{Diophantine sets and their complex extensions}
We fix $\tau>0$ and
$0<\gamma $, and consider the usual set of
$(\gamma,\tau)$-Diophantine numbers
\begin{equation} \label{eq:dc}
    \DC:=\left\{\omega\in\R:
    \operatorname{dist}(m\omega,\Z)
    \geq \frac{\gamma}{m^{1+\tau}}
    \text{ for every }m\in\N^*\right\}.
\end{equation} 
This set is invariant under translation by integers. 
 It is well known that
$\bigcup_{\gamma>0}\DC$ has full Lebesgue measure modulo $\Z$. Its
complement has measure zero and contains all Liouville numbers. 
Under the condition that $ \gamma < \gamma^\ast ( \tau) := \frac{1}{2 \zeta (1 + \tau) }$, where $ \zeta$ denotes the usual Zeta function, it is well known that $\DC$  has positive measure. 
A much celebrated KAM result, due to Lazutkin and then Pöschel~\cite{lazutkin1973existence,Poschel,Poschel2001},  states that, for a twist map $F$  near the integrable regime, a smooth invariant curve corresponding to a rotation number  $\omega \in \DC $ persists under sufficiently small $C^\infty$--perturbations of $F$. Moreover, the smooth parametrization of the curve depends smoothly on $\omega$ in the Whitney sense.
Now we choose to  follow~\cite{CMS} to prove holomorphy instead of $C^\infty$ dependence. 
This goes by extending $\DC$ to the subset of $\C$
\begin{equation*}
    \AA := \left\{ z \in \C \colon |\Im z| \ge d(\Re z,\DC) \right\},
\end{equation*}
where $d(\cdot,X)$ denotes the distance to a subset $X\subset\R$.

We then consider its exponential compactification on the Riemann sphere,
\begin{equation*}
\KK := \e(\AA) \cup \left\{0,\infty\right\},
\end{equation*}
where $\e(z):=e^{2\pi iz}$. See Figure~\ref{fig:diamonds}.
Note that $\KK$ is compact and pathwise connected. 
We then denote by $\Hol^\infty(\KK)$ the space of functions holomorphic in the
interior of $\KK$  and admitting a $C^\infty$--Whitney extension to the boundary.  The definition of this space and its topology are detailed in Section~\ref{lieu:top}.

\begin{figure}[H]
    \centering     
    \includegraphics[width=0.35\linewidth]{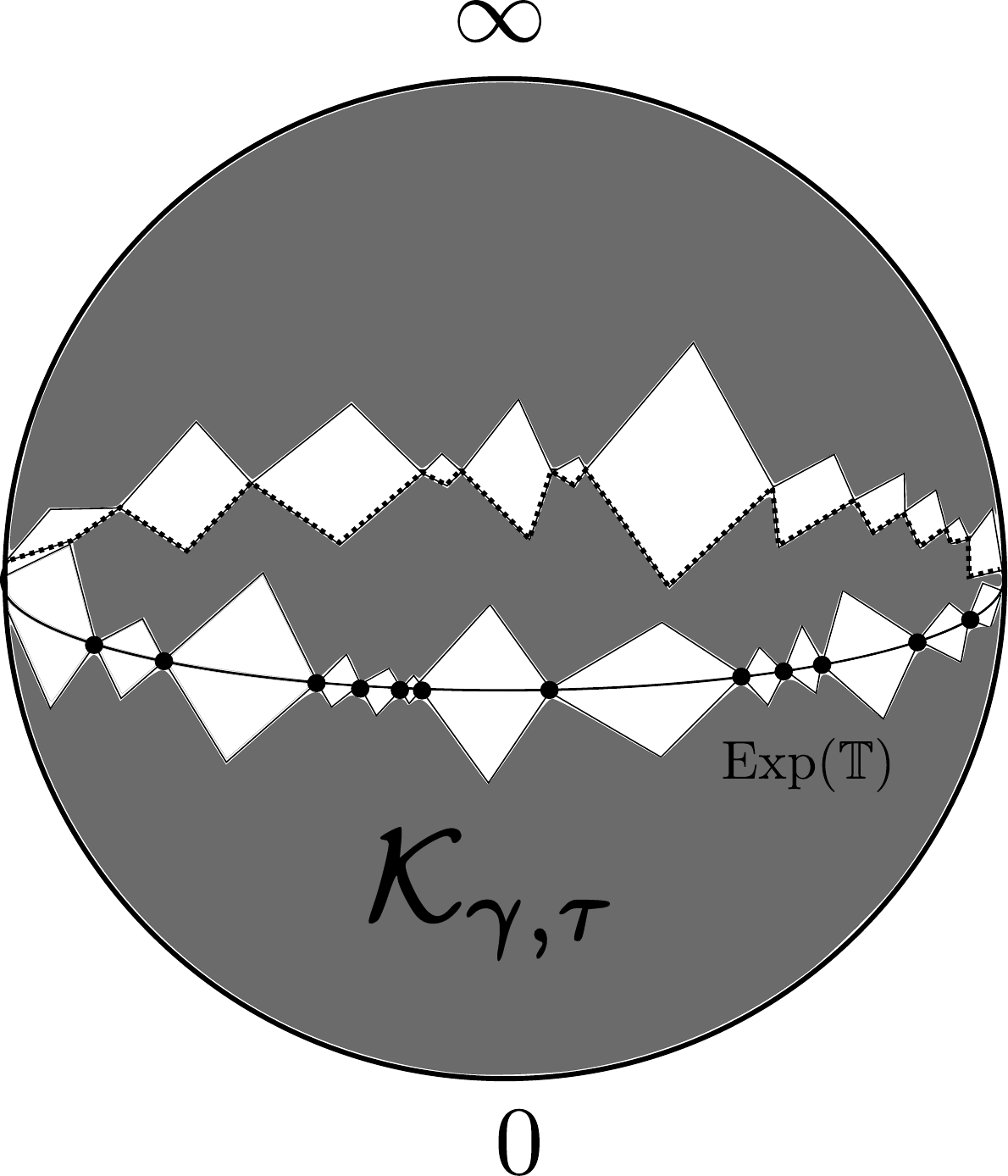}
    \caption{The set $\KK$, obtained from the complex sphere by removing
    ``diamonds'' near resonances.}
    \label{fig:diamonds}
\end{figure}

\subsection{The KAM theorem}


For generalized standard maps near the zero potential, Carminati--Marmi--Sauzin~\cite{CMS} proved that the parametrization of KAM curves admits a $C^1$-holomorphic extension on the set $ \KK$. This work was actually motivated by a question of  Kolmogorov~\cite{kolmogorov1954general} on the existence of an extension of the parametrization of KAM curves to a complex domain of rotation numbers which is monogenic in the sense of Borel.
Then, Carminati--Marmi--Sauzin--Sorrentino~\cite{CMSS}  used this regularity property of KAM curves to construct a complex extension of Mather's $\beta$-function to $\KK$ that is $C^1$-holomorphic in the complex exponentially lifted rotation number in $ \KK$.
An intermediate result of this article, which is also of independent
interest, gives the $C^\infty$--Whitney-holomorphic version of the result in \cite{CMS}. 
\begin{theo*}[Informal statement of Theorem~\ref{thm:hol-KAM}]
Fix $\tau>0$, $0<\gamma<\gamma^\ast(\tau)$, and
$\rho>\rho_\infty>0$. There exists a
neighbourhood $\mathcal{U} \subset \mathrm{Hol}_\rho(\mathbb{T})$ of the
zero potential and a map 
\begin{equation*}
  U  : \mathcal{U} \times \KK \longrightarrow
  \mathrm{Hol}_{\rho_\infty}(\mathbb{T})
\end{equation*}
which depends $C^\infty$--Whitney-holomorphically on the ``complexified rotation number'' 
$q\in\KK$ and is tame
holomorphic in the potential $V\in\mathcal U$ in the Fréchet sense. 
For
$V\in\mathcal U\cap\Hol_\rho^\R(\T)$ and $\omega\in\DC$, the map
$ \id_\T+U(V,e^{2\pi i\omega})$ is an
orientation-preserving analytic circle diffeomorphism, and the corresponding invariant graph is a KAM curve of rotation number $\omega$.
\end{theo*}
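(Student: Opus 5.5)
The plan is to follow the Lagrangian (Levi--Moser) formulation used in \cite{CMS} and to upgrade each estimate so that it is uniform in the derivatives. We look for the angular parametrization $h=\id_\T+U$ of the invariant curve. With $q=\e(\omega)$, the Euler--Lagrange equation for $\mathcal L_V$ along $\theta_n=h(\theta+n\omega)$ becomes
\begin{equation*}
\Delta_\omega U:=U(\theta+\omega)-2U(\theta)+U(\theta-\omega)=V'(\theta+U(\theta)).
\end{equation*}
In Fourier variables, $\Delta_\omega$ acts on the mode $k$ by multiplication by $q^k-2+q^{-k}=(q^{k/2}-q^{-k/2})^2$. This expression makes sense for every $q\in\KK$, including non-real rotation numbers. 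For $\omega\in\AA$ the small divisors satisfy the lower bound $|q^k-2+q^{-k}|\gtrsim \gamma^2 k^{-2(1+\tau)}$, uniformly up to the boundary. The points $q=0,\infty$ need a separate treatment, since there the divisors become large and $U\equiv0$ is the limit. We then solve the equation by a Newton scheme in the spirit of Levi--Moser. The linearized operator $\Delta_\omega-V''(\id+U)$ is conjugated, thanks to the Lagrangian structure, to a composition of two first-order difference operators $\delta_\omega$. Each iterate is therefore obtained by inverting $\delta_\omega$ twice, together with averaging steps that eliminate the mean obstructions. The zero-mean normalisation of $V$ and $U$ makes these averages solvable.

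The first key step is a tame Cauchy-type estimate for the inverse of $\delta_\omega$ on $\Hol_{\rho'}(\T)$, with loss of strip width. This estimate must hold together with all its $q$-derivatives. Differentiating $(q^k-1)^{-1}$ in $q$ a total of $j$ times produces factors $k^j (q^k-1)^{-1-j}$. Using the diamond geometry of $\AA$, we control these by $C_j\gamma^{-1-j}k^{(1+\tau)(1+j)+j}$. This gives a loss of strip width of order $\sigma^{-(1+\tau)(1+j)-j}$, which the super-exponential convergence of Newton's method absorbs. Holomorphy in the interior of $\KK$ is clear at each step, since each iterate is a finite composition of holomorphic operations. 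Continuity of all derivatives up to $\partial\KK$, together with the Whitney conditions, follows from uniform convergence of every $\partial_q^j$ of the iterates on $\KK$. This uses the standard fact that uniform limits of $C^\infty$-Whitney-holomorphic functions with uniformly controlled jets stay in $\Hol^\infty(\KK)$. I also rely on the compatibility of the Whitney jets at boundary points coming from the real Diophantine set.

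The second step is the dependence on $V$. Each iterate $U_n(V,q)$ is built from $V$ by composition, multiplication, inversion of $1+\delta U$ and $\delta_\omega^{-1}$. These operations are holomorphic in $V$ as maps $\Hol_{\rho}\to\Hol_{\rho_n}$. The Newton scheme is run with quantitative smallness that depends only on $|V|_{\rho'}$ for a fixed $\rho'<\rho$. Hence there is a single open neighbourhood $\mathcal U$ on which the limit $U$ is a locally uniform limit of holomorphic maps into each $\Hol_{\rho_\infty}$ seminorm. It is therefore holomorphic in the Fréchet sense. Tameness comes from the explicit polynomial losses in the estimates. For $V$ real and $\omega\in\DC$, uniqueness of the normalized solution implies $\overline{U}=U$, so $U$ is real. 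Smallness of $U'$ gives an orientation-preserving analytic diffeomorphism $\id_\T+U$. The graph $\theta\mapsto(h(\theta),h(\theta)-h(\theta-\omega))$ is then $F_V$-invariant, with dynamics conjugate to $T_\omega$.

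I expect the main obstacle to be the joint uniformity: all $q$-derivatives must be controlled up to the boundary while keeping estimates that are uniform on a fixed open set in $V$. The $q$-derivative losses grow with $j$. We therefore have to choose a strip sequence $\rho_n\downarrow\rho_\infty$, with $\sum\sigma_n<\rho-\rho_\infty$, together with a smallness threshold that does not depend on $j$. For each $j$ the quadratic convergence eventually beats the losses of order $\sigma_n^{-c(j)}$. I would first try to prove, by induction on $j$, the bound $\|\partial_q^j(U_{n+1}-U_n)\|_{\rho_{n+1}}\le C_j\varepsilon_n^{1/2}$ with $\varepsilon_n=\varepsilon_0^{(3/2)^n}$. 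The threshold $\varepsilon_0$ is fixed once, by the $j=0$ estimate. Each fixed $j$ then only enlarges the constant $C_j$, without shrinking $\mathcal U$.
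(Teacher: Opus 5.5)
Your architecture matches the paper's:
\begin{itemize}
\item the Levi--Moser step $h=AS$, with $[A,AS]_\Delta=AE$ solved by two discrete inversions and a mean correction;
\item the bounds $|\partial_q^j\lambda_k|\le j!\,c_\gamma^{j+1}|k|^{(j+1)(\tau+1)+j}$ for $\lambda_k=(q^k-1)^{-1}$;
\item a neighbourhood $\mathcal U$ fixed once by the order-zero quadratic scheme;
\item an induction on the $q$-derivative order that only enlarges constants (Theorems~\ref{thm:error-order-m-new} and~\ref{thm:convergence-order-m-new}, where the new order-$m$ error is linear in $|E|_{\rho',m}$ with the small factor $|E|_{\rho',0}$, plus polynomial terms in order-$(m-1)$ quantities);
\item holomorphy in $V$ from uniform convergence on $\mathcal U$.
\end{itemize}
What is missing is the treatment of the two places where the geometry of $\KK$ genuinely enters.

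\textbf{The shifts.} For $|q|\neq1$, $U(\theta\pm\omega)$ is not a composition inside a strip, since the imaginary part of $\omega$ is unbounded on $\AA$. It is the Fourier multiplier $q^{\pm k}$, which is exponentially large in one direction of $k$. For instance, for $|q|>e^{2\pi\rho}$ the forward shift of a generic element of $\Hol_\rho(\T)$ is not even defined. Yet the scheme needs $\Delta u_n$ inside $E_n$ and $A_n^+=1+\partial_\theta u_n^+$ inside $(A_nA_n^+)^{-1}$. That the symbol $q^k-2+q^{-k}$ makes sense on $\KK$ does not provide these. You must work in a space where $f$ and $f^\pm$ are controlled simultaneously (the paper's $C^\pm_\rho$) and check that every step preserves it. The mechanism is the identity $q^k\lambda_k=1+\lambda_k$, which gives $(\nabla^{-1}f)^+=f+\nabla^{-1}f$, together with $(\nabla^{-1}f)^-=\nabla^{-1}(f^-)$ for $f\in C^-_\rho$. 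The specific ordering $S=\nabla^{-1}\bigl[(\nabla_-^{-1}(AE)-\chi)(AA^+)^{-1}\bigr]$ is what yields $S\in C^\pm$ whenever $A\in C^\pm$.

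\textbf{Whitney regularity.} You attribute this to a ``standard fact'', but uniform convergence of all $\partial_q^j$ on a compact set does not in general give the Whitney remainder conditions for the limit. Here this is not a formality. The interior of $\KK$ has two components, inside and outside the unit circle, and their closures meet only at the non-interior points of $\e(\DC)$. So compatibility of the jets across the circle is exactly what has to be proved; your ``compatibility of the Whitney jets at boundary points'' names the issue without a mechanism. Moreover, the iterates are not finite compositions: $\nabla^{-1}f=\sum_k\lambda_k\hat f_ke_k$ is an infinite series whose poles accumulate on the unit circle. So the problem arises at every step, not only as $n\to\infty$. The paper resolves it in two steps:
\begin{itemize}
\item it proves that $\KK$ is quasiconvex (Lemma~\ref{lem:quasiconvex}, with paths running along the boundaries of the diamonds);
\item it deduces (Lemma~\ref{lem:whitney-quasiconvex}) that Whitney remainders are bounded by sup norms of higher derivatives.
\end{itemize}
Only then does normal convergence of these series, and of $(u_n)$, in derivative seminorms imply convergence in $\Hol^\infty(\KK,\cdot)$.

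\textbf{A minor point.} For reality of $U$ at real $V$ and $\omega\in\DC$, argue iterate by iterate: each normalized cohomological solution is unique, and the scheme commutes with complex conjugation. Do not rely on a uniqueness of the nonlinear normalized solution, which you have not established.
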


Relative to
\cite{CMS}, this KAM theorem has two separate gains:
\begin{enumerate}
  \item it upgrades $C^1$-holomorphic frequency dependence to
    $C^\infty$--Whitney-holomorphic dependence;
  \item we establish tame holomorphic dependence on the full infinite-dimensional
    space of analytic potentials, rather than only the scalar families
    $\epsilon V$.
\end{enumerate}
Let us, however, comment that, for the rigidity statements, only
Whitney-smooth dependence on the Diophantine rotation numbers would be
required. Working with $\KK$ instead not only provides a much stronger
result but also simplifies the proof, since its quasiconvexity allows us
to work with derivative seminorms instead of the usual Whitney
seminorms and yields natural definitions of the Fréchet spaces in which
the parametrization must lie.

As a corollary, in that
KAM neighbourhood of the zero potential, we improve the result in \cite{CMSS} to the same setting as follows. 
\begin{theo}
\label{thm:kam-main-beta}
Fix $\tau,\rho>0$ and $0<\gamma<\gamma^\ast(\tau)$. Then there exist a
neighbourhood $\mathcal U\subset\Hol_\rho(\T)$ of the zero potential and
a functional
\begin{equation*}
  \Phi:\mathcal U\longrightarrow\Hol^\infty(\KK)
\end{equation*}
that is tame holomorphic. For every
$V\in\mathcal U\cap\Hol_\rho^\R(\T)$ and every $\omega\in\DC$,
\begin{equation*}
  \beta_V(\omega)=\frac{\omega^2}{2}+\Phi(V)\left(e^{2\pi i\omega}\right).
\end{equation*}
Thus $\Phi$ is defined and holomorphic on the complex KAM domain, while
its restriction to real potentials extends the normalized function
$\beta_V(\omega)-\omega^2/2$.
\end{theo}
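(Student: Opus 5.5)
We deduce Theorem~\ref{thm:kam-main-beta} from the holomorphic KAM Theorem~\ref{thm:hol-KAM}, taking the same neighbourhood $\mathcal U$. The plan is to write $\beta_V(\omega)-\omega^2/2$ as an explicit integral of the parametrization $U(V,q)$ against Lebesgue measure on $\T$. That integral formula is then extended verbatim to complex $V$ and $q\in\KK$, and its regularity is read off from composition properties.

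\textbf{Step 1 (the formula on the real locus).} Take $V\in\mathcal U\cap\Hol_\rho^\R(\T)$ and $\omega\in\DC$, and set $h=\id+u$ with $u=U(V,e^{2\pi i\omega})$. The KAM curve is $\{(h(\theta), h(\theta)-h(\theta-\omega))\}$, and $h$ conjugates the dynamics to $\tau_\omega$. The orbit through $h(\theta)$ therefore has angles $\theta_k=h(\theta+k\omega)$, computed with a lift. Since this is a KAM curve with irrational rotation number, Aubry--Mather theory (Section~\ref{subsec:aubry-mather}) shows that its orbits are minimal. Hence $\beta_V(\omega)$ equals the average of $\mathcal L_V$ along the orbit, which by unique ergodicity of $\tau_\omega$ equals
\begin{equation*}
\beta_V(\omega)=\int_\T \Big(\tfrac12\big(\omega+u(\theta+\omega)-u(\theta)\big)^2+V(\theta+u(\theta))\Big)\,d\theta .
\end{equation*}
Expanding the square and using $\langle u(\cdot+\omega)-u\rangle=0$ gives
\begin{equation*}
\Phi(V)(q):=\int_\T\Big(\tfrac12\big(u(\theta+\omega)-u(\theta)\big)^2+V(\theta+u(\theta))\Big)d\theta .
\end{equation*}
We must express $u(\cdot+\omega)$ through $q=e^{2\pi i\omega}$. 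In Fourier modes, $u(\theta+\omega)-u(\theta)=\sum_k \hat u_k(q^k-1)e^{2\pi ik\theta}$. Parseval then gives
\begin{equation*}
\tfrac12\sum_k|\hat u_k|^2|q^k-1|^2 .
\end{equation*}
This is not holomorphic, so we use the bilinear form $\tfrac12\sum_k \hat u_k\hat u_{-k}(q^k-1)(q^{-k}-1)$ instead. The two expressions agree for real $u$ and $|q|=1$, and the bilinear one is a rational expression in $q$ with poles only at $0$ and $\infty$ term by term. We define $\Phi$ by this bilinear expression plus $\langle V\circ(\id+u)\rangle$.

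\textbf{Step 2 (regularity).} The map $(V,q)\mapsto U(V,q)$ is tame holomorphic in $V$ and $C^\infty$--Whitney holomorphic in $q$ with values in $\Hol_{\rho_\infty}(\T)$. Two points need checking.
\begin{itemize}
\item The operator $u\mapsto \sum_k \hat u_k\hat u_{-k}(2-q^k-q^{-k})$ is a continuous bilinear map into $\Hol^\infty(\KK)$, jointly with $q$. On $\KK$ away from $0,\infty$, $|q^{\pm k}|$ grows at most like $e^{2\pi k|\Im\omega|}$, and the Fourier decay $e^{-2\pi k\rho_\infty}$ of $\hat u_k$ dominates this on the relevant region. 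If needed, we cut $\KK$ near $0,\infty$ using the symmetry $q\mapsto1/q$ already built into Theorem~\ref{thm:hol-KAM}, or we rewrite the quadratic term through the invariance equation. Derivatives in $q$ only produce polynomial factors $k^n$.
\item Composition $(V,u)\mapsto \langle V\circ(\id+u)\rangle$ is tame holomorphic for $|u|_{\rho_\infty}$ small. This holds because $V$ is holomorphic on $\T_\rho$ with $\rho>\rho_\infty$, and Cauchy estimates give tameness.
\end{itemize}
Composing tame holomorphic maps gives a tame holomorphic map, and $q$-derivatives pass under the integral.

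\textbf{Main obstacle.} The delicate point is the growth of $q^{\pm k}$ on the unbounded parts of $\KK$, that is, near $q=0$ and $q=\infty$. The fix I would try first is to avoid $q^{-k}$ altogether. Using the Euler--Lagrange (invariance) equation $u(\theta+\omega)-2u(\theta)+u(\theta-\omega)=V'(\theta+u(\theta))$, integration by parts rewrites the kinetic term as $-\tfrac12\langle u\,V'(\id+u)\rangle$. This expression involves no shift at all and is manifestly as regular as $U$ itself. It agrees with the previous formula on the real locus, by the same computation as in \cite{CMSS}.
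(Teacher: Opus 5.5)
Your final argument works, but the first bullet of Step~2 should be discarded rather than patched. For a general $u\in C_{\rho_\infty}$ the only uniform information is $|\hat u_k(q)|\lesssim e^{-2\pi\rho'|k|}$. The termwise bound therefore controls $\hat u_k\hat u_{-k}q^{-k}$ only for $|q|>e^{-4\pi\rho'}$, while $\KK$ contains the whole punctured disc $0<|q|\le e^{-\pi}$.

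Passing to the chart $q\mapsto q^{-1}$ does not help: it only trades $q^{-k}$ near $0$ for $q^{k}$ near $\infty$. Already the $q$-independent $u=\cos 2\pi\theta$ gives $\tfrac12(2-q-q^{-1})$, which has poles at $0$ and $\infty$. Note also that no Parseval detour is needed, since your bilinear expression is literally $\tfrac12\int_\T(u^+-u)^2$.

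What makes this kinetic term holomorphic on all of $\KK$ is the shift regularity $U(V)\in C^\pm_{\rho_\infty}$ given by Theorem~\ref{thm:hol-KAM}. The functions $q^{\pm k}\hat U_k$ are the Fourier coefficients of $U^\pm\in C_{\rho_\infty}$; for instance $\hat U_{-k}$ vanishes to order $k$ at $q=0$. So $\tfrac12\int_\T(U^+-U)^2$ is simply a product of holomorphic maps.

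That is exactly the paper's route. It keeps $\tfrac12\int_\T(U^+-U)^2$ and eliminates the potential term instead, via a primitive of $V$, integration by parts and \eqref{eq:EL-U}, rewriting it as $\int_\T U(1+\partial_\theta U)\Delta U$. Then $\Phi$ is a polynomial expression in $U,U^\pm,\partial_\theta U$. It is holomorphic by the algebra property of $C^\pm_{\rho_\infty}$, and tame because $(\mathcal Q_m)$ bounds $U$ uniformly on $\mathcal U$.

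Your fallback is a correct and genuinely different route, and you should take it as the definition from the start. On the real locus the shifts are real translations, so
\[
\tfrac12\langle(u^+-u)^2\rangle=-\tfrac12\langle u\,\Delta u\rangle=-\tfrac12\langle u\,V'(\id_\T+u)\rangle .
\]
You may therefore set $\Phi(V)(q):=\langle V\circ(\id_\T+U)\rangle-\tfrac12\langle U\cdot V'\circ(\id_\T+U)\rangle$ with $U=U(V)(q)$, on all of $\mathcal U\times\KK$.

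This eliminates the shifts rather than the potential, so it uses only $U\in\Hol^\infty(\mathcal U,C_{\rho_\infty})$ and not the $\pm$-structure. The price is the composition Lemma~\ref{lem:composition-q-deriv}, and you need to say where its smallness hypothesis comes from. It comes from the order-zero bound $\bar d/9$ on $U(V)$, uniform on $\mathcal U$ and inherited from Proposition~\ref{prop:induction}. This lets you apply the lemma with $\rho'=\rho^\star$, a strip $\sigma<\rho_\infty$ and $\kappa=1/9$, exactly as in the proof of Fact~\ref{fact:remainder}. Tameness then follows from \eqref{eq:comp-bound-order-zero} together with the uniform bounds on $U$.

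Your version shows that shift regularity is needed to run the scheme but not to make $\Phi$ holomorphic. The paper's version avoids composition altogether and expresses $\Phi(V)$ through $U(V)$ alone.
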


Throughout the paper, $\mathcal U$ denotes the neighbourhood in
Theorem~\ref{thm:kam-main-beta} (actually constructed in the precise statement of the KAM
Theorem~\ref{thm:hol-KAM}). 
Here $\Phi$ is a tame holomorphic map between Fréchet spaces. We retain this notation all along the text.  See Section~\ref{lieu:top}. 

We shall also mention, as shown in \cite{CMS}, that even in the $C^1$-holomorphic setting, the circle acts as a natural boundary for the domain of analyticity in the complexified rotation number $ q \in \KK$  of the parametrization of the KAM curve.

\subsection{Jet-rigidity}
Let $\mathcal U$ be the neighbourhood of the zero potential in Theorem~\ref{thm:kam-main-beta}, and set
\begin{equation*}
    \mathcal{U}^\R :=\mathcal U \cap \Hol_\rho^\R (\T) .
\end{equation*}

For local injectivity of the $\beta$-function
to hold on  an affine subspace of perturbations, it appears to be necessary to impose some nondegeneracy condition. This leads to introducing the following.

Before stating the main result, Theorem~\ref{thm:main-rig}, we shall introduce and recall a few notations and definitions. 
Denote by $(e_n)_{n\in\Z}$ the canonical Fourier basis on $\T$ and for a map $ h \in \Hol_\rho (\T)$, write its Fourier decomposition as  $$ h = \sum_{n \in \Z} \hat{h}_n e_n . $$ 
\begin{definition}[Fourier-separated subspace]
\label{def:fourier-separated}
A closed real linear subspace
$\mathscr{V}\subset\Hol_\rho^\R(\T)$ is said to be 
\emph{Fourier-separated} if for every integer $d \ge 1$, for every
subspace $\V\subset\mathscr{V}$ of dimension $d$, there exist a
basis $(h^{(1)},\ldots,h^{(d)})$ of $\V$, pairwise distinct positive
integers $n_1,\ldots,n_d$, and phases
$\xi_1,\ldots,\xi_d\in\mathbb S^1$ such that
\begin{equation}
\label{eq:fourier-separation}
\det\left[
\Re \!\left(
\xi_k \,\widehat{h^{(j)}}_{n_k}
\right)
\right]_{1\le  k,j\le  d}\neq0.
\end{equation}
\end{definition}
Note that condition~\eqref{eq:fourier-separation} is independent of the chosen basis 
of $\V$. Also, if $\mathscr{V}$ has finite dimension, then it is sufficient to check the condition for $\V=\mathscr{V}$. The closedness assumption ensures that
$\mathscr{V}$, with the induced topology, is again a Fréchet space.

\begin{definition}[Stability under Fourier truncation] \label{def:truncation}
We say that a closed real vector space $\mathscr{V} \subset \Hol^\R_\rho (\T) $ is \emph{stable under Fourier truncations} if 
\begin{equation*}
    \mathcal T_N h:= \sum_{|n|\le  N}\widehat h_n e_n\in\mathscr{V}
    \qquad\text{for every }h\in\mathscr{V}\text{ and }N \geq 0. 
\end{equation*} 
\end{definition}

\begin{remark}
Evenness gives an important special case. The space
\begin{equation*}
    \Hol_{\rho}^\ev(\T)
    :=\left\{V\in\Hol_\rho^{\R}(\T):V(\theta)=V(-\theta)
    \text{ for every }\theta\in\T\right\}
\end{equation*}
is Fourier-separated and stable under Fourier truncations.
 We write
\begin{equation*}
    \mathcal U^\ev:=\mathcal U\cap\Hol_\rho^\ev(\T).
\end{equation*}
The same holds for odd potentials, for instance. 

More generally, one can show that a closed vector space $ \mathscr{V}$ is Fourier separated and stable under Fourier truncations if and only if there exist $ I \subset\N^*$ and  a sequence  $ (\xi_n)_{n \in I} \in (\S^1)^I $ such that 
\begin{equation}
\mathscr{V} = \left\{h\in\Hol_\rho^\R(\T):
\xi_n\widehat h_n\in\R\ \text{for every }n\in I \qand
\widehat h_n=0\ \text{for every }n\in\N^*\setminus I
\right\}.
\end{equation}
Dropping the stability under Fourier truncation allows other natural spaces of deformations such as for instance 
\begin{equation*}
\left\{h\in\Hol_\rho^\R(\T):
\widehat h_{2k-1}=\widehat h_{2k}
\text{ for every }k\geq1\right\},
\end{equation*}
which is Fourier separated but is not stable under Fourier truncations.

\end{remark}

In the main results,  deformations of potentials are considered inside a Fourier separated space $ \mathscr{V}$. The base potential will also be in $ \mathscr{V}$ when the latter is stable under Fourier truncation. 
Observe that the case of even potentials replaces the usual $ \Z_2 $ symmetries that are required in the proof of inverse problems such as \cite{Z2,HezariHamid} for Kac's question, \cite{GM,CD, DKW} for billiards or also \cite{SKL} for billiards with obstacles. Note that $\Z_2$-symmetry can be replaced, as in the work of Fierobe--Kaloshin--Sorrentino~\cite{fierobe2025billiard}, by dihedral symmetries. Our definition of being Fourier separated contains them as well as more general families of symmetries.

Now recall the usual notion of jet in this setting. Given $d \ge 1 $, for a  function $f : \AA \to \C $ that is $C^\infty$--Whitney smooth at a point  $ \omega_0 \in \AA$, we write
\begin{equation*}
 J_{\omega_0}^{d-1}f
 :=\left(f(\omega_0),\partial_\omega f(\omega_0),\ldots,
 \partial_\omega^{d-1}f(\omega_0)\right) \in\C^d.
\end{equation*}

Finally, recall the following definition. 

\begin{definition}[Proper real analytic subset]
A  subset $ Z $ of a Fréchet space $X$ is said to be a  \emph{proper real analytic subset} if it is the zero set 
\begin{equation*}
    \left\{x\in\mathcal O \colon f(x)=0 \right\} ,
\end{equation*}
of a real analytic nonzero map $f:\mathcal{O} \to \R$ defined on a connected open subset $ \mathcal{O}$ of $X$. 
\end{definition}
In Theorem~\ref{thm:shyness} we show that  proper real analytic subsets are shy. 

The main result of the present work is the following.

\begin{theo}[Main]
\label{thm:main-rig}
Let $\omega_0\in\DC$ be an algebraic rotation number. Let $\mathscr{V} \subset \Hol_\rho^\R(\T)$ be a Fourier-separated linear
subspace.
 For  a linear subspace
$\V \subset\mathscr{V}$ of dimension $d \ge 1 $, there exists a proper real-analytic subset
$\mathcal Z_{\V} \subset \mathcal{U}^\R$ such that, for every $ V \in \mathcal{U}^\R \setminus \mathcal Z_{\V}$, the locally defined jet map
\begin{equation*}
\begin{array}{c@{\quad}c@{\quad}c}
\V\cap\left(-V+\mathcal U^\R\right)
&\longrightarrow&
\R^d
\\[4pt]
W
&\longmapsto&
J_{\omega_0}^{d-1}\left(\beta_{V+W}\right).
\end{array}
\end{equation*}
is real analytic and a local diffeomorphism at $0$. 

\smallskip\noindent
In particular, $\mathcal Z_{\V}$ is shy in $\Hol_\rho^\R(\T)$ and relatively closed in $\mathcal U^\R$.

\smallskip\noindent
 If, in addition, $\mathscr{V}$ is
stable under Fourier truncations, then $\mathcal Z_\V\cap\mathscr{V}$ is shy
in $\mathscr{V}$. 
\end{theo}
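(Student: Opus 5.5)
The proof rests on the tame holomorphy of $\Phi$ from Theorem~\ref{thm:kam-main-beta}. Fix a basis $(h^{(1)},\ldots,h^{(d)})$ of $\V$ as given by Definition~\ref{def:fourier-separated}. For $V\in\mathcal U$, let $M(V)$ be the $d\times d$ matrix whose $(k,j)$ entry is $\partial_\omega^{k-1}\big(D\Phi(V)[h^{(j)}]\big)(e^{2\pi i\omega_0})$, for $k=1,\ldots,d$. Because $\Phi:\mathcal U\to\Hol^\infty(\KK)$ is holomorphic and evaluating a finite jet at a boundary point is a continuous linear functional on $\Hol^\infty(\KK)$, the map $V\mapsto M(V)$ is holomorphic on $\mathcal U$. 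Hence $\delta(V):=\det M(V)$ is holomorphic on $\mathcal U$ and real analytic on $\mathcal U^\R$. For real $V$ and real $\omega$, the quantity $\beta_{V+W}(\omega)$ is real, so $M(V)$ is real.

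Define $\mathcal Z_\V:=\{V\in\mathcal U^\R:\delta(V)=0\}$. The jet map $W\mapsto J^{d-1}_{\omega_0}\beta_{V+W}$ differs from $W\mapsto J^{d-1}_{\omega_0}\Phi(V+W)(e^{2\pi i\cdot})$ only by the jet of $\omega^2/2$, which does not depend on $W$. It is therefore real analytic on the finite-dimensional open set $\V\cap(-V+\mathcal U^\R)$, and its differential at $0$ is exactly $M(V)$. So the inverse function theorem yields a local diffeomorphism whenever $V\notin\mathcal Z_\V$.

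What remains is to show that $\delta\not\equiv 0$ on the connected open set $\mathcal U^\R$, which we may take to be a ball. Shyness then follows from Theorem~\ref{thm:shyness}, and relative closedness from the continuity of $\delta$. Since $\delta$ is holomorphic on a connected complex neighbourhood of $0$, it suffices to show that some Taylor coefficient of $\delta$ at $V=0$, in real directions, is nonzero.

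\emph{First-order computation.} At $V=0$ the curves are flat, so $D\Phi(0)[h]=\langle h\rangle=0$, and the nondegeneracy has to come from higher order. Using the Lagrangian (Levi--Moser) formulation, I would compute the second variation. For $V=\epsilon W_0$, the parametrization is $u=\epsilon u_1+O(\epsilon^2)$ with $u_1(\theta+\omega)-2u_1(\theta)+u_1(\theta-\omega)=W_0'(\theta)$. This gives
\begin{equation*}
\partial_\epsilon D\Phi(\epsilon W_0)[h]\big|_{\epsilon=0}=\langle h' u_1\rangle=\sum_{n\neq0}\frac{(2\pi i n)^2\,\widehat h_n\,\overline{\widehat{W_0}_n}}{|q^{n}-1|^2}\cdot c,
\end{equation*}
with $c$ a harmless constant, and it is a rational function of $q=e^{2\pi i\omega}$. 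Choose $W_0$ supported on the modes $n_1,\ldots,n_d$ with phases adjusted by the $\xi_k$. Then the $d\times d$ matrix of $\omega$-jets at $\omega_0$ is $\big(\partial_\omega^{k-1}\sum_l a_l(\omega)\,\Re(\xi_l\widehat{h^{(j)}}_{n_l})\big)_{k,j}$, where $a_l(\omega)=n_l^2/\sin^2(\pi n_l\omega)$. It factorizes as a Wronskian-type matrix $\big(\partial^{k-1}a_l(\omega_0)\big)_{k,l}$ times the Fourier-separation matrix, which is invertible by~\eqref{eq:fourier-separation}. The term $\epsilon^d$ of $\delta(\epsilon W_0)$ is then, up to a nonzero factor, the product of these two determinants.

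The main obstacle is showing that this Wronskian of the functions $a_l$, for distinct $n_l$, does not vanish at $\omega_0$. The functions $a_l$ are linearly independent, since their poles differ, so their Wronskian is a nonzero function that is meromorphic and algebraic in $q$, built from rational expressions in $\cos$ and $\sin$ of $\pi n_l\omega$. Its zeros are values $e^{2\pi i\omega}$ that are algebraic numbers. The plan is to invoke the Gelfond--Schneider/Lindemann-type transcendence of $e^{2\pi i\omega_0}$ for algebraic irrational $\omega_0$: this number is transcendental, so it cannot be a root of a nonzero polynomial with algebraic coefficients. This is exactly where the algebraicity hypothesis enters.

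\emph{Stable case.} When $\mathscr V$ is stable under truncation, it contains trigonometric polynomials such as $W_0$, because their modes are drawn from $I$. Then $\delta|_{\mathscr V\cap\mathcal U^\R}\not\equiv0$, and Theorem~\ref{thm:shyness} applied in the Fréchet space $\mathscr V$ gives shyness there. In the general case I would instead take $W_0$ in all of $\Hol^\R_\rho$ with the chosen modes, which is always possible.
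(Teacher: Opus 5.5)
Your overall architecture is the paper's: the determinant of the $\omega$-jets of $D\Phi(V)[h^{(j)}]$ at $q_0$, real analyticity from Theorem~\ref{thm:kam-main-beta}, $D\Phi(0)=0$ so that along $tV^*$ the Jacobian is $t\,\Sigma(q_0)H+O(t^2)$, Gelfond--Schneider for $q_0$, and Theorem~\ref{thm:shyness}.

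\textbf{Where you differ.} The genuine difference is in showing that the Wronskian of the $\sigma_{n_\ell}$ is not identically zero.
\begin{itemize}
\item The paper clears denominators and computes the leading coefficient of the resulting polynomial $P$, reducing it to a Vandermonde determinant proportional to $\prod_\ell n_\ell^2\prod_{i<j}(n_j-n_i)$ (Proposition~\ref{prop:S-invertible}).
\item You instead use linear independence together with the classical fact that analytic functions whose Wronskian vanishes identically are linearly dependent.
\end{itemize}
Your route is shorter and more conceptual; the paper's is fully explicit and also yields the degree of $P$.

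Two details need tightening on your side.
\begin{itemize}
\item ``Their poles differ'' is not literally true. All $\sigma_n$ blow up at $q=1$, and the pole sets are nested when $n_i\mid n_j$. Argue instead via a primitive $n_{\max}$-th root of unity, which is a pole only of $\sigma_{n_{\max}}$, and induct.
\item To conclude that the zeros are algebraic, note that $\partial_\omega=2\pi i q\partial_q$ contributes a factor $(2\pi i)^{k-1}$ to the $k$-th row. After factoring these out, the determinant is a rational function of $q$ with rational coefficients.
\end{itemize}

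\textbf{A slip in the truncation-stable case.} The phases $\xi_k$ supplied by Definition~\ref{def:fourier-separated} need not put your $W_0$ in $\mathscr V$, so ``because their modes are drawn from $I$'' is not enough. For example, take even potentials, $d=1$, $h^{(1)}=\cos(2\pi\theta)$ and $\xi_1=e^{i\pi/4}$. Fourier separation holds, yet $W_0=\cos(2\pi\theta-\pi/4)$ is not even.

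The fix is to re-choose each $\xi_k$ as the phase of the one-dimensional space $(\mathcal T_{n_k}-\mathcal T_{n_k-1})(\mathscr V)$. One then checks that the separation matrix changes only by the invertible factor $\operatorname{diag}\bigl(\Re(\xi^0_k\overline{\xi_k})\bigr)$. This is exactly Proposition~\ref{prop:phase-adapted-potential}.

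Relying on the characterization via $I$ from the Remark does not avoid this. The paper states that characterization without proof, and it still requires this phase choice.
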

It is known that, for every $\tau>0$, one may choose $0<\gamma<\gamma^\ast(\tau)$ so that the Diophantine set $\DC$ contains infinitely many algebraic numbers, for instance quadratic irrationals. See e.g.~\cite[Theorem~1.1]{mcmullen2009uniformly}.

As mentioned before, this applies when $ \mathscr{V}$ is, for instance, the space of even potentials or the space of odd potentials.

Let us provide three examples of applications of the latter theorem. The first ones are trivial applications and need no proof.

\begin{coro}
\label{cor:nonlocal}
Under the hypotheses of Theorem~\ref{thm:main-rig}, let
$V_0 \in \mathcal{U}^\R \setminus \mathcal Z_\V$. Then level sets of 
$$  V \in (  V_0 + \V ) \cap \mathcal{U}^\R \mapsto J^{d-1}_{\omega_0} \beta_V  $$ 
have zero Lebesgue measure. 
\end{coro}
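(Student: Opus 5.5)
Corollary~\ref{cor:nonlocal} follows from Theorem~\ref{thm:main-rig} once the map is shown to be a local diffeomorphism on a full-measure open subset of the affine space. We argue as follows.

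Identify $V_0+\V$ with $\R^d$ through a basis $(h^{(1)},\ldots,h^{(d)})$ of $\V$. Let $\mathcal O:=\{t\in\R^d : V_0+\sum_j t_jh^{(j)}\in\mathcal U^\R\}$, which is open and contains $0$, and let $\mathcal O_0$ be its connected component containing $0$. Define
\begin{equation*}
G(t):=J^{d-1}_{\omega_0}\beta_{V_0+\sum_j t_jh^{(j)}} .
\end{equation*}
By Theorem~\ref{thm:kam-main-beta}, $\Phi$ is holomorphic on $\mathcal U$ with values in $\Hol^\infty(\KK)$. Taking the $(d-1)$-jet at $\omega_0$ is a continuous linear functional there, and $\beta_V(\omega)=\omega^2/2+\Phi(V)(e^{2\pi i\omega})$. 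Restricted to a finite-dimensional real slice, $G$ is therefore real analytic on $\mathcal O$. Its values are real because $\beta_V$ is real on $\DC$ and the jet is computed along the real Diophantine set through Whitney derivatives.

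Set $D(t):=\det DG(t)$, which is real analytic on $\mathcal O$. Since $V_0\notin\mathcal Z_\V$, Theorem~\ref{thm:main-rig} gives $D(0)\neq0$. Hence $D\not\equiv0$ on $\mathcal O_0$, and the zero set of a nonzero real-analytic function on a connected open subset of $\R^d$ has Lebesgue measure zero. On each other component $\mathcal O_i$ we argue differently. Any point $V_1=V_0+\sum t_jh^{(j)}$ with $t\in\mathcal O_i$ satisfies $V_1+\V=V_0+\V$. Either $V_1\notin\mathcal Z_\V$, and the same argument applies on $\mathcal O_i$. Or every point of $\mathcal O_i$ lies in $\mathcal Z_\V$; in that case we would only control the critical set on the components that contain some point outside $\mathcal Z_\V$.

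\textbf{Main obstacle.} The corollary is clean when we restrict to the component of $0$, and I would state it for $\mathcal O_0$, or else assume that $\mathcal U^\R\cap(V_0+\V)$ is connected. For instance, if $\mathcal U$ is chosen convex (a ball), which I expect to be the case from the KAM construction, then $\mathcal O$ is itself convex and hence connected.

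Given $\{D=0\}$ null, fix $c\in\R^d$. At any $t\in G^{-1}(c)$ with $D(t)\neq0$, the inverse function theorem shows that $t$ is an isolated point of $G^{-1}(c)$. Thus $G^{-1}(c)\setminus\{D=0\}$ is a discrete subset of the open set $\{D\neq0\}$, hence countable, hence null. Therefore
\begin{equation*}
G^{-1}(c)\subset\{D=0\}\cup\bigl(G^{-1}(c)\setminus\{D=0\}\bigr)
\end{equation*}
has Lebesgue measure zero, which proves the corollary.
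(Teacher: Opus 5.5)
Your argument is correct and is exactly the real-analyticity argument the paper has in mind; the paper gives no written proof and calls the corollary trivial. The key steps are that $\det DG$ is real analytic and nonzero at $0$, so its zero set is null, and that off this set the level sets of $G$ are discrete, hence countable. The connectedness issue you flag is not an obstacle: the paper defines $\mathcal U$ in \eqref{eq:U-def} as the ball $\{V:|V|_{\rho^\sharp}<r_{\mathcal U}\}$, which is convex, so $(V_0+\V)\cap\mathcal U^\R$ is convex and the corollary holds as stated, with no need to restrict to the component of $0$.
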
 
While being trivial by real analyticity, it allows us to observe that the genericity of finite-dimensional rigidity is not a localized property. 

Then the following prohibits, in some sense, the existence of flexible deformations. 

\begin{coro}
\label{cor:defor-1-d}
Under the hypotheses of Theorem~\ref{thm:main-rig}, let
$V_0\in\mathcal U^\R\setminus\mathcal Z_\V$, let $I\subset\R$ be a connected
open interval containing $0$, and let
\begin{equation*}
    t\in I\longmapsto V_t\in(V_0+\V)\cap\mathcal U^\R
\end{equation*}
be a real-analytic deformation of $V_0$. If
$J_{\omega_0}^{d-1}(\beta_{V_t})$ is independent of $t$, then
$V_t\equiv V_0$ on $I$.
\end{coro}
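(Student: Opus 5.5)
We apply Theorem~\ref{thm:main-rig} along the curve $t\mapsto V_t$. Since $V_t\in V_0+\V$, write $V_t=V_0+W_t$ with $W_t\in\V\cap(-V_0+\mathcal U^\R)$. The map $t\mapsto W_t$ is real analytic with values in the finite-dimensional space $\V$, and $W_0=0$. Define
\begin{equation*}
  G\colon W\longmapsto J^{d-1}_{\omega_0}\left(\beta_{V_0+W}\right).
\end{equation*}
By Theorem~\ref{thm:main-rig}, since $V_0\notin\mathcal Z_\V$, $G$ is real analytic on $\V\cap(-V_0+\mathcal U^\R)$ and is a local diffeomorphism at $0$. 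Choose a neighbourhood $\mathcal O$ of $0$ in $\V$ on which $G$ is injective.

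\textbf{Local step.} By continuity of $t\mapsto W_t$, there is $\varepsilon>0$ such that $W_t\in\mathcal O$ for $|t|<\varepsilon$. The hypothesis that the jet is constant gives $G(W_t)=G(W_0)$ for these $t$. Injectivity on $\mathcal O$ then forces $W_t=W_0=0$, that is, $V_t=V_0$ for $|t|<\varepsilon$.

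\textbf{Propagation by the identity principle.} The map $t\mapsto W_t$ is real analytic on the connected interval $I$ with values in the finite-dimensional space $\V$. Each of its coordinates in a basis of $\V$ is a real-analytic function on $I$ that vanishes on the open set $(-\varepsilon,\varepsilon)$. By the identity theorem each coordinate vanishes on all of $I$, so $W_t\equiv0$ and hence $V_t\equiv V_0$ on $I$.

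The only point needing care is that the local diffeomorphism property is used only at the base point $0$, corresponding to $V_0$. We never need it at other points $V_t$, which could lie in $\mathcal Z_\V$; analyticity in $t$ is exactly what removes that need. There is no real obstacle beyond this.
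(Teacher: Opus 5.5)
Your proof is correct. Theorem~\ref{thm:main-rig} gives local injectivity of $W\mapsto J^{d-1}_{\omega_0}(\beta_{V_0+W})$ near $0$, since $V_0\notin\mathcal Z_\V$, and this forces $V_t=V_0$ for $t$ near $0$. The identity theorem, applied to the finitely many real-analytic coordinates of $t\mapsto V_t-V_0\in\V$, then propagates this to all of $I$.

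The paper gives no proof of this corollary; it calls it a trivial application of Theorem~\ref{thm:main-rig}, and your argument is the natural one. The closest proof the paper does write out is for Corollary~\ref{cor:def-rig-gen}, and it takes a different route. It picks the first nonvanishing Taylor coefficient $W_\ell=\partial_t^\ell V_t|_{t=0}$ and uses only that the differential of the jet map at $V_0$ is invertible, with no inverse function theorem. That is why it can handle infinite-jet constancy with directions in varying subspaces $\V_{n_k}$, but it genuinely requires analyticity in $t$.

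One small correction to your closing remark: analyticity is not actually needed here. The set $\{t\in I: V_t=V_0\}$ is closed by continuity. It is also open, by your local step applied at any $t_1$ in the set, since there you are back at the base point $V_0$. Connectedness of $I$ then gives the conclusion for any merely continuous deformation.
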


The next consequence is not confined to one prescribed space of deformations.

\begin{coro}
\label{cor:def-rig-gen}
Fix an algebraic $\omega_0\in\DC $. Let $\mathscr{V}$ be Fourier separated and stable under Fourier truncation  and fix   a countable collection  $ (\V_n)_{n \in \N^*} $ of finite-dimensional subspaces of $\mathscr{V}$. 
There exists a subset $\mathcal R\subset\mathscr V\cap\mathcal U^\R$, residual and prevalent in $\mathscr V\cap\mathcal U^\R$, with the following property.
 Let $V_0\in\mathcal R$, let $I\subset\R$ be a connected open
interval containing $0$, and let
\begin{equation*}
    t\in I\longmapsto V_t\in \mathscr{V}  \cap \mathcal{U}^\R
\end{equation*}
be a real-analytic deformation of $V_0$. Assume that, for every
$k\geq1$, there exists $n_k \in \N^* $ such that 
\begin{equation*}
    \partial_t^k V_t |_{t=0} \in \V_{n_k} \;. 
\end{equation*}
 If $$ J^\infty_{\omega_0} \beta_{V_t}= J^\infty_{\omega_0} \beta_{V_0}$$
  for every $t\in I$, then $V_t\equiv V_0$ on $I$.
\end{coro}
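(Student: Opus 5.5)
We derive Corollary~\ref{cor:def-rig-gen} from Theorem~\ref{thm:main-rig}, applied to countably many finite-dimensional spaces. For each $n\in\N^*$ and each $m\ge1$, let $\V_{n,m}\subset\mathscr V$ be a finite-dimensional subspace: either $\V_n$ itself, or $\V_n+\Vect_\R(\V_{n_1},\dots)$ for finite sums $\V_{n_1}+\cdots+\V_{n_j}$ of the given spaces. These are countably many, and each is finite-dimensional and contained in $\mathscr V$. Theorem~\ref{thm:main-rig} gives for each such space $\V'$ a proper real-analytic subset $\mathcal Z_{\V'}$. It is relatively closed in $\mathcal U^\R$, and since $\mathscr V$ is stable under Fourier truncation, $\mathcal Z_{\V'}\cap\mathscr V$ is shy in $\mathscr V$. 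We set
\[
\mathcal R:=(\mathscr V\cap\mathcal U^\R)\setminus\bigcup_{\V'}\mathcal Z_{\V'} .
\]
Shy sets form a $\sigma$-ideal, so $\mathcal R$ is prevalent. Each $\mathcal Z_{\V'}\cap\mathscr V$ is closed and shy, hence has empty interior, hence is nowhere dense; so $\mathcal R$ is residual by Baire in the Fréchet space $\mathscr V$.

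Now fix $V_0\in\mathcal R$ and a real-analytic deformation $V_t$ with $J^\infty_{\omega_0}\beta_{V_t}$ constant. Suppose $V_t\not\equiv V_0$. By analyticity and connectedness of $I$, some derivative $\partial_t^kV_t|_{t=0}$ is nonzero; let $k_0$ be the smallest such $k$. Set $W_t:=V_t-V_0$, so that
\[
W_t=t^{k_0}w+O(t^{k_0+1}),\qquad w=\tfrac{1}{k_0!}\partial_t^{k_0}V_t|_{0}\in\V_{n_{k_0}}\setminus\{0\}.
\]

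The difficulty is that $W_t$ need not lie in a single finite-dimensional space, since the higher derivatives range over different $\V_{n_k}$. We handle this by working only to finite order. Take $\V'=\V_{n_{k_0}}$ of dimension $d$, and consider the real-analytic map
\[
G(W):=J^{d-1}_{\omega_0}\beta_{V_0+W},
\]
defined on a neighbourhood of $0$ in $\mathscr V$. Its real analyticity follows from the tame holomorphy of $\Phi$ in Theorem~\ref{thm:kam-main-beta}. Since $V_0\notin\mathcal Z_{\V'}$, the restriction of $G$ to $\V'$ is a local diffeomorphism at $0$. In particular $DG(0)|_{\V'}$ is injective, because a local diffeomorphism $\V'\to\R^d$ with $\dim\V'=d$ has invertible differential. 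Expanding $G(W_t)$ in $t$ gives
\[
G(W_t)=G(0)+t^{k_0}\,DG(0)[w]+O(t^{k_0+1}).
\]
The lower-order terms vanish: every multilinear term $D^jG(0)[\cdot]$ applied to $W_t$ is $O(t^{jk_0})$, and $jk_0>k_0$ when $j\ge2$. The hypothesis forces $G(W_t)=G(0)$, so $DG(0)[w]=0$ with $w\in\V'\setminus\{0\}$. This contradicts injectivity, and hence $V_t\equiv V_0$.

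The argument above uses only $\V_{n_{k_0}}$, so taking $\mathcal R$ to avoid $\mathcal Z_{\V_n}$ for each $n$ suffices. The main point to check is that Theorem~\ref{thm:main-rig} yields more than the local diffeomorphism property stated there, namely that the full derivative $DG(0)$ restricted to $\V'$ is invertible. This holds because $G$ is real analytic, as is implicit in the statement of the theorem.
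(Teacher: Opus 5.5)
Your argument is correct and is essentially the paper's proof. The paper likewise removes the countably many relatively closed, shy sets $\mathcal Z_{\V_n}\cap\mathscr V$, takes the first nonvanishing derivative $\partial_t^{k_0}V_t|_{t=0}\in\V_{n_{k_0}}$, and differentiates the constant jet $k_0$ times to obtain $D J^{d-1}_{\omega_0}\beta(V_0)[w]=0$, which contradicts the invertible Jacobian off $\mathcal Z_{\V_{n_{k_0}}}$.

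Three small remarks. The preliminary family of finite sums is unnecessary, as you note yourself. Spaces with $\V_n=\{0\}$ should simply be skipped, since Theorem~\ref{thm:main-rig} needs $d\ge1$. Finally, invertibility of $DG(0)|_{\V'}$ does not come from analyticity. It follows from the chain rule for a local diffeomorphism, or directly from $\mathcal Z_{\V'}$ being the zero set of the Jacobian determinant.
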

Let us give an example of the above corollary.

\begin{example}
\label{ex:def-rig}
We restrict to perturbations with finitely many Fourier coefficients and even potentials. This is an arbitrary choice to make it more explicit. 

Fix an algebraic $\omega_0\in\DC$. There exists a  residual and prevalent subset $\mathcal R\subset\mathcal U^\ev$  with the following property. Let $V_0\in\mathcal R$, let $I\subset\R$ be a connected open
interval containing $0$, and let
\begin{equation*}
    t\in I\longmapsto V_t\in\mathcal U^\ev
\end{equation*}
be a real-analytic deformation of $V_0$. Assume that, for every
$k\geq1$, the derivative 
\begin{equation*}
    \left.\partial_t^kV_t\right|_{t=0}
\end{equation*}
has finitely many nonzero Fourier coefficients. If $ J^\infty_{\omega_0} \beta_{V_t}= J^\infty_{\omega_0} \beta_{V_0}$ for every $t\in I$,
then $V_t\equiv V_0$ on $I$.
\end{example}

If the base potential and the deformation directions are allowed to vary, the Fourier-separation condition can be dropped. Using the notion of prevalence introduced before
Theorem~\ref{thm:prevalent-beta-rigidity}, set
\begin{equation*}
    \mathscr P_d
    :=\mathcal U^\R\times\left(\Hol_\rho^{\R}(\T)\right)^d.
\end{equation*}

\begin{theo}
\label{thm:prevalent-affine}
Fix $d\ge1$ and an algebraic $\omega_0\in\DC$. There is an
open and  prevalent subset $\mathscr R_d\subset\mathscr P_d$ such
that for every
\begin{equation*}
    (V_0,h^{(1)},\ldots,h^{(d)})\in\mathscr R_d,
\end{equation*}
the locally defined map
\begin{equation*}
    \begin{aligned}
    (a_1,\ldots,a_d) \in \R^d &\longmapsto
    J_{\omega_0}^{d-1}\!\left(\beta_{V_a}\right),
    \qquad \text{ where  }  V_a:=V_0+\sum_{j=1}^d a_jh^{(j)},
    \end{aligned}
\end{equation*}
is a local diffeomorphism at $ 0$. 
\end{theo}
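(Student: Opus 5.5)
The plan is to reduce Theorem~\ref{thm:prevalent-affine} to the nonvanishing of a single real-analytic function on $\mathscr P_d$ and then invoke Theorem~\ref{thm:shyness}.

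\textbf{The determinant function.} By Theorem~\ref{thm:kam-main-beta}, $\Phi$ is tame holomorphic on $\mathcal U$ with values in $\Hol^\infty(\KK)$. Hence for real $V$ the map
\begin{equation*}
V\mapsto J^{d-1}_{\omega_0}\beta_V
\end{equation*}
is real analytic. Indeed, the jet of $\omega^2/2$ does not depend on $V$, and the jet in $\omega$ of $\Phi(V)(e^{2\pi i\omega})$ is obtained from the $q$-jet of $\Phi(V)$ at $q_0=e^{2\pi i\omega_0}$ by a fixed linear change of variables. Evaluating a Whitney jet at a boundary point is a continuous linear functional on $\Hol^\infty(\KK)$. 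So on $\mathscr P_d$ we can form
\begin{equation*}
\Delta(V_0,h^{(1)},\dots,h^{(d)}) := \det\Bigl[\partial_\omega^{k}\, D\Phi(V_0)[h^{(j)}](e^{2\pi i\omega})\big|_{\omega=\omega_0}\Bigr]_{0\le k\le d-1,\ 1\le j\le d}.
\end{equation*}
The entries are real for real data, since $D\Phi(V_0)[W](q)=\int W(\theta_0)\,d\mu_{V_0,q}$ is real along $\DC$ and its $\omega$-derivatives are Whitney derivatives along real Diophantine points. Each entry is real analytic on $\mathscr P_d$ and linear in $h^{(j)}$, so $\Delta$ is real analytic. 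It is the Jacobian at $a=0$ of $a\mapsto J^{d-1}_{\omega_0}\beta_{V_a}$. Set $\mathscr R_d:=\{\Delta\neq0\}$. This set is open, and by the inverse function theorem every point of it has the required local diffeomorphism property. Since $\mathcal U^\R$ is connected (or we work on the connected component of $0$), it remains to show $\Delta\not\equiv0$. Theorem~\ref{thm:shyness} then makes $\{\Delta=0\}$ shy, so $\mathscr R_d$ is prevalent.

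\textbf{Nonvanishing at $V_0=0$.} At the zero potential the invariant curves are the straight circles $r=\omega$ with $h_{0,q}=\mathrm{id}$. Hence $\mu_{0,q}$ is Lebesgue along the curve, and
\begin{equation*}
D\Phi(0)[W](q)=\langle W\rangle=0.
\end{equation*}
The first variation therefore vanishes, and we must look at the base point differently. We perturb the base point along a ray $V_0=\epsilon V_1$ and expand $\Delta$ in $\epsilon$. The second-order term comes from $D^2\Phi(0)[V_1,W]$, which is computable explicitly. The first-order correction to $h$ solves the cohomological equation
\begin{equation*}
U(\theta+\omega)-2U(\theta)+U(\theta-\omega)=V_1'(\theta).
\end{equation*}
In Fourier modes this gives, up to normalisation,
\begin{equation*}
D^2\Phi(0)[V_1,W](q)=c\sum_{n\ne0}\frac{n^2\,\widehat{V_1}_n\,\widehat W_{-n}}{|q^n-1|^2}.
\end{equation*}
Here $|q^n-1|^2=2-q^n-q^{-n}$ holomorphically, with $c$ a nonzero constant. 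The leading term of $\Delta$ is then the determinant of the $\omega$-jets at $\omega_0$ of the $d$ functions
\begin{equation*}
f_j(\omega)=\sum_n \frac{n^2\,\widehat{V_1}_n\,\widehat{h^{(j)}}_{-n}}{4\sin^2(\pi n\omega)}.
\end{equation*}

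\textbf{Choice of data and the main obstacle.} We choose $V_1=2\Re(e_{n_1}+\dots+e_{n_d})$ and $h^{(j)}=2\Re e_{n_j}$ with distinct $n_j$. Then $f_j(\omega)=c_j\,\sin^{-2}(\pi n_j\omega)$, and $\Delta$ becomes a Wronskian-type determinant of $\omega\mapsto\sin^{-2}(\pi n_j\omega)$ at $\omega_0$. The main obstacle is proving that this Wronskian at the specific point $\omega_0$ does not vanish. This is where algebraicity enters. The functions are linearly independent, since their poles differ, so their Wronskian is a nonzero function of $\omega$. Its value at $\omega_0$ is, after clearing denominators, a nonzero polynomial with rational coefficients in $\cos(\pi\omega_0)$, $\sin(\pi\omega_0)$ and $\pi$, coming from the $\omega$-derivatives. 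We would argue that its vanishing forces an algebraic relation between $\pi$ and $e^{i\pi\omega_0}$ with $\omega_0$ algebraic irrational. The first attempt is to invoke Gelfond--Schneider or Lindemann--Weierstrass-type transcendence: $e^{i\pi\omega_0}$ is transcendental, and one should get algebraic independence of $\pi$ and $e^{i\pi\omega_0}$ if needed. Failing that, we retain the freedom to vary the $n_j$ and the amplitudes in $V_1$ to avoid the zero.

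If the $\epsilon^2$-coefficient is nonzero, $\Delta(\epsilon V_1,h)\neq0$ for small $\epsilon$, so $\Delta\not\equiv0$ and the proof is complete.
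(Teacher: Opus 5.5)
Your reduction is the paper's. You show that the determinant on $\mathscr P_d$ is real analytic, that its zero set is shy by Theorem~\ref{thm:shyness} once it is not identically zero, and you test nonvanishing at $(\epsilon V_1,2\Re e_{n_1},\ldots,2\Re e_{n_d})$. There, because $D\Phi(0)=0$, each column of the jet matrix is $\epsilon\,D^2\Phi(0)[V_1,h^{(j)}]+O(\epsilon^2)$, so the leading term of $\Delta$ is of order $\epsilon^d$, not $\epsilon^2$ as you write.

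The genuine gap is the step you yourself single out as the main obstacle. You write the Wronskian of $\omega\mapsto\sin^{-2}(\pi n_j\omega)$ at $\omega_0$ as a polynomial in $\cos(\pi\omega_0)$, $\sin(\pi\omega_0)$ and $\pi$. You then exclude its vanishing by algebraic independence of $\pi$ and $e^{i\pi\omega_0}$, but that tool is not available:
\begin{itemize}
\item Gelfond--Schneider only gives transcendence of $e^{i\pi\omega_0}$.
\item Lindemann--Weierstrass does not apply, since $i\pi\omega_0$ is not algebraic.
\item Algebraic independence of $\pi$ and $e^{i\pi\omega_0}$ for real algebraic irrational $\omega_0$ is not known in general; it would follow from Schanuel's conjecture.
\end{itemize}
Your fallback does not close the gap either. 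With single-mode directions $h^{(j)}$, changing the amplitudes in $V_1$ only rescales the columns, so it cannot move a zero. ``Varying the $n_j$'' comes with no argument that some choice avoids the zero.

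The missing observation is that $\pi$ enters only as a global factor. Each $\omega$-derivative brings out one factor $\pi$ uniformly along a row. Hence the determinant is a power of $\pi$ times a rational function of $e^{i\pi\omega_0}$ with coefficients in $\Q(i)$, and transcendence of $e^{i\pi\omega_0}$ suffices.

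The paper makes this transparent by working in $q$:
\begin{itemize}
\item Since $\partial_\omega=2\pi i q\partial_q$, row operations give $\Pi_V=(2\pi i q_0)^{d(d-1)/2}\widetilde\Pi_V$.
\item By Lemma~\ref{lem:der-sig-n}, $\partial_q^{k-1}\sigma_n=\sig_{n,k}/(q^n-1)^{k+1}$ with $\sig_{n,k}\in\pi^2\Q[q]$.
\item Clearing the denominators $(q^{n_\ell}-1)^{d+1}$ column by column turns $\det\Sigma(q)$ into $\pi^{2d}P(q)$ with $P\in\Q[q]$.
\item $P\neq0$. The paper shows this by computing its leading coefficient through a Vandermonde determinant; your argument that distinct poles force a nonvanishing Wronskian would also do.
\item The zeros of $P$ are algebraic, whereas $q_0=e^{2\pi i\omega_0}$ is transcendental by Gelfond--Schneider. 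So $P(q_0)\neq0$ and $\det\Sigma(q_0)\neq0$ (Proposition~\ref{prop:S-invertible}).
\end{itemize}
With this replacement your argument goes through and coincides with the paper's proof via Theorem~\ref{thm:wronskian-small}.
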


\begin{remark}
Actually, for a fixed tuple $(V_0,h^{(1)},\ldots,h^{(d)})\in\mathscr R_d$, the domain of this map is the open neighbourhood of $0$ given by
\begin{equation*}
\left\{a\in\R^d:V_0+\sum_{j=1}^d a_jh^{(j)}\in\mathcal U^\R\right\}.
\end{equation*}
\end{remark}

Note that this is a stronger version  of Theorem~\ref{thm:prevalent-beta-rigidity}.

The above results all use finitely many prescribed deformation directions and the jet of the $\beta$-function.
If instead all sufficiently small perturbations are available, then the sole
value of $\beta$ at a single Diophantine frequency already determines the
potential. 
No Fourier-separation condition or algebraicity assumption is needed.

\begin{theo}
\label{thm:one-frequency-rigidity}
Let $\mathcal B$ be an open, convex, and balanced neighbourhood of $0$ in
$\Hol_\rho^\R(\T)$ such that
\begin{equation*}
\mathcal B+\mathcal B\subset\mathcal U^\R.
\end{equation*}
Let $\omega_0\in\DC$ and let $V_1,V_2\in\mathcal B$. Assume that there
exists a neighbourhood $\mathcal W$ of $0$ in $\Hol_\rho^\R(\T)$ such that
\begin{equation*}
\beta_{V_1+W}(\omega_0)=\beta_{V_2+W}(\omega_0)
\qquad\text{for every }W\in\mathcal W.
\end{equation*}
Then $V_1=V_2$.
\end{theo}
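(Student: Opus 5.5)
The plan is to use the first-order linear response formula for $\Phi$ and then exploit convexity of $\mathcal B$ along the segment from $V_1$ to $V_2$. Fix $q_0:=e^{2\pi i\omega_0}$ and define $g(V):=\Phi(V)(q_0)$ for $V\in\mathcal U^\R$. By Theorem~\ref{thm:kam-main-beta}, $g$ is real analytic, and in fact the restriction of a holomorphic function on $\mathcal U$. It also satisfies $\beta_V(\omega_0)=\omega_0^2/2+g(V)$.

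The hypothesis says that $g(V_1+W)=g(V_2+W)$ for every $W\in\mathcal W$. Differentiating in $W$ at $W=0$ gives
\begin{equation*}
Dg(V_1)=Dg(V_2)
\end{equation*}
as linear forms on $\Hol_\rho^\R(\T)$. By the linear response formula of the introduction,
\begin{equation*}
Dg(V)[H]=\int_{\mathcal C}H(\theta_0)\,d\mu_{V,q_0}=\int_\T H\circ h_V\,d\Leb,
\end{equation*}
where $h_V:=\id_\T+U(V,q_0)$ is the angular parametrization of the KAM curve of rotation number $\omega_0$. So $Dg(V)[H]=\int_\T H\,d\big((h_V)_*\Leb\big)$. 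Since $h_V$ is an analytic circle diffeomorphism, this is the integral of $H$ against the density $1/h_V'\circ h_V^{-1}$. Trigonometric polynomials with zero mean lie in $\Hol_\rho^\R(\T)$ and determine a probability density up to its mean. Equality of the differentials therefore gives equality of the pushforward measures $(h_{V_1})_*\Leb=(h_{V_2})_*\Leb$. Both $h_{V_j}$ are orientation-preserving, so normalizing the lift (the KAM normalization $\langle U\rangle=0$ fixes the translation) yields $h_{V_1}=h_{V_2}=:h$.

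Next I would recover $V$ from $h$ using the invariance equation. In Lagrangian form, the Euler--Lagrange equation for $\mathcal L_V$ along the orbit $\theta_n=h(\theta+n\omega_0)$ reads
\begin{equation*}
h(\theta+\omega_0)-2h(\theta)+h(\theta-\omega_0)=V'(h(\theta)).
\end{equation*}
The left-hand side depends only on $h$. Hence $V_1'\circ h=V_2'\circ h$, so $V_1'=V_2'$ on $\T$, and the zero-mean condition gives $V_1=V_2$.

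The only place where $\mathcal B+\mathcal B\subset\mathcal U^\R$ and the convexity and balancedness of $\mathcal B$ enter is in making sure that $V_1,V_2$ (and the perturbed potentials $V_j+W$ for small $W$) lie in $\mathcal U^\R$, so that all objects are defined. A more robust fallback, in case the measure identification is delicate, is to apply the same argument along $V_2+t(V_1-V_2)$.

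The main obstacle I anticipate is the identification of $Dg(V)$ with the pushforward of Lebesgue measure, together with the normalization of $h_V$. The derivative formula needs a derivation: differentiate $\int(\mathcal L_V-\omega^2/2)\,d\mu_{V,q}$ and use that $h_V$ is a critical point of the averaged action, so that variations of $h$ contribute nothing. I would derive this from the invariance equation by integrating by parts on $\T$, checking that the extra terms telescope. I must also confirm that the translation ambiguity of $h$ is fixed consistently by the KAM normalization in Theorem~\ref{thm:hol-KAM}. If it were not, I would obtain $V_1'=V_2'\circ R$ for some rotation $R$. That rotation would then need to be ruled out through the hypothesis at nearby $W$, by differentiating once more.
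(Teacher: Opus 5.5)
Your argument is correct, but it takes a different route from the paper.

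\textbf{The paper's route.} The paper sets $Z:=V_2-V_1$ and considers $F(V):=\Phi(V+Z)(q_0)-\Phi(V)(q_0)$. It uses real analyticity of $F$ on the convex set $\mathcal B\cap(\mathcal B+\mathcal B-Z)$ to carry the vanishing of $F$ from a neighbourhood of $V_1$ to a neighbourhood of the zero potential. Differentiating there gives $D\Phi(Z)[W](q_0)=D\Phi(0)[W](q_0)$. Since $U(0)=0$, this says that $\phi:=\id_\T+U(Z,q_0)$ pushes Lebesgue measure to itself. Hence $\phi$ is a rotation, it equals the identity by the normalization, and the Euler--Lagrange equation gives $Z'=0$.

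\textbf{Your route.} You differentiate the hypothesis directly at $V_1$ and $V_2$ and obtain $(h_{V_1})_*\Leb=(h_{V_2})_*\Leb$ from the first-variation formula (Lemma~\ref{lem:beta-potential}). It follows that $h_{V_2}^{-1}\circ h_{V_1}$ is an orientation-preserving, Lebesgue-preserving circle diffeomorphism, hence a rotation. You then remove that rotation using the normalization $\langle U\rangle=0$.

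\textbf{Comparison.} After the differentiation step both proofs use the same ingredients: the first-variation formula, rigidity of Lebesgue-preserving circle maps, the normalization, and the Euler--Lagrange equation. Your version, however, needs no analytic continuation. It never uses convexity or balancedness of $\mathcal B$, nor $\mathcal B+\mathcal B\subset\mathcal U^\R$; only $V_1,V_2\in\mathcal U^\R$ is required. It also proves something stronger: equality of the first derivatives of $V\mapsto\beta_V(\omega_0)$ at $V_1$ and at $V_2$ already forces $V_1=V_2$. The paper's detour through $V=0$ only buys that the comparison is made against the explicit base point where the parametrization is the identity.

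\textbf{Your last paragraph.} The concerns there are already resolved. The derivative formula is Lemma~\ref{lem:beta-potential}, so it need not be rederived. The normalization also closes. If $\id+u_1=(\id+c)+u_2(\cdot+c)+k$ with $k\in\mathbb Z$ and $\int_\T u_1=\int_\T u_2=0$, integrating over $\T$ gives $c=-k\in\mathbb Z$. Therefore $h_{V_1}=h_{V_2}$, and no residual rotation $R$ can appear.
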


Note that such a neighbourhood  $\mathcal{B}$ exists  as we will prove that $\mathcal U^\R$ is a locally convex
open neighbourhood of $0$ in
$\Hol_\rho^\R(\T)$.

It happens that all these results are genuinely nonlinear near the zero potential. Indeed, we show within the proof (see Corollary~\ref{cor:D1-at-zero}) that the Fréchet derivative of the $\beta$-function at the zero potential vanishes. 

\subsection{Open problems and perspectives}
 We now discuss structural limitations of the results and possible outcomes.  

\paragraph{Flexibility and synchronicity.}
The present results leave open the complementary
question of \emph{flexibility}.
The sole flexibility result for generalized standard maps was recently
obtained by Y.~Li~\cite{li2025deformations}. He constructs nontrivial
smooth deformations that preserve, respectively, the symplectic actions
or the Lyapunov exponents of infinitely many periodic orbits accumulating
on an invariant curve with Liouville rotation number. It appears within his proof that the accumulation curve cannot correspond to a Diophantine rotation number.

Note that there exist trivial symmetries that leave $ \beta$ unchanged. Indeed, denote $(T_aV)(\theta):=V(\theta+a)$ for fixed $a\in\T$. Then translating every configuration gives $\beta_{T_aV}=\beta_V$, since 
\begin{equation*}
    \mathcal L_{T_aV}(\theta_0,\theta_1)
    =\mathcal L_V(\theta_0+a,\theta_1+a).
\end{equation*}
However, a natural question is the following. 

\begin{question}
Modulo the action $V\mapsto T_aV$, do there exist nonconstant real analytic
deformations in the KAM domain that preserve the full Mather $\beta$-function or at least its infinite jet at a single algebraic Diophantine number ? 
\end{question} 
Furthermore, the present results and the involved techniques of the proofs suggest that injectivity of the jet map on every large enough finite dimensional subspace of a Fourier separated space stable under Fourier truncation is independent of the choice of the algebraic rotation number. 
\begin{question}[Synchronicity of rigidity]
\label{quest:two-frequencies}
Let $\omega_1,\omega_2\in\DC$ be algebraic rotation numbers.
Let $\mathscr V\subset\Hol_\rho^\R(\T)$ be an infinite dimensional Fourier separated space which is stable under Fourier truncations. Set
\begin{equation*}
\V_N:=\mathcal T_N\mathscr V,
\qquad
d_N:=\dim\V_N.
\end{equation*}
Is it true that, for every $V\in\mathcal U^\R\cap\mathscr V$, there exists
$N_0\geq1$ such that, for every $N\geq N_0$, the two locally defined maps
\begin{equation*}
W\in\V_N
\longmapsto
J_{\omega_i}^{d_N-1}\left(\beta_{V+W}\right)\in\R^{d_N},
\qquad i=1,2,
\end{equation*}
are either both local diffeomorphisms at $0$ or both fail to be local
diffeomorphisms there?
\end{question}

Note that, by Corollary~\ref{cor:def-rig-gen}, this is true for a residual set of potentials with $N_0 =1$, and any algebraic $\omega_1,\omega_2$ (as there are countably many of them). A positive answer to that question would present some property of synchronisation for algebraic Diophantine rotation numbers of local finite dimensional invertibility of the jet of $\beta$-function.

\paragraph{Billiard maps.}
The billiard map of a strictly convex domain $\Omega\subset\R^2$ is another
natural exact twist map, and its $\beta$-function determines the maximal marked length spectrum, see e.g.~\cite{Sor14,Bialy2022}.
Very recently, Fierobe, Kaloshin and
Trujillo~\cite{FKT} prove that if the $\beta$-functions of two analytic billiard-like maps coincide on a subset of positive measure of a fixed KAM Diophantine set near zero, then they coincide on that whole KAM Diophantine set.
 For billiard domains, this implies equality of the corresponding Marvizi--Melrose invariants.
While they do not study the inverse problem, they bring the first step toward establishing a holomorphic theorem in the flavour of the present work. 
Their proof starts from Lazutkin's complex extension of the set of rotation numbers, which is very similar to the extension considered here. Then, on this set but at a fixed domain, they provide a
$C^\infty$--Whitney-holomorphic extension of the parametrization of the KAM curves. 
A natural next step would be to prove holomorphic dependence on the parametrization of the boundary. 
In view of the present work, it is natural to renew the question raised
in~\cite{FKT}: whether, and in which setting, the jet of the
$\beta$-function of a billiard map at a single rotation number determines its domain. 
They actually deal with the more general billiard-like-maps including systems such as symplectic billiards or outer billiards. Let us mention that  Baracco, Bernardi and Nardi in \cite{Nardi} computed the first derivatives in the rotation number of $ \beta$ at zero for  these two systems. The similar inverse problem for symplectic or outer billiards could be legitimately set. 

Also one may ask whether for such systems Laplace rigidity could be proved using $ \beta$-function.  

\paragraph{Mechanical systems.}
Techniques developed here could apply to flows, and in particular to the class of \emph{mechanical systems} on the torus.
For instance, consider flows generated by a Hamiltonian of the form
\begin{equation}
H ( \theta , p ) = \frac{1}{2}  | p|^2   - V (\theta) 
\end{equation}
where $ V : \T \to \R$ is once again a potential. 
 For such systems, the $\beta$-function is the minimal average action of an orbit of the flow among orbits with corresponding rotation number. 
 Extending Theorem~\ref{thm:hol-KAM} to this continuous-time setting would provide a common ground and tools to address rigidity questions of such systems.

\paragraph{Geodesic flows} 
Beyond the natural continuous-time rigidity questions based either on
minimal action as a function of the rotation vector or on geodesic
lengths indexed by free homotopy classes, geodesic flows on surfaces
also admit a natural discrete-time description. Near a nondegenerate
elliptic periodic orbit in the unit tangent bundle, one may consider a
transverse Poincaré section. The corresponding local first return map is exact
symplectic and, generically, satisfies the twist condition.
Close to the flat metric, for instance, one may then ask to what extent the $\beta$-function locally determines the metric.

\subsection*{Acknowledgements}
This work is partially supported by ERC SPERIG $\#885707$. 
I am deeply grateful to Vadim Kaloshin and Corentin Fierobe for many
thoughtful and enlightening discussions.

\subsection{Organization of the paper}

The paper has two main parts.

\smallskip
 We first provide  the setting in which the  KAM theorem will be proven.  Then rigidity arguments are given by studying the local behaviour of the second Fréchet derivative in the potential of the jet in the rotation number of the $\beta$-function near the zero potential. Holomorphy provided by the KAM theorem allows us to pass from local to nonperturbative analysis.
 
Before entering these two parts, Section~\ref{sec:preliminaries} presents all the material used throughout the paper. We first recall the variational setting for generalized standard maps and the precise definition of Mather's $\beta$-function. Then we introduce the Fréchet and $C^\infty$--Whitney-holomorphic spaces needed to study
the dependence on the potential and on the complexified rotation number.
This goes by providing the quasiconvexity property of $\KK$ which allows us to control the full Whitney topology through explicit derivative estimates that are easier to handle than the defining Whitney family of seminorms. 

\smallskip
Section~\ref{sec:KAM} consists of stating rigorously the KAM theorem and providing its proof. It happens to be necessary to define some adapted spaces for solving the Euler--Lagrange equation. 
 After defining these and studying the
discrete operators involved, in Section~\ref{subsec:cohom}, we solve the Levi--Moser cohomological equation, presented in Section~\ref{subsec:levy-moser}, with explicit tame estimates. 
Then, following~\cite{CMS}, the iteration is first run
at order zero on $ \KK$ in Section~\ref{subsec:zero-scheme}.
In Section~\ref{subsec:convergence-every-order}, an induction on the order of the derivatives then proves convergence in the full $C^\infty$--Whitney topology without shrinking the KAM 
neighbourhood. This gives Theorem~\ref{thm:hol-KAM}, including
the tame holomorphic dependence on the potential, and then the corresponding extension of Mather's $\beta$-function in
Theorem~\ref{thm:kam-main-beta}. The latter is proven in Section~\ref{subsec:beta-holo}. 

Section~\ref{sec:perturbative} turns this analytic construction into
rigidity results. We first compute in Section~\ref{subsec:frechet-deriv} the first variations of the extended
$\beta$-function and obtain, in Section~\ref{subsec:linearization}, an explicit Fourier representation of its second derivative at the zero potential. This representation leads to a
Wronskian criterion for the local invertibility of the jet map that is studied in Section~\ref{subsec:wronskian}. 
 We verify  this criterion at explicit arbitrarily small potentials, and holomorphy then shows that its failure is confined to a proper real analytic subset of the KAM domain.
Finally in Section~\ref{subsec:wronsk-to-rig}, we deduce the proofs of Theorems~\ref{thm:main-rig}, \ref{thm:prevalent-affine} and \ref{thm:one-frequency-rigidity}. 

\section{Setting and topology}
\label{sec:preliminaries}
This preliminary section provides proper definitions involved in this article together with auxiliary results needed for the proofs of the KAM and rigidity theorems. 

\subsection{Some elements of Aubry--Mather theory for generalized standard maps} \label{subsec:aubry-mather}
We recall in greater detail the notion of twist maps, their Lagrangian formulation and link to Aubry--Mather theory.

\subsubsection{Rotation number and the \texorpdfstring{$\beta$}{beta}-function} \label{subsec:beta}

Let $\A := \T \times \R$ denote the cylinder.
A \emph{twist map} of $\A$ is an exact symplectic diffeomorphism
\begin{equation*}
  F : (\theta_0, r_0) \mapsto (\theta_1, r_1)
\end{equation*}
preserving $d\theta \wedge dr$ and admitting a generating function
$\mathcal{L} : \R \times \R \to \R$ of class $C^k$, for $k \ge 2$, satisfying the
periodicity condition
$\mathcal{L}(\theta_0 + 1, \theta_1 + 1) = \mathcal{L}(\theta_0, \theta_1)$
together with
\begin{equation*}
  r_0 = -\partial_{\theta_0} \mathcal{L}(\theta_0,\theta_1),
  \qquad
  r_1 = \partial_{\theta_1} \mathcal{L}(\theta_0,\theta_1),
\end{equation*}
and the twist condition
$\partial_{\theta_0}\partial_{\theta_1}\mathcal{L}(\theta_0,\theta_1)
\le -\varepsilon  $ for some $ \varepsilon > 0$. 

Orbits of $F$ are in bijection with configurations
$(\theta_n)_{n \in \Z} \in \R^{\Z}$ that are critical points of the formal
action functional
\begin{equation*}
  \mathcal{A}\left((\theta_n)\right)
  := \sum_{n \in \Z} \mathcal{L}(\theta_n, \theta_{n+1}),
\end{equation*}
two configurations being identified when they differ by a simultaneous
integer translation $\theta_n \mapsto \theta_n + k$, with the same
$k\in\Z$ for every $n$.
We focus throughout on the family of \emph{generalized standard maps},
for which the generating function is
\begin{equation*}
  \mathcal{L}_V (\theta_0, \theta_1)
  = \tfrac{1}{2}(\theta_1-\theta_0)^2 + V(\theta_0),
  \qquad  \text{ for } V \in C^2(\T).
\end{equation*}
That is, the corresponding map $F_V$ is defined as
\begin{equation}
\begin{array}{rcl}
 F_V\; \;   \colon   \; \;   \; \;   \mathbb{A}  \; \;  &\longrightarrow&  \; \;  \mathbb{A} \\
(\theta_0,r_0) &\longmapsto& (\theta_1,r_1),
\end{array}
\qquad \text{with }
\begin{dcases}
\theta_1 = \theta_0 + r_0 + V'(\theta_0)\\
r_1 = r_0  +  V'(\theta_0) \; .
\end{dcases}
\end{equation}
We now proceed to define the rotation number of an invariant measure for a general twist map $F$ of the above form. Fix a lift $\widehat F  \colon \R^2 \to \R^2$ of $F$  commuting with
the translation $(x, r) \mapsto (x+1, r)$.
Write $ \widehat{F} =: (X, P) $ for its coordinates. 
The \emph{displacement} $f := X - \mathrm{id}$ is $\Z$-periodic in $x$ and thus defines a smooth
function on the cylinder $\A$. 

For an $F$-invariant probability measure $\nu$ on $\A$ such that $f \in L^1(\nu)$, the \emph{rotation number} of $\nu$ is defined as the \emph{average angular displacement per iterate}:
\begin{equation*}
  \rho(\nu) := \int_{\A} f \, d\nu \; \in \R \; .
\end{equation*} 

We then  transport invariant measures to the space $ \mathcal{C} := \left.(\R \times \R)\right/ \Z$,  where the quotient is taken under the action $ k \cdot (\theta, \theta') := (\theta + k, \theta' + k) $ for $ k \in\Z $ and $ \theta, \theta' \in \R$. 
Both $\theta'-\theta$ and $\mathcal L(\theta,\theta')$ are invariant
under this action and therefore define functions on
$\mathcal C$.
 The twist condition implies that
\begin{equation*}
\begin{aligned}
    T\colon\mathbb A&\longrightarrow\mathcal C,\\
    (\theta,r)&\longmapsto[\theta,X(\theta,r)]
\end{aligned}
\end{equation*}
is a well-defined smooth diffeomorphism. For generalized standard maps, note that we have the explicit formula 
\begin{equation*}
    X(\theta,r)=\theta+r+V'(\theta).
\end{equation*}
Introduce the conjugacy of $F$ by $ T$ as 
\begin{equation*}
    \widetilde F:=T\circ F\circ T^{-1}
    \colon\mathcal C\longrightarrow\mathcal C.
\end{equation*}
If $(\theta_0,\theta_1) \in \mathcal{C} $, then  
\begin{equation*}
    \widetilde F ( \theta_0,\theta_1) = (\theta_1,\theta_2) ,
\end{equation*}
where $\theta_2$ is uniquely determined by the discrete Euler--Lagrange
equation
\begin{equation*}
    \partial_{\theta_1}\mathcal L(\theta_0,\theta_1)
    +\partial_{\theta_0}\mathcal L(\theta_1,\theta_2)=0.
\end{equation*}
Thus $\widetilde F$ simply shifts two consecutive entries of an orbit
configuration.

In particular, for $\nu$ the $F$-invariant probability measure on $\mathbb{A} $ such
that the displacement $ f$ is integrable, we consider its pushforward 
\begin{equation*}
    \mu:=T_\ast\nu.
\end{equation*}
This is a  $\widetilde F$-invariant probability measure on
$\mathcal C$. 
Consequently, the rotation number of $ \nu$ can equivalently be written as that of $ \mu$:
\begin{equation*}
    \rho(\mu)
    :=\int_{\mathcal C}(\theta'-\theta)\,d\mu(\theta,\theta')
    =\rho(\nu).
\end{equation*}

Mather's $\beta$-function is therefore defined by
\begin{equation*}
    \beta(\omega)
    :=
    \min_{\substack{
        \mu\ \widetilde F\text{-invariant probability measure}\\
        \rho(\mu)=\omega}}
    \int_{\mathcal C}\mathcal L\,d\mu,
    \qquad \omega\in\mathbb R.
\end{equation*}
The Aubry--Mather theorem recalled below ensures that this minimum is
attained.
The  function $\beta$ is convex. 
For generalized standard maps, the function
$\omega \mapsto \beta(\omega) - \tfrac{\omega^2}{2}$
is $1$-periodic and even.   

For each $\omega \in \R$,
the \emph{Mather set} of rotation number $\omega$ is the closed invariant set
\begin{equation*}
  \mathcal{M}_\omega
  := \overline{\bigcup \left\lbrace T^{-1}\left(\operatorname{supp} \mu\right)
     \;:\; \begin{array}{l}
     \mu\text{ is a $\widetilde F$-invariant probability measure},\\
     \rho(\mu)=\omega\text{ and }\displaystyle\int_{\mathcal C}\mathcal L\,d\mu=\beta(\omega)
     \end{array}
     \right\rbrace} \; \subset \A \; .
\end{equation*}
The following fundamental theorem provides the starting point of the study of the geometry of minimal orbits. See, for instance~\cite{Bangert} for more details. 
\begin{theorem}[Birkhoff]
  Every Mather set is contained in a Lipschitz graph over its projection
  to the angle coordinate $\theta\in\T$.
\end{theorem}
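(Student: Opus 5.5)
The statement is Birkhoff's theorem: every Mather set is contained in a Lipschitz graph over $\T$. The plan is the classical Aubry--Mather route, built on the \emph{Aubry crossing lemma} and the twist condition.

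\textbf{Step 1: minimizers do not cross.} Every $\widetilde F$-invariant minimizing measure $\mu$ with $\rho(\mu)=\omega$ is supported on configurations $(\theta_n)$ that are minimal segments. Here minimal means that every finite segment minimizes $\sum\mathcal L$ among configurations with the same endpoints. To see this, use the variational characterization together with ergodic decomposition. If a positive-measure set of orbits admitted a strictly cheaper local modification, one would build a competing invariant measure with strictly lower action and the same rotation number, which is a contradiction.

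Then prove Aubry's lemma. Take two distinct minimal configurations in the union over all minimizing measures, including all their integer translates $(\theta_{n+j}+k)$. Their lifts to $\R$ cross at most once. Moreover, lifts of elements of $\mathcal M_\omega$ are in fact totally ordered: either $\theta_n<\theta'_n$ for all $n$, or the reverse, or they coincide.

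\textbf{Step 2: the crossing argument.} This is the main obstacle. Suppose two configurations $x,y$ cross between indices $n$ and $n+1$. Apply the exchange $\min(x,y)$, $\max(x,y)$. Using the submodularity identity
\[
\mathcal L(a,b)+\mathcal L(a',b')-\mathcal L(\min(a,a'),\min(b,b'))-\mathcal L(\max(a,a'),\max(b,b'))>0 ,
\]
which holds at crossings and follows from $\partial_{\theta_0}\partial_{\theta_1}\mathcal L\le-\varepsilon<0$, the exchanged configurations have no larger action. At least one of them then has a corner, and so violates the Euler--Lagrange equation while still being minimal, which is a contradiction. For measures with the same rotation number, one further shows that a single crossing of two orbits with the same rotation number is impossible. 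Combining this with the translation invariance of the support gives the total order, that is, the projection $\pi:\mathcal M_\omega\to\T$ is injective.

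\textbf{Step 3: the Lipschitz bound.} Injectivity of $\pi$ makes $\mathcal M_\omega$ a graph $r=g(\theta)$ over a closed set $\pi(\mathcal M_\omega)$. To obtain the Lipschitz estimate, use the order of points at neighbouring times. Let $(\theta_0,\theta_1)$ and $(\theta'_0,\theta'_1)$ come from ordered configurations with $\theta_0<\theta'_0$. Then $\theta_{-1}\le\theta'_{-1}$ and $\theta_1\le\theta'_1$.

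The momentum is $r_0=\partial_{\theta_1}\mathcal L(\theta_{-1},\theta_0)=-\partial_{\theta_0}\mathcal L(\theta_0,\theta_1)$. By the twist condition, $r_0$ is monotone in one of these neighbouring variables and anti-monotone in the other. This gives two-sided bounds
\[
-C|\theta_0-\theta'_0|\le r_0-r'_0\le C|\theta_0-\theta'_0| .
\]
For the standard map, $\mathcal L_V$ gives explicitly $r_0=\theta_0-\theta_{-1}$ and also $r_0=\theta_1-\theta_0-V'(\theta_0)$. Ordering then yields
\[
r_0'-r_0\ge -(\theta'_{0}-\theta_0)\bigl(1+\|V''\|_\infty\bigr)
\qquad\text{and}\qquad
r'_0-r_0\le \theta'_0-\theta_0 ,
\]
so $g$ is Lipschitz with constant $1+\|V''\|_\infty$.

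\textbf{Step 4: extension.} Finally, extend $g$ from the closed subset $\pi(\mathcal M_\omega)\subset\T$ to all of $\T$ by linear interpolation on the complementary gaps. This preserves the Lipschitz constant, and the extended graph contains $\mathcal M_\omega$.
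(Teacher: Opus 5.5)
The paper does not prove this statement. It quotes it as classical, with a pointer to Bangert, so your proposal has to stand on its own. Step 3 is correct once the total order is known: the one-sided bounds $r_0'-r_0\le\theta_0'-\theta_0$ and $r_0'-r_0\ge-(1+\|V''\|_\infty)(\theta_0'-\theta_0)$ are right. Step 4 is not needed, since only a graph over the projection is asked for.

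The gap is Step 2, which is the entire content of the theorem. The $\min/\max$ exchange can only be compared with fixed-endpoint minimality \emph{between two crossings}. On the segment $[n,m+1]$ between them, $\min(x,y)$ has the endpoints of one configuration and $\max(x,y)$ those of the other. After a single crossing the endpoints do not match and there is nothing to compare. So your argument yields Aubry's lemma (at most one crossing) and no more. Indeed, if the corner argument worked after one crossing, no two minimal configurations could ever cross.

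The claim you then invoke, that two minimal orbits of the same rotation number cannot cross once, is false. Take $\omega=p/q$ with two neighbouring $(q,p)$-periodic minimizers $x^-<x^+$. Then there are minimal heteroclinic configurations from $x^-$ to $x^+$ and from $x^+$ to $x^-$, all of rotation number $p/q$. An up-heteroclinic and a down-heteroclinic between $x^-$ and $x^+$ cross exactly once. The set of all minimal configurations of rotation number $p/q$ is invariant under shifts and integer translations, so ``translation invariance of the support'' cannot exclude crossings either.

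What is missing is a genuine use of the minimizing measures. One route is to relax to closed measures, i.e. probability measures on $\mathcal C$ whose two angular marginals coincide. Their minimal action at rotation number $\omega$ is still $\beta(\omega)$; this is a nontrivial Ma\~n\'e-type fact. Suppose lifts of $x,y$ through the supports of minimizing measures $\mu,\mu'$ cross strictly between $n$ and $n+1$. In $\tfrac12(\mu+\mu')$, replace a small mass near $(x_n,x_{n+1})$ and near $(y_n,y_{n+1})$ by the swapped pairs near $(x_n,y_{n+1})$ and $(y_n,x_{n+1})$. Both marginals and $\int(\theta'-\theta)$ are unchanged, while your submodularity inequality makes the action strictly smaller. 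This is the single-crossing exchange that actually works, because the marginal constraint is weaker than fixed endpoints. Tangencies $x_n=y_n$ then need a separate first-order variation, using that the momenta at time $n$ differ.

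Alternatively, Poincar\'e recurrence for $\mu\otimes\mu'$ turns one crossing into infinitely many crossings of a fixed pair $x$, $y+m$, contradicting Aubry's lemma. But this route needs the Birkhoff property. It also needs every orbit in the support to have rotation number exactly $\omega$. That is not automatic for a non-ergodic minimizing measure: it amounts to strict convexity of $\beta$, and recurrence alone does not forbid orbits of different rotation numbers from crossing.

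The same issue affects Step 1. Modified segments are not orbits, so you cannot directly ``build a competing invariant measure'' from them without such a relaxation, or without the equality of the measure-theoretic and configuration-wise infima.
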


Furthermore, the following theorem, due independently to Aubry and Mather,
guarantees that minimizing objects exist for every rotation number.

\begin{theorem}[Aubry--Mather {\cite{Aubry,Mather82}}]
  For every $\omega \in \R$, the twist map $F$ possesses an invariant set
  $\mathcal{M}_\omega \subset \A$ of rotation number $\omega$ consisting
  entirely of action-minimizing orbits.
  If $\omega \in \Q$, then $\mathcal{M}_\omega$ contains an
  action-minimizing periodic orbit.
  If $\omega \notin \Q$, then $\mathcal{M}_\omega$ is either an invariant
  circle or a Cantor set.
\end{theorem}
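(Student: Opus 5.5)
The plan is the classical variational route: construct minimizers directly for rational rotation numbers, then pass to irrational ones by limits, and use Aubry's crossing argument to get the ordering properties that produce the circle/Cantor dichotomy. Throughout I use only the periodicity $\mathcal L(\theta_0+1,\theta_1+1)=\mathcal L(\theta_0,\theta_1)$, the twist condition $\partial_{\theta_0}\partial_{\theta_1}\mathcal L\le-\varepsilon$, and the superlinear growth of $\mathcal L$ in $\theta_1-\theta_0$. For $\mathcal L_V$ this growth is immediate from the quadratic term $\tfrac12(\theta_1-\theta_0)^2$.

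\textbf{Rational case $\omega=p/q$.} I would minimize the periodic action
\begin{equation*}
W_{p,q}(\theta_0,\dots,\theta_{q-1})=\sum_{n=0}^{q-1}\mathcal L(\theta_n,\theta_{n+1}),\qquad \theta_{n+q}=\theta_n+p,
\end{equation*}
over $\R^q$ modulo the diagonal $\Z$-action. Superlinearity makes the sublevel sets compact modulo this action, so a minimizer exists, and it gives a periodic orbit.

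The central tool is Aubry's crossing lemma. Suppose two minimizing configurations $\theta,\eta$ cross, meaning $(\theta_n-\eta_n)(\theta_{n+1}-\eta_{n+1})<0$ for some $n$, or they touch without coinciding. Then the configurations $\min(\theta,\eta)$ and $\max(\theta,\eta)$ have total action no larger than that of $\theta$ and $\eta$. The twist condition in fact gives a strict decrease, which contradicts minimality. Applying this to a minimizer and its translates $(\theta_{n+k}+j)_n$ shows that the periodic minimizer is well ordered, i.e.\ it is a Birkhoff orbit. Comparing finite segments with arbitrary configurations that share the same endpoints then shows it is globally action minimizing among all configurations with rotation number $p/q$.

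\textbf{Irrational case $\omega\notin\Q$.} Take $p_k/q_k\to\omega$ and let $\theta^{(k)}$ be the corresponding well-ordered minimizers, normalized so that $\theta^{(k)}_0\in[0,1)$. Well-ordering gives the uniform bound $|\theta^{(k)}_n-\theta^{(k)}_0-n\,p_k/q_k|\le1$. This yields compactness in the product topology on $\R^{\Z}$, and a diagonal extraction produces a limit. Minimality of finite segments passes to the limit, so the limit is a well-ordered minimizer with rotation number $\omega$.

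The closure $M$ of all translates of such orbits, taken over $\Z^2$ acting by index shift and integer translation, is closed and invariant. Being totally ordered, $M$ projects injectively to $\T$. Birkhoff's theorem stated above then makes $M$ a Lipschitz graph on which the dynamics is an order-preserving circle-homeomorphism-like map with irrational rotation number $\omega$. A Denjoy-type argument on its minimal subset then gives that this subset is either the whole circle, so an invariant curve, or a Cantor set.

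\textbf{Identification with the Mather set $\mathcal M_\omega$.} Existence of a minimizing measure in the definition of $\beta$ follows from weak-$*$ compactness of invariant measures with fixed rotation number, which uses tightness from superlinearity, together with lower semicontinuity of $\mu\mapsto\int\mathcal L\,d\mu$. The remaining task is to show that supports of minimizing measures consist of minimal configurations. For this I would use the invariant measures carried by the orbits constructed above: the periodic measure when $\omega=p/q$, and the unique invariant measure on the minimal set when $\omega\notin\Q$. Averaging the global minimality of these orbits shows that their action equals $\beta(\omega)$. Ergodic decomposition combined with the crossing lemma then shows that generic points of any minimizing measure give minimal, mutually non-crossing configurations, so $\mathcal M_\omega$ is contained in the ordered sets described above.

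The main obstacle I expect is the crossing lemma with strict inequality, together with the resulting total ordering of all minimizers of a given rotation number. Everything else, namely compactness, passage to limits and the circle-map dichotomy, follows routinely once that ordering is available. I would first prove the strict-decrease inequality for two crossing finite segments using $\partial_{\theta_0}\partial_{\theta_1}\mathcal L<0$, integrating the mixed second derivative over the rectangle spanned by the crossing.
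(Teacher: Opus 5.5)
The paper does not prove this statement; it quotes it from Aubry and Mather, with Bangert's survey as the reference. Your configuration-level part is the classical Aubry route behind that citation and is fine in outline: minimizers of $W_{p,q}$, well-ordering by the min/max exchange, limits of Birkhoff minimizers for irrational $\omega$, and the Denjoy dichotomy for the recurrent part. One step is omitted there: minimizers of $W_{kp,kq}$ lie in the $(p,q)$-periodic class, and this is what makes your segment comparison work.

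The genuine gap is in your last step, which is where the statement actually lives, because in the paper $\mathcal M_\omega$ is defined through \emph{minimizing invariant measures}. First, ``averaging the global minimality'' does not give $\int\mathcal L\,d\nu=\beta(\omega)$ for the measure $\nu$ carried by your orbit. Aubry minimality only compares configurations with the same endpoints. Even granting that every configuration of rotation number $\alpha$ has average action at least the minimal average action $a(\alpha)$ (which itself needs proof), a competitor $\mu$ with $\rho(\mu)=\omega$ may be a mixture of ergodic measures whose rotation numbers differ from $\omega$. Handling it requires convexity of $\alpha\mapsto a(\alpha)$ and Jensen over the ergodic decomposition. Second, ergodic decomposition plus the crossing lemma cannot show that generic points of a minimizing measure are minimal configurations, since the crossing lemma takes minimality as an \emph{input}.

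Third, even among minimal configurations, the exchange argument only shows that two distinct bi-infinite ones cross \emph{at most once}. After a single crossing there is no finite window on which $\min$ and $\max$ keep the original endpoints, and indeed minimal configurations of different rotation numbers do cross once. So ``mutually non-crossing'' for configurations coming from different minimizing measures with the same irrational $\omega$ needs a separate argument. Without a single totally ordered set carrying all of them, you cannot use unique ergodicity of the induced circle homeomorphism to conclude that $\mathcal M_\omega$ \emph{equals} its minimal set; mere containment in ordered sets does not give ``circle or Cantor set''.

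The standard way to close all three points at once is Mather's calibration. Construct $c\in\R$ and a continuous $1$-periodic $u$ with
\[
\mathcal L(\theta,\theta')-c\,(\theta'-\theta)+u(\theta)-u(\theta')\ \ge\ \kappa
\qquad\text{on }\mathcal C,
\]
with equality on the support of $\nu$. Integrating against any invariant $\mu$ kills the coboundary and gives $\int\mathcal L\,d\mu\ge c\,\rho(\mu)+\kappa$, with equality for $\nu$, so $\nu$ is minimizing. The equality case then forces every minimizing $\mu$ onto calibrated configurations. These are minimal (sum the inequality over a window) and pairwise non-crossing (compare the exchanged pairs with the inequality). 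Constructing $u$ is the real work, so the crossing lemma you single out as the main obstacle is the easy part.

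Relatedly, applying the paper's Birkhoff theorem to your set $M$ before identifying it with a Mather set is circular. It is also unnecessary: injectivity of the projection and compactness already give continuity of the induced circle map.
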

This guarantees that $ \beta$ is actually well defined.

As is usual in KAM theory, rotation numbers satisfying classical Diophantine
conditions enjoy better regularity than that provided by the above theorems.
This is the subject of the next paragraph.

\subsubsection{KAM curves and regularity of their parametrization}

A (smooth) \emph{KAM curve} of rotation number $\omega \in \R \setminus \Q$
is an $F$-invariant  embedded circle
$\Gamma \subset \A$ on which the dynamics is smoothly conjugate to the
rotation by $\omega$. Precisely, there exists a $C^\infty$ embedding
$\gamma \colon \T \to \A$ with $\Gamma = \gamma(\T)$ such that
\begin{equation*}
  F \circ \gamma = \gamma \circ \tau_\omega \; ,
  \qquad
  \tau_\omega \colon \T \to \T, \quad \theta \mapsto \theta + \omega \bmod 1 \; .
\end{equation*}
Equivalently, there exists an orientation-preserving circle diffeomorphism
$h \colon \T \to \T$, with lift $\widetilde h$, solving the discrete
Euler--Lagrange equation
\begin{equation}\label{eq:EL}
  \partial_{\theta_0} \mathcal{L}\left(\widetilde h(\theta),\,
  \widetilde h(\theta+\omega)\right)
  + \partial_{\theta_1} \mathcal{L}\left(\widetilde h(\theta-\omega),\,
  \widetilde h(\theta)\right)
  = 0 \; ,
  \qquad \forall\, \theta \in \R \; . 
\end{equation}
The corresponding curve is the  graph of the map 
\begin{equation*}
  \gamma(\theta)
  = \left( h(\theta),\,
    -\partial_{\theta_0} \mathcal{L}\left(\widetilde h(\theta),\,
    \widetilde h(\theta+\omega)\right) \right) \; .
\end{equation*}
We know from Aubry and Mather's theory~\cite{Aubry,Mather82} that the configuration
$\left(\widetilde h(\theta + n\omega)\right)_{n \in \Z}$ is a global minimizer of the action among orbits with the same prescribed rotation number. Moreover, $\Gamma$ is a smooth graph
over the angle coordinate $\theta \in \T$.
Writing
\begin{equation*}
  \widetilde h(\theta) = \theta + u(\theta)
\end{equation*}
with $u \in C^\infty(\T)$, we normalize $u$ by requiring
$\int_\T u = 0$. This normalization fixes the freedom of precomposition by rotations
and determines the homeomorphism $h$ uniquely.

\smallskip
The dynamics on a KAM curve of irrational rotation number is conjugate to
an irrational rotation and thus uniquely ergodic. The pushforward of Lebesgue measure under the parametrization $\gamma$ is consequently a minimizing invariant measure. For general background on action-minimizing sets of twist maps, see Bangert~\cite{Bangert}. 

Consequently, the $\beta$-function takes the form 
\begin{equation}\label{eq:beta-general}
  \beta(\omega)
  =  \int_\T  \mathcal{L} \left(\widetilde h(\theta),\,\widetilde h(\theta+\omega)\right) \,d\theta \;. 
\end{equation} 
For the generalized standard map with generating function
$\mathcal{L}_V(\theta_0,\theta_1)
=\tfrac12(\theta_1-\theta_0)^2+V(\theta_0)$,
equation~\eqref{eq:beta-general} becomes
\begin{equation*}
  \beta(\omega)
  = \frac{\omega^2}{2} +   \int_\T \left[
      \tfrac{1}{2}\left(u(\theta+\omega)-u(\theta)\right)^2
      + V\left(\theta + u(\theta)\right)
    \right]\,d\theta,
\end{equation*}
and the discrete Euler--Lagrange equation~\eqref{eq:EL} reduces to
\begin{equation*}
  u(\theta+\omega) - 2u(\theta) + u(\theta-\omega)
  = V'\left(\theta + u(\theta)\right),
  \qquad \forall\,\theta \in \T.
\end{equation*}
We write $\id_\T$ for the identity map on $\T$ and, by an abuse of
notation, also for the constant map $a\in A\mapsto\id_\T$, for any
parameter space $A$. For a function $f(x,\theta)$ with $\theta\in\T$
and $x$ in some other space, we write
\begin{equation*}
  \int_\T f := x\longmapsto\int_\T f(x,\theta)\,d\theta
\end{equation*}
whenever this makes sense. 

We now turn to the regularity of the map
$(V ,\omega)\mapsto u$ among such parametrizations.

Although KAM curves may exist only for a Cantor set of Diophantine
rotation numbers, their parametrizations vary with $\omega$ with the
strongest regularity available in this setting. 
Recall that $\omega  \in  \R$ is \emph{$(\gamma,\tau)$-Diophantine} if
\begin{equation*}
  \left|\omega - \frac{p}{q}\right| \ge \frac{\gamma}{q^{2+\tau}}
  \qquad \forall\, p \in \Z,\ q \in \N^*,
\end{equation*}
for some constants $\gamma > 0$ and $\tau >  0 $.
This set is invariant under integer translation.
For a fixed $\tau$, the measure of $\DC$ modulo $\Z$ tends to one as $\gamma $ goes to $ 0$.
A classical, yet seminal, result due to Lazutkin~\cite{lazutkin1973existence}, and later,
independently, to Pöschel~\cite{Poschel} states that, near the integrable case in a fixed twist region, the parametrizations of KAM curves within $\DC$ with fixed $ \gamma, \tau$ depend $C^\infty$-smoothly on the Diophantine rotation
number in the Whitney sense.

On the other hand, Carminati--Marmi--Sauzin
\cite{CMS} fix an analytic potential $V$ and treat the scalar family
$\epsilon V$  with a free small parameter $\epsilon$. 
They show $C^1$-holomorphy in the rotation number on $\AA$ (more precisely, in $q := \e (\omega) \in \KK$) and usual holomorphy in the parameter $ \epsilon$.  
Theorem~\ref{thm:hol-KAM} provides a stronger  statement. 
The parametrization belongs to $\Hol^\infty(\KK , \Hol_{\rho_\infty} (\T) )$ for $ \rho_\infty < \rho$, hence is holomorphic in the interior of $\KK$ with a $C^\infty$-Whitney jet on its boundary and moreover the dependence on the potential is holomorphic near the zero potential.

\subsection{Topologies and Whitney
\texorpdfstring{$C^\infty$}{C-infinity}-holomorphicity}
\label{lieu:top}
While the potential is considered under usual holomorphy, the rotation cannot be. This  leads us to consider the notion of $C^\infty$-Whitney holomorphy. For the sake of clarity, we treat them separately. We will then observe that notations are consistent.

\subsubsection{Holomorphy on open Fréchet domains}

First, we quickly recall that a Fréchet space $A$ over $\K\in\{\R,\C\}$ is a complete
Hausdorff locally convex space whose topology is generated by a countable
family of seminorms. We can always assume that such a family $( p_m)_{m \ge 0} $ is taken to be increasing. 
Then for every  $m\geq0$, let $A_m$ be the Banach completion of
$A/\ker p_m$ for the norm induced by $p_m$.  Then
$A \simeq \varprojlim_{m\geq0} A_m $ is the projective limit of the Banach spaces  $ A_m$. 
In particular, a sequence $(a_n)$ converges to $a$ in $A$ if and only if
$p_m(a_n-a) $ converges to $0$ for every $m\geq0$.

For $\Hol_\rho(\T)$, we choose instead to work with the uncountable family $ (|\cdot |_{\rho'})_{ 0 < \rho' < \rho} $. Note that this is equivalent to taking 
$p_m (V) :=|V|_{\rho_m}$ where $  (\rho_m)_{m \in\N}$ is a positive sequence increasing to $\rho$. 

We now present the notion of holomorphy on open subspaces of  Fréchet spaces. 
The present notion goes back to Bastiani~\cite{Bastiani1964}. See 
 Schmeding~\cite{Schmeding2023} for a modern presentation of the topic. 
 
 A map  $F:\Omega\to B$ is  said to be holomorphic if its  directional derivatives
\begin{equation*}
 D^kF(x)[h_1,\ldots,h_k]
 :=\left.
 \partial_{t_1}\cdots\partial_{t_k}
 F(x+t_1h_1+\cdots+t_kh_k)
 \right|_{t_1=\cdots=t_k=0}
\end{equation*}
exist,
are complex multilinear and depend continuously
on the point $x \in \Omega$ and directions $h_1,\ldots,h_k \in A $,  for every $k\geq0$.
It is well known, for instance, that this definition is equivalent to $F$ being continuous and to its restriction to every complex affine line crossing $\Omega$ being holomorphic. Such equivalent descriptions of holomorphy on
Fréchet spaces are standard.  See \cite{Mujica1986,Dineen1999}.
We then denote by 
$\Hol^\infty(\Omega,B)$ the space of holomorphic maps from $ \Omega \subset A$ to $B$. This space is endowed with the topology generated by the seminorms
\begin{equation*}
    \|  F  \| _{K,m,k}
    :=
    \sup_{(x,h_1,\ldots,h_k)\in K}
  p_m^B \left(D^kF(x)[h_1,\ldots,h_k]\right),
\end{equation*}
where $m,k\geq0$ and  $K$ ranges over the compact subsets of $\Omega\times A^k$.

Following Hamilton~\cite[Part~II.1]{Hamilton1982}, a continuous
map $F:\Omega\subset  A \to  B $ between graded Fréchet spaces is called
\emph{tame} if every $x_0\in\Omega$ has a neighbourhood
$\mathcal O\subset\Omega$ for which there exist integers $m_0 , r\geq0$ such
that, for every $m \geq m_0 $, there exists a constant $C_m >0$ satisfying
\begin{equation*}
 p_m^B( F(x) )   \le C_m  \left( 1+ p_{m +r} ^A (x) \right) 
 \qquad\text{for every }x\in\mathcal O.
\end{equation*}
The essential point is  that the loss of derivatives represented by the shift $r$  is independent of
$m$.

For real holomorphy we choose the following usual definition throughout the paper. If $A$ is a real Fréchet space, a map on an open subset of $A$  is called \emph{real analytic} when it is locally the restriction of a holomorphic map on the complexification of $A$.

\subsubsection{Whitney jets on compact and perfect subsets}

The definition of holomorphy we adopt is the complex version of Whitney's definition \cite{whitney1934analytic} in the general setting where the target space is a Fréchet space and the domain is a compact and perfect subset of $ \C$. 

Let $K\subset\C$ be compact and perfect, i.e. without isolated points. Let $A$ be a complex Fréchet space with corresponding seminorms $(p_s)_{s\geq0}$. 
The same definition applies to a complete Hausdorff locally convex target by using all its continuous seminorms.

For a sequence of continuous maps $ \mathbf{F} = (F^{(k)})_{k \in \N}$ from $K$ to $A$ and integers $ k , \ell \ge 0$, we denote by
\begin{equation*}
R_{k,\ell}  \mathbf{F}  (x,y)
:=
F^{(k)}(y)
-
\sum_{j=0}^{\ell}
\frac{F^{(k+j)}(x)}{j!}(y-x)^j
\end{equation*}
the \emph{remainder}.

\begin{definition}[Whitney-holomorphic maps]
\label{def:holo}
A continuous map $F\colon K\to A$ is  said to be 
$C^\infty$--Whitney-holomorphic if there exists a sequence of continuous maps $ \mathbf{F} = (F^{(k)})_{k \in \N}$ from $K$ to $A$ with $F^{(0)}=F$ and such that, for every
$s,k,\ell\geq0$, the following holds:
\begin{equation}
\label{eq:whitney-remainder}
\lim_{\delta\to0^+}
\sup_{\substack{x,y\in K\\0<|x-y|\le \delta}}
\frac{p_s\left(R_{k,\ell} \mathbf{F} (x,y)\right)}
{|x-y|^\ell}
=0.
\end{equation}
We denote the resulting space by $\Hol^\infty(K,A)$.
\end{definition}

Since $K$ is perfect, the maps $F^{(k)}$ are uniquely determined by
$F$. 

For $m\geq0$, define the corresponding seminorms
\begin{equation}
\label{eq:whitney-seminorm}
\|F\|^{\mathrm W}_{K,s,m}
:=
\max_{0\le  k\le  m}\sup_{x\in K}p_s\left(F^{(k)}(x)\right)
+
\max_{\substack{k,\ell\geq0\\k+\ell\le  m}}
\sup_{\substack{x,y\in K\\x\neq y}}
\frac{p_s\left(R_{k,\ell}\mathbf{F}(x,y)\right)}
{|x-y|^\ell}.
\end{equation}
The seminorms \eqref{eq:whitney-seminorm} make
$\Hol^\infty(K,A)$ a Fréchet space. Indeed, a Cauchy sequence is  uniformly convergent if and only if elements of the sequence converge in every seminorm of $A$ and  
the normalized remainders converge uniformly as well. So
\eqref{eq:whitney-remainder} passes to the limit. Completeness of $A$ then gives completeness. 

For compact subsets of $\hat\C$, Definition~\ref{def:holo} is imposed in the two standard charts. The chartwise jets agree on the overlap by the
usual chain rule. This is the convention used for $K=\KK$ and will be investigated in greater depth in Subsection~\ref{subsec:charts}.

When $K$ is \emph{quasiconvex} it is possible to replace the family of seminorms in~\eqref{eq:whitney-seminorm} by the simpler family
\begin{equation}\label{eq:seminorms-simpler}
\|F\|_{K,s,m} := \max_{0\le  k\le  m} \sup_{x\in K}p_s\left(F^{(k)}(x) \right) 
\end{equation}
Recall that a subset $K\subset\C$ is called $c$-quasiconvex for some constant $ c>0$  if every pair of points
$x,y \in K$ can be joined in $K$ by a $1$-Lipschitz path 
\begin{equation*}
 \gamma:[0,L]\longrightarrow\C,
\end{equation*}
 satisfying
\[
L \le  c |x-y| \;.
\]
We will call such a path a rectifiable path between $x$ and $y$ and denote (abusively) its length $ \ell (\gamma) := L$. 
We will also say that $K$ is quasiconvex if it is $c$-quasiconvex for some $c$. 

\begin{lemma}
\label{lem:whitney-quasiconvex}
Let $K\subset\C$ be compact and $c$-quasiconvex. Let
$F \in \Hol^\infty(K,A)$. Then, for every
$k,\ell,s\geq0$ and $x,y\in K$,
\begin{equation}
\label{eq:whitney-path-bound}
 p_s\left(R_{k,\ell}\mathbf F(x,y)\right)
 \le  \frac{c^{\ell+1}}{(\ell+1)!}
 \sup_{z\in K}p_s(F^{(k+\ell+1)}(z))\,|x-y|^{\ell+1} 
\end{equation}
where $ \mathbf{F} = (F^{(k)})_{k \ge 0} $ denotes the derivatives of $F$. 
Consequently, the seminorms of \eqref{eq:seminorms-simpler}, defined only via the supremum of derivatives, and the full Whitney
seminorms \eqref{eq:whitney-seminorm} generate the same topology.
\end{lemma}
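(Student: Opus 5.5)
The plan is to reduce the estimate to a Taylor formula with integral remainder along a rectifiable path inside $K$, using the Whitney jet conditions to justify it. Fix $x,y\in K$ and, by $c$-quasiconvexity, a $1$-Lipschitz path $\gamma:[0,L]\to K$ with $\gamma(0)=x$, $\gamma(L)=y$ and $L\le c|x-y|$. The first step is to show that for every $k\ge0$ the map $t\mapsto F^{(k)}(\gamma(t))$ is differentiable in the seminorms of $A$, with derivative $F^{(k+1)}(\gamma(t))\,\gamma'(t)$ for almost every $t$. For this we take $x=\gamma(t)$, $y=\gamma(t+h)$ and $\ell=1$ in \eqref{eq:whitney-remainder}. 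This gives
\begin{equation*}
F^{(k)}(\gamma(t+h))-F^{(k)}(\gamma(t))-F^{(k+1)}(\gamma(t))\bigl(\gamma(t+h)-\gamma(t)\bigr)=o(|\gamma(t+h)-\gamma(t)|)=o(h),
\end{equation*}
where the last equality uses the Lipschitz property. At points where $\gamma$ is differentiable (almost every $t$, by Rademacher), this is exactly the claimed chain rule. Moreover, the same estimate shows that $t\mapsto F^{(k)}(\gamma(t))$ is Lipschitz for each seminorm $p_s$, so the fundamental theorem of calculus holds for it as a Bochner/Riemann integral in the Fréchet space $A$. If needed, this can be checked by composing with continuous linear functionals and applying Hahn--Banach together with the scalar absolutely continuous FTC.

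The second step is to iterate and obtain the integral Taylor formula along the path. For complex derivatives the natural object is $(y-\gamma(t))^j$. Define
\begin{equation*}
G(t):=\sum_{j=0}^{\ell+1}\frac{F^{(k+j)}(\gamma(t))}{j!}\,(y-\gamma(t))^{j}.
\end{equation*}
Differentiating almost everywhere, the sum telescopes, just as in the classical proof, and leaves
\begin{equation*}
G'(t)=\frac{F^{(k+\ell+2)}(\gamma(t))}{(\ell+1)!}(y-\gamma(t))^{\ell+1}\gamma'(t).
\end{equation*}
This form would lose one derivative, so it is cleaner to run the argument with $\ell$ in place of $\ell+1$ in the sum. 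Since $G(L)=F^{(k)}(y)$, one obtains
\begin{equation*}
R_{k,\ell}\mathbf F(x,y)=\int_0^L\frac{F^{(k+\ell+1)}(\gamma(t))}{\ell!}\,(y-\gamma(t))^{\ell}\,\gamma'(t)\,dt.
\end{equation*}

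The third step, which I expect to be the only delicate point, is the estimate itself. Naively, $|y-\gamma(t)|\le L$ gives the bound $L^{\ell+1}/\ell!$, which is off from the stated constant by a factor $\ell+1$. The better estimate uses the Lipschitz property of $\gamma$ along the path: $|y-\gamma(t)|\le L-t$. Then
\begin{equation*}
\int_0^L\frac{(L-t)^\ell}{\ell!}\,dt=\frac{L^{\ell+1}}{(\ell+1)!}.
\end{equation*}
Combining this with $L\le c|x-y|$ and $|\gamma'|\le1$, and taking the supremum of $p_s(F^{(k+\ell+1)})$ over $K$, yields exactly \eqref{eq:whitney-path-bound}.

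For the consequence on topologies, note first that the simpler seminorms \eqref{eq:seminorms-simpler} are dominated by the Whitney ones. Conversely, \eqref{eq:whitney-path-bound} gives
\begin{equation*}
\frac{p_s\bigl(R_{k,\ell}\mathbf F(x,y)\bigr)}{|x-y|^\ell}\le \frac{c^{\ell+1}\operatorname{diam}(K)}{(\ell+1)!}\,\|F\|_{K,s,k+\ell+1}.
\end{equation*}
Hence $\|F\|^{\mathrm W}_{K,s,m}\le C\|F\|_{K,s,m+1}$. The two families of seminorms are therefore mutually bounded up to a shift in $m$, and so they generate the same topology.
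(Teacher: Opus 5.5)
Your proof is correct and follows essentially the paper's route: a Taylor formula with integral remainder along the $1$-Lipschitz quasiconvex path, justified by the first-order Whitney condition, combined with the key bound $|y-\gamma(t)|\le L-t$ to gain the factor $1/(\ell+1)$. The only differences are cosmetic. You integrate the order-$\ell$ remainder $F^{(k+\ell+1)}(\gamma(t))(y-\gamma(t))^{\ell}\gamma'(t)/\ell!$ directly, whereas the paper writes the remainder with $(y-\gamma(t))^{\ell-1}\bigl(F^{(k+\ell)}(\gamma(t))-F^{(k+\ell)}(x)\bigr)$ and bounds that difference by $t\sup_K p_s(F^{(k+\ell+1)})$; both give $L^{\ell+1}/(\ell+1)!$. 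Your Lipschitz argument for the fundamental theorem of calculus also makes explicit a step the paper leaves implicit.
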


\begin{proof}
Choose a rectifiable path
$\gamma:[0, L ]\to K$ joining $x$ to $y$ in $K$. 
Since $\gamma$ is Lipschitz, it is differentiable almost everywhere and, for every $j\ge 0$, the first-order Whitney condition for almost every $t \in [0,L]$ implies 
\begin{equation*}
 \frac{d}{dt}F^{(j)}(\gamma(t))
 =F^{(j+1)}(\gamma(t)) \cdot \gamma'(t) \;. 
\end{equation*}
At $ \ell = 0$ a simple integration provides
\begin{equation*}
 R_{k,0}\mathbf F(x,y)
 =\int_0^{L}
 F^{(k+1)}(\gamma(t)) \cdot \gamma'(t)\,dt,
\end{equation*}
giving~\eqref{eq:whitney-path-bound} trivially in this case. For
$\ell\geq1$, a simple induction on $ \ell $ gives
\begin{equation*}
 R_{k,\ell}\mathbf F(x,y)
 =\frac{1}{(\ell-1)!}\int_0^{L}
 (y-\gamma(t))^{\ell-1}
 \left(F^{(k+\ell)}(\gamma(t))-F^{(k+\ell)}(x)\right)
 \gamma'(t)\,dt.
\end{equation*}
Hence, we have
\begin{equation*}
 p_s\left(F^{(k+\ell)}(\gamma(t))-F^{(k+\ell)}(x)\right)
 \le  t\sup_{z\in K}p_s(F^{(k+\ell+1)}(z)).
\end{equation*}
Since $\gamma$ is $1$-Lipschitz and
$\gamma(L)=y$,
\begin{equation*}
 |y-\gamma(t)|
 =|\gamma(\ell(\gamma))-\gamma(t)|
 \le  \ell(\gamma)-t.
\end{equation*}
It follows that
\begin{equation*}
 p_s\left(R_{k,\ell}\mathbf F(x,y)\right)
 \le  \frac{1}{(\ell-1)!}
 \int_0^{L}(\ell(\gamma)-t)^{\ell-1}t\,dt\,
 \sup_{z\in K}p_s(F^{(k+\ell+1)}(z))
 =\frac{L^{\ell+1}}{(\ell+1)!}
 \sup_{z\in K}p_s(F^{(k+\ell+1)}(z)).
\end{equation*}
Since $ L \le c |x - y | $, we obtain the desired inequality and conclude the proof. 
\end{proof}
\begin{remark}
In particular, when $K=\overline{K^\circ}$ is quasiconvex, a map is $C^\infty$--Whitney-holomorphic if it is holomorphic in the interior of $K$ and all its derivatives extend continuously to $K$. At interior points this notion is the usual holomorphy on an open set.
\end{remark} 
\subsubsection{Examples of continuous operations on Whitney-holomorphic maps}
We provide some examples of $C^\infty$--holomorphic maps that are used in the proofs.

\begin{proposition}
\label{fact:trace}
Let $X\subset K$ be compact and perfect. The restriction from $ \Hol^\infty(K,A) $ to $\Hol^\infty(X,A)$
\begin{equation*}
\begin{array}{rcl}
\Hol^\infty(K,A)
&\longrightarrow&
\Hol^\infty(X,A)
\\[4pt]
F
&\longmapsto&
F\vert_X
\end{array}
\end{equation*}
 is  continuous.
\end{proposition}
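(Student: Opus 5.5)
The natural plan is to show that restriction preserves the whole Whitney structure and is dominated seminorm by seminorm. Let $F\in\Hol^\infty(K,A)$ with its jet $\mathbf F=(F^{(k)})_{k\ge0}$. I will define the candidate jet on $X$ as the family of restrictions
\[
\mathbf F\vert_X:=(F^{(k)}\vert_X)_{k\ge0}.
\]
These maps are continuous on $X$ because each $F^{(k)}$ is continuous on $K$. The remainders are computed pointwise from the jet, so for $x,y\in X$ we have
\[
R_{k,\ell}(\mathbf F\vert_X)(x,y)=R_{k,\ell}\mathbf F(x,y).
\]
Consequently, the supremum in \eqref{eq:whitney-remainder} taken over pairs $x,y\in X$ with $0<|x-y|\le\delta$ is bounded by the same supremum taken over pairs in $K$. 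That supremum tends to $0$ as $\delta\to0^+$. Therefore $F\vert_X\in\Hol^\infty(X,A)$, with jet $\mathbf F\vert_X$. Since $X$ is perfect, this jet is the unique one attached to $F\vert_X$, so the seminorms of $F\vert_X$ in \eqref{eq:whitney-seminorm} are really computed from $\mathbf F\vert_X$.

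The same comparison of suprema gives the estimate directly. Both terms in \eqref{eq:whitney-seminorm} are suprema over points of $X$, or over pairs of distinct points of $X$, so each is bounded by the corresponding supremum over $K$. This yields
\[
\|F\vert_X\|^{\mathrm W}_{X,s,m}\le\|F\|^{\mathrm W}_{K,s,m}\qquad\text{for all } s,m\ge0 .
\]

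Restriction is clearly linear. A linear map between Fréchet spaces that satisfies such a bound for every seminorm is continuous, which proves the proposition.

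I expect no real obstacle. The only point that needs care is the uniqueness of the jet on $X$, and that is exactly where the hypothesis that $X$ is perfect is used. One should also not try to use the simplified seminorms \eqref{eq:seminorms-simpler}: $X$ need not be quasiconvex, so working with the full Whitney seminorms is what makes the argument valid in general. For subsets of $\hat\C$, such as $X\subset\KK$, I would apply the same argument separately in each of the two standard charts.
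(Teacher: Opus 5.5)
Your proof is correct and is essentially the paper's argument: the paper's proof is the one-line observation that the Whitney conditions on $K$ restrict to $X$. Your explicit bound $\|F\vert_X\|^{\mathrm W}_{X,s,m}\le\|F\|^{\mathrm W}_{K,s,m}$, together with your remark that perfectness of $X$ makes the restricted jet the canonical one, simply spells out what the paper leaves implicit.
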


\begin{proof}
Whitney conditions, if they hold on $K$, also hold on $X$. 
\end{proof}
In other words, the trace topology is finer than the topology inherited from $\Hol^\infty(X,A)$.

\begin{lemma}
\label{lem:linear-postcomposition}
Let $L:A\to B$ be a continuous linear map between Fréchet spaces. If $\Omega$ is an open subset of a Fréchet space or a compact perfect subset of $\C$, then the map 
\begin{equation*}
 F \longmapsto L\circ F
\end{equation*}
is a continuous linear map from $\Hol^\infty(\Omega,A)$ to
$\Hol^\infty(\Omega,B)$. 
\end{lemma}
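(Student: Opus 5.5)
The plan is to handle the two cases for $\Omega$ separately. In each case the aim is to show that postcomposition with $L$ preserves the defining conditions of the relevant space and controls every seminorm. Linearity of $F\mapsto L\circ F$ is immediate, so the work is in membership and continuity. The single input is the continuity of $L$: for every seminorm $p_s^B$ there exist $s'\ge 0$ and a constant $C_s>0$ with
\[
p_s^B(L a)\le C_s\, p_{s'}^A(a)\qquad\text{for all } a\in A.
\]

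\textbf{Open subset of a Fréchet space.} Let $F\in\Hol^\infty(\Omega,A)$. Since $L$ is continuous and linear, it commutes with the limits of difference quotients that define directional derivatives. Hence
\[
D^k(L\circ F)(x)[h_1,\dots,h_k]=L\bigl(D^kF(x)[h_1,\dots,h_k]\bigr).
\]
These derivatives exist, are complex multilinear in $(h_1,\dots,h_k)$, and are jointly continuous in $(x,h_1,\dots,h_k)$, so $L\circ F$ is holomorphic. For a compact set $K\subset\Omega\times A^k$ and indices $m,k$, the continuity estimate gives
\[
\|L\circ F\|_{K,m,k}\le C_m\,\|F\|_{K,m',k},
\]
which is exactly continuity of the linear map $F\mapsto L\circ F$.

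\textbf{Compact perfect subset $K\subset\C$.} Take the candidate jet of $L\circ F$ to be $(L\circ F^{(k)})_{k\ge0}$. Each $L\circ F^{(k)}$ is continuous, and by linearity the remainders satisfy
\[
R_{k,\ell}(L\circ\mathbf F)(x,y)=L\bigl(R_{k,\ell}\mathbf F(x,y)\bigr).
\]
Applying the seminorm estimate turns condition~\eqref{eq:whitney-remainder} for $\mathbf F$ with index $s'$ into the same condition for $L\circ\mathbf F$ with index $s$. The same estimate bounds the Whitney seminorms:
\[
\|L\circ F\|^{\mathrm W}_{K,s,m}\le C_s\,\|F\|^{\mathrm W}_{K,s',m}.
\]
This gives membership in $\Hol^\infty(K,B)$ and continuity at once.

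For compact subsets of $\hat\C$ such as $\KK$, the argument is applied in each of the two standard charts; the chartwise jets still agree on the overlap because $L$ commutes with the chain rule.

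The main obstacle is mild. One must note that, in the open-set case, the seminorm index $m'$ depends only on $m$ and not on $K$ or $k$. One must also confirm that $L$ passes through the limit defining the derivative, which holds because $L$ is sequentially continuous.
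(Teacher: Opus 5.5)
Your proposal is correct and follows the paper's own route. In both cases $L$ commutes with the derivatives and the Taylor remainders, and the continuity estimate $p^B_s(La)\le C_s\,p^A_{s'}(a)$ transfers both the defining conditions and the seminorm bounds from $F$ to $L\circ F$. You simply spell out in detail what the paper calls trivial in the open case and compresses into one sentence in the compact perfect case.
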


\begin{proof}
If $ \Omega $ is an open subset of a Fréchet space, this is trivial. In the compact and perfect case, note that applying $L$ to each Taylor remainder in~\eqref{eq:whitney-remainder} bounds every seminorm of $L\circ F$ by a finite number of seminorms on $F$.
\end{proof}

\begin{lemma}
\label{lem:fourier-extraction}
Let $\Omega$ be either an open Fréchet domain or a compact perfect subset
of $\C$. For every $k\in\Z$, Fourier extraction defines a  continuous
linear map
\begin{equation*}
 \mathcal F_k:\Hol^\infty\left(\Omega,\Hol_\rho(\T)\right)
 \longrightarrow\Hol^\infty(\Omega,\C),
 \qquad
 \mathcal F_k(f)(x):=\int_0^1f(x,\theta)e^{-2\pi i k\theta}\,d\theta.
\end{equation*}
We will denote $\widehat f_k := \mathcal F_k(f)$ when it improves readability. 
For every $0<\rho'<\rho$ and $V\in\Hol_\rho(\T)$,
\begin{equation*}
 |\widehat V_k|\le  e^{-2\pi\rho'|k|}|V|_{\rho'}.
\end{equation*}
The same applies without a mean constraint, including
for $k=0$.
\end{lemma}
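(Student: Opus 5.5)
The plan is to verify the three claims in Lemma~\ref{lem:fourier-extraction} directly: the Cauchy-type coefficient bound, the linearity and continuity of $\mathcal F_k$ on values, and the passage to $\Hol^\infty(\Omega,\cdot)$ through Lemma~\ref{lem:linear-postcomposition}.

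\textbf{Step 1: the coefficient estimate.} Fix $V\in\Hol_\rho(\T)$ and $0<\rho'<\rho$. Since $V$ is holomorphic and $1$-periodic on $\T_\rho$, so is $z\mapsto V(z)e^{-2\pi i k z}$. By Cauchy's theorem, the integral over $[0,1]$ equals the integral over the shifted segment $[0,1]-i\sigma$ for every $|\sigma|\le\rho'$; the vertical sides cancel by periodicity. On that segment,
\begin{equation*}
|e^{-2\pi i k(\theta-i\sigma)}|=e^{-2\pi k\sigma}.
\end{equation*}
Choosing $\sigma=\rho'\operatorname{sign}(k)$ gives
\begin{equation*}
|\widehat V_k|\le e^{-2\pi\rho'|k|}\sup_{\T_{\rho'}}|V|.
\end{equation*}
Strictly speaking the segment at $|\operatorname{Im}|=\rho'$ lies on the boundary of the open strip $\T_{\rho'}$. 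This is handled by taking $\sigma\to\rho'$ and using continuity of $V$ on $\T_\rho$, or by noting that the supremum over the open strip equals that over its closure. The case $k=0$ is just $|\langle V\rangle|\le|V|_{\rho'}$, and nowhere is the zero-mean condition used.

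\textbf{Step 2: continuity on values.} Step 1 shows that $V\mapsto\widehat V_k$ is a $\C$-linear functional $\Hol_\rho(\T)\to\C$ bounded by one seminorm $|\cdot|_{\rho'}$. It is therefore continuous, and the same holds on the space of holomorphic functions on $\T_\rho$ without mean constraint.

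\textbf{Step 3: lifting to $\Hol^\infty(\Omega,\cdot)$.} Apply Lemma~\ref{lem:linear-postcomposition} with $L=\mathcal F_k$ on values. Then $f\mapsto \mathcal F_k\circ f$ is continuous and linear from $\Hol^\infty(\Omega,\Hol_\rho(\T))$ to $\Hol^\infty(\Omega,\C)$, and $(\mathcal F_k\circ f)(x)=\int_0^1 f(x,\theta)e^{-2\pi ik\theta}\,d\theta$ by definition.

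\textbf{Step 4: compatibility of derivatives.} The remaining point is that the Whitney jet, or Fréchet derivatives, of $\mathcal F_k\circ f$ are $\mathcal F_k$ applied to those of $f$. This holds because a continuous linear map commutes with difference quotients and limits, and it preserves the remainders $R_{k,\ell}$ termwise. It is already contained in the proof of Lemma~\ref{lem:linear-postcomposition}.

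I do not expect a real obstacle. The only point needing care is the contour shift at the edge of the open strip in Step 1, which is resolved by the limiting argument given there.
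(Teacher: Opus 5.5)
Your proposal is correct and matches the paper's proof: you shift the contour to $\operatorname{Im}\theta=-\operatorname{sgn}(k)\rho'$ using Cauchy's theorem and periodicity to get the coefficient bound, then lift the resulting continuous linear functional to $\Hol^\infty(\Omega,\cdot)$ with Lemma~\ref{lem:linear-postcomposition}. Your remark that the shifted segment lies on the boundary of the open strip $\T_{\rho'}$, which is harmless because $V$ is continuous on $\T_\rho\supset\overline{\T_{\rho'}}$, is a correct detail that the paper leaves implicit.
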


\begin{proof}
The case $k=0$ is immediate.
For $k\neq0$, by  Cauchy's theorem and periodicity, the bound directly follows from: 
\begin{equation*}
\widehat V_k
=
e^{-2\pi\rho'|k|}
\int_0^1
V\left(\theta-i\operatorname{sgn}(k)\rho'\right)
e^{-2\pi i k\theta}\,d\theta.
\end{equation*}
Since this operation is linear, Lemma~\ref{lem:linear-postcomposition} therefore gives continuity and holomorphy of the corresponding map, while the displayed estimate applies in each defining seminorm. 
\end{proof}

\begin{lemma}
\label{lem:der-hol}
Let $0<\rho'<\rho$, and let $\Omega$ be an open  subset of a Fréchet space or a
compact perfect subset of $\C$. Then differentiation with respect to $ \theta \in \T$ is a continuous linear map
\begin{equation*}
 \partial_\theta:
 \Hol^\infty\left(\Omega,\Hol_\rho(\T)\right)
 \longrightarrow
 \Hol^\infty\left(\Omega,\Hol_{\rho'}(\T)\right).
\end{equation*}
More generally, for every $V\in\Hol_\rho(\T)$, every
$p\geq0$, and every $\rho''\in(0,\rho')$, we have the bound
\begin{equation}
\label{eq:der-theta}
|\partial_\theta^p V|_{\rho''}
\leq
\frac{p!}{(\rho'-\rho'')^p}|V|_{\rho'}.
\end{equation}
\end{lemma}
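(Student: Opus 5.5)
The plan is to prove the Cauchy estimate~\eqref{eq:der-theta} first, and then obtain the continuity of $\partial_\theta$ on the spaces $\Hol^\infty(\Omega,\cdot)$ from Lemma~\ref{lem:linear-postcomposition}.

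\textbf{Cauchy estimate.} Fix $V\in\Hol_\rho(\T)$, $p\ge0$ and $0<\rho''<\rho'<\rho$. Take $z\in\T_{\rho''}$ and set $r<\rho'-\rho''$. The closed disc of radius $r$ centred at $z$ lies in $\T_{\rho'}$, since imaginary parts move by at most $r$. The Cauchy integral formula on the circle $|\zeta-z|=r$ then gives
\begin{equation*}
|\partial_\theta^pV(z)|\le \frac{p!}{r^p}\sup_{|\zeta-z|=r}|V(\zeta)|\le\frac{p!}{r^p}|V|_{\rho'}.
\end{equation*}
Taking the supremum over $z\in\T_{\rho''}$ and letting $r\to\rho'-\rho''$ yields~\eqref{eq:der-theta}. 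This uses only holomorphy on $\T_{\rho'}$, and periodicity raises no issue because the disc is viewed in the universal cover $\C$.

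\textbf{Continuity and linearity of $\partial_\theta$.} Note first that $\partial_\theta V$ again has zero mean, because $\int_0^1 V'=V(1)-V(0)=0$. Hence $\partial_\theta$ maps $\Hol_\rho(\T)$ into $\Hol_{\rho'}(\T)$, indeed into $\Hol_\rho(\T)$ itself. It is linear, and for each seminorm $|\cdot|_{\rho''}$ with $\rho''<\rho'$ the case $p=1$ of the estimate gives
\begin{equation*}
|\partial_\theta V|_{\rho''}\le (\sigma-\rho'')^{-1}|V|_\sigma
\end{equation*}
for any fixed $\sigma\in(\rho'',\rho)$. This bound is a continuous seminorm on $\Hol_\rho(\T)$, so $\partial_\theta:\Hol_\rho(\T)\to\Hol_{\rho'}(\T)$ is a continuous linear map between Fréchet spaces.

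\textbf{Conclusion.} Lemma~\ref{lem:linear-postcomposition} now applies directly, in both the open-Fréchet-domain case and the compact-perfect case. It shows that $F\mapsto\partial_\theta\circ F$ is continuous and linear from $\Hol^\infty(\Omega,\Hol_\rho(\T))$ to $\Hol^\infty(\Omega,\Hol_{\rho'}(\T))$. In the Whitney setting, $\partial_\theta$ commutes with the jets, so the jets of $\partial_\theta F$ are $(\partial_\theta F^{(k)})_k$. There is no genuine obstacle here; the only point needing care is choosing the radius so that the disc stays inside the strip.
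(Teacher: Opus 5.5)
Your proof is correct and follows essentially the same route as the paper: the Cauchy integral formula on a circle of radius (up to) $\rho'-\rho''$ gives the estimate, and continuity on $\Hol^\infty(\Omega,\cdot)$ follows from applying that estimate through the derivatives and jets in $\Omega$. The paper does this step directly, while you package it via Lemma~\ref{lem:linear-postcomposition}; your explicit check that $\partial_\theta$ preserves the zero-mean condition is a detail the paper leaves implicit.
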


\begin{proof}
For $\theta\in\T_{\rho''}$, the higher-order Cauchy formula gives
\begin{equation*}
\partial_\theta^p V(\theta)
=
\frac{p!}{2\pi i}
\int_{|\zeta-\theta|=\rho'-\rho''}
\frac{V(\zeta)}{(\zeta-\theta)^{p+1}}\,d\zeta .
\end{equation*}
This gives~\eqref{eq:der-theta}. Applying the same estimate after differentiation in the auxiliary variable proves continuity in every seminorm.
\end{proof}

	
\subsection{Holomorphy on \texorpdfstring{$\KK$}{K}}
\label{subsec:charts}

We now specify the notion of holomorphy on $\KK$. Recall that $\KK$ is compact and perfect, see~\cite{CMS}.
We split $ \KK$ into the two compact perfect subsets
\begin{equation*}
\KK^{0}
:= \left\{ q  \in \KK  :  | q | \le e^\pi \right\} 
\qquad
\KK^\infty
:= \left\{ q  \in \KK  :  | q | \ge e^{-\pi} \right\} . 
\end{equation*}
This choice is tuned so that both of them contain an open neighborhood of the closure of  $   \e(  \T ) \cap \KK  $.   
 We use on them the restrictions of the standard
charts
\begin{equation*}
 q \in \KK^0   \mapsto q \in \C
\qand 
  q \in \KK^\infty   \mapsto q^{-1} \in \C 
\end{equation*}
Observe that, since $\AA=-\AA$, the two chart images coincide:
\begin{equation*}
\KK^0 = \left( \KK^\infty \right)^{-1} 
\subset\C.
\end{equation*}

We first prove its quasiconvexity. Combined with
Lemma~\ref{lem:whitney-quasiconvex}, this will allow us to work with the family of seminorms defined below.
\begin{lemma}
\label{lem:quasiconvex}
The set $\AA\subset\C$ is $2$-quasiconvex and  $\KK^0$ is quasiconvex. 
\end{lemma}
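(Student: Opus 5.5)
We have to show that $\AA=\{z:|\Im z|\ge d(\Re z,\DC)\}$ is $2$-quasiconvex, and then that $\KK^0$ is quasiconvex.

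\textbf{Structure of $\AA$.} The complement of $\AA$ is a union of open ``diamonds''. Indeed, $\R\setminus\DC$ is a countable union of disjoint open intervals $(a,b)$ with $a,b\in\DC$; these are bounded, since $\DC$ is $1$-periodic and nonempty. Over such an interval $d(x,\DC)=\min(x-a,b-x)$, so the removed region is the open square $\Delta_{a,b}$ with diagonal $[a,b]$ and sides of slope $\pm1$. Distinct diamonds are disjoint, because their real projections are disjoint.

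\textbf{Path in $\AA$.} Take $x,y\in\AA$ and the segment $[x,y]$. If the segment avoids all diamonds we are done with constant $1$. Otherwise we modify it diamond by diamond. Each $\Delta_{a,b}$ is convex, so $[x,y]\cap\overline{\Delta_{a,b}}$ is a single chord with endpoints $p,p'$ on $\partial\Delta_{a,b}\subset\AA$. Replace this chord by the shorter of the two arcs of $\partial\Delta_{a,b}$ joining $p$ to $p'$; that arc lies in $\AA$.

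The key geometric estimate is that in a square, a boundary arc between two boundary points has length at most $\sqrt2$, hence at most $2$, times the chord. To check this, note that if $p$ and $p'$ lie on adjacent sides meeting at a corner, the arc through that corner has length $s+t\le\sqrt2\sqrt{s^2+t^2}$. If they lie on opposite sides, the chord is at least the side length $\ell$, while the shorter arc is at most $2\ell$. So the arc length is at most $2|p-p'|$.

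The modified diamonds are disjoint and the chords are disjoint subsegments of $[x,y]$. Summing, the total length is at most $2|x-y|$. When there are countably many crossed diamonds, one should justify that the concatenation is a genuine rectifiable path, which I expect to be the main technical point. I would define it as the graph-type path that follows $[x,y]$ outside the union of chords and follows the chosen arc inside each chord. Continuity follows because chord lengths are summable, so arcs over small chords are uniformly small. Lipschitz reparametrization by arclength then gives a $1$-Lipschitz path of length $L\le 2|x-y|$. Endpoints $x,y$ lying on a diamond boundary are harmless.

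\textbf{Transfer to $\KK^0$.} The set $\KK^0$ is the image under $\e$ of the strip part $\AA\cap\{\Im z\ge-1/2\}$, together with $0$. Modulo $\Z$ this part is compact, and $\e$ is bi-Lipschitz on bounded pieces: $|\e'|=2\pi e^{-2\pi\Im z}$ is bounded above and below there, and the map is locally injective. The region $\Im z\ge-1/2$ is itself stable under the diamond-modified paths between its points, since the modified arcs stay within the imaginary range $[\min,\max]$ of the endpoints' diamonds; if needed one can truncate at large $\Im$.

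\textbf{Points close to $0$.} For $q,q'$ near $0$ (large $\Im z$), the diamonds have height at most $1/2$ (as $\DC$ is $1$-periodic). So $\{\Im z\ge 1\}\subset\AA$, and its image is the full disk $\{|q|\le e^{-2\pi}\}$, which is convex. I would therefore handle three cases:
\begin{enumerate}
\item both points in the disk: use the straight segment;
\item both points in the compact annulus part: use lifts $z,z'$ with $|\Re z-\Re z'|\le1/2$ and the path above, with $|\e(z)-\e(z')|\ge c|z-z'|$ for such lifts;
\item mixed: go radially to the disk boundary, and then combine with the other two cases using the triangle inequality, which holds with a uniform constant because the disk and the annulus are at positive distance in the relevant configurations, or else the points are comparable.
\end{enumerate}
This yields some constant $c$, so $\KK^0$ is quasiconvex.
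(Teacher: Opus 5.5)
Your proposal is correct and follows essentially the paper's route. The complement of $\AA$ is the union of open diamonds over the gaps of $\DC$, and each crossed chord is replaced by the shorter boundary arc, of length at most twice the chord. Quasiconvexity of $\KK^0$ is then obtained through the exponential, which is bi-Lipschitz on the compact annulus, with the disk around $0$ handled by convexity.

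The one loose spot is your mixed case. The disk and the annulus share a boundary circle, so the ``positive distance'' remark is not the right justification. As in the paper, concatenate a segment in the disk with an annulus path, for instance through the point where $[q,q']$ leaves the disk. Your radial route also works if you start from the disk point $q$, since the boundary point $q''$ then satisfies $|q-q''|\le|q-q'|$ and $|q''-q'|\le 2|q-q'|$.
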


\begin{proof}
 
\emph{(i) Quasiconvexity of $\AA$.}
Since $ \DC $ is closed and $\R$ is separable, we can write its complement as a countable union of disjoint maximal open intervals 
\begin{equation*}
\R \setminus\DC=\bigcup_{I\in\mathcal I}I.
\end{equation*}
These intervals are exactly the connected components of $\R \setminus\DC$. 
For $I=(a,b)\in\mathcal I$, let us denote the open complex square with diagonal $I$ by 
\begin{equation*}
Q_I
:=
\left\{x+iy:a<x<b,\ |y|<\min(x-a,b-x)\right\}.
\end{equation*}
Since $I$ is maximal, $ \partial Q_I \subset \AA$. Since this holds for any such interval, the complement of $\AA$ is the union of the diamonds $Q_I$ for $I\in\mathcal{I}$. 
\begin{figure}[H]
    \centering
    \includegraphics[width=1\linewidth]{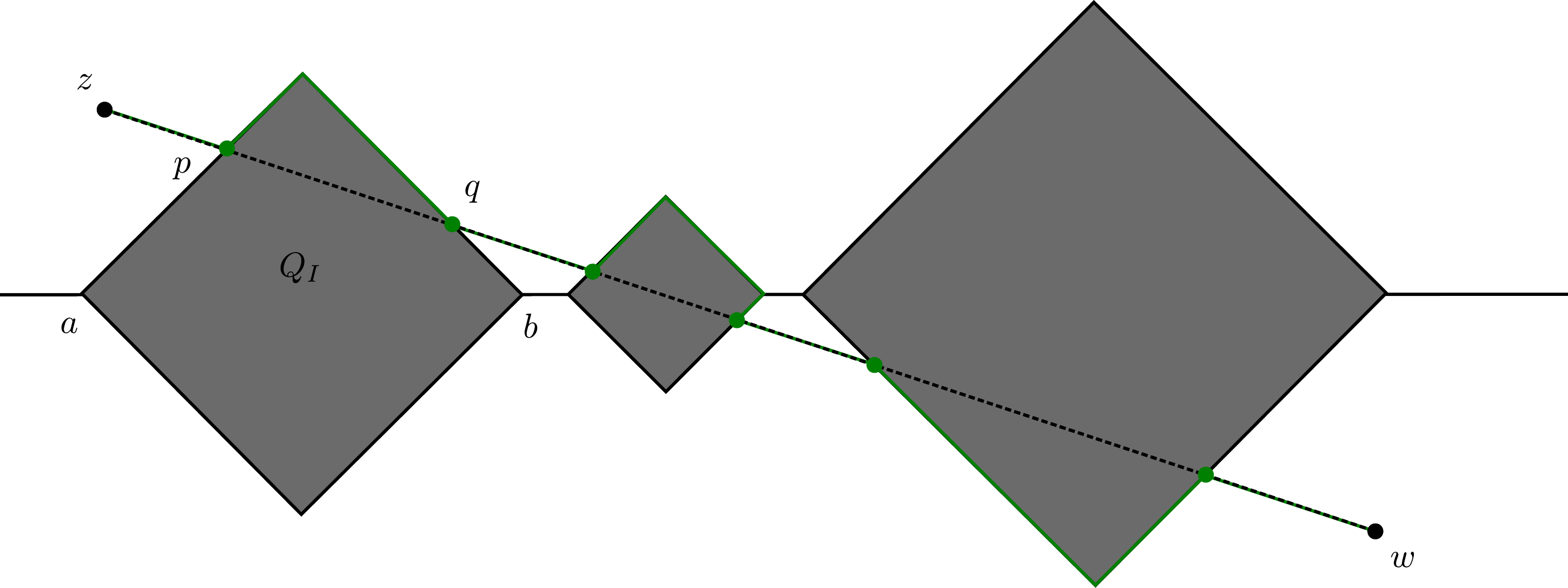}
    \caption{A diamond $Q_I$ for $I=(a,b)\subset\R\setminus\DC$ and a
    path, in green, joining points $z$ and $w$ inside $\AA$.}
    \label{fig:dia-q}
\end{figure}

Let $z,w\in\AA$, with $z\neq w$, and denote by $L:=|z-w|$ their distance. We now construct a $1$-Lipschitz parametrization $\Gamma$ with length $2L$ joining them in $\AA$. 
Start with the affine parametrization $  \Gamma_0 : t\in [0,2L] \mapsto z+\frac{t}{2L}(w-z) $, which is $1/2$-Lipschitz. 
For every $I\in\mathcal I$ such that $Q_I$ meets $[z,w]$,
we denote by $ (p_I,q_I) $ the interval  $ Q_I \cap [z,w]$. We also denote by $ a_I$ and $b_I$ the parameters such that $ \Gamma_0 (a_I) = p_I$ and $ \Gamma_0 ( b_I) = q_I$. 

Fix one of the shortest boundary arcs of $Q_I$ joining $p_I$ to $q_I$.  Since $Q_I$ is a square, this arc has length at most $2|p_I-q_I|=b_I-a_I$. 
Parametrize this path by arclength at unit speed (with adapted sign) and keep the parametrization
constant at $q_I$ for any remaining time. We denote the resulting
$1$-Lipschitz parametrization by $ \eta_I : [a_I,b_I]\longrightarrow\partial Q_I$. 
This surgery is illustrated in Figure~\ref{fig:dia-q}.
We are then ready to provide the parametrization $ \Gamma$ as 
\begin{equation*}
    \Gamma : t \in [0, 2L ] \mapsto 
    \begin{cases}
        \eta_I(t),
        &\text{if } t\in(a_I,b_I) 
        \text{ for some }I \in \mathcal{I} ,\\
        \Gamma_0(t),
        &\text{otherwise}.
    \end{cases}
\end{equation*}
Since the intervals are disjoint and countable, the path $ \Gamma$ joins $z$ to $w$, is well defined and $1$-Lipschitz, and has length $2L$. This ends the proof of the first assertion.

To prove the second assertion, note that, as a corollary of the proof, the set $\AA^{\mathrm{cyl}}:=\AA \cap \{\omega\in\C:|\Im \omega|\le 1/2\}$ is also $2$-quasiconvex. 
Indeed, since every $I$ has length at most $1$, it follows that $ \partial Q_I \subset \left\{ | \Im z | \le 1/2 \right\}$.  Hence  every pair of points in $\AA^{\mathrm{cyl}}$ is joined in $\AA^{\mathrm{cyl}}$ by the path constructed above. 

\smallskip
\noindent\emph{(ii) Quasiconvexity of $ \KK^0$.}
Observe that  $ \KK^0 = \e(\AA^{\mathrm{cyl}} ) \cup  \D $, where $\D$ is the closed disc of radius $e^{-\pi}$ centred at $0$.
On the closed strip $\{|\Im\omega|\le 1/2\}$, the exponential map, after quotienting by integer translations, induces the 
diffeomorphism 
\begin{equation*}
    \e:
    \{\omega\in\C:|\Im\omega|\le 1/2\} / \Z 
    \longrightarrow
    \{q\in\C:e^{-\pi}\le |q|\le  e^\pi\}.
\end{equation*}
This shows that $\e(\AA^{\mathrm{cyl}} ) $ is quasiconvex by quasiconvexity of $ \AA^{\mathrm{cyl}} $. The disc $\D$ is convex and its boundary circle is contained in $\e(\AA^{\mathrm{cyl}})$. Concatenating a segment in $\D$ with a quasiconvex path in $\e(\AA^{\mathrm{cyl}})$ therefore proves that their union $\KK^0$ is quasiconvex.
\end{proof}

 We say that a map  $f : \KK \to B$ from $ \KK$ to a Fréchet space $ B $ is holomorphic if the restriction of $ f $ to $ \KK^0$ is holomorphic and the map 
\begin{equation*}
 f^\# : q \in \KK^0  \mapsto f (  q^{-1}) 
\end{equation*}
is holomorphic as well. 
We denote by $ \Hol^\infty ( \KK , B)$ the corresponding space. Note that this is also a Fréchet space. 

 Actually, this article only deals with the cases of $ B = \Hol_\rho (\T)$ or $ B = \C$.  

By Lemmas~\ref{lem:quasiconvex} and~\ref{lem:whitney-quasiconvex}, the $C^\infty$-Whitney holomorphic topology on $\Hol^\infty ( \KK , \C )$ is given by  

\begin{equation}
\label{eq:def-scale-KK-scalar}
|f|_{m}
:=
\max_{0\le  k\le  m}
\sup_{q\in\KK^0}
\max\left\{  |\partial_q^k f(q)| , |\partial_q^k f^\#(q)|  \right\},
\end{equation}
indexed by $m\ge 0$. 

Similarly, the $C^\infty$-Whitney holomorphic topology on  $ \Hol^\infty ( \KK , \Hol_\rho (\T) ) $  is given by the family of seminorms 
\begin{equation}
\label{eq:def-scale-KK-vector}
|f|_{\rho',m}
:=
\max_{0\le  k\le  m}
\sup_{q\in\KK^0}
\max\left\{
|\partial_q^k f(q)|_{\rho'},
|\partial_q^k f^\#(q)|_{\rho'}
\right\},
\end{equation}
indexed by $m\geq0$ and $0<\rho'<\rho$.


\subsection{Combinatorial estimates}
We finish this preliminary section with auxiliary combinatorial estimates that are used in the proof of the KAM theorem.

\begin{lemma}
\label{lem:sharp-binomial-sum}
For all integers $p,m\ge0$,
\begin{equation*}
    p^m \le m!\binom{p+m}{m}.
\end{equation*}
Consequently, for every $r\in(0,1)$ and $m\ge0$,
\begin{equation*}
    \sum_{k=0}^\infty k^m r^k \le \frac{m!}{(1-r)^{m+1}}.
\end{equation*}
\end{lemma}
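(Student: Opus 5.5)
The first inequality is a pure counting statement. For $p=0$ or $m=0$ it is trivial, since $0^0=1$ and $\binom{p}{0}=1$. In general I would write
\begin{equation*}
m!\binom{p+m}{m}=\frac{(p+m)!}{p!}=(p+1)(p+2)\cdots(p+m),
\end{equation*}
a product of $m$ factors each at least $p$. Hence it is bounded below by $p^m$, and no induction is needed.

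For the series I would insert this bound term by term:
\begin{equation*}
\sum_{k\ge0}k^m r^k\le m!\sum_{k\ge0}\binom{k+m}{m}r^k .
\end{equation*}
The right-hand side is the classical negative binomial series
\begin{equation*}
\sum_{k\ge0}\binom{k+m}{m}r^k=(1-r)^{-(m+1)}
\end{equation*}
for $|r|<1$. This identity follows by differentiating the geometric series $\sum_k r^{k+m}=r^m/(1-r)$ $m$ times and dividing by $m!$. Alternatively, one can induct on $m$ using $\binom{k+m}{m}=\sum_{j\le k}\binom{j+m-1}{m-1}$, which expresses the series as the Cauchy product of the order-$(m-1)$ series with $\sum_k r^k=(1-r)^{-1}$. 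All terms are nonnegative, so the rearrangements are legitimate and convergence for $r\in(0,1)$ is immediate. Combining the two displays gives $\sum_k k^m r^k\le m!/(1-r)^{m+1}$.

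There is no real obstacle here. The only points needing care are the convention $0^0=1$ in the $k=0$ term (with $m=0$ both sides equal $1/(1-r)$) and justifying the negative binomial identity, for which the differentiation argument is the quickest route.
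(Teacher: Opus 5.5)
Your proof is correct and follows the paper's argument exactly: you bound $m!\binom{p+m}{m}=(p+1)\cdots(p+m)\ge p^m$ and then sum against the identity $\sum_{k\ge0}\binom{k+m}{m}r^k=(1-r)^{-(m+1)}$, which the paper simply cites as standard. Your differentiation justification of that identity also works, since $r^m/(1-r)$ differs from $1/(1-r)$ by a polynomial of degree $m-1$, so its $m$-th derivative is $m!/(1-r)^{m+1}$.
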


\begin{proof}
Since
\begin{equation*}
    \binom{p+m}{m}
    = \frac{(p+1)(p+2)\cdots(p+m)}{m!}
    \ge \frac{p^m}{m!},
\end{equation*}
the first assertion follows. Summing this inequality gives
\begin{equation*}
    \sum_{k\ge0} k^m r^k
    \le m!\sum_{k\ge0}\binom{k+m}{m}r^k
    = \frac{m!}{(1-r)^{m+1}},
\end{equation*}
where the last equality is the standard generating-function identity.
\end{proof}

\begin{lemma}
\label{lem:fubini-bound}
For every integer $\ell\ge0$,
\begin{equation*}
    \sum_{p=0}^{\ell} \stirling{\ell}{p}\,p!
    \le 2^{\ell}\,\ell!.
\end{equation*}
\end{lemma}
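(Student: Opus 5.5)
We need to prove $\sum_{p=0}^{\ell}\stirling{\ell}{p}p!\le 2^\ell\ell!$. The left side is the $\ell$-th ordered Bell (Fubini) number $a_\ell$, which counts ordered set partitions of $\{1,\dots,\ell\}$. We argue by induction on $\ell$, using a recursion for $a_\ell$ obtained by isolating the first block.

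\textbf{Recursion.} An ordered partition of an $\ell$-element set, with $\ell\ge1$, is determined by two pieces of data: its first block, a nonempty subset of size $j$, and an ordered partition of the remaining $\ell-j$ elements. This gives
\begin{equation*}
a_\ell=\sum_{j=1}^{\ell}\binom{\ell}{j}a_{\ell-j},\qquad a_0=1 .
\end{equation*}
Equivalently, the recursion follows from $\stirling{\ell}{p}p!$ counting surjections from $\{1,\dots,\ell\}$ onto $\{1,\dots,p\}$, classified by the preimage of $1$.

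\textbf{Base case.} For $\ell=0$, both sides equal $1$.

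\textbf{Inductive step.} Assume $a_k\le 2^k k!$ for all $k<\ell$. Then
\begin{equation*}
a_\ell\le\sum_{j=1}^{\ell}\frac{\ell!}{j!\,(\ell-j)!}\,2^{\ell-j}(\ell-j)!
=\ell!\,2^\ell\sum_{j=1}^{\ell}\frac{2^{-j}}{j!}
\le \ell!\,2^\ell\left(e^{1/2}-1\right).
\end{equation*}
Since $e^{1/2}-1<1$, we get $a_\ell\le 2^\ell\ell!$, which closes the induction.

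The only step needing care is the combinatorial recursion; the remaining estimate is a one-line exponential-series bound. An alternative route would bound $a_\ell$ through its generating function $1/(2-e^x)$, whose nearest singularity is at $\log 2>1/2$. That approach gives $a_\ell\le C\,\ell!/(\log 2)^{\ell}$, which is not directly comparable to $2^\ell\ell!$ without handling the constant $C$ explicitly, so we prefer the induction.
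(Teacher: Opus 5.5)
Your proof is correct, but it takes a different route from the paper's. You use the first-block recursion $a_\ell=\sum_{j=1}^{\ell}\binom{\ell}{j}a_{\ell-j}$ with strong induction, and close it with the bound $\sum_{j\ge1}2^{-j}/j!\le e^{1/2}-1<1$.

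The paper works directly with the Stirling numbers. It combines $j^\ell=\sum_{p}\stirling{\ell}{p}p!\binom{j}{p}$ with the identity $\sum_{j\ge0}\binom{j}{p}2^{-j-1}=1$ to obtain the exact series $\sum_{p}\stirling{\ell}{p}p!=\sum_{j\ge0}j^\ell 2^{-j-1}$. It then bounds this series by Lemma~\ref{lem:sharp-binomial-sum} with $r=1/2$.

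What each route buys: the paper's argument reuses a lemma it needs anyway and passes through a closed formula. Yours is self-contained and purely combinatorial, and it gives more: the constant $e^{1/2}-1$ for $\ell\ge1$.

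Your induction also settles the concern in your last remark. Suppose you run it with the hypothesis $a_k\le c^k k!$. The step reads $a_\ell\le \ell!\,c^\ell\bigl(e^{1/c}-1\bigr)$, so it closes whenever $e^{1/c}\le2$. Taking $c=1/\log 2$ gives $a_\ell\le \ell!/(\log 2)^{\ell}$ with constant one. This is the sharp exponential rate, and it implies the stated bound because $1/\log 2<2$. No separate handling of the generating-function constant $C$ is needed.
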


\begin{proof}
The identity
\begin{equation*}
    j^{\ell} = \sum_{p=0}^{\ell}
    \stirling{\ell}{p}\,p!\binom{j}{p}
\end{equation*}
and the equality
\begin{equation*}
    \sum_{j\ge0}\binom{j}{p}2^{-j-1}=1
\end{equation*}
give
\begin{equation*}
    \sum_{p=0}^{\ell}\stirling{\ell}{p}\,p!
    = \sum_{j\ge0}j^{\ell}2^{-j-1}.
\end{equation*}
Lemma~\ref{lem:sharp-binomial-sum}, applied with $r=1/2$, bounds the
right-hand side by $2^{\ell}\ell!$.
\end{proof}

\begin{lemma}
\label{lem:k-sum}
Let $\alpha\ge0$ and $0<D\le1$. Then
\begin{equation}
    \label{eq:k-sum-rigorous}
    \sum_{k\ne0}|k|^\alpha e^{-2\pi D|k|}
    \le \frac{K(\alpha)}{D^{\alpha+1}},
    \qquad
    K(\alpha):=
    \frac{2^{\alpha+1}\left((2\pi)^\alpha+\Gamma(\alpha+1)\right)}
    {(2\pi)^{\alpha+1}}.
\end{equation}
\end{lemma}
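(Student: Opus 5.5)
By symmetry in $k$, we write the sum as $2\sum_{k\ge1}k^\alpha e^{-2\pi Dk}$ and bound it by comparison with an integral. We avoid an exact monotonicity argument because $x^\alpha e^{-2\pi Dx}$ first increases and then decreases. Instead we use the following elementary bound, valid for every integer $k\ge1$:
\begin{equation*}
k^\alpha e^{-2\pi Dk}\le \int_k^{k+1} (x)^\alpha e^{-2\pi D(x-1)}\,dx .
\end{equation*}
It holds because on $[k,k+1]$ we have $x^\alpha\ge k^\alpha$ and $e^{-2\pi D(x-1)}\ge e^{-2\pi Dk}$.

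Summing over $k\ge1$ gives
\begin{equation*}
\sum_{k\ge1}k^\alpha e^{-2\pi Dk}\le e^{2\pi D}\int_1^\infty x^\alpha e^{-2\pi Dx}\,dx.
\end{equation*}
The prefactor $e^{2\pi D}$ is bounded since $D\le1$, but it is wasteful compared with the target constant $K(\alpha)$. So we use a sharper split instead.

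\textbf{Splitting via $(a+b)^\alpha$.} Write $k=x+(k-x)$ with $x\in[k-1,k]$. For $\alpha\ge1$, convexity gives $(a+b)^\alpha\le 2^{\alpha-1}(a^\alpha+b^\alpha)$; for $\alpha<1$ we use $(a+b)^\alpha\le a^\alpha+b^\alpha$. Together,
\begin{equation*}
k^\alpha\le 2^\alpha\left(x^\alpha+1\right), \qquad x\in[k-1,k],
\end{equation*}
and also $e^{-2\pi Dk}\le e^{-2\pi Dx}$ on this interval. Hence
\begin{equation*}
\sum_{k\ge1}k^\alpha e^{-2\pi Dk}\le 2^\alpha\int_0^\infty (x^\alpha+1)e^{-2\pi Dx}\,dx .
\end{equation*}

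\textbf{Evaluating the integrals.} The substitution $y=2\pi Dx$ gives
\begin{equation*}
\int_0^\infty x^\alpha e^{-2\pi Dx}\,dx=\frac{\Gamma(\alpha+1)}{(2\pi D)^{\alpha+1}},\qquad \int_0^\infty e^{-2\pi Dx}\,dx=\frac{1}{2\pi D}.
\end{equation*}

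\textbf{Collecting constants.} Since $D\le1$, we have $\frac{1}{2\pi D}\le\frac{(2\pi)^\alpha}{(2\pi)^{\alpha+1}D^{\alpha+1}}$. Multiplying by $2$ for the negative $k$, the total is at most
\begin{equation*}
\frac{2^{\alpha+1}\left((2\pi)^\alpha+\Gamma(\alpha+1)\right)}{(2\pi)^{\alpha+1}D^{\alpha+1}}=\frac{K(\alpha)}{D^{\alpha+1}}.
\end{equation*}

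The only delicate point is the non-monotone summand, and the shift argument above is what handles it; all other steps are direct computations.
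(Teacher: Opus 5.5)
Your proof is correct and follows the paper's argument. Both bound $k^\alpha e^{-2\pi Dk}$ pointwise on $[k-1,k]$ by $2^\alpha(x^\alpha+1)e^{-2\pi Dx}$ (the paper goes through $(t+1)^\alpha e^{-\widetilde D t}$ and then $(t+1)^\alpha\le 2^\alpha(1+t^\alpha)$), integrate, evaluate the two Gamma integrals, absorb the $1/(2\pi D)$ term using $D\le1$, and double for negative $k$; your opening shifted-integral bound with the $e^{2\pi D}$ prefactor is an unneeded detour and can simply be dropped.
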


\begin{proof}
Put $\widetilde D:=2\pi D$. For $t\in[k-1,k]$ and $k\ge1$,
\begin{equation*}
    (t+1)^\alpha e^{-\widetilde Dt}
    \ge k^\alpha e^{-\widetilde Dk}.
\end{equation*}
Integrating over $t$ and summing over $k$ gives the bound
\begin{equation*} \int_0^\infty(t+1)^\alpha e^{-\widetilde Dt}\,dt \ge 
    \sum_{k\ge1} k^\alpha e^{-\widetilde Dk}
    .
\end{equation*}
Since $(t+1)^\alpha\le2^\alpha(1+t^\alpha)$ for $t\ge0$,
\begin{equation*}
    \int_0^\infty(t+1)^\alpha e^{-\widetilde Dt}\,dt
    \le2^\alpha\left(
        \frac1{\widetilde D}
        +\frac{\Gamma(\alpha+1)}{\widetilde D^{\alpha+1}}
    \right).
\end{equation*}
Using $1/\widetilde D\le(2\pi)^\alpha/
\widetilde D^{\alpha+1}$ and doubling for the negative integers gives
\eqref{eq:k-sum-rigorous}.
\end{proof}

\section{A fully holomorphic KAM theorem}\label{sec:KAM}

Recall that  the Euler--Lagrange equation for a parametrized invariant curve of a
generalized standard map with potential $V$ reduces to
\begin{equation*}
 V' ( \theta + u ( \theta) ) = u ( \theta + \omega) + u (\theta - \omega) - 2 u (\theta)  \;.
\end{equation*}
We seek, for fixed $0<\rho_\infty<\rho$ and an open neighbourhood
$\mathcal U$ of $0$ in $\Hol_\rho(\T)$, a map
\[
U\in\Hol^\infty\!\left(
\mathcal U,\Hol^\infty(\KK,\Hol_{\rho_\infty}(\T))
\right)
\]
such that, for every $V\in\mathcal U$ and $q\in\KK$,
$u=U(V)(q)$ solves the above equation.
In particular, if $q=\e(\omega)$ for some $\omega\in\DC$ and $ V$ is real, then $\id_\T+U(V)(q)\vert_\T$ is a parametrization of an invariant curve of rotation number $\omega$. 
We rewrite the Euler--Lagrange equation functionally as 
\begin{equation*}
     V ' ( \id_\T  + U(V) ) = \Delta U (V)
\end{equation*}
where
\begin{equation*}
    \Delta U(V,e^{2i\pi\omega},\theta)
    :=U(V,e^{2i\pi\omega},\theta+\omega)
      +U(V,e^{2i\pi\omega},\theta-\omega)
      -2U(V,e^{2i\pi\omega},\theta).
\end{equation*}

\subsection{Adapted spaces and the KAM theorem}
\label{subsec:Cpm}
Throughout, $\omega \in \AA$ denotes the \emph{complex rotation number} under
consideration, and we write $q:= \e  ( \omega)  \in \KK$. For
$f:\KK\times\T\to\C$, we formally write the forward
and backward \emph{shifts}
\begin{equation*}
    f^+(q,\theta) := f(q,\theta+\omega),
    \qquad
    f^-(q,\theta) := f(q,\theta-\omega).
\end{equation*}
These are involved in the Euler--Lagrange equation above.
Note that they act on 
Fourier coefficients by
\begin{equation*}
\widehat{f^+}_k(q)=q^k\widehat f_k(q),
\qquad
\widehat{f^-}_k(q)=q^{-k}\widehat f_k(q),
\qquad k\in\Z.
\end{equation*}
These definitions extend naturally at $ q= 0$ and $  q = \infty$ by taking limits when they exist. 
In general, neither $f^+$ nor $f^-$ is
holomorphic. In the spirit of~\cite{CMS}, we therefore
consider the following spaces.

We first denote holomorphic 
functions on $\T_\rho$ without the zero mean condition by
\begin{equation*}
  \mathscr H_\rho(\T):=\Hol_\rho(\T)\oplus\C.
\end{equation*}

We abbreviate the notation as follows:
\begin{equation*}
  \mathscr C_\rho
  :=\Hol^\infty\left(\KK,\mathscr H_\rho(\T)\right),
  \qquad
  C_\rho
  :=\Hol^\infty\left(\KK,\Hol_\rho(\T)\right).
\end{equation*}

More precisely, for $f\in\mathscr C_\rho$, we say that $f^+$ is
defined if there exists $g\in\mathscr C_\rho$ such that
\begin{equation*}
\widehat g_k(q)=q^k\widehat f_k(q)
\qquad
\text{for every }q\in\KK\cap\C^\ast\text{ and }k\in\Z.
\end{equation*}
Such a $g$ is unique and is denoted by $f^+$. The backward shift
$f^-$ is defined analogously using $q^{-k}$. Membership in
$\mathscr C_\rho$ provides, in particular, the required extensions at
$q=0$ and $q=\infty$.

\begin{definition}
\label{def:Cpm}
For $\rho>0$ and $\gamma,\tau>0$, and for a sign
$\varepsilon\in\{+,-\}$, set
\begin{align*}
\mathscr C_\rho^{\varepsilon}
&:=
\left\{f\in \mathscr C_\rho:
f^\varepsilon\in \mathscr C_\rho \right\},\\
C_\rho^{\varepsilon }
&:=
\left\{f\in C_\rho :
f^\varepsilon\in C_\rho \right\}.
\end{align*}
We also use $\pm$ to mean that both shifts have the required regularity and denote 
\begin{equation*}
\mathscr C_\rho^{\pm}
:=
\mathscr C_\rho^{+}\cap\mathscr C_\rho^{-},
\qquad
C_\rho^{\pm}
:=
C_\rho^{+}\cap C_\rho^{-}.
\end{equation*}

The spaces denoted by $C$ retain the zero-mean condition, whereas
those denoted by $\mathscr C$ allow constants.

For $0<\rho'<\rho$ and $m\ge0$, consider the seminorms 
\begin{equation*}
    |f|_{+,\rho',m} := \max\left( |f|_{\rho',m},|f^+|_{\rho',m}\right) ,
    \qquad
    |f|_{-,\rho',m} := \max\left( |f|_{\rho',m},|f^-|_{\rho',m}\right) .
\end{equation*}
For $0<\rho'<\rho$, also set
\begin{equation*}
  |\cdot|_{\pm,\rho',m}
  :=\max(|\cdot|_{+,\rho',m},|\cdot|_{-,\rho',m}).
\end{equation*}
Then for every $\varepsilon\in\lbrace+,-,\pm\rbrace$ the spaces $C^\varepsilon_\rho$ and $\mathscr C^\varepsilon_\rho$ are Fréchet spaces endowed with the family of seminorms $(|\cdot|_{\varepsilon,\rho',m})_{0<\rho'<\rho,\,m\ge 0}$. 

\end{definition}

We are now ready to state the KAM theorem. 
\begin{theo}
\label{thm:hol-KAM}
Fix $\tau,\rho>0$, take $0<\gamma<\gamma^\ast(\tau)$ and
$\rho_\infty\in(0,\rho)$.
There exists a neighbourhood $\mathcal{U} \subset \Hol_\rho(\T)$
of the zero potential and a map
\begin{equation*}
  U:\mathcal U\longrightarrow C^\pm_{\rho_\infty}
\end{equation*}
such that for every $V  \in  \mathcal{U}$,
\begin{equation}
  \label{eq:EL-U}
  V'\left( \id_\T + U(V)\right)
  =  \Delta U(V).
\end{equation}
The map $U$ is tame holomorphic.
Moreover, $U(0)\equiv0$, and
\begin{equation*}
 \int_{\T}U(V,q,\theta)\,d\theta=0
 \qquad\text{for every }V\in\mathcal U\text{ and }q\in\KK.
\end{equation*}

\smallskip
If $V\in\mathcal U\cap\Hol_\rho^\R(\T)$ and $\omega\in\DC$, then
\begin{equation*}
  \id_\T+U(V,e^{2\pi i\omega})
\end{equation*}
is a degree-one orientation-preserving analytic circle diffeomorphism.
The corresponding invariant graph is a KAM curve of rotation number
$\omega$.
\end{theo}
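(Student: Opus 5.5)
The proof will be a Newton-type (Levi--Moser) scheme run simultaneously in the potential $V$ and the complexified rotation number $q\in\KK$. Throughout, the unknown is $u\in C^\pm_{\rho'}$ and the nonlinear operator is
\begin{equation*}
 \mathcal E(V,u):=\Delta u - V'(\id_\T+u).
\end{equation*}
Following Levi--Moser, we linearize $\mathcal E$ at an approximate solution $u$ with error $e=\mathcal E(V,u)$. We write $\phi:=1+\partial_\theta u$ and use the identity
\begin{equation*}
 \Delta(\phi w)-\phi\,V''(\id_\T+u)w=\phi^{+}\!\left(\ldots\right)\!,
\end{equation*}
which factorizes the linearized operator, up to a term quadratic in $e$, into
\begin{equation*}
 w\mapsto \phi^{-1}\bigl((\phi\phi^{-}(\psi-\psi^{-}))^{+}-\phi\phi^{-}(\psi-\psi^{-})\bigr),\qquad w=\phi\psi .
\end{equation*}
The correction is therefore obtained by solving two cohomological equations of the form $g^+-g=f$ with $\langle f\rangle=0$. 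In between, a constant is fixed by averaging, which also keeps $\langle u\rangle=0$.

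\textbf{Step 1: the cohomological equation.} The key estimate is for $g^+-g=f$, i.e. $\widehat g_k=\widehat f_k/(q^k-1)$. On $\KK$, the definition of $\AA$ gives $|q^k-1|\gtrsim \operatorname{dist}(k\omega,\Z)$ together with $|\Im\omega|\ge d(\Re\omega,\DC)$, and hence a uniform small-divisor bound $|q^k-1|^{-1}\le C\gamma^{-1}|k|^{1+\tau}$ on $\KK\cap\{e^{-\pi}\le|q|\le e^\pi\}$. Near $0$ and $\infty$ the divisor is trivially bounded, so one works in the charts $\KK^0$ and $\KK^\infty$. For the $q$-derivatives, each $\partial_q$ applied to $(q^k-1)^{-1}$ costs a factor $|k|\,|q^k-1|^{-1}$. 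This gives
\begin{equation*}
 |\partial_q^m (q^k-1)^{-1}|\le C_m\,|k|^{m(2+\tau)+1+\tau}\gamma^{-m-1}.
\end{equation*}
Combining this with Lemma~\ref{lem:fourier-extraction} and Lemma~\ref{lem:k-sum}, we get a tame estimate
\begin{equation*}
 |g|_{\pm,\rho'-\delta,m}\le C_m\,\delta^{-(m+1)(2+\tau)}\sum_{j\le m}|f|_{\rho',j}.
\end{equation*}
The loss in $\delta$ grows with $m$, but the loss in the order of $q$-derivatives does not. The $\pm$-shifts of $g$ are handled identically, because $|q^{\pm k}|$ is controlled on $\T_{\rho'}\times\KK$ by $e^{2\pi\delta|k|}$-type weights once we restrict $|\Im\omega|$ appropriately. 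This restriction is exactly why the spaces $C^\pm$ were introduced.

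\textbf{Step 2: convergence at order zero, then induction in $m$.} At $m=0$ we run the standard quadratic scheme of \cite{CMS}. We choose strips $\rho_n\downarrow\rho_\infty$ with $\delta_n=(\rho-\rho_\infty)2^{-n-2}$. Since $|e_{n+1}|\le C\delta_n^{-a}|e_n|^2$, the errors decay superexponentially provided $|V|_\rho$ is small. This smallness fixes $\mathcal U$ once and for all. For higher $m$, we differentiate the recursion in $q$ by Leibniz. Each $\partial_q^m$ of the new error is bounded by $C_m\delta_n^{-a_m}$ times products of lower-order quantities, which are already superexponentially small by the induction hypothesis, plus a linear term in the top order that is multiplied by the small $|e_n|_0$. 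Hence the superexponential decay propagates to every order without shrinking $\mathcal U$; only the constants $C_m$ depend on $m$. I expect this step to be the main obstacle: the $m$-dependent loss $\delta_n^{-(m+1)(2+\tau)}$ must be beaten by the order-zero quadratic convergence, and the bookkeeping for this is delicate. Lemma~\ref{lem:fubini-bound} and Lemma~\ref{lem:sharp-binomial-sum} control the combinatorics of the Leibniz and Faà di Bruno expansions of $V'(\id_\T+u)$.

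\textbf{Step 3: regularity in $V$, tameness, and the real case.} Each iterate $u_n$ is obtained from $V$ by finitely many compositions, multiplications and applications of the bounded linear inverse from Step 1. Each of these operations is holomorphic in $V$ (Lemmas~\ref{lem:linear-postcomposition}, \ref{lem:der-hol}), so $V\mapsto u_n$ is holomorphic into $C^\pm_{\rho_n}$. The convergence in Step 2 is locally uniform in $V\in\mathcal U$ for every seminorm, so the limit $U$ is holomorphic, being a locally uniform limit of holomorphic maps (equivalently, holomorphic on complex lines and continuous). Tameness follows from the estimates of Step 2, whose loss is uniform in $m$. We have $U(0)=0$ because $e_0=0$, and the mean condition is preserved at each step. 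For real $V$ and $\omega\in\DC$, reality of Fourier coefficients is preserved: $\widehat f_{-k}=\overline{\widehat f_k}$ and $\overline{q^k}=q^{-k}$ on $|q|=1$. Moreover $|\partial_\theta U|<1$ by smallness, so $\id_\T+U$ is an analytic orientation-preserving circle diffeomorphism. Finally, the Euler--Lagrange equation \eqref{eq:EL} holds, so the associated graph is invariant and conjugate to $\tau_\omega$, i.e. it is a KAM curve of rotation number $\omega$.
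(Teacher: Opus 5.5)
Your overall architecture is the paper's: the commutator form $[A,AS]_\Delta=AE$ of the Levi--Moser step, two discrete inversions with a constant fixed in between, a quadratic order-zero scheme on all of $\KK$, then an induction on the order $m$ of $q$-derivatives so that $\mathcal U$ is never shrunk, and holomorphy in $V$ via locally uniform limits. The genuine gap is in Step~1, in how you control the shifts.

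On $\KK$ the modulus $|q|$ takes every value in $[0,\infty]$. Dominating the multipliers $q^{\pm k}$ by $e^{2\pi\delta|k|}$ requires $|\Im\omega|\le\delta$. With $\delta=\delta_n\to0$, that restriction shrinks the domain to the unit circle. It therefore loses the interior holomorphy and the points $q=0,\infty$, which is exactly what the theorem asserts.

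Worse, your estimate bounding $|g|_{\pm,\rho'-\delta,m}$ by $\sum_j|f|_{\rho',j}$ alone is false. For $g=\nabla^{-1}f$,
\[
\widehat{(g^-)}_k=q^{-k}\widehat f_k/(q^k-1)\sim -q^{-k}\widehat f_k \quad\text{as } q\to0,\ k>0 .
\]
So $g^-$ is unbounded near $q=0$ already when $f$ is independent of $q$, and symmetrically near $q=\infty$ for $k<0$.

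What the paper uses instead is the identity $q^k\lambda_k=1+\lambda_k$. It gives $(\nabla^{-1}f)^+=f+\nabla^{-1}f$, so $\nabla^{-1}$ maps $C_\rho$ into $C^+_\rho$. On the other hand $(\nabla^{-1}f)^-=\nabla^{-1}(f^-)$, so $\nabla^{-1}f\in C^\pm$ only when $f\in C^-$ (Proposition~\ref{prop:Cpm-properties}(iii), Corollary~\ref{cor:nabla-inv-Cpm}).

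The two inversions must therefore be ordered so that the input of the second one already carries the missing shift. The paper takes $g=\nabla_-^{-1}(AE)\in C^-$. It then multiplies by $(AA^+)^{-1}$, which lies in $C^-$ precisely because $A\in C^\pm$ (as $(AA^+)^-=A^-A$). Only after that does it apply $\nabla^{-1}$, landing in $C^\pm$. With your ordering (outer $\nabla$) you need the mirror statements: $\nabla^{-1}(\phi e)\in C^+$, division by $\phi\phi^-$ stays in $C^+$, and $\nabla_-^{-1}$ maps $C^+$ into $C^\pm$. This, not a restriction of $\Im\omega$, is why the spaces $C^\pm$ exist: they make the scheme close on all of $\KK$.

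The same identity is also what justifies your derivative count. Indeed $\partial_q\lambda_k=-kq^{-1}(1+\lambda_k)\lambda_k$, whereas $|q^k|$ can reach $e^{\pi|k|}$ on $\KK^0$.

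Two smaller omissions remain. First, solvability of the first equation needs $\langle\phi e\rangle=0$, which you assume without proof. It follows from $\int_\T(1+\partial_\theta u)V'(\id_\T+u)=0$ and $\int_\T(1+\partial_\theta u)\Delta u=0$ by integration by parts (Fact~\ref{fact:zero-mean}). Second, smallness must be imposed on $|V|_{\rho^\sharp}$ for some $\rho^\sharp<\rho$, since $\{|V|_\rho<r\}$ is not open in $\Hol_\rho(\T)$.
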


The adapted space $C^\pm_{\rho_\infty}$ ensures that both shifts $U(V)^+$ and $U(V)^-$ are holomorphic. This is needed to prove holomorphy of the $\beta$-function. The proof of Theorem~\ref{thm:hol-KAM} occupies the rest of this section. We first recall the construction of the Levi--Moser iteration scheme in the specific case of standard maps.

\subsection{Levi--Moser's iteration scheme}
\label{subsec:levy-moser}
To solve the Euler--Lagrange equation $V'(\id_\T+u)=\Delta u$, we first follow the proof of Carminati--Marmi--Sauzin, which uses the Levi--Moser iteration scheme, but one essential and important distinction is that we shall keep track of $C^\infty$-holomorphy along all the steps.
  
For $ u \in \Hol^\infty ( \KK , \Hol_\rho ( \T) ) $ and $ V \in \Hol_\rho (\T) $ for some $ \rho > 0$, we denote the \emph{error operator} of the Euler--Lagrange equation by
\begin{equation*}
    \mathcal{E} (u , V ) := V' ( \id_\T + u) - \Delta u \;.
\end{equation*}
 To solve $\mathcal{E} (u,V) = 0$, the first natural idea is to use the classical Newton iteration scheme. However, it appears that  $D_u\mathcal{E}(u,V)$ does
not admit a bounded inverse and Newton's method cannot be applied.  The idea of Levi--Moser
\cite{levi2001lagrangian}, used in \cite{CMS} for standard maps, is to make
\begin{equation*}
  \mathcal{E}(u_n,V)+D_u\mathcal{E}(u_n,V)[u_{n+1}-u_n]
\end{equation*}
quadratically small even though it is nonzero. Given an approximate
solution $u$, we find the new approximate solution $u+h$ by solving
\begin{equation}
    \label{eq:levi-moser}
    \mathcal{E}(u,V) + D_u\mathcal{E}(u,V)[h]
    = h \cdot
    \frac{D_u\mathcal{E}(u,V)[1+\partial_\theta u]}{1+\partial_\theta u}.
\end{equation}
This equation can be solved with smooth dependence on the parameters.
Moreover, a direct computation shows that such a solution $h$ satisfies
\begin{equation*}
    \mathcal{E}(u+h,V)
    = \frac{h}{1+\partial_\theta u}\,\partial_\theta\mathcal{E}(u,V)
    + \text{``quadratic term''}.
\end{equation*}
If $h$ has the size of $\mathcal E(u,V)$, it follows that the new
error $\mathcal E(u+h,V)$ is quadratic in the appropriate topology; see
Lemma~\ref{lem:exact-identity} for a precise statement and proof.

We write $[A,B]_\Delta := A\Delta B - B\Delta A$ for the commutator
induced by $\Delta$. Let us show precisely how, in the context of
generalized standard maps, equation~\eqref{eq:levi-moser} becomes an
equation of this commutator form. Write $A:=1+\partial_\theta u$ and
$E:=\mathcal E(u,V)$, so that $D_u\mathcal E(u,V)[k] =
V''(\id_\T+u)k-\Delta k$ for any $k$, and, by the chain rule,
\begin{equation*}
    D_u\mathcal E(u,V)[A] = A\,V''(\id_\T+u) - \Delta A
    = \partial_\theta\left[V'(\id_\T+u)\right] - \Delta A
    = \partial_\theta E,
\end{equation*}
the last equality using $\Delta A = \Delta(\partial_\theta u) =
\partial_\theta(\Delta u)$, since $\partial_\theta$ and $\Delta$ commute
and $E=V'(\id_\T+u)-\Delta u$. Multiplying~\eqref{eq:levi-moser} by
$A$ and substituting this identity on the right-hand side provides
\begin{equation*}
    AE + A\left[V''(\id_\T+u)h - \Delta h\right] = h\,\partial_\theta E \; . 
\end{equation*}
Since $AV''(\id_\T+u)=\partial_\theta[V'(\id_\T+u)]$ and, using
$E=V'(\id_\T+u)-\Delta u$ again, $\partial_\theta E -
\partial_\theta[V'(\id_\T+u)] = -\partial_\theta(\Delta u) = -\Delta A$,
this rearranges to
\begin{equation*}
    AE = A\Delta h - h\Delta A = [A,h]_\Delta.
\end{equation*}
Writing $h=:AS$, we
have shown that~\eqref{eq:levi-moser} becomes exactly
\begin{equation}
    \label{eq:cohom}
    [A,AS]_\Delta = AE.
\end{equation}

To solve this equation, we introduce appropriate solution spaces that also
ensure the required holomorphicity of
$u(\theta)=U(V,e^{2i\pi\omega},\theta)$.

\subsection{The discrete operators and their regularity}
\label{subsec:discrete}
We decompose the discrete Laplacian $\Delta f=f^++f^--2f$ into two intermediate discrete difference operators to solve the cohomological equation. 
For $f\in\mathscr C_\rho$, consider the \emph{forward discrete derivation} $\nabla$ and the \emph{backward
discrete derivation} $\nabla_-$
\begin{equation}
    \label{eq:nabla-def}
    \nabla f := f^+ - f,
    \qquad
    \nabla_- f := f - f^- \;. 
\end{equation}
Then the discrete Laplacian $ \Delta$ is equal to 
\begin{equation}
\Delta  = \nabla  - \nabla_-  = \nabla  \nabla_-  =  \nabla_- \nabla   .
\end{equation}

The discrete shifts act diagonally on the Fourier modes $e_k(\theta):=e^{2i\pi k\theta}$ for $k\in\Z$; so do the operators $\nabla$, $\nabla_-$ and $\Delta$. 
Precisely, for $ f \in \mathscr{C}_\rho $ and  $ k \in \Z $, we have
\begin{align*}
    \widehat{(\nabla f)}_k(q) &= (q^k-1)\,\hat f_k(q), \\
    \widehat{(\nabla_- f)}_k(q) &= (1-q^{-k})\,\hat f_k(q)
    = q^{-k}(q^k-1)\,\hat f_k(q), \\
    \widehat{(\Delta f)}_k(q) &= (q^k-2+q^{-k})\,\hat f_k(q)
    = q^{-k}(q^k-1)^2\hat f_k(q).
\end{align*}
If $f$ furthermore has zero mean, that is $f \in C_\rho$,  
we define formal inverses $\nabla^{-1}f$ and $\nabla_-^{-1}f$ by their Fourier coefficients
\begin{equation*}
    \widehat{(\nabla^{-1}f)}_k(q) := \lambda_k(q)\,\hat f_k(q),
    \qquad
    \widehat{(\nabla_-^{-1}f)}_k(q) := q^k\lambda_k(q)\,\hat f_k(q),
    \qquad k\neq0,
\end{equation*}
where
\begin{equation*}
    \lambda_k(q):=\dfrac{1}{q^k-1} \; .
\end{equation*}
At the formal level we indeed have  
\begin{equation*}
  \nabla(\nabla^{-1}f)=f
  \qand
  \nabla_-(\nabla_-^{-1}f)=f \;. 
\end{equation*}
 Proposition~\ref{prop:Cpm-properties} below shows that
these formal inverses are well-defined tame continuous maps on the relevant
function spaces. To prove this proposition, we first need to control the derivatives of $\lambda_n$. 

\begin{lemma}\label{lem:der-lambda-decomposition}
For every $n\in\N^*$ and $k\ge1$,  there are polynomials
$a_{k,j}\in\Z[X]$ for $1\le j\le k$, such that for every $q \in \C $ with $q^n \neq 1$, the $k$-th derivative of $\lambda_n$ can be written as
\begin{equation} \label{eq:decomposition}
    \partial_q^k\lambda_n(q) = \left(\sum_{j=1}^{k} a_{k,j}(n)\,q^{jn-k}\right)\lambda_n(q)^{k+1} .
\end{equation}

Moreover, the following properties hold: 
\begin{enumerate}
    \item[(i)] $a_{k,j}(n)=0$ whenever $jn<k$; 
    \item[(ii)] every nonzero $a_{k,j}(n)$ has sign $(-1)^k$;
    \item[(iii)]
    $\displaystyle\sum_{j=1}^k|a_{k,j}(n)| = k!\,n^k$.
\end{enumerate}
\end{lemma}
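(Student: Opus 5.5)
We argue by induction on $k$, differentiating the ansatz \eqref{eq:decomposition} directly. Write $\lambda:=\lambda_n$ and note the basic identity
\begin{equation*}
\partial_q\lambda(q) = -n q^{n-1}\lambda(q)^2 ,
\end{equation*}
which already gives the case $k=1$ with $a_{1,1}(n)=-n$; properties (i)--(iii) are immediate there, since $n\ge1$ and $|a_{1,1}|=n=1!\,n^1$. For the inductive step, set $P_k(q):=\sum_{j=1}^k a_{k,j}(n)q^{jn-k}$, so that $\partial_q^k\lambda=P_k\lambda^{k+1}$. Differentiating once more and using the identity above,
\begin{equation*}
\partial_q^{k+1}\lambda = \left(P_k'(q)\,\lambda^{-1} - (k+1)n q^{n-1}P_k(q)\right)\lambda^{k+2},
\qquad \lambda^{-1}=q^n-1 .
\end{equation*}
Expanding $P_k'(q)(q^n-1)=\sum_j a_{k,j}(jn-k)\big(q^{(j+1)n-(k+1)}-q^{jn-(k+1)}\big)$ and $q^{n-1}P_k=\sum_j a_{k,j}q^{(j+1)n-(k+1)}$, we collect by the exponent $jn-(k+1)$ and read off the recursion
\begin{equation*}
a_{k+1,j}(n) = -(jn-k)\,a_{k,j}(n) + \big((j-1)n-k\big)a_{k,j-1}(n) - (k+1)n\,a_{k,j-1}(n)
= -(jn-k)\,a_{k,j} - (n+1)\cdots
\end{equation*}
More precisely, the coefficient of $a_{k,j-1}$ simplifies to $(j-1)n-k-(k+1)n=-\big((k+2-j)n+k\big)$, so
\begin{equation*}
a_{k+1,j}(n) = -(jn-k)\,a_{k,j}(n) - \big((k+2-j)n+k\big)\,a_{k,j-1}(n),
\end{equation*}
with the conventions $a_{k,0}=a_{k,k+1}=0$. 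This shows at once that the $a_{k+1,j}$ are integer polynomials in $n$.

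Property (i) follows by induction from this recursion. If $jn<k+1$, then $jn\le k$, so $(j-1)n<k$ and the second term vanishes by the hypothesis. The first term vanishes too: either $jn<k$, in which case $a_{k,j}=0$, or $jn=k$, in which case the prefactor $jn-k$ is $0$. For (ii), if $a_{k,j}\neq0$ then $jn\ge k$, so $-(jn-k)\le0$, while $-\big((k+2-j)n+k\big)<0$ since $j\le k+1$. Both terms therefore carry sign $-(-1)^k=(-1)^{k+1}$ or vanish, and no cancellation can occur.

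For (iii), the absence of cancellation from (ii) gives
\begin{equation*}
\sum_j |a_{k+1,j}| = \sum_j (jn-k)|a_{k,j}| + \sum_j \big((k+1-j)n+k\big)|a_{k,j}|,
\end{equation*}
where the second sum has been reindexed $j\mapsto j+1$. Here we may replace $jn-k$ by its absolute value, because it is nonnegative whenever $a_{k,j}\ne0$. Adding the two brackets gives $(jn-k)+((k+1-j)n+k)=(k+1)n$, independent of $j$, so the sum equals $(k+1)n\cdot k!\,n^k=(k+1)!\,n^{k+1}$.

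The only delicate point is the bookkeeping of the index shift and the exponents, namely checking that $q^{n-1}\cdot q^{(j-1)n-k}$ and the derivative terms land on the same monomial $q^{jn-(k+1)}$. The rest is mechanical.
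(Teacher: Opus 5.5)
Your proof is correct and follows the paper's argument: the same induction on $k$, with the recursion $a_{k+1,j}=-(jn-k)a_{k,j}-((k+2-j)n+k)a_{k,j-1}$ being exactly the paper's, since $n(j-k-2)-k=-((k+2-j)n+k)$. The only difference is in (iii): you sum absolute values directly, using the no-cancellation given by (ii). The paper instead sums the signed coefficients to get $S_{k+1}(n)=-n(k+1)S_k(n)$ and then invokes (ii). You should also delete the stray unfinished expression $-(n+1)\cdots$ in your first displayed recursion.
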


\begin{proof}
Let $n$ be fixed and proceed  by induction on $k\ge 1$.
For $k=1$, differentiating $\lambda_n(q)=1/(q^n-1)$ directly gives
\begin{equation}
    \label{eq:lambda-prime}
    \lambda_n'(q) = -n\,q^{n-1}\,\lambda_n(q)^2,
\end{equation}
which provides the base case  with $a_{1,1}(n):=-n$.

Assume~\eqref{eq:decomposition} holds at order $k\ge1$.
We extend the family of polynomials by setting $a_{k,j}:=0$ for $j\notin\{1,\ldots,k\}$. Differentiating  $q^{jn-k}\,\lambda_n(q)^{k+1}$ for  $j \in \N$  provides
\begin{equation} \label{eq:der-monomial}
    \partial_q \left[ q^{jn-k}\,\lambda_n(q)^{k+1}\right]
    = (jn-k)\,q^{jn-k-1}\,\lambda_n(q)^{k+1}
    + (k+1)\,q^{jn-k}\,\lambda_n(q)^{k}\,\lambda_n'(q)  \, .
\end{equation}
The identity
$\lambda_n(q)^{k+1}=(q^n-1)\lambda_n(q)^{k+2}
=q^n\lambda_n(q)^{k+2}-\lambda_n(q)^{k+2}$ gives
\begin{equation*}
    q^{jn - k -1} \lambda_n (q)^{k+1} =  \left( q^{(j+1)n - (k +1)}  - q^{ jn - (k+1)}  \right) \lambda_n(q)^{k+2} \; .
\end{equation*}

Together with~\eqref{eq:lambda-prime} and~\eqref{eq:der-monomial} the latter equation implies
\begin{equation*}
     \partial_q \left[ q^{jn-k}\,\lambda_n(q)^{k+1}\right]
    =  \left[  (jn-k) ( q^{(j+1)n-(k+1)} - q^{ jn - (k+1)}  )
     - n (k+1) q^{(j+1)n-(k+1)} \right] \lambda_n(q)^{k +2 } \; .
\end{equation*}
Then let 
\begin{equation}
    \label{eq:coeff-recursion-fresh}
    a_{k+1,j}(n)
   = -(jn-k)\,a_{k,j}(n) + \left[n(j-k-2)-k\right] a_{k,j-1}(n),
\end{equation}
which inductively constructs $a_{k+1,j} $ as a polynomial. 
We now verify $(i)-(iii)$. 
The property $(i)$ obviously holds from \eqref{eq:coeff-recursion-fresh} by separating the cases $jn<k$ and $jn=k$. The property $(ii)$ comes for free by the same recursive formula \eqref{eq:coeff-recursion-fresh}. 

It remains to prove~(iii). Set
\begin{equation*}
  S_k(n):=\sum_{j=1}^k a_{k,j}(n)
  \qand
  T_k(n):=\sum_{j=1}^k j\,a_{k,j}(n).
\end{equation*}
Summing the recursion~\eqref{eq:coeff-recursion-fresh} over $j$, the contribution
of the first term is
\begin{equation*}
    \sum_j \left[ -(jn-k) a_{k,j}(n) \right] = -n T_k(n) + k S_k(n),
\end{equation*}
and the contribution of the second term is
\begin{equation*}
    \sum_j\left[n(j-k-2)-k\right]a_{k,j-1}(n)
    = \sum_i\left[n(i-k-1)-k\right]a_{k,i}(n)
    = n\,T_k(n) - n(k+1)\,S_k(n) - k\,S_k(n).
\end{equation*}
Adding the two above contributions gives
\begin{equation*}
    S_{k+1}(n) = -n(k+1)\,S_k(n).
\end{equation*}
Since $S_k(n)= (-1)^k\,k!\,n^k $ by the induction hypothesis, it follows that 
\begin{equation*}
    S_{k+1} (n)  = (-1)^{k+1} \,(k+1)!\,n^{k+1} .
\end{equation*}
This proves~(iii).
\end{proof}

We now provide the derivative bounds for $\lambda_n$ on $\KK$.

\begin{proposition}
\label{prop:der-lambda-bound}
For every
$n \in \Z^*$, every $k \ge 0$, and every $q \in \KK^0$,
\begin{equation}
    \label{eq:der-lambda-bound}
    | \partial_q^k \lambda_n(q) |
     \le
   k! \, c_\gamma^{k+1} \, |n|^{(k+1)(\tau+1) + k} \, ,
\end{equation}
where one may take $c_\gamma:=2+\dfrac{2\sqrt2}{\gamma}$.
\end{proposition}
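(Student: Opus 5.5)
The plan is to combine the algebraic decomposition of Lemma~\ref{lem:der-lambda-decomposition} with two pointwise bounds on $\KK^0$:
\begin{equation*}
|\lambda_n(q)|\le c_\gamma |n|^{\tau+1},
\qquad
|q^n\lambda_n(q)|\le c_\gamma |n|^{\tau+1},
\qquad q\in\KK^0,\ n\in\N^*.
\end{equation*}
The second bound follows from the first, because $q^n\lambda_n=1+\lambda_n$ and $1\le 2|n|^{\tau+1}$ is absorbed by the constant $2$ in $c_\gamma$. The case $k=0$ is then exactly the first bound.

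\textbf{Small-divisor estimate.} This is the main obstacle. For $q=\e(\omega)$ with $\omega=x+iy\in\AA$, choose $x_0\in\DC$ with $|x-x_0|\le|y|$, which is possible since $|y|\ge d(x,\DC)$. Write $n\omega=m+\xi$ with $m\in\Z$ and $|\Re\xi|\le 1/2$. Then
\begin{equation*}
|\Re \xi|\ge \operatorname{dist}(nx_0,\Z)-n|x-x_0|\ge \frac{\gamma}{n^{1+\tau}}-n|y|,
\qquad
|\Im\xi|=n|y|.
\end{equation*}
Hence $|\Re\xi|+|\Im\xi|\ge\gamma n^{-1-\tau}$, and so $|\xi|\ge \gamma n^{-1-\tau}/\sqrt2$. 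I would then use an elementary inequality $|e^{2\pi i\xi}-1|\ge \tfrac12|\xi|$ (or a similar constant) on the strip $|\Re\xi|\le1/2$ where $|\Im\xi|$ is bounded. When $|\Im\xi|$ is large, $|e^{2\pi i\xi}-1|$ is bounded below by a universal constant directly. Together these give $|\lambda_n|\le 2\sqrt2\,\gamma^{-1}n^{\tau+1}$ on $\e(\AA)$. On the disc $|q|\le e^{-\pi}$ one has $|q^n-1|\ge 1-e^{-\pi}$, so $|\lambda_n|\le 2$ there. Combining the two cases gives the stated $c_\gamma$; only the constants need bookkeeping.

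\textbf{Derivatives for $n\ge1$, $k\ge1$.} Each term of~\eqref{eq:decomposition} is bounded by splitting on $|q|$:
\begin{itemize}
\item If $|q|\le1$, property~(i) gives $jn-k\ge0$ whenever $a_{k,j}(n)\ne0$. So $|q^{jn-k}|\le1$ and the term is at most $|a_{k,j}(n)|\,|\lambda_n|^{k+1}$.
\item If $|q|\ge1$, rewrite $q^{jn-k}\lambda_n^{k+1}=(q^n\lambda_n)^j\,q^{-k}\,\lambda_n^{k+1-j}$. Here $|q^{-k}|\le1$ and $k+1-j\ge1$, so the term is at most $|a_{k,j}(n)|\,(c_\gamma n^{\tau+1})^{k+1}$.
\end{itemize}
Summing over $j$ with property~(iii), $\sum_j|a_{k,j}(n)|=k!\,n^k$, yields $k!\,n^k\,(c_\gamma n^{\tau+1})^{k+1}$, which is~\eqref{eq:der-lambda-bound}.

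\textbf{Negative $n$.} For $n<0$, use $\lambda_{n}(q)=1/(q^{-|n|}-1)=-q^{|n|}\lambda_{|n|}(q)=-1-\lambda_{|n|}(q)$. For $k\ge1$ the derivatives coincide up to sign with those of $\lambda_{|n|}$. For $k=0$ the bound on $|1+\lambda_{|n|}|=|q^{|n|}\lambda_{|n|}|$ established above applies.
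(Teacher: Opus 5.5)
Your proposal is correct and follows the paper's proof: the same decomposition from Lemma~\ref{lem:der-lambda-decomposition}, the same rewriting via $q^n\lambda_n=1+\lambda_n$ combined with properties (i) and (iii), and the same reduction $\lambda_{-n}=-(1+\lambda_n)$ for negative indices. The differences are minor.
\begin{itemize}
\item You split at $|q|=1$ rather than $|q|=1/2$. This avoids the factor $2^k$ from $|q|^{-k}$, at the cost of using the Diophantine bound also for $1/2\le|q|\le1$.
\item You re-derive the small-divisor estimate that the paper simply cites from \cite[Lemma~7]{CMS}. You get the weaker constant $2\sqrt2/\gamma$ instead of $\sqrt2/\gamma$, which is still absorbed in $c_\gamma$.
\end{itemize}
One phrasing point: your second pointwise bound does not follow from $|\lambda_n|\le c_\gamma|n|^{\tau+1}$ itself. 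It follows from the sharper bound $|\lambda_n|\le\frac{2\sqrt2}{\gamma}|n|^{\tau+1}$, via $|1+\lambda_n|\le 1+\frac{2\sqrt2}{\gamma}|n|^{\tau+1}\le c_\gamma|n|^{\tau+1}$.
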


\begin{proof}
Fix $q \in \KK^0$.
We use throughout the following bound provided in \cite[Lemma~7]{CMS}:
\begin{equation}
    \label{eq:CMS7}
    |\lambda_n(q)| \le   \frac{\sqrt{2}}{\gamma}|n|^{1+\tau} \, ,
    \qquad \forall n \in \Z^*   .
\end{equation}
We may reduce to positive $n$. Indeed, for $n<0$, the identity
$\lambda_n=-(1+\lambda_{-n})$ shows that the estimate for $n$ follows from
the estimate for $-n$ whenever $k\ge1$. The case $k=0$ follows directly
from~\eqref{eq:CMS7}.

Now let $n\ge1$ and $k\ge1$. Since $q$ is not a root of unity,
Lemma~\ref{lem:der-lambda-decomposition} gives
\begin{equation*}
    \partial_q^k\lambda_n(q)
    = \sum_{j=1}^k a_{k,j}(n)\,q^{jn-k}\,\lambda_n(q)^{k+1}.
\end{equation*}
By (i) in Lemma~\ref{lem:der-lambda-decomposition}, the sum ranges only over $j$ such that $jn\ge k$.  We distinguish two regions. When
$|q|\le1/2$, no Diophantine structure is involved. The complementary region
$|q|\ge1/2$, which contains the Diophantine structure, is bounded away from
$0$.

\smallskip\noindent\textit{Case $|q| \le 1/2$.}
Since $jn-k\ge0$ and $|q|\le1$, $|q|^{jn-k}\le1$ for every surviving term, while $|\lambda_n(q)|\le2$. It follows that
\begin{equation*}
 | \partial_q^k \lambda_n(q) |
\le  |\lambda_n(q)|^{k+1} \,  \sum_{j=1}^k|a_{k,j}(n)|
= |\lambda_n(q)|^{k+1} k! \, n^k
\le  2 ^{k+1} k! \,  n^k   ,
\end{equation*}
which is bounded by the right-hand side of~\eqref{eq:der-lambda-bound}.

\smallskip\noindent\textit{Case $ |q|\ge 1/2$.}
Using again the identity $q^n\lambda_n(q)=1+\lambda_n(q)$, each surviving
term with $1\le j\le k$ satisfies
\begin{equation*}
    q^{jn-k}\lambda_n(q)^{k+1}
    = q^{-k}\left(q^n\lambda_n(q)\right)^j\lambda_n(q)^{k+1-j}
    = q^{-k}\left(1+\lambda_n(q)\right)^j\lambda_n(q)^{k+1-j}.
\end{equation*}
Since $|q|^{-k} \le 2^k$, by~\eqref{eq:CMS7} both
$|1+\lambda_n(q)|$ and $|\lambda_n(q)|$ are at most $ \frac{c_\gamma}{2}  n^{1+\tau}$. Hence, for every such $j$,
\begin{equation*}
    | q^{jn-k}\lambda_n(q)^{k+1}| \le 2^k \left( \frac{c_\gamma}{2} n^{1 + \tau } \right)^{k+1} \le c_\gamma^{k+1} n^{ (1+\tau) (k+1)}   \; .
\end{equation*}
Summing this estimate and using property~(iii) of
Lemma~\ref{lem:der-lambda-decomposition} gives
\begin{align*}
    | \partial_q^k\lambda_n(q)|
    &\le  \left(\sum_{j=1}^k|a_{k,j}(n)|  \right)   c_\gamma^{k+1} n^{(1+\tau)(k+1)}   \\
    &=  k! c_\gamma^{k+1} n^{(k+1)(\tau+1)+ k} \\
    &\le k! c_\gamma^{k+1} n^{(k+1)(\tau+2)} .
\end{align*}
This concludes the second case and hence the proof.
\end{proof}

As a direct corollary, we bound the seminorms of $\lambda_n$ for every
$n\in\Z^*$.
\begin{corollary}
\label{cor:der-lambda-seminorm}
For every $n \in \Z^*$ and every $m \ge 0$,
\begin{equation}
    \label{eq:der-lambda-seminorm}
    |\lambda_n|_{m} \le  m!\,c_\gamma^{\,m+1}\,|n|^{(m+1)(\tau+2)},
\end{equation}
where $c_\gamma$ is as in Proposition~\ref{prop:der-lambda-bound}. In
particular, $\lambda_n \in \Hol^\infty(\KK,\C)$ for every $n \in \Z^*$.
\end{corollary}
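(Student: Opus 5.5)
The plan is to combine Proposition~\ref{prop:der-lambda-bound}, which controls derivatives on $\KK^0$, with a separate treatment of the second chart, since the seminorm \eqref{eq:def-scale-KK-scalar} also involves $\lambda_n^\#(q)=\lambda_n(q^{-1})$ on $\KK^0$.

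\emph{The chart at infinity.} For $q\neq0$ with $q^n\neq1$,
\begin{equation*}
\lambda_n(q^{-1})=\frac{1}{q^{-n}-1}=\frac{q^n}{1-q^n}=-\frac{q^n}{q^n-1}=-(1+\lambda_n(q)),
\end{equation*}
and this identity extends by continuity to $q=0$, where both sides equal $-1$. Equivalently, $\lambda_n^\#=-1-\lambda_n=\lambda_{-n}$ on $\KK^0$. Consequently:
\begin{enumerate}
\item for $k\ge1$, $\partial_q^k\lambda_n^\#=-\partial_q^k\lambda_n$, so it satisfies the same bound;
\item for $k=0$, we have $|\lambda_n^\#|=|\lambda_{-n}|$, which is controlled by the case $n\mapsto -n$ of \eqref{eq:der-lambda-bound} (that proposition holds for all $n\in\Z^*$, and the right-hand side depends only on $|n|$).
\end{enumerate}
So on $\KK^0$, for every $0\le k\le m$,
\begin{equation*}
\max\{|\partial_q^k\lambda_n|,|\partial_q^k\lambda_n^\#|\}\le k!\,c_\gamma^{k+1}|n|^{(k+1)(\tau+1)+k}.
\end{equation*}

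\emph{Monotonicity in $k$.} The aim is to bound this by $m!\,c_\gamma^{m+1}|n|^{(m+1)(\tau+2)}$. Three facts give this, using $c_\gamma\ge2\ge1$ and $|n|\ge1$:
\begin{enumerate}
\item $k!\le m!$;
\item $c_\gamma^{k+1}\le c_\gamma^{m+1}$;
\item $(k+1)(\tau+1)+k\le(k+1)(\tau+2)\le(m+1)(\tau+2)$.
\end{enumerate}
Taking the maximum over $k\le m$ and the supremum over $q\in\KK^0$ yields \eqref{eq:der-lambda-seminorm}.

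\emph{Membership in $\Hol^\infty(\KK,\C)$.} Some care is needed, since a priori the roots of unity $q^n=1$ could lie in $\KK$. I expect this to be the main obstacle, though it is mild. Every root of unity is $\e(p/n)$ with $p/n$ rational, and rationals are not in $\DC$. Each such point lies at the centre of a diamond of $\AA$ complement. Its image under $\e$ is therefore in the open complement of $\KK$, so $\lambda_n$ has no poles on $\KK$.

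Hence $\lambda_n$ is a rational function that is holomorphic on a neighbourhood of $\KK^0$, and likewise $\lambda_n^\#$. All their derivatives are continuous on $\KK^0$. Since $\KK^0$ is quasiconvex and perfect (Lemma~\ref{lem:quasiconvex}), the Whitney conditions hold by Lemma~\ref{lem:whitney-quasiconvex} and the remark following it, or directly by Taylor's formula along paths. The seminorms $|\lambda_n|_m$ are finite by the bound above. Therefore $\lambda_n\in\Hol^\infty(\KK,\C)$.
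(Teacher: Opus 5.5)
Your proposal is correct and follows the paper's proof. The paper takes the bound on $\KK^0$ from Proposition~\ref{prop:der-lambda-bound}, handles the second chart via $\lambda_n^\#=\lambda_{-n}$, and gets holomorphy because the poles (roots of unity) avoid $\KK$; you additionally spell out the monotonicity in $k$ that the paper leaves implicit.

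There are two harmless slips. At $q=0$ both sides of $\lambda_n(q^{-1})=-(1+\lambda_n(q))$ equal $0$, not $-1$, when $n>0$. Also, a root of unity $\e(p/n)$ lies inside some diamond $\e(Q_I)$, but not necessarily at its centre.
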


\begin{proof}
Recall that 
\begin{equation*}
    |\lambda_n|_{m }
    = \max_{0\le k\le m}\max\left\{
    \sup_{ q \in  \KK^0 }
    |\partial_q^k\lambda_n(q)|,
    \sup_{ q \in \KK^0 }
    |\partial_q ^k\lambda_n^\#(q)|\right\}
     \;.
\end{equation*}
The first term is controlled directly by
Proposition~\ref{prop:der-lambda-bound}.
For the second term, the identity
$\lambda_n^\# = \lambda_{-n}$ shows that the same bound applies with
$n$ replaced by $-n$, proving the stated bound.
For fixed $n$, the poles of $\lambda_n$ are finitely many roots of unity,
all disjoint from $\KK$. Hence $\lambda_n$ is holomorphic. 
\end{proof}

We now  provide the  basic properties of the adapted Fréchet spaces and their operators that will be used in the proof of  Theorem~\ref{thm:hol-KAM}. 
\begin{proposition}
\label{prop:Cpm-properties}
Fix $\rho,\gamma,\tau>0$. We have the following properties. 
\begin{enumerate}
    \item \emph{(Algebra.)} For $f,g\in\mathscr C_\rho^{+}$, we have 
    $fg\in\mathscr C_\rho^{+}$. For every $m\geq0$, we have the bound
    \begin{equation*}
        |fg|_{+,\rho',m} \le 2^m\,|f|_{+,\rho',m}\,|g|_{+,\rho',m}
    \end{equation*}
    for every $\rho'<\rho$. 
    Furthermore, for $ m \ge 1$  we have the sharper estimate
    \begin{equation}
    \label{eq:leibniz-split}
    |fg|_{+,\rho',m}
    \le |f|_{+,\rho',m}|g|_{+,\rho',0}
       +|f|_{+,\rho',0}|g|_{+,\rho',m}
       +2^m|f|_{+,\rho',m-1}|g|_{+,\rho',m-1} \;. 
    \end{equation}
    The same holds by replacing $+$ by $-$ or $\pm$. 
    \item \emph{(Action of $\nabla,\nabla_-$.)} The operators $\nabla,\nabla_- $ are linear maps
    \begin{equation*}
        \nabla : \mathscr C_\rho^{+} \to C_\rho,
        \qquad
        \nabla_- : \mathscr C_\rho^{-} \to C_\rho,
    \end{equation*}
    with $|\nabla f|_{\rho',m}\le2|f|_{+,\rho',m}$ and
    $|\nabla_-f|_{\rho',m}\le2|f|_{-,\rho',m}$ for every integer $m$ and for every $ 0 < \rho' < \rho$ without strip loss. 

\item \emph{(Inversion of $\nabla,\nabla_-$.)} The discrete operators
    \begin{equation*}
        \nabla^{-1} : C_\rho \to C_\rho^{+},
        \qquad
        \nabla_-^{-1} : C_\rho \to C_\rho^{-},
    \end{equation*}
    are well defined on the zero mean subspace. 
Moreover, for every $f\in C_\rho$, every $0<\rho''<\rho'<\rho$ such that
$\rho'-\rho''\le1$ and $ m \ge 0$, they satisfy the tame estimates
    \begin{equation}
        \label{eq:nabla-inv-tame-Cpm}
        |\nabla^{-1}f|_{+,\rho'',m} \le 
        \frac{ \widehat\xi_{m,\gamma,\tau}}{(\rho'-\rho'')^{(m+1)(\tau+2)+1}}\,|f|_{\rho',m}
 \end{equation}
 and
 \begin{equation*}
        |\nabla_-^{-1}f|_{-,\rho'',m} \le 
        \frac{ \widehat\xi_{m,\gamma,\tau}}{(\rho'-\rho'')^{(m+1)(\tau+2)+1}}\,|f|_{\rho',m},
    \end{equation*}
    where
    \begin{equation}
        \label{eq:Chat-def}
        \widehat\xi_{m,\gamma,\tau}
        := (m+1)^2\,2^{m+1} \, m! \, c_\gamma^{m+1} \, K \left( (m+1)(\tau+2) \right)
    \end{equation}
with $c_\gamma$ as in Proposition~\ref{prop:der-lambda-bound} and $K(\cdot)$ the constant of Lemma~\ref{lem:k-sum}.
\end{enumerate}

\end{proposition}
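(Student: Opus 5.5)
The plan is to prove the three items separately, in the order stated, working throughout in the chart $\KK^0$ with the seminorms \eqref{eq:def-scale-KK-vector}. The chart $q\mapsto q^{-1}$ is handled the same way, since there the forward and backward shifts swap roles: $f^\#$ has Fourier multipliers $q^{-k}$ instead of $q^k$, and $\lambda_n^\#=\lambda_{-n}$.

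\emph{Algebra.} First note that $(fg)^+=f^+g^+$, because the shift acts by $\theta\mapsto\theta+\omega$. On $\KK$ the identity $\widehat{(fg)^+}_k=q^k\widehat{(fg)}_k$ follows from the convolution formula, since $q^k=q^jq^{k-j}$. Hence $fg\in\mathscr C^+_\rho$ as soon as the product of two elements of $\Hol^\infty(\KK,\mathscr H_\rho(\T))$ stays in that space. This is clear by Leibniz' rule, because $\mathscr H_\rho$ is a Banach algebra for each $|\cdot|_{\rho'}$. The estimate then comes from
\begin{equation*}
\partial_q^k(fg)=\sum_{i}\binom{k}{i}\partial_q^if\,\partial_q^{k-i}g,
\end{equation*}
using $\sum_i\binom{k}{i}=2^k\le 2^m$. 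For \eqref{eq:leibniz-split} we isolate the terms $i=0$ and $i=k$. The remaining terms involve derivatives of order at most $m-1$, so they are bounded by $2^m|f|_{+,\rho',m-1}|g|_{+,\rho',m-1}$. Applying the same argument to $f^+g^+$ gives the result for the $+$ seminorm.

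\emph{Action of $\nabla$, $\nabla_-$.} We have $\nabla f=f^+-f$, so its derivatives are differences of the derivatives of $f^+$ and $f$, which gives the factor $2$. It has zero mean because the $k=0$ multiplier $q^0-1$ vanishes. The key point is $\nabla f\in\mathscr C_\rho$, which holds because both $f^+$ and $f$ lie there by hypothesis, so there is no strip loss.

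\emph{Inversion.} This is the main step. For $f\in C_\rho$ we write
\begin{equation*}
\nabla^{-1}f=\sum_{k\ne0}\lambda_k\widehat f_k e_k.
\end{equation*}
Its forward shift is $\sum_{k\ne0}q^k\lambda_k\widehat f_ke_k=\sum_{k\ne0}(1+\lambda_k)\widehat f_ke_k=\nabla^{-1}f+\mathcal T f$, where $\mathcal T f=f$ on zero-mean functions. So it is enough to show that $\nabla^{-1}f\in C_\rho$ with the stated bound; the bound for the shift then follows at no extra cost, since $|f|_{\rho'',m}$ is dominated by the right-hand side. To estimate the $k$-th $q$-derivative for $k\le m$, we apply Leibniz termwise:
\begin{equation*}
\partial_q^k(\lambda_n\widehat f_n)=\sum_i\binom{k}{i}\partial_q^i\lambda_n\,\partial_q^{k-i}\widehat f_n.
\end{equation*}
Corollary~\ref{cor:der-lambda-seminorm} bounds $|\partial_q^i\lambda_n|\le m!\,c_\gamma^{m+1}|n|^{(m+1)(\tau+2)}$. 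Lemma~\ref{lem:fourier-extraction} gives $|\partial_q^{j}\widehat f_n|\le e^{-2\pi\rho'|n|}|f|_{\rho',m}$, and $|e_n|_{\rho''}=e^{2\pi\rho''|n|}$. Summing over $n$ with Lemma~\ref{lem:k-sum}, taking $\alpha=(m+1)(\tau+2)$ and $D=\rho'-\rho''\le1$, produces the factor $K(\alpha)D^{-\alpha-1}$. The binomial sum contributes $2^{m}$, and the max over $k\le m$ and the two charts is absorbed by the generous prefactor $(m+1)^2 2^{m+1}$. Uniform convergence of these series in every seminorm shows that the partial sums converge in $\Hol^\infty(\KK,\Hol_{\rho''}(\T))$. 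Since each partial sum is $C^\infty$--Whitney holomorphic and the space is complete, the limit lies in $C_{\rho''}$ for every $\rho''<\rho$, hence in $C_\rho$.

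The delicate point is the behaviour at $q=0$ and $q=\infty$ and at the boundary points on the unit circle. The boundary points are handled by the uniform bounds of Proposition~\ref{prop:der-lambda-bound}, which hold on all of $\KK^0$ including the Diophantine boundary. At $q=0$ each $\lambda_n$ is holomorphic, so nothing more is needed there. Finally, $\nabla_-^{-1}$ is treated identically using the multiplier $q^k\lambda_k=1+\lambda_k$, together with the reflected chart.
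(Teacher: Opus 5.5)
Your proposal is correct and follows essentially the same route as the paper: Leibniz for the algebra bounds, the triangle inequality for $\nabla$, and termwise Leibniz with Corollary~\ref{cor:der-lambda-seminorm}, Lemma~\ref{lem:fourier-extraction} and Lemma~\ref{lem:k-sum} for $\nabla^{-1}$, together with $q^k\lambda_k=1+\lambda_k$ for the shifted function. Your treatment of $\nabla_-^{-1}$ through the same multiplier, which amounts to $\nabla_-^{-1}f=(\nabla^{-1}f)^+$ and $(\nabla_-^{-1}f)^-=\nabla^{-1}f$, is a slightly cleaner substitute for the paper's reflection identity $\nabla_-^{-1}(f^\#)=-(\nabla^{-1}f)^\#$. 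The one imprecision is the claim that the shift bound comes ``at no extra cost'': you need slack in the bound on $\nabla^{-1}f$, and your computation does provide it, since its constant $2^m m!\,c_\gamma^{m+1}K((m+1)(\tau+2))$ is at most $\widehat\xi_{m,\gamma,\tau}/2$; then $\rho'-\rho''\le1$ and $\widehat\xi_{m,\gamma,\tau}\ge2$ absorb the extra term $|f|_{\rho'',m}$, exactly as in the paper.
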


\begin{proof}
\smallskip\noindent\textit{(i) Algebra property.}
For every $0\le k\le m$, the $k$-th derivative of $fg$ exists by the Leibniz rule and we have the bound 
\begin{equation} \label{eq:leibniz-step}
   |\partial_q^k(fg)|_{\rho',0}
   \le\sum_{j=0}^k\binom{k}{j}
   |\partial_q^jf|_{\rho',0}|\partial_q^{k-j}g|_{\rho',0}
   \le2^m|f|_{\rho',m}|g|_{\rho',m}.
\end{equation}
Using
$\partial_q^k\left((fg)^{+}\right)=\partial_q^k(f^+g^+)$, we similarly
obtain $|\partial_q^k((fg)^{+})|_{\rho',0}
\le2^m|f^+|_{\rho',m}|g^+|_{\rho',m}$. Taking the supremum over $k\le m$ proves the first algebra property. 
For $m\geq1$, the same equation \eqref{eq:leibniz-step} provides the second inequality. 
The proof for the other choices of signs is similar.

\smallskip\noindent\textit{(ii) Action of $\nabla$.} The triangle inequality trivially gives 
\begin{equation*}
    |\nabla f|_{\rho',m}
    \le  |f^+|_{\rho',m}+|f|_{\rho',m}
    \le  2|f|_{+,\rho',m}, 
\end{equation*}
and $ \nabla f $ has zero mean.
 The same argument applies to $\nabla_-$.

\noindent\textit{(iii) Inversion of $\nabla$.} We prove the assertion for
$\nabla^{-1}$. The proof for $\nabla_-^{-1}$ follows from the identity $  \nabla_-^{-1}(f^\#) = -\left(\nabla^{-1}f\right)^\#$. 
Applying Lemma~\ref{lem:fourier-extraction} to
$\partial_q^\ell f$, for every $\ell\leq m$ and $k\in\Z^*$, yields
\begin{equation}
    \label{eq:fourier-cauchy}
    \sup_{q\in \KK^0}
    \left|\partial_q^\ell\hat f_k(q)\right|
    \le  |f|_{\rho',m}\,e^{-2\pi\rho'|k|}.
\end{equation}
By the Leibniz rule, for $0\le\ell\le m$,
\begin{equation*}
    \left|\partial_q^\ell\left[\lambda_k(q)\hat f_k(q)\right]\right|
    \le \sum_{j=0}^\ell\binom{\ell}{j}\left|\partial_q^j\lambda_k(q)\right|
    \cdot\left|\partial_q^{\ell-j}\hat f_k(q)\right|.
\end{equation*}
Combining Corollary~\ref{cor:der-lambda-seminorm}
with~\eqref{eq:fourier-cauchy} for $\ell-j\le m$ gives
\begin{equation*}
    \sup_{q\in\KK^0}
    \left|\partial_q^\ell\left[\lambda_k(q)\hat f_k(q)\right]\right|
    \le \left(\sum_{i=0}^\ell\binom{\ell}{i}i!\,c_\gamma^{i+1}|k|^{(i+1)(\tau+2)}\right)
    |f|_{\rho',m}\,e^{-2\pi\rho'|k|}.
\end{equation*}
The exact same bound on $\KK^\infty $ is then directly obtained by using the identity $  (\lambda_k\hat f_k)^\# =\lambda_{-k} \cdot \hat f_k^\# $  on $ \KK^0 $. 
Summing over $k\neq0$ against the weight $e^{2\pi\rho''|k|}$ and
applying Lemma~\ref{lem:k-sum} with $\alpha=(i+1)(\tau+2)$
and $D=\rho'-\rho''$ gives the bound
\begin{equation*}
    |\nabla^{-1}f|_{\rho'',m}
    \le \left(\sum_{\ell=0}^m\sum_{i=0}^\ell\binom{\ell}{i}i!\,c_\gamma^{i+1}
    \frac{K\left((i+1)(\tau+2)\right)}{(\rho'-\rho'')^{(i+1)(\tau+2)+1}}\right)
    |f|_{\rho',m}.
\end{equation*}

Since $K(\alpha)$ and the power of $(\rho'-\rho'')^{-1}$ are both
increasing in $i$, bounding every term in the double sum by the
$i=\ell=m$ term, and using $\sum_{\ell=0}^m\sum_{i=0}^\ell\binom{\ell}{i}
= 2^{m+1} - 1 \le 2^{m+1}$, gives
\begin{equation}
    \label{eq:nabla-inv-tame-self}
    |\nabla^{-1}f|_{\rho'',m}
    \le  \frac{\widehat\xi_{m,\gamma,\tau}}{ 2 (\rho'-\rho'')^{(m+1)(\tau+2)+1}}\,|f|_{\rho',m},
\end{equation}
with $\widehat\xi_{m,\gamma,\tau}$ as in~\eqref{eq:Chat-def}.

\smallskip 
We now study $(\nabla^{-1}f)^+$.
We use again the identity
$q^k\lambda_k(q) = 1+\lambda_k(q)$ which reads in Fourier modes as
\begin{equation*}
    \widehat{(\nabla^{-1}f)^+}_k(q)
    = q^k\lambda_k(q)\,\hat f_k(q)
    = \left(1+\lambda_k(q)\right)\hat f_k(q)
    = \hat f_k(q) + \widehat{(\nabla^{-1}f)}_k(q) \;. 
\end{equation*}
 Hence, by the triangle inequality
and~\eqref{eq:nabla-inv-tame-self},
\begin{align*}
    |(\nabla^{-1}f)^+|_{\rho'',m}
   &\le |f|_{\rho'',m} + |\nabla^{-1}f|_{\rho'',m} \\
   &\le |f|_{\rho',m} + \frac{\widehat\xi_{m,\gamma,\tau}} {2(\rho'-\rho'')^{(m+1)(\tau+2)+1}}\,|f|_{\rho',m} \\
   &\le \frac{ \widehat\xi_{m,\gamma,\tau}}{(\rho'-\rho'')^{(m+1)(\tau+2)+1}} \,|f|_{\rho',m} \;, 
\end{align*}
since  $\rho'-\rho''\le1$, and $\widehat\xi_{m,\gamma,\tau}\ge 2 $. Combining the two bounds gives 
\begin{equation*}
    |\nabla^{-1}f|_{+,\rho'',m}
    = \max\left(|\nabla^{-1}f|_{\rho'',m},\,|(\nabla^{-1}f)^+|_{\rho'',m}\right)
    \le \frac{ \widehat\xi_{m,\gamma,\tau}}{(\rho'-\rho'')^{(m+1)(\tau+2)+1}}\,|f|_{\rho',m},
\end{equation*}
which is~\eqref{eq:nabla-inv-tame-Cpm}. 

In particular the series defining $ \nabla^{-1} f$ is normally convergent in every seminorm, providing the required holomorphy property with tame estimates. 
\end{proof}

Observe that we obtain the following corollary under a stronger regularity assumption. 
\begin{corollary}
\label{cor:nabla-inv-Cpm}
Let  $0<\rho''<\rho'<\rho$ satisfy
$\rho'-\rho''\le1$. For every $f\in C_\rho^{-}$, we have
$\nabla^{-1}f\in C_\rho^{\pm}$, and for every $m\ge0$,
\begin{equation}
    \label{eq:nabla-inv-Cpm-strong}
    |\nabla^{-1}f|_{\pm,\rho'',m}
    \le \frac{\widehat\xi_{m,\gamma,\tau}}{(\rho'-\rho'')^{(m+1)(\tau+2)+1}}\,|f|_{-,\rho',m}.
\end{equation}
\end{corollary}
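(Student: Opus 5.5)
The idea is to reduce everything to Proposition~\ref{prop:Cpm-properties}(iii), by expressing both shifts of $g:=\nabla^{-1}f$ in terms of $f$ and $f^-$ in Fourier modes. Fix $f\in C_\rho^-$ and put $g:=\nabla^{-1}f$. From Proposition~\ref{prop:Cpm-properties}(iii) we already know that $g\in C_\rho^+$ and that
\[
|g|_{+,\rho'',m}\le \frac{\widehat\xi_{m,\gamma,\tau}}{(\rho'-\rho'')^{(m+1)(\tau+2)+1}}\,|f|_{\rho',m}.
\]
Since $|f|_{\rho',m}\le|f|_{-,\rho',m}$, this settles the $g$ and $g^+$ parts of $|g|_{\pm,\rho'',m}$. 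It remains to show that $g^-\in C_\rho$ and to bound $|g^-|_{\rho'',m}$ by the same constant times $|f|_{-,\rho',m}$.

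For the backward shift, note that $\nabla$ and the shift $f\mapsto f^-$ commute on Fourier coefficients. For $k\neq0$ and $q\in\KK\cap\C^\ast$ we have
\[
\widehat{g^-}_k(q)=q^{-k}\lambda_k(q)\hat f_k(q)=\lambda_k(q)\,\widehat{f^-}_k(q).
\]
Moreover $f^-\in C_\rho$ by assumption: it has zero mean because shifting preserves the mean, and its regularity comes from the definition of $C_\rho^-$. The candidate for $g^-$ is therefore $\nabla^{-1}(f^-)$, which lies in $C_\rho$ by Proposition~\ref{prop:Cpm-properties}(iii). Its Fourier coefficients agree with $q^{-k}\widehat g_k$ on $\KK\cap\C^\ast$, so by the uniqueness in the definition of the shift, $g^-$ exists and equals $\nabla^{-1}(f^-)$. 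The bound \eqref{eq:nabla-inv-tame-self} applied to $f^-$ then gives
\[
|g^-|_{\rho'',m}\le \frac{\widehat\xi_{m,\gamma,\tau}}{2(\rho'-\rho'')^{(m+1)(\tau+2)+1}}\,|f^-|_{\rho',m}.
\]
The right-hand side is at most the claimed bound, because $|f^-|_{\rho',m}\le|f|_{-,\rho',m}$.

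Taking the maximum of the three bounds gives \eqref{eq:nabla-inv-Cpm-strong}, and $g\in C_\rho^+\cap C_\rho^-=C_\rho^\pm$.

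The only delicate point is the identification $g^-=\nabla^{-1}(f^-)$ at the compactification points $q=0,\infty$, where the Fourier multiplier $q^{-k}$ need not make sense. This is handled by the convention in the definition of shifts: they are characterised only on $\KK\cap\C^\ast$, and membership in $C_\rho$ supplies the extension to $q=0,\infty$. Since $\nabla^{-1}(f^-)\in C_\rho$, that extension is automatic, and the identification on $\KK\cap\C^\ast$ is all that is needed.
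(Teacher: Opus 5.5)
Your proposal is correct and is the paper's argument: you apply Proposition~\ref{prop:Cpm-properties}(iii) to $f$ (for the $g$ and $g^+$ parts) and to $f^-$, using the identity $(\nabla^{-1}f)^-=\nabla^{-1}(f^-)$, which you check on Fourier coefficients over $\KK\cap\C^\ast$. Your remarks on the points $q=0,\infty$ and on the factor-$\tfrac12$ bound for $g^-$ simply spell out details the paper leaves implicit.
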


\begin{proof}
The desired result is obtained by applying Proposition~\ref{prop:Cpm-properties}(iii) to $f$ and to $f^-$, using the identity $ 
\left(\nabla^{-1}f\right)^-=\nabla^{-1}\left(f^-\right)$.
\end{proof}

\subsection{Solving the cohomological equation and control of the increments} \label{subsec:cohom}
We now solve the cohomological equation~\eqref{eq:cohom}. First observe the following elementary fact.
\begin{fact} \label{fact:zero-mean}
Let $V\in\Hol_\rho(\T)$ and let $u\in C_{\rho'}^{\pm}$ for some
$0<\rho'<\rho$. Assume that the composition
$V'\circ(\id_\T+u)$ is defined on $\T_{\rho'}$ and belongs to
$\mathscr C_{\rho'}$. Then the function $ (1 + \partial_\theta u) \mathcal{E} (u,V)$ has zero mean: 
\begin{equation*}
    \int_\T(1+\partial_\theta u)\,\mathcal E(u,V)=0.
\end{equation*}
\end{fact}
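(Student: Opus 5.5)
We write $A:=1+\partial_\theta u$ and split $\mathcal E(u,V)=V'(\id_\T+u)-\Delta u$. The claim then reduces to two separate zero-mean identities, computed pointwise in $q\in\KK$:
\begin{equation*}
\int_\T A\,V'(\id_\T+u)=0
\qquad\text{and}\qquad
\int_\T A\,\Delta u=0 .
\end{equation*}

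For the first identity, the chain rule gives $A\,V'(\id_\T+u)=\partial_\theta\bigl[V(\id_\T+u)\bigr]$ on $\T_{\rho'}$. The map $\id_\T+u$ is a lift of degree one, because $u$ is $1$-periodic. The potential $V$ is $1$-periodic and holomorphic on $\T_\rho$, and by hypothesis the composition is defined on $\T_{\rho'}$. Hence $V(\theta+u(\theta))$ is a $1$-periodic holomorphic function of $\theta$, and the mean of its $\theta$-derivative vanishes. Equivalently, in Fourier terms, the zeroth coefficient of a derivative is zero. This holds even for complex $q$ and complex $V$, since only periodicity and holomorphy in $\theta$ are used.

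For the second identity, we have $\int_\T\Delta u=0$, since $\Delta$ multiplies the $k$-th Fourier coefficient by $q^{-k}(q^k-1)^2$, which vanishes at $k=0$. It therefore remains to show $\int_\T(\partial_\theta u)\,\Delta u=0$. By Parseval, with the pairing taken without complex conjugation,
\begin{equation*}
\int_\T(\partial_\theta u)\,\Delta u=\sum_{k\neq0}2\pi i k\,\hat u_k\,(q^{-k}-2+q^{k})\,\hat u_{-k}.
\end{equation*}
The weight $q^{-k}-2+q^{k}$ is even in $k$. Replacing $k$ by $-k$ therefore changes the sign of the summand, so the sum vanishes.

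This series manipulation is justified because $u\in C^\pm_{\rho'}$, so $u^+$ and $u^-$ are honest holomorphic functions on $\T_{\rho'}$. Their Fourier coefficients $q^{\pm k}\hat u_k$ then decay exponentially, and the sum converges absolutely. At $q=0$ and $q=\infty$ we would argue by continuity in $q$. Alternatively, in physical space the identity reads $\int u'(\theta)\bigl(u(\theta+\omega)+u(\theta-\omega)\bigr)\,d\theta=0$, and this follows from integration by parts combined with the change of variables $\theta\mapsto\theta+\omega$.

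The only delicate point is the meaning of the shifts for complex $\omega$, or at $q\in\{0,\infty\}$. Working on Fourier coefficients with the definition of $f^\pm$ in Definition~\ref{def:Cpm} avoids this issue entirely, so we would present the argument in that form. Adding the two identities then gives $\int_\T A\,\mathcal E(u,V)=0$.
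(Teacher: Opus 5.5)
Your proof is correct and is essentially the paper's argument. You use the same splitting into $\int_\T A\,V'(\id_\T+u)=0$, which follows from $A\,V'(\id_\T+u)=\partial_\theta[V(\id_\T+u)]$ and periodicity, and $\int_\T A\,\Delta u=0$; the only difference is that you prove the second identity through the antisymmetry $k\mapsto -k$ of the Fourier sum, whereas the paper integrates by parts and shifts $\theta\mapsto\theta-\omega$ (your physical-space alternative), which for complex $\omega$ is justified by exactly your Fourier computation.
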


\begin{proof}
Periodicity and an integration by parts yield
\begin{equation*}
 \int_\T (\partial_\theta u)u=0,
 \qand
 \int_\T(\partial_\theta u)u^+
 =-\int_\T u(\partial_\theta u)^+
 =-\int_\T(\partial_\theta u)u^-.
\end{equation*}
It follows that
\begin{equation}\label{eq:int-u}
\int_\T(1+\partial_\theta u)\Delta u=0.
\end{equation}
 On the other hand,
periodicity gives
$
 (1+\partial_\theta u)V'(\id_\T+u)
 =\partial_\theta\left[V(\id_\T+u)\right],
$
and hence
\begin{equation} \label{eq:int-V}
 \int_\T(1+\partial_\theta u)V'(\id_\T+u)=0.
\end{equation}
Subtracting \eqref{eq:int-u} from \eqref{eq:int-V} ends the proof. 
\end{proof}

We will next solve the cohomological equation with appropriate
bounds and prove that the resulting full solution map is holomorphic.
The proof requires controlling the map $A\mapsto(AA^+)^{-1}$ using the following lemma. 
\begin{lemma}
\label{lem:Ainv-linear}
Let $0<\rho'<\rho$ and let
$f\in\mathscr C_{\rho}$ satisfy
$|f-1|_{\rho',0}\le  \frac{1}{2}$. Then
$f^{-1}\in\mathscr C_{\rho'} $ and, for every $m\geq1$, 
\begin{equation}
    \label{eq:Ainv-linear}
    \left| \frac{1}{f} \right|_{\rho',m}  \le  4 \,|f|_{\rho',m}
     +  2^{m+1}\,m!\, \left(1 + 2|f|_{\rho',m-1}\right)^m \;. 
\end{equation}
At order $m=0$, one has $|f^{-1}|_{\rho',0}\leq2$.
Moreover,  the map 
\begin{equation*}
\begin{array}{rcl}
\left\{f\in\mathscr C_\rho:
|f-1|_{\rho',0}<\dfrac12\right\}
&\longrightarrow&
\mathscr C_{\rho'}
\\[4pt]
f 
&\longmapsto&
f^{-1}
\end{array}
\end{equation*}
is holomorphic. 
In particular, if $f\in\mathscr C_{\rho'}^{\pm}$ and
$|f-1|_{\pm,\rho',0}\leq\frac12$, the same estimates hold with
$|\cdot|_{\rho',m}$ replaced by $|\cdot|_{\pm,\rho',m}$ and
$f^{-1}\in\mathscr C_{\rho'}^{\pm}$.
\end{lemma}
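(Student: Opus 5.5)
The plan is to work pointwise in $(q,\theta)$ first and then control the $q$-derivatives through the Fa\`a di Bruno formula for the scalar map $z\mapsto 1/z$.

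\emph{Order zero.} Since $|f-1|_{\rho',0}\le\frac12$, we have $|f(q,\theta)|\ge\frac12$ for all $q\in\KK^0$ and $\theta\in\T_{\rho'}$, and similarly for $f^\#$. Hence $1/f$ is well defined, holomorphic in $\theta$ on $\T_{\rho'}$, and satisfies $|f^{-1}|_{\rho',0}\le 2$. In the interior of $\KK$ it is holomorphic in $q$, because $f$ is. Its $q$-derivatives are given by the formal chain rule, and these extend continuously to the boundary. By the Remark following Lemma~\ref{lem:whitney-quasiconvex}, together with the quasiconvexity of Lemma~\ref{lem:quasiconvex}, this gives $f^{-1}\in\mathscr C_{\rho'}$, with the chart at $\infty$ handled identically since $(1/f)^\#=1/f^\#$.

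\emph{Derivative bound.} Write $g=1/f$ and differentiate the identity $fg=1$ $k$ times, for $k\le m$:
\[
f\,\partial_q^kg=-\sum_{j=1}^{k}\binom kj\partial_q^jf\,\partial_q^{k-j}g .
\]
The term with $j=k$ gives $|\partial_q^kg|\le 2\cdot2\,|\partial^k_qf|\le 4|f|_{\rho',m}$; this is the first term of~\eqref{eq:Ainv-linear}. The remaining terms involve only derivatives of order at most $m-1$ of both $f$ and $g$. To bound them, I would use Fa\`a di Bruno on $1/f$:
\[
\partial^\ell_q g=\sum_{\pi}(-1)^{|\pi|}|\pi|!\,f^{-|\pi|-1}\prod_{B\in\pi}\partial_q^{|B|}f ,
\]
which gives $|\partial_q^\ell g|\le \sum_p\stirling{\ell}{p}p!\,2^{p+1}|f|_{\rho',\ell}^p$. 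By Lemma~\ref{lem:fubini-bound}, this is controlled by $2\,\ell!\,2^\ell(2|f|_{\rho',\ell})^{\max}$, and so by a quantity of the shape $2^{\ell+1}\ell!(1+2|f|_{\rho',\ell})^\ell$. Feeding this bound for $\ell\le m-1$ into the Leibniz sum and absorbing the binomials will give the second term $2^{m+1}m!(1+2|f|_{\rho',m-1})^m$. I expect this combinatorial bookkeeping to be the main obstacle. If the Leibniz route loses constants, I would instead apply Fa\`a di Bruno directly at order $m$: isolate the single-block partition, which produces $f^{-2}\partial^m f$ and hence the $4|f|_{\rho',m}$ term, and bound all other partitions, whose blocks have size at most $m-1$, using Lemma~\ref{lem:fubini-bound}.

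\emph{Holomorphy and the $\pm$ version.} For holomorphy of $f\mapsto f^{-1}$, I would use the equivalent criterion recalled in the paper: it suffices to show continuity plus holomorphy along complex affine lines. Along a line $f+tg$, the map $t\mapsto 1/(f+tg)$ is holomorphic pointwise, and the estimates above are locally uniform in $t$. Continuity follows from the identity $1/f-1/h=(h-f)/(fh)$ together with the algebra bound of Proposition~\ref{prop:Cpm-properties}. For the last claim, note that $(1/f)^\pm=1/f^\pm$, since the shift acts by composition in $\theta$. Applying the previous results to $f^\pm$, which satisfy $|f^\pm-1|_{\rho',0}\le\frac12$, then gives both the membership $f^{-1}\in\mathscr C^{\pm}_{\rho'}$ and the same estimates in the $|\cdot|_{\pm,\rho',m}$ norms.
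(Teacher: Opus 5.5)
Your proposal is correct and is essentially the paper's proof. The paper gets $|f^{-1}|_{\rho',0}\le 2$ from the Neumann series. It then applies Fa\`a di Bruno directly at order $m$: the single-block partition gives $4|f|_{\rho',m}$, and all other partitions are bounded by $\sum_{p\ge2}\stirling{m}{p}p!\,2^{p+1}|f|_{\rho',m-1}^p\le 2^{m+1}m!(1+2|f|_{\rho',m-1})^m$ via Lemma~\ref{lem:fubini-bound}. This is exactly your fallback, and your Leibniz variant also closes, since the terms $1\le j\le k-1$ contribute at most $2(e^{1/2}-1)\,2^k k!\,(1+2|f|_{\rho',m-1})^k$.

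One step is worth making explicit, and the paper also leaves it implicit: the identity $(1/f)^\pm=1/f^\pm$ when $|q|\neq1$. Since $f^\pm$ exists, the Fourier series of $f(q,\cdot)$ converges on the whole strip between $\T_{\rho'}$ and $\T_{\rho'}\pm\omega$. The maximum principle applied to $f(q,\cdot)-1$ keeps it bounded by $\frac12$ on that strip. Hence $1/f(q,\cdot)$ has no poles in between, and its Fourier coefficients are indeed multiplied by $q^{\pm k}$.
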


\begin{proof}
The hypothesis $|f-1|_{\rho',0}\leq\frac12$ gives
$f^{-1}=\sum_{k\geq0}(1-f)^k$ since 
\begin{equation}
\label{eq:f-0}
    \left|\frac1f\right|_{\rho',0}
    \le 
    \sum_{k\geq0}|1-f|_{\rho',0}^k
    \le 2.
\end{equation}
This, in particular, provides the bound at $ m = 0$.
For $m\geq1$, the derivative of order $m$ decomposes as
\begin{equation*}
\partial_q^m(f^{-1})
=
\sum_{\mathcal{P} \vdash \{1,\ldots,m\}}
(-1)^{|\mathcal P|}|\mathcal P|!\,
f^{-|\mathcal P|-1}
\prod_{B\in\mathcal{P}}\partial_q^{|B|}f,
\end{equation*}
where $ \mathcal{P} \vdash \{1,\ldots,m\}$ means that $ \mathcal{P }$ ranges over the partitions of $\{1,\ldots,m\}$. 
The term $ \mathcal{P} = \lbrace  \lbrace 1 , \dots m \rbrace \rbrace $ formed by a single subset contributes
$-f^{-2}\partial_q^m f$ and, by~\eqref{eq:f-0}, satisfies the bound 
\begin{equation*}
\left|f^{-2}\partial_q^m f\right|_{\rho',0}
\le 
\left|f^{-1}\right|_{\rho',0}^2
\left|\partial_q^m f\right|_{\rho',0}
\le 
4|f|_{\rho',m}.
\end{equation*}

Every other partition has at least two subsets, and hence each
of its subsets has at most $m-1$ elements. For such a partition $ \mathcal{P}$, if we denote 
$p:=|\mathcal P|$, its contribution is therefore bounded by
\begin{equation*}
\left|
f^{-p-1}
\prod_{B\in\mathcal P}\partial_q^{|B|}f
\right|_{\rho',0}
\le 
2^{p+1}|f|_{\rho',m-1}^{\,p}.
\end{equation*}
Since the number of partitions of $\{1,\ldots,m\}$ into $p$ subsets is
$\stirling{m}{p}$, we obtain
\begin{equation*}
\left|\partial_q^m(f^{-1})\right|_{\rho',0}
\le 
4|f|_{\rho',m}
+
\sum_{p=2}^m
\stirling{m}{p}p!\,
2^{p+1}|f|_{\rho',m-1}^{\,p}.
\end{equation*}

Lemma~\ref{lem:fubini-bound} now gives that 
\begin{equation*}
\left|\partial_q^m(f^{-1})\right|_{\rho',0}
\le 
4|f|_{\rho',m}
+
2^{m+1}m!\,
\left(1+2|f|_{\rho',m-1}\right)^m.
\end{equation*} 

Since the seminorms increase with their order and the right-hand side
also increases with $m$, we obtain~\eqref{eq:Ainv-linear}. Applying this result to discrete shifts provides the last assertion.
\end{proof}

With this inverse estimate in hand, we can solve the Levi--Moser
cohomological equation while retaining explicit control at every order. 
For $m \ge  0$ and $ \gamma, \tau > 0$, let us introduce the notations 
\begin{equation} \label{eq:def-tau-m-xi}
    \xi_{m,\gamma,\tau} := 2^{\tau_m+12}\,\widehat\xi_{m,\gamma,\tau}^{\,2},
    \qquad \tau_m := 2(m+1)(\tau+2)+3.
\end{equation}
Here we recall that $ \widehat\xi_{m,\gamma,\tau} $ is the constant of \eqref{eq:Chat-def} from Proposition~\ref{prop:Cpm-properties}. 

\begin{theorem}
\label{thm:solution-hol}
Let $0<\rho''<\rho'$ with $\rho'-\rho''\le1$. Consider
$u  \in C_{\rho'}^{\pm}$, set $ A := 1 +\partial_\theta u$ and assume that 
  \begin{equation}\label{eq:A-small}
    |  A - 1 |_{\pm,\rho',0}<\tfrac{1}{8} \;. 
 \end{equation}
For every $E\in\mathscr C_{\rho'}$ with $\langle AE\rangle=0$,
there is a unique $S\in\mathscr C_{\rho'}^{\pm}$    satisfying
\begin{equation*}
    [A,AS]_\Delta=AE,
    \qquad \langle AS\rangle=0.
\end{equation*}
We denote this normalized solution by $S=\mathcal{S}(A,E)$. It is given by the explicit formula 
\begin{equation}
    \label{eq:G-formula}
    \mathcal{S}(A,E):=S_0(A,E)
    -\left\langle A S_0(A,E)\right\rangle,
\end{equation}
where
\begin{equation*}
    S_0(A,E) :=\nabla^{-1}\!\left[
    \frac{\nabla_-^{-1}(AE)-\chi(A,E)}{AA^+}\right] 
    \qand 
    \chi(A,E) :=
    \frac{\displaystyle\left\langle
    \frac{\nabla_-^{-1}(AE)}{AA^+}\right\rangle}
    {\displaystyle\left\langle(AA^+)^{-1}\right\rangle} .
\end{equation*}
The  solution map
\begin{equation*}
\begin{array}{rcl}
\left\{(A,E)\in\mathscr C_{\rho'}^{\pm}\times\mathscr C_{\rho'}:
\langle A\rangle=1,\ \langle AE\rangle=0,\
|A-1|_{\pm,\rho',0}<\frac18\right\}
&\longrightarrow&
\mathscr C_{\rho''}^{\pm}\\
(A,E)&\longmapsto&\mathcal S(A,E)
\end{array}
\end{equation*}
is locally the restriction of a holomorphic map on an open subset of
the ambient product.

\smallskip With $h:=AS$ the corresponding increment, we furthermore have the following graded bounds. At order zero, we have: 
\begin{equation*}
 |S|_{\pm,\rho'',0}\le
 \frac{\xi_{0,\gamma,\tau}}
 {(\rho'-\rho'')^{\tau_0}}|E|_{\rho',0},
 \qquad
 |h|_{\pm,\rho'',0}\le
 \frac{\xi_{0,\gamma,\tau}}
 {(\rho'-\rho'')^{\tau_0}}|E|_{\rho',0}.
\end{equation*}

For $m\ge1$, put
\begin{equation*}
    \Theta_m := 2^{\,m^3+3m^2+9m+9}\,(m!)^{m+3}  
    \left(1+|\partial_\theta u|_{\pm,\rho',m-1}\right)^{2m^2+4m+2}
    |E|_{\rho',m-1} \qand  \Theta_0:=0. 
\end{equation*}
Then the  solution $S$  and the increment $h$ satisfy
\begin{align*}
 |S|_{\pm,\rho'',m}
 &\le
    \frac{\xi_{m,\gamma,\tau}}
    {(\rho'-\rho'')^{\tau_m}}\left(
    |E|_{\rho',m}
     +|\partial_\theta u|_{\pm,\rho',m}|E|_{\rho',0}
     +\Theta_m\right),\\
 |h|_{\pm,\rho'',m}
 &\le
    \frac{\xi_{m,\gamma,\tau}}
    {(\rho'-\rho'')^{\tau_m}}\left(
    |E|_{\rho',m}
     +|\partial_\theta u|_{\pm,\rho',m}|E|_{\rho',0}
     +\Theta_m\right) \;.
\end{align*}
\end{theorem}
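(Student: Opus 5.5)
The plan has three parts: an algebraic factorization of the commutator equation, holomorphy of the resulting solution map, and the graded estimates.

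\emph{Algebraic reduction.} The starting point is the factorization of the commutator by discrete derivations. Writing $\Delta=\nabla_-\nabla$ and expanding, I expect the identity
\begin{equation*}
[A,AS]_\Delta=\nabla_-\!\left(AA^+\,\nabla S\right).
\end{equation*}
It follows from the discrete Leibniz rule: $A\Delta(AS)-AS\Delta A$ telescopes into $A A^{+}(S^{+}-S)-A^{-}A(S-S^{-})$, which is $\nabla_-$ applied to $AA^+\nabla S$.

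With this identity the equation $[A,AS]_\Delta=AE$ is solved in two inversions.
\begin{enumerate}
\item Since $\langle AE\rangle=0$, Proposition~\ref{prop:Cpm-properties}(iii) gives $\nabla_-^{-1}(AE)\in C^-_{\rho'}$. Every solution then satisfies $AA^+\nabla S=\nabla_-^{-1}(AE)-\chi$ for some constant $\chi$, because the kernel of $\nabla_-$ on $\mathscr C_{\rho'}$ consists of the constants.
\item Dividing by $AA^+$, the right-hand side lies in the range of $\nabla$ exactly when it has zero mean. This forces the stated value of $\chi(A,E)$.
\item The denominator is harmless. By \eqref{eq:A-small} and Lemma~\ref{lem:Ainv-linear}, $|AA^+-1|_{\pm,\rho',0}<\tfrac14$, hence $|\langle(AA^+)^{-1}\rangle-1|\le\tfrac13$ (or some such bound bounded away from zero).
\item Applying $\nabla^{-1}$ then determines $S$ up to a constant, and that constant is fixed by $\langle AS\rangle=0$ using $\langle A\rangle=1$. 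This yields \eqref{eq:G-formula} and uniqueness.
\item Membership $S\in\mathscr C^{\pm}_{\rho''}$ follows from Corollary~\ref{cor:nabla-inv-Cpm}, applied to the bracket. The bracket lies in $C^-$ because $\nabla_-^{-1}(AE)\in C^-$ and $(AA^+)^{-1}\in\mathscr C^\pm$, the latter since $A\in\mathscr C^\pm$ gives $A^+\in\mathscr C^-$ through $(A^+)^-=A$.
\end{enumerate}
One point needs care: $(AA^+)^+=A^+A^{++}$ is not controlled, so only the $-$ shift is used at that stage. This matches the hypothesis $f\in C^-$ in Corollary~\ref{cor:nabla-inv-Cpm}.

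\emph{Holomorphy.} The formula is a composition of continuous linear maps (the inverses $\nabla^{-1}$, $\nabla_-^{-1}$, means, shifts), products via the algebra property, and inversion, which is holomorphic by Lemma~\ref{lem:Ainv-linear}. The formula already makes sense on the open set where $|A-1|_{\pm,\rho',0}<\tfrac18$ and $\langle(AA^+)^{-1}\rangle\ne0$, dropping the constraints $\langle A\rangle=1$ and $\langle AE\rangle=0$. To extend $\nabla_-^{-1}$ to non-zero-mean inputs, I would subtract the mean first, and the same for $\nabla^{-1}$. This gives the required holomorphic extension, whose restriction to the constrained set is $\mathcal S$.

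\emph{Estimates and main obstacle.} The graded bounds are where the work lies. At order $0$ I would chain the estimates in this order.
\begin{enumerate}
\item $|AE|_{-,\rho',0}\le\tfrac98|E|_{\rho',0}$ from \eqref{eq:A-small}.
\item \eqref{eq:nabla-inv-tame-Cpm} for $\nabla_-^{-1}$, losing half of the strip $\rho'-\rho''$.
\item $|(AA^+)^{-1}|\le 4$ and $|\chi|\lesssim|\nabla_-^{-1}(AE)|$.
\item Corollary~\ref{cor:nabla-inv-Cpm}, losing the other half of the strip.
\end{enumerate}
The product of the two $\widehat\xi_0$ factors and the $2^{\tau_0}$ coming from the halved strips fits inside $\xi_0=2^{\tau_0+12}\widehat\xi_0^2$.

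For $m\ge1$ I would use the split Leibniz bound \eqref{eq:leibniz-split} at every product, so that only one factor carries top order. A top-order term is either $|E|_m\cdot|A|_0$ or $|A|_m\cdot|E|_0=|\partial_\theta u|_m|E|_0$, and the inverse $(AA^+)^{-1}$ contributes a top-order term $4|AA^+|_m$ by \eqref{eq:Ainv-linear}. All mixed terms have both factors at order at most $m-1$. They are bounded by powers of $(1+|\partial_\theta u|_{\pm,\rho',m-1})$ times $|E|_{m-1}$, and these are absorbed into $\Theta_m$.

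I expect this bookkeeping of combinatorial constants to be the main obstacle. The products involved are $A\cdot E$, $A\cdot A^+$, the inverse, the multiplication by $(\nabla_-^{-1}(AE)-\chi)$, and finally $h=AS$. I would track the exponents crudely, granting a factor $2^m(m!)(1+|\partial_\theta u|_{m-1})^{2m+\dots}$ per operation, and check that the total stays below $2^{m^3+3m^2+9m+9}(m!)^{m+3}(1+|\partial_\theta u|_{m-1})^{2m^2+4m+2}$. Finally, the bound for $h=AS$ follows from that for $S$ by one more application of \eqref{eq:leibniz-split}.
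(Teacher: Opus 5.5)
Your proposal is correct and follows the paper's proof essentially step for step. The paper also uses the factorization $[A,AS]_\Delta=\nabla_-(AA^+\nabla S)$, fixes $\chi$ by the zero-mean condition, normalizes with $\langle A\rangle=1$, and obtains holomorphy by subtracting means; its estimates run through the same chain $AE$, $\nabla_-^{-1}(AE)$, $(AA^+)^{-1}$, $\chi$, the bracket, $S_0$, $S$ and $h$, using the split Leibniz bound \eqref{eq:leibniz-split} at each product.

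There are a few minor slips, none affecting the argument. Only the backward shift of $AA^+$ and of $(AA^+)^{-1}$ is controlled, so both lie in $\mathscr C^-$, and the bound is $|AA^+-1|_{-,\rho',0}<17/64$ rather than $1/4$. $AE$ only needs the plain seminorm $|AE|_{\rho',0}$, not $|AE|_{-,\rho',0}$. Finally, the $\chi$ step is an inverse of the mean of an inverse; the paper treats it by induction on $|\chi|_m$, and this compounding is what produces the $2^{m^3}(m!)^{m+3}$ and the exponent $2m^2+4m+2$ in $\Theta_m$, not a fixed factor per operation.
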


\begin{proof}
In addition to the notations $S$, $S_0$ and $ \chi$, set  $ R := (AA^+)^{-1}$, $ g := \nabla_{-}^{-1} ( A E)$,   
$c:=\langle AS_0\rangle$,  and
$H:=(g-\chi)R$. 

We first show that if \eqref{eq:G-formula} defines $S$, then it is, up to an additive constant, the unique solution of $ [A,AS]_\Delta = AE$. 
By definition of $S$ and since $\langle AE\rangle=0$, we have
$$      \nabla_-\left(AA^+\nabla S\right) =    \nabla_-\left(AA^+\nabla S_0 \right)   = \nabla_{-}  \left( \nabla_-^{-1}( AE) - \chi  \right) = AE  . $$  
On the other hand, expanding $\nabla_-\left(AA^+\nabla S \right)$ gives 
\begin{align*}
    \nabla_-\left(AA^+\nabla S\right)
    &= AA^+S^+ + A^-AS^- - A(A^++A^-)S \\
    &= A\Delta(AS) - AS\,\Delta A \\
    &= [A,AS]_\Delta \;. 
\end{align*}
Consequently $S$ solves the
cohomological equation  $ [A,AS]_\Delta  = AE  $. 
Moreover, since $ \langle A\rangle=1$, we have $\langle AS\rangle=\langle A(S_0-\langle AS_0\rangle)\rangle=0$, providing the zero-mean normalisation. 

It remains to prove uniqueness. Let $T$ satisfy $ [ A ,AT ]_\Delta = AE$. Then the factorization above
provides $\nabla_-(AA^+\nabla (T-S) )=0$. Hence
$AA^+\nabla (T - S)  $  is independent of the angular variable. 
 Moreover, the smallness assumption gives
$|AA^+-1|_{-,\rho',0}<17/64$ and hence, by the Neumann series,
$|(AA^+)^{-1}-1|_{-,\rho',0}<17/47$. Thus
$\left\langle(AA^+)^{-1}\right\rangle$ does not vanish.
Consequently since $\nabla$ preserves zero mean it follows 
\begin{equation*}
   0 = \langle  \nabla (T-S) \rangle  = \left\langle \frac{AA^+ \nabla (T-S)}{AA^+}  \right\rangle  =  \left\langle \frac{1}{AA^+} \right\rangle AA^+ \nabla (T-S) \; . 
\end{equation*}
This leaves no option but $ \nabla (T-S) =0 $.  Since $ \langle A \rangle = 1$, the normalizing condition $  \langle  AT  \rangle  =  \langle A S  \rangle = 0 $ provides $ T = S$. This ends the proof of unicity. 

\smallskip
We now provide the desired estimates, which in particular give the well-definedness of $S$. 
To do so, we decompose the map $\mathcal{S}$ into elementary operations and control each of them successively. 
 
To simplify the notation further, for $ j \in \N $, write
\begin{equation*}
 w_j:=|\partial_\theta u|_{\pm,\rho',j},
 \qquad e_j:=|E|_{\rho',j}.
\end{equation*}
Also set $d:=\rho'-\rho''$ and
$\rho^{\mathrm{mid}}:=\rho''+d/2= \rho' - d/2 = (\rho'+\rho'')/2$.

\smallskip\noindent\emph{(i) The product $AE$.}
For $m\geq1$, splitting off the terms of orders $0$ and $m$ in the
Leibniz sum used in~\eqref{eq:leibniz-step} gives
\begin{equation}
    \label{eq:P-split}
    |AE|_{\rho',m}    \le   ( 1 + w_0) e_m +  w_m e_0  +  2^m w_{m-1} e_{m-1}  \le  \tfrac76\,e_m  +  w_m  e_0   +  2^m\,w_{m-1} e_{m-1} \;.
\end{equation}
At order $m=0$, the ordinary product estimate gives
$|AE|_{\rho',0}\le(1+w_0)e_0\le\tfrac76e_0$.

\smallskip\noindent\emph{(ii) The discrete primitive $g:=\nabla_-^{-1}(AE)$.}
Since $\langle AE\rangle=0$, $g$ is well defined, and
Proposition~\ref{prop:Cpm-properties}(iii) applies from $\rho'$ to
$\rho^{\mathrm{mid}}$,  giving for $m\ge1$
\begin{equation*}
    |g|_{-,\rho^{\mathrm{mid}},m} \le G_m \,|AE|_{\rho',m},
    \qquad \text{where}  \quad
    G_m \;:=\; \frac{2^{(m+1)(\tau+2)+1}\,\widehat\xi_{m,\gamma,\tau}}
    {d^{(m+1)(\tau+2)+1}} \;. 
\end{equation*}
Hence, by~\eqref{eq:P-split}, we have the bound
\begin{equation} \label{eq:bound-g}
|g|_{-,\rho^{\mathrm{mid}},m}\le G_m  \left( \frac{7}{6} e_m+w_m e_0 + 2^m w_{m-1} e_{m-1}  \right) \;.
\end{equation}
At order $m-1$, for $m\geq1$, we use the cruder bound 
\begin{equation}
\label{eq:bound-g-m-1}
|g|_{-,\rho^{\mathrm{mid}},m-1 }\le  G_{m}  2^m   ( 1+ w_{m-1} )  e_{m-1}\;. 
\end{equation}
At order $m=0$, we use simply
\begin{equation}
    \label{eq:g-order0}
    |g|_{-,\rho^{\mathrm{mid}},0} \le  G_0\,e_0 ,
    \qquad \text{ where }
    G_0 \;:=\; \tfrac{7}{3}\,\frac{2^{\tau+2}\widehat\xi_{0,\gamma,\tau}}{d^{\tau+3}}.
\end{equation}

 \smallskip\noindent\emph{(iii) The inverse $R:=(AA^+)^{-1}$.}
Writing 
$AA^+ - 1 = \partial_\theta u + \partial_\theta u^+ +\partial_\theta u \cdot  \partial_\theta u^+ $ the triangle inequality yields
\begin{equation*}
    |AA^+-1|_{-,\rho',0} \le 2 |\partial_\theta u|_{\pm, \rho',0}  + |\partial_\theta u|_{\pm, \rho',0}^2
    <   \frac{17}{64}  \;. 
\end{equation*}
The Neumann series then gives
\begin{equation}
\label{eq:R-order0}
|R|_{-,\rho',0}
\le
\sum_{k\geq0}|AA^+-1|_{-,\rho',0}^k
<
\frac{64}{47}  \; ( < 2) \;. 
\end{equation}
Moreover, using the identity $ R -1 = -( AA^+ -1 ) R $ and
Proposition~\ref{prop:Cpm-properties}(i) at order zero, we obtain
\begin{equation*}
|R-1|_{-,\rho',0}
\le
|AA^+-1|_{-,\rho',0}|R|_{-,\rho',0}
<
\frac{17}{64} \cdot \frac{64}{47}
=
\frac{17}{47}
<
\frac{1}{2}.
\end{equation*}
 Applying Lemma~\ref{lem:Ainv-linear} to both $ AA^+ $ and $ A^-A $  therefore gives $R\in\mathscr C_{\rho'}^{-}$ and for $ m \ge 1$ 
\begin{equation}
    \label{eq:R-prim}
    |R|_{-,\rho',m} \le 4|AA^+|_{-,\rho',m}
     +  2^{m+1}m!\left(1+2|AA^+|_{-,\rho',m-1}\right)^{m} \;.
\end{equation}
We deduce from~\eqref{eq:leibniz-split} that
\begin{equation*}
|AA^+|_{-,\rho',m}
\le
2+\frac{7}{3}w_m+2^m w_{m-1}^2.
\end{equation*}
Moreover,
\begin{equation*}
1+2|AA^+|_{-,\rho',m-1}
\le
2^{m+1}\left(1+w_{m-1}^2\right).
\end{equation*}
For $m\geq2$, this follows from the preceding estimate at order $m-1$
and the monotonicity of the seminorms. For $m=1$, it follows directly
from $|AA^+|_{-,\rho',0}\le(7/6)^2$.
This enables the bound 
\begin{equation}
\label{eq:R-split}
\begin{split}
|R|_{-,\rho',m}
&\le
8+\frac{28}{3}w_m+2^{m+2}w_{m-1}^2 
+2^{m+1}m!
\left(
2^{m+1}\left(1+w_{m-1}^2\right)
\right)^m \\
&\le
\frac{28}{3}w_m
+2^{m^2+2m+2}m!
\left(1+w_{m-1}^2\right)^m \;. 
\end{split}
\end{equation}

We shall also use a cruder estimate at order $m-1$. For $m\geq2$,
the above \eqref{eq:R-split} gives 
\begin{equation}
\label{eq:R-m-1}
\begin{split}
|R|_{-,\rho',m-1}
&\le
\frac{28}{3}w_{m-1}
+
2^{m^2+1}(m-1)!
\left(1+w_{m-2}^2\right)^{m-1} \\
&\le
2^{m^2+2}m!
\left(1+w_{m-1}^2\right)^m.
\end{split}
\end{equation}
By~\eqref{eq:R-order0} the above estimates extend to $ m=1$ as well. 

\smallskip\noindent\emph{(iv) The constant
$\chi:=\langle gR\rangle/\langle R\rangle$.}
Since  the mean $ \langle \cdot \rangle $ is $1$-Lipschitz  it holds
\begin{equation*}
|\langle R\rangle-1|_0
\le
|R-1|_{-,\rho',0}
<
\frac12.
\end{equation*}
Thus $\langle R\rangle$ does not vanish and
$|\langle R\rangle^{-1}|_0\le2$. In particular,
\begin{equation}
\label{eq:chi-order0}
|\chi|_0
\le
2|g|_{-,\rho^{\mathrm{mid}},0}|R|_{-,\rho',0}
\le
4G_0e_0.
\end{equation}
Lemma~\ref{lem:Ainv-linear}, applied  to 
$\langle R\rangle$ considered  as a constant on $ \T$, also shows that $\chi$ is well defined and
holomorphic. 

We now prove by induction on $m\ge 0 $ that
\begin{equation}
\label{eq:chi-split}
|\chi|_m
\le
\frac{14}{3}G_me_m
+
\left(4G_m+\frac{280}{3}G_0\right)w_me_0
+
\frac{1}{32}G_m\Theta_m.
\end{equation}
The base case is given by \eqref{eq:chi-order0}. For $m \ge 1$ let us assume that \eqref{eq:chi-split} holds at $ m -1$.  
For $1\leq k\leq m$, differentiating
$\langle R \rangle \cdot \chi=\langle gR\rangle$ gives
\begin{equation*}
\langle R\rangle\,\partial_q^k\chi
=
\left\langle R\,\partial_q^kg\right\rangle
+
\left\langle(g-\chi)\,\partial_q^kR\right\rangle +
\sum_{j=1}^{k-1}\binom{k}{j}
\left\langle
\partial_q^j(g-\chi)\,
\partial_q^{k-j}R
\right\rangle.
\end{equation*}

Consequently it holds 
\begin{equation*}
 \frac{1}{2}  | \partial_q^k\chi | 
\le 
|R|_{-,\rho',0}|g|_{-,\rho^{\mathrm{mid}},m}  + 
\left(
|g|_{-,\rho^{\mathrm{mid}},0}+|\chi|_0
\right)|R|_{-,\rho',m} +
2^{m}
\left(
|g|_{-,\rho^{\mathrm{mid}},m-1}+|\chi|_{m-1}
\right)|R|_{-,\rho',m-1}.
\end{equation*}

Taking the maximum over $0\leq k\leq m$ yields
\begin{equation}
\label{eq:chi-recursion}
\begin{split}
|\chi|_m
 & \le
2|R|_{-,\rho',0}|g|_{-,\rho^{\mathrm{mid}},m}  + 
2\left(
|g|_{-,\rho^{\mathrm{mid}},0}+|\chi|_0
\right)|R|_{-,\rho',m}\\
& \quad +
2^{m+1}
\left(
|g|_{-,\rho^{\mathrm{mid}},m-1}+|\chi|_{m-1}
\right)|R|_{-,\rho',m-1}.
\end{split}
\end{equation}

We bound successively the three summands  on the right-hand side of the above~\eqref{eq:chi-recursion}. The first term is bounded  using  \eqref{eq:R-order0}  and 
the  \eqref{eq:bound-g} by 
\begin{equation} \label{eq:first-term}
\begin{split}
2|R|_{-,\rho',0}|g|_{-,\rho^{\mathrm{mid}},m}
&\le 
\frac{14}{3}G_me_m
+4G_mw_me_0
+2^{m+2}G_mw_{m-1}e_{m-1} \\ 
&\le  \frac{14}{3}G_me_m
+4G_mw_me_0
+ \frac{1}{128} G_m \Theta_m   .
\end{split}
\end{equation}
The second term  is bounded using the base case and \eqref{eq:R-split} as
\begin{equation} \label{eq:second-term}
\begin{split}
2\left(
|g|_{-,\rho^{\mathrm{mid}},0}+|\chi|_0
\right)|R|_{-,\rho',m}
&\le
10G_0e_0\left[
\frac{28}{3}w_m
+
2^{m^2+2m+2}m!
\left(1+w_{m-1}^2\right)^m
\right] \\
&\le 
\frac{280}{3}G_0w_me_0
+ \frac{1}{128} G_m \Theta_m \;. 
\end{split}
\end{equation}
The third and last term is  obtained by~\eqref{eq:R-m-1} and  the induction hypothesis: 
\begin{equation} \label{eq:third-term}
\begin{split}
&2^{m+1}
\left(
|g|_{-,\rho^{\mathrm{mid}},m-1}+|\chi|_{m-1}
\right)|R|_{-,\rho',m-1}  \\ 
&\le
2^{m^2+2m+10}m!G_m
\left(1+w_{m-1}^2\right)^m
(1+w_{m-1})e_{m-1} 
+
2^{m^2+m-2}m!G_{m-1}
\left(1+w_{m-1}^2\right)^m\Theta_{m-1} \\
&\le 2^{m^2+2m+10}m!G_m
\left(1+w_{m-1}^2\right)^m
(1+w_{m-1})e_{m-1} 
+ \frac{1}{64} G_m \Theta_m
.
\end{split}
\end{equation}

Combining the three estimates \eqref{eq:first-term}, \eqref{eq:second-term} and \eqref{eq:third-term}  in~\eqref{eq:chi-recursion} proves
\begin{equation*}
|\chi|_m
\le
\frac{14}{3}G_me_m
+
\left(
4G_m+\frac{280}{3}G_0
\right)w_me_0
+
\frac{1}{32}G_m\Theta_m,
\end{equation*}
which is~\eqref{eq:chi-split} and closes the induction. 

\smallskip\noindent\emph{(v) The product $H:=(g-\chi)R$.}
Applying~\eqref{eq:leibniz-split} once again to the  product $(g-\chi)R$ gives
\begin{equation*}
\begin{aligned}
|H|_{-,\rho^{\mathrm{mid}},m}
&\le
\left(
|g|_{-,\rho^{\mathrm{mid}},m}+|\chi|_m
\right)|R|_{-,\rho',0}\\
&\quad+
\left(
|g|_{-,\rho^{\mathrm{mid}},0}+|\chi|_0
\right)|R|_{-,\rho',m}\\
&\quad+
2^m\left(
|g|_{-,\rho^{\mathrm{mid}},m-1}+|\chi|_{m-1}
\right)|R|_{-,\rho',m-1}.
\end{aligned}
\end{equation*}
As in $(iv)$, we bound the three terms  successively. 
The first term is bounded using \eqref{eq:R-order0}, \eqref{eq:bound-g}, and
\eqref{eq:chi-split}: 
\begin{equation*}
\begin{split}
\left(
|g|_{-,\rho^{\mathrm{mid}},m}+|\chi|_m
\right)|R|_{-,\rho',0}  
&\le
2G_m\left(
\frac76e_m+w_me_0+2^mw_{m-1}e_{m-1}
\right) \\
&\quad + 
2\left[
\frac{14}{3}G_me_m
+
\left(
4G_m+\frac{280}{3}G_0
\right)w_me_0
+
\frac1{32}G_m\Theta_m
\right] \\
&\le 
\frac{35}{3}G_me_m
+
\left(
10G_m+\frac{560}{3}G_0
\right)w_me_0 
+
\frac1{8}G_m\Theta_m.
\end{split}
\end{equation*}
For the second term,  \eqref{eq:g-order0}, \eqref{eq:chi-order0}, and
\eqref{eq:R-split} provide
\begin{equation*}
\begin{split}
\left(
|g|_{-,\rho^{\mathrm{mid}},0}+|\chi|_0
\right)|R|_{-,\rho',m} 
&\le
5 G_0 e_0
\left[
\frac{28}{3}w_m
+
2^{m^2+2m+2}m!
\left(1+w_{m-1}^2\right)^m
\right] \\
\le 
\frac{140}{3}G_0w_me_0
+ \frac{1}{128} G_m \Theta_m. 
\end{split}
\end{equation*}
Finally, the third term is bounded by 
\begin{equation*}
\begin{split}
2^m\left(
|g|_{-,\rho^{\mathrm{mid}},m-1}+|\chi|_{m-1}
\right)|R|_{-,\rho',m-1}
&\le
2^{m^2+2m+9}m!G_m
(1+w_{m-1})^{2m+1}e_{m-1}\\
&\quad+
2^{m^2+m-3}m!G_{m-1}
\left(1+w_{m-1}^2\right)^m\Theta_{m-1} \\
&\le 2^{m^2+2m+9}m!G_m
(1+w_{m-1})^{2m+1}e_{m-1} + \frac{1}{128} G_m \Theta_m .
\end{split}
\end{equation*}
Consequently, summing the three above bounds provides 
\begin{equation}
\label{eq:H-split}
|H|_{-,\rho^{\mathrm{mid}},m}
\le
\frac{35}{3}G_me_m
+
\left(
10G_m+\frac{700}{3}G_0
\right)w_me_0
+  \frac{1}{4} G_m\Theta_m \; 
\end{equation}
and at order zero 
\begin{equation}\label{eq:H-split-0}
|H|_{-,\rho^{\mathrm{mid}},0}
\le
\left(
|g|_{-,\rho^{\mathrm{mid}},0}+|\chi|_0
\right)|R|_{-,\rho',0} \le
10G_0e_0.
\end{equation}

\smallskip\noindent\emph{(vi) Inverting the discrete operator  $S_0:=\nabla^{-1}H$.} 
By the definition of $\chi$, we obtain the zero mean normalization
\begin{equation*}
\langle H\rangle
=
\langle gR\rangle-\chi\langle R\rangle
=
0.
\end{equation*}
Since $ H \in C^-_{\rmid}$, the inverse  $S_0=\nabla^{-1}H $ is well defined  and lies in $C_{\rmid }^\pm$.   
Moreover we have the bound
\begin{equation*}
|S_0|_{\pm,\rho'',m}
 \le
G_m|H|_{-,\rho^{\mathrm{mid}},m} \le
\frac{35}{3}G_m^2e_m
+
\left(
10G_m^2+\frac{700}{3}G_mG_0
\right)w_me_0
+
\frac{1}{4} G_m^2\Theta_m.
\end{equation*}
The definition of $\xi_{m,\gamma,\tau}$ now gives the cruder bound
\begin{equation*}
|S_0|_{\pm,\rho'',m}
\le
\frac{\xi_{m,\gamma,\tau}}
{16d^{\tau_m}}
\left(e_m+w_me_0+\Theta_m\right).
\end{equation*}
At order zero, we similarly obtain
\begin{equation*}
|S_0|_{\pm,\rho'',0}
\le
10G_0^2e_0
\le
\frac{\xi_{0,\gamma,\tau}}
{16d^{\tau_0}}e_0.
\end{equation*}

\smallskip\noindent\emph{(vii) The solution
$S:=S_0-\langle AS_0\rangle$.}
Set $ c:=\langle AS_0\rangle$, so that $S=S_0-c$. For $m \ge 1$, one final application of  \eqref{eq:leibniz-split} gives
\begin{equation}
\label{eq:c-leibniz}
|c|_m
\le
\frac76|S_0|_{\pm,\rho'',m}
+
w_m|S_0|_{\pm,\rho'',0}
+
2^mw_{m-1}|S_0|_{\pm,\rho'',m-1}.
\end{equation}
Once again we estimate successively the three terms from left to right in
\eqref{eq:c-leibniz}. By $(vi)$, the first one satisfies
\begin{equation*}
\frac76|S_0|_{\pm,\rho'',m}
\leq
\frac{7\xi_{m,\gamma,\tau}}
{96d^{\tau_m}}
\left(
e_m+w_me_0+\Theta_m
\right) 
\end{equation*}
 and the second one verifies
\begin{equation*}
w_m|S_0|_{\pm,\rho'',0}
\leq
\frac{\xi_{0,\gamma,\tau}}
{16d^{\tau_0}}w_me_0 
\leq
\frac{\xi_{m,\gamma,\tau}}
{16d^{\tau_m}} w_me_0 \;.
\end{equation*}

For the last term it holds
\begin{equation*}
2^mw_{m-1}|S_0|_{\pm,\rho'',m-1}
\leq
\frac{\xi_{m,\gamma,\tau}}
{16d^{\tau_m}}
2^mw_{m-1}
\left(
e_{m-1}+w_{m-1}e_0+\Theta_{m-1}
\right) \leq
\frac{\xi_{m,\gamma,\tau}}
{16d^{\tau_m}}\Theta_m.
\end{equation*}
Combining these three estimates in~\eqref{eq:c-leibniz} yields
\begin{equation*}
|c|_m
\leq
\frac{\xi_{m,\gamma,\tau}}
{16d^{\tau_m}}
\left[
\frac76
\left(
e_m+w_me_0+\Theta_m
\right)
+w_me_0+\Theta_m
\right]\leq
\frac{3\xi_{m,\gamma,\tau}}
{16d^{\tau_m}}
\left(
e_m+w_me_0+\Theta_m
\right).
\end{equation*}
At order zero, the same conclusion follows directly from
\begin{equation*}
|c|_0
\leq
\frac76|S_0|_{\pm,\rho'',0}
<
\frac{3\xi_{0,\gamma,\tau}}
{16d^{\tau_0}}e_0.
\end{equation*}
Therefore, we conclude for the solution $S$  that
\begin{equation} \label{eq:S-final}
|S|_{\pm,\rho'',m}
\leq
|S_0|_{\pm,\rho'',m}+|c|_m \leq
\frac{\xi_{m,\gamma,\tau}}
{4 d^{\tau_m}}
\left(
e_m+w_me_0+\Theta_m
\right)\leq
\frac{\xi_{m,\gamma,\tau}}
{d^{\tau_m}}
\left(
e_m+w_me_0+\Theta_m
\right) 
\end{equation}
which is the asserted estimate on $S$.
 
  \smallskip\noindent\emph{(viii) The increment $h:=AS$.}
By construction $ \langle h \rangle = 0$.
Equation \eqref{eq:leibniz-split} gives
\begin{equation*}
\begin{split}
|h|_{\pm,\rho'',m}
&\leq
\frac76|S|_{\pm,\rho'',m}
+
w_m|S|_{\pm,\rho'',0}
+
2^mw_{m-1}|S|_{\pm,\rho'',m-1}.
\end{split}
\end{equation*}
By the penultimate inequality in \eqref{eq:S-final}, we obtain
\begin{equation*}
\begin{split}
|h|_{\pm,\rho'',m}
&\leq
\frac{\xi_{m,\gamma,\tau}}
{4 d^{\tau_m}}
\left[
\frac76
\left(
e_m+w_me_0+\Theta_m
\right)
+w_me_0+\Theta_m
\right] \\
&\leq
\frac{\xi_{m,\gamma,\tau}}
{d^{\tau_m}}
\left(
e_m+w_me_0+\Theta_m
\right).
\end{split}
\end{equation*}
At order zero, we similarly have
\begin{equation*}
|h|_{\pm,\rho'',0}
\leq
\frac76|S|_{\pm,\rho'',0}
\leq
\frac{7\xi_{0,\gamma,\tau}}
{24d^{\tau_0}}e_0
\leq
\frac{\xi_{0,\gamma,\tau}}
{d^{\tau_0}}e_0.
\end{equation*}
Finally, the local holomorphic extension just asserted is obtained as
follows. Replace $AE$ by $AE-\langle AE\rangle$ and normalize $S_0$ by
$S_0-\langle AS_0\rangle/\langle A\rangle$. The smallness of $A-1$
keeps all denominators nonzero, and holomorphy of all involved operators
follows from~\eqref{eq:leibniz-step},
Proposition~\ref{prop:Cpm-properties}(i),(iii),
Corollary~\ref{cor:nabla-inv-Cpm}, and
Lemmas~\ref{lem:Ainv-linear} and~\ref{lem:linear-postcomposition}.
This completes the proof.
\end{proof}

\subsection{Setting of the iteration scheme} \label{subsec:setting-it-scheme} 
We now fix some notation to run the iteration scheme and properly state estimates on the updated errors. 
Fix $\gamma,\tau>0$, $\rho>0$, and $ \rho_\infty \in(0,\rho)$.  Up to replacing $\rho_\infty$ we can assume without any loss that 
\[ \rho - \rho_\infty \le 1  . \]
Indeed taking a larger strip $ \rho_\infty$ only leads to stronger regularity in the KAM theorem. 

Choose $\rho_0 \in(\rho_\infty,\rho)$.  We will actually choose it close to $ \rho_\infty$ in \eqref{eq:star-standing} below. The successive strips in the iteration are chosen recursively by 
\begin{equation}
    \label{eq:strip-seq}
   \rho_{n+1} :=  \rho_{n} - d_n \quad \text{ with } d_n := \frac{6(\rho_0-\rho_\infty)}{\pi^2(n+1)^2} \;.
\end{equation}
Fix, in addition, two further reference strips and the corresponding gap,
\begin{equation}
    \label{eq:reference-strips}
    \rho^\star := \frac{\rho_0+\rho}{2}, \qquad
    \rho^\sharp := \frac{\rho_0+3\rho}{4} \qand 
    \bar d := \rho^\star - \rho_0 = \frac{\rho-\rho_0}{2} > 0.
\end{equation}
In particular, note that $0<\bar{d}< 1$.  These strips are presented in Figure~\ref{fig:strips}.

\begin{figure}[H]
    \centering
    \includegraphics[width=0.6\linewidth]{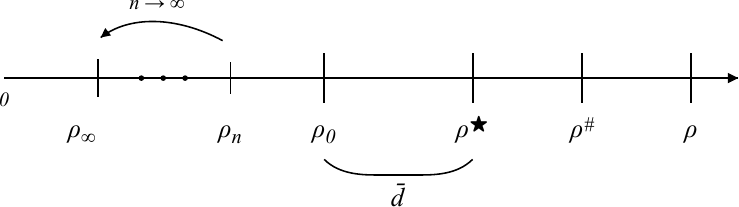}
    \caption{Hierarchy of the analytic strips in the iteration scheme.}
    \label{fig:strips}
\end{figure}
Note that every
strip $\rho_n$ of~\eqref{eq:strip-seq} satisfies $\rho_n \le \rho_0 <\rho^\star$. Thus
we have $\rho^\star - \rho_n \ge \rho^\star-\rho_0 = \bar d$ for every $n\ge0$.

Recall that $ \xi_{0,\gamma,\tau} $ is provided by Proposition~\ref{prop:Cpm-properties}.
Set, once and for all,
\begin{equation} \label{eq:xi-err-0}
 \xi^{\mathrm{err}}_{0,\gamma,\tau}
 :=\xi_{0,\gamma,\tau}
 +\frac{1050\,\xi_{0,\gamma,\tau}^{2}}{\bar d^3}.
\end{equation}

Set
\begin{equation} \label{eq:eps-0}
  \varepsilon_0:=\frac{\bar d}{18\,\xi_{0,\gamma,\tau}}.
\end{equation}

 We then choose $\rho_0$ sufficiently close to $\rho_\infty$ such that
\begin{equation}
    \label{eq:star-standing}
    (\rho_0-\rho_\infty)^{\tau_0}
    \le 2^{4\tau_0}\,\xi^{\mathrm{err}}_{0,\gamma,\tau}\,\varepsilon_0\;.
\end{equation}
This will guarantee the smallness hypothesis~\eqref{eq:E-small-quad} of
Proposition~\ref{prop:quadratic} at every step of the induction allowing quadratically small error in the order zero seminorms. 

We finally set
\begin{equation} \label{eq:def-eps-r-U}
\varepsilon_{\mathcal U}
:=
\left(
8+
\frac{2^5\xi_{0,\gamma,\tau}
(\rho_0-\rho_\infty)^{\tau_0-1}}
{2^{3\tau_0}\xi^{\mathrm{err}}_{0,\gamma,\tau}}
\right)^{-1} \qand 
r_{\mathcal U}
:=
\frac{
\varepsilon_{\mathcal U}d_0^{2\tau_0}
(\rho^\sharp-\rho_0)
}{
2^{4\tau_0}\xi^{\mathrm{err}}_{0,\gamma,\tau}
+
\varepsilon_{\mathcal U}d_0^{2\tau_0}
(\rho^\sharp-\rho_0)
} \;. 
\end{equation}
and define the open convex KAM neighborhood

\begin{equation}
\label{eq:U-def}
\mathcal U
:=
\left\{
V\in\Hol_\rho(\T):
|V|_{\rho^\sharp}<r_{\mathcal U}
\right\}.
\end{equation}
The constant $ \varepsilon_{\mathcal{U}}$ is chosen to satisfy the  inequalities
\begin{equation}
    \label{eq:U-smallness}
\varepsilon_{\mathcal U}\le\frac18,
\qquad
\varepsilon_{\mathcal U}
\le
\frac{2^{3\tau_0-5}\xi^{\mathrm{err}}_{0,\gamma,\tau}}
{\xi_{0,\gamma,\tau}
(\rho_0-\rho_\infty)^{\tau_0-1}}.
\end{equation}
This is the neighbourhood $\mathcal U$ appearing in Theorem~\ref{thm:hol-KAM}.

For $V\in\mathcal U$, when possible, define recursively
\begin{equation} \label{eq:def-incr} 
\begin{split}
   & u_0:=0,\qquad A_n:=1+\partial_\theta u_n,\qquad
    E_n:=\mathcal E(u_n,V):=V'(\id_\T+u_n)-\Delta u_n, \\ 
   & S_n:=\mathcal{S}(A_n,E_n),\qquad
    h_n:=A_nS_n \qand u_{n+1}:=u_n+h_n.
    \end{split}
\end{equation}
Actually, Proposition~\ref{prop:induction} will show that they all are indeed well defined.

Since the definition of the error $E_n$ involves the map
$(u,V)\mapsto V'(\id_\T+u)-\Delta u$, we need to control its norm. This is
done using the following lemma, proved in the $C^1$-holomorphic case
in~\cite[Lemma~10]{CMS}.
\begin{lemma}
\label{lem:composition-q-deriv}
Let $\rho>\rho'>\rho''>0$ and let $\widetilde\rho>\rho''$. Let
$f\in\mathscr H_\rho(\T)$ and let
$\phi\in\mathscr C_{\widetilde\rho}$ such that for some $\kappa\in(0,1)$ it holds
\begin{equation}
    \label{eq:phi-small-order-zero}
    |\phi|_{\rho'',0} \le \kappa\,(\rho'-\rho'').
\end{equation}
Then $f\circ(\id_\T+\phi)\in\mathscr C_{\rho''}$ and for every $m \ge  0$, it verifies
\begin{equation}
    \label{eq:comp-bound-order-zero}
    \left|f\circ(\id_\T+\phi)\right|_{\rho'',m}
    \le 2^m\,m!\,
    \max\!\left(1,\;\frac{|\phi|_{\rho'',m}}{(1-\kappa)(\rho'-\rho'')}\right)^{\!m}
    |f|_{\rho'}.
\end{equation}
In particular the map 
\begin{equation*}
\begin{array}{rcl}
\mathscr H_\rho(\T)\times
\left\{
\phi\in\mathscr C_{\widetilde\rho}:
|\phi|_{\rho'',0}<\kappa(\rho'-\rho'')
\right\}
&\longrightarrow&
\mathscr C_{\rho''}
\\
(f,\phi)
&\longmapsto&
f\circ(\id_\T+\phi).
\end{array}
\end{equation*}
is holomorphic.

Furthermore, for $m\geq1$, one also has the sharper graded bounds
\begin{equation}
\label{eq:comp-bound-tame}
 \left|f\circ(\id_\T+\phi)\right|_{\rho'',m}
 \le |f|_{\rho'}\left[
 \frac{|\phi|_{\rho'',m}}{(1-\kappa)(\rho'-\rho'')}
 +2^m m!\max\!\left(
 1,\frac{|\phi|_{\rho'',m-1}}
 {(1-\kappa)(\rho'-\rho'')}
 \right)^m\right].
\end{equation}

\end{lemma}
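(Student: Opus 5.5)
We work at a fixed $q\in\KK^0$ (the chart $\KK^\infty$ is handled identically, since $(f\circ(\id_\T+\phi))^\#=f\circ(\id_\T+\phi^\#)$). The plan is to expand the composition in a Taylor series in $\phi$ and then differentiate in $q$ using Faà di Bruno.

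\emph{Step 1: the Taylor expansion.} For $\theta\in\T_{\rho''}$ we have $|\phi(q,\theta)|\le\kappa(\rho'-\rho'')$. We write
\begin{equation*}
f(\theta+\phi)=\sum_{p\ge0}\frac{f^{(p)}(\theta)}{p!}\phi^p .
\end{equation*}
Lemma~\ref{lem:der-hol} with strips $\rho'\to\rho''$ gives $|f^{(p)}|_{\rho''}\le p!\,|f|_{\rho'}(\rho'-\rho'')^{-p}$. Hence the series converges normally, with sum bounded by $(1-\kappa)^{-1}|f|_{\rho'}$.

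For holomorphy, each term $(f,\phi)\mapsto f^{(p)}\phi^p/p!$ is a continuous polynomial map into $\mathscr C_{\rho''}$, by Lemma~\ref{lem:der-hol} and the algebra property (Proposition~\ref{prop:Cpm-properties}(i), plain version). Once we have locally uniform bounds in every seminorm, the series is a locally uniformly convergent sum of holomorphic maps. It is therefore holomorphic, since a uniform limit of holomorphic maps into a Fréchet space is holomorphic. Whitney holomorphy in $q$ follows from the same normal convergence of all $q$-derivatives, by quasiconvexity (Lemma~\ref{lem:whitney-quasiconvex}).

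\emph{Step 2: the $q$-derivatives.} Rather than differentiating the series term by term, we apply Faà di Bruno directly to $q\mapsto f(\theta+\phi(q,\theta))$. Writing $\partial_\theta^p f$ for the derivative in the circle variable,
\begin{equation*}
\partial_q^k\left[f\circ(\id_\T+\phi)\right]=\sum_{\mathcal P\vdash\{1,\dots,k\}}(\partial_\theta^{|\mathcal P|}f)\circ(\id_\T+\phi)\prod_{B\in\mathcal P}\partial_q^{|B|}\phi .
\end{equation*}
Each factor $(\partial_\theta^{p}f)\circ(\id_\T+\phi)$ is evaluated at points of $\T_{\rho''+\kappa(\rho'-\rho'')}$. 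Applying the Cauchy estimate on the remaining gap $(1-\kappa)(\rho'-\rho'')$ gives
\begin{equation*}
|(\partial_\theta^pf)\circ(\id_\T+\phi)|_{\rho'',0}\le \frac{p!\,|f|_{\rho'}}{((1-\kappa)(\rho'-\rho''))^p}.
\end{equation*}
For a partition with $p$ blocks the product of $\phi$-derivatives is bounded by $|\phi|_{\rho'',k}^p$. Setting $M:=\max\bigl(1,|\phi|_{\rho'',m}/((1-\kappa)(\rho'-\rho''))\bigr)$, the total is at most $\sum_p\stirling{k}{p}p!\,M^p|f|_{\rho'}$. Lemma~\ref{lem:fubini-bound} bounds this by $2^kk!\,M^k|f|_{\rho'}$, and taking the maximum over $k\le m$ gives \eqref{eq:comp-bound-order-zero}.

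\emph{Step 3: the tame bound \eqref{eq:comp-bound-tame}.} We separate the partition with a single block, which contributes $(f'\circ(\id_\T+\phi))\,\partial_q^m\phi$ and is bounded by $|f|_{\rho'}|\phi|_{\rho'',m}/((1-\kappa)(\rho'-\rho''))$. Every other partition, of order $k\le m$, has all blocks of size at most $m-1$, so it only involves $|\phi|_{\rho'',m-1}$. The same Stirling sum then gives the second term $2^mm!\max(1,|\phi|_{\rho'',m-1}/((1-\kappa)(\rho'-\rho'')))^m$. For $k<m$ the single-block term is also controlled by $m-1$ seminorms and is absorbed in that same term.

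\emph{Main obstacle.} The delicate step is the precise strip bookkeeping. The Cauchy estimate in Step 2 must be taken around points already displaced by $\phi$, which is why the factor $(1-\kappa)$ appears. One must also check that the $q$-derivatives exist up to the boundary of $\KK$ in the Whitney sense. For this we rely on the remark after Lemma~\ref{lem:whitney-quasiconvex}: holomorphy in the interior, together with continuous extension of all derivatives to $\KK$, suffices by quasiconvexity (Lemma~\ref{lem:quasiconvex}).
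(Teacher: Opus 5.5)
Your Steps 2 and 3 are the paper's own proof. This includes the Cauchy estimate around the displaced point $\theta+\phi(q,\theta)$ with radius $(1-\kappa)(\rho'-\rho'')$, Fa\`a di Bruno over set partitions, Lemma~\ref{lem:fubini-bound} together with $M\ge1$, and isolating the one-block partition for \eqref{eq:comp-bound-tame}. Your argument for membership in $\mathscr C_{\rho''}$, via continuity of the Fa\`a di Bruno expressions up to the boundary of $\KK$, is also essentially the paper's.

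You depart from the paper in the holomorphy of $(f,\phi)\mapsto f\circ(\id_\T+\phi)$, and as written that step has a gap. You proved normal convergence of $\sum_p f^{(p)}\phi^p/p!$ only at order $m=0$. The bounds of Step~2 control the composition itself, not the tails of this series. The tool you cite does not give convergence in the higher seminorms: since $f^{(p)}$ is independent of $q$, the algebra property only yields
\[ \frac{1}{p!}\,|f^{(p)}\phi^p|_{\sigma,m}\le |f|_{\rho'}\Bigl(\frac{2^m|\phi|_{\sigma,m}}{\rho'-\sigma}\Bigr)^p . \]
This need not be summable, because only $|\phi|_{\rho'',0}$ is assumed small, not the $q$-derivatives of $\phi$. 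The same objection applies to your sentence deriving Whitney holomorphy ``from the same normal convergence''. Your fallback via the remark after Lemma~\ref{lem:whitney-quasiconvex} is fine for membership in $\mathscr C_{\rho''}$, but it does not give holomorphy in $(f,\phi)$.

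The repair is short. For $k\ge1$, $\partial_q^k(\phi^p)$ contains at most $k$ differentiated factors, so
\[ |\partial_q^k(\phi^p)|_{\sigma,0}\le\sum_{j=1}^{\min(k,p)}\stirling{k}{j}\,p^j\,|\phi|_{\sigma,0}^{\,p-j}\,|\phi|_{\sigma,k}^{\,j}. \]
For $\sigma\le\rho''$ you have $|\phi|_{\sigma,0}\le\kappa(\rho'-\sigma)$. Hence the $p$-th term of the series is $O(p^k\kappa^p)$ in $|\cdot|_{\sigma,k}$, locally uniformly in $(f,\phi)$. The series then converges locally uniformly in every seminorm, and your ``sum of continuous polynomials'' argument goes through.

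Alternatively, argue as the paper does. The composition is holomorphic along every complex affine line, and \eqref{eq:comp-bound-order-zero} already gives locally uniform bounds in every seminorm, which yields holomorphy. The paper's route is shorter because it reuses the estimate just proved. Once patched, your route gives the more explicit fact that the composition is a locally uniformly convergent series of continuous homogeneous polynomials.
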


\begin{proof} 
Set the radius
\begin{equation*}
r_\kappa:=(1-\kappa)(\rho'-\rho'').
\end{equation*}
For every $\theta\in\T_{\rho''}$ and $q\in\KK$,
assumption~\eqref{eq:phi-small-order-zero} gives
\begin{equation*}
\left|\Im\left(\theta+\phi(q,\theta)\right)\right|
\leq
\rho''+\kappa(\rho'-\rho'')
=
\rho'-r_\kappa.
\end{equation*}
Consequently, for every $0<r<r_\kappa$, the closed disc of radius
$r$ centred at
\begin{equation*}
z:=\theta+\phi(q,\theta)
\end{equation*}
is contained in $\T_{\rho'}$. Cauchy's formula gives, for every
$p\geq0$,
\begin{equation*}
f^{(p)}(z)
=
\frac{p!}{2\pi i}
\int_{|\zeta-z|=r}
\frac{f(\zeta)}{(\zeta-z)^{p+1}}\,d\zeta.
\end{equation*}
Therefore we deduce the bound 
\begin{equation*}
\left|f^{(p)}(z)\right|
 \leq
\frac{p!}{2\pi}
\int_{|\zeta-z|=r}
\frac{|f(\zeta)|}{|\zeta-z|^{p+1}}\,|d\zeta|
\leq
\frac{p!}{2\pi}
\frac{|f|_{\rho'}}{r^{p+1}}(2\pi r)
=
p!r^{-p}|f|_{\rho'}.
\end{equation*}
Letting $r$ tend to $r_\kappa$ yields
\begin{equation}
\label{eq:cauchy-p}
\left|
f^{(p)}\left(\theta+\phi(q,\theta)\right)
\right|
\leq
p!r_\kappa^{-p}|f|_{\rho'}.
\end{equation}
Note that at $ p= 0$, this already gives \eqref{eq:comp-bound-order-zero} in the  $m = 0$ case. We now prove the two estimates for $m\geq1$.

Let us write  $G:=f\circ(\id_\T+\phi)$. The $m$-derivative is given by 
\begin{equation*}
G^{(m)}(q)(\theta)
=
\sum_{\mathcal P\vdash\left\{1,\ldots,m\right\}}
f^{(|\mathcal P|)}
\left(\theta+\phi(q,\theta)\right)
\prod_{A\in\mathcal P}
\phi^{(|A|)}(q)(\theta).
\end{equation*}
where the sum is over set partitions $ \mathcal{P}$ of $ \left\{ 1 , \dots , m  \right\}$.  We then bound the different terms of the sum separately.
For every partition $\mathcal P$ containing 
$p:=|\mathcal P|$ subsets, estimate~\eqref{eq:cauchy-p} gives
\begin{equation*}
\left|
f^{(|\mathcal P|)}
\circ(\id_\T+\phi)
\prod_{A\in\mathcal P}\phi^{(|A|)}
\right|_{\rho''}
\leq
p!|f|_{\rho'}
\left(
\frac{|\phi|_{\rho'',m}}{r_\kappa}
\right)^p 
\end{equation*}
Since the number of partitions with $p$ subsets is $\stirling{m}{p}$,
Lemma~\ref{lem:fubini-bound} gives
\begin{equation*}
|G^{(m)}(q)|_{\rho''}
\leq
2^m m!\max\left\{
1,\frac{|\phi|_{\rho'',m}}{r_\kappa}
\right\}^m|f|_{\rho'}.
\end{equation*}
The same estimate at every order $0\leq\ell\leq m$, together with the monotonicity of the seminorms and of the right-hand side of the above inequality, gives \eqref{eq:comp-bound-order-zero}.

The term of the derivative containing $\phi^{(m)}$ is
$\left[f'\circ(\id_\T+\phi)\right]\phi^{(m)}$ and satisfies
\begin{equation*}
\left|
\left[f'\circ(\id_\T+\phi)\right]\phi^{(m)}
\right|_{\rho''}
\leq
|f|_{\rho'}\frac{|\phi|_{\rho'',m}}{r_\kappa}.
\end{equation*}
Every remaining term of the derivative contains only derivatives of $\phi$ of order at most $m-1$ and their sum is bounded by
\begin{equation*}
\begin{aligned}
&\left|
\sum_{\substack{
\mathcal P\vdash\left\{1,\ldots,m\right\}\\
|\mathcal P|\geq2
}}
f^{(|\mathcal P|)}
\circ(\id_\T+\phi)
\prod_{A\in\mathcal P}\phi^{(|A|)}
\right|_{\rho''}\\
&\leq
|f|_{\rho'}
\sum_{p=2}^{m}
\stirling{m}{p}p!
\left(
\frac{|\phi|_{\rho'',m-1}}{r_\kappa}
\right)^p\\
&\leq
2^m m!|f|_{\rho'}
\max\left\{
1,
\frac{|\phi|_{\rho'',m-1}}{r_\kappa}
\right\}^{m}.
\end{aligned}
\end{equation*}
It follows that 
\begin{equation*}
|G^{(m)}(q)|_{\rho''}
\leq
|f|_{\rho'}
\left[
\frac{|\phi|_{\rho'',m}}{r_\kappa}
+
2^m m!
\max\left\{
1,
\frac{|\phi|_{\rho'',m-1}}{r_\kappa}
\right\}^{m}
\right].
\end{equation*}
The same estimate bounds all derivatives of order at most $m-1$. 
Taking the maximum over all derivatives of order at most $m$ therefore gives \eqref{eq:comp-bound-tame}.

The derivative formulas are precisely the Whitney derivatives, hence $ G\in\mathscr C_{\rho''}$.
The argument proving~\eqref{eq:comp-bound-order-zero} also proves
holomorphy. Indeed, along every  intersection of a complex affine line with its domain, the
composition is pointwise holomorphic, while
\eqref{eq:comp-bound-order-zero} gives locally uniform bounds in every
seminorm of $\mathscr C_{\rho''}$. Hence the composition map is
holomorphic with values in $\mathscr C_{\rho''}$.  The strip
$\widetilde\rho>\rho''$ on which $\phi$ is defined is introduced only so that the condition 
\begin{equation*}
|\phi|_{\rho'',0}<\kappa(\rho'-\rho'')
\end{equation*}
defines an open subset of that space. 
\end{proof}

To estimate the new error, we use the following exact identity.

\begin{lemma}
\label{lem:exact-identity}
Let $0<\rho''<\rho'<\rho$ satisfy
$\rho'-\rho''\leq1$. Let $V\in\Hol_\rho(\T)$ and
$u\in C_{\rho'}^{\pm}$ satisfy
\begin{equation*}
|u|_{\rho',0}<\rho-\rho',
\qquad
|\partial_\theta u|_{\pm,\rho',0}<\frac18.
\end{equation*}
Set $  A:=1+\partial_\theta u$, $ E:=\mathcal{E}(u,V)= V'(\id_\T+u)-\Delta u$, and let $  S:=\mathcal S(A,E) \in \mathscr{C}_{\rho'}^{\pm}$ be the normalized solution provided by
Theorem~\ref{thm:solution-hol}. Assume moreover that
\begin{equation} \label{eq:strong-for-id}
|u |_{\rho'',0} + |AS|_{\rho'',0} < \rho-\rho''.
\end{equation}
Then for 
$\widetilde u:=u+AS$, the updated error is given by the identity
\begin{equation}
\label{eq:exact-identity}
\mathcal E(\widetilde u,V)
=
S\cdot\partial_\theta E
+
(AS)^2
\int_0^1
(1-t)V'''(\id_\T+u+tAS)\,dt
\end{equation}
in $\mathscr C_{\rho''}$.
\end{lemma}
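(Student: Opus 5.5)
The identity is purely algebraic, so I will prove it by direct expansion, pointwise in $(q,\theta)$, and only at the end check that every term lies in $\mathscr C_{\rho''}$. Write $h:=AS$. Hypothesis~\eqref{eq:strong-for-id} guarantees that for every $t\in[0,1]$ the point $\theta+u+th$ stays inside $\T_\rho$ when $\theta\in\T_{\rho''}$. Hence $V'$, $V''$ and $V'''$ can all be evaluated along the segment. Taylor's formula with integral remainder then gives
\begin{equation*}
V'(\id_\T+u+h)=V'(\id_\T+u)+V''(\id_\T+u)\,h+h^2\int_0^1(1-t)V'''(\id_\T+u+th)\,dt .
\end{equation*}
Since $\Delta$ is linear, $\Delta\widetilde u=\Delta u+\Delta h$. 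Subtracting, I get
\begin{equation*}
\mathcal E(\widetilde u,V)=E+V''(\id_\T+u)\,h-\Delta h+h^2\int_0^1(1-t)V'''(\id_\T+u+th)\,dt .
\end{equation*}
So it remains to show that $E+V''(\id_\T+u)h-\Delta h=S\,\partial_\theta E$. This is exactly the linear identity underlying the derivation of~\eqref{eq:cohom} in Section~\ref{subsec:levy-moser}, run in reverse.

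To verify it, I multiply by $A$, which is invertible since $|A-1|_{\rho',0}<1/8$. The chain rule gives $AV''(\id_\T+u)=\partial_\theta[V'(\id_\T+u)]=\partial_\theta E+\Delta A$, using $\partial_\theta\Delta u=\Delta\partial_\theta u=\Delta A$. Therefore
\begin{equation*}
A\bigl(E+V''(\id_\T+u)h-\Delta h\bigr)=AE+h\,\partial_\theta E+h\,\Delta A-A\Delta h=AE+h\,\partial_\theta E-[A,h]_\Delta .
\end{equation*}
By Theorem~\ref{thm:solution-hol}, $S$ satisfies $[A,AS]_\Delta=AE$. Hence the right-hand side reduces to $h\,\partial_\theta E=AS\,\partial_\theta E$, and dividing by $A$ yields the claim.

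The main obstacle is making the shifts legitimate on the complex domain. The expression $\Delta h$, and the commutation $\partial_\theta\Delta=\Delta\partial_\theta$, must make sense as elements of $\mathscr C_{\rho''}$. For $q\in\KK$ off the unit circle, the shifts are defined only through Fourier multipliers, not as genuine translations. I handle this as follows.
\begin{itemize}
\item $S\in\mathscr C^{\pm}_{\rho''}$ and $A\in\mathscr C^{\pm}_{\rho'}$, so Proposition~\ref{prop:Cpm-properties}(i) gives $h\in\mathscr C^{\pm}_{\rho''}$. By (ii), $\nabla h$ and $\nabla_-h$ are then defined, and so is $\Delta h$.
\item Commutation of $\partial_\theta$ with the shifts is immediate on Fourier modes, because both act diagonally.
\item The expansion of the commutator into $A\Delta h-h\Delta A$ uses only $(fg)^\pm=f^\pm g^\pm$, which is the Fourier convolution identity.
\end{itemize}

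The nonlinear terms need a separate argument. Lemma~\ref{lem:composition-q-deriv} places $V'(\id_\T+u)$ and $V'''(\id_\T+u+th)$ in $\mathscr C_{\rho''}$, with bounds uniform in $t$. This makes the $t$-integral a convergent Bochner integral in that Fréchet space. Pointwise in $q$, Taylor's formula is the ordinary one-variable formula for the holomorphic function $V'$. Both sides of~\eqref{eq:exact-identity} are continuous elements of $\mathscr C_{\rho''}$ that agree pointwise, so they are equal.
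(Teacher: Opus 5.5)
Your proof is correct and is essentially the paper's argument: a second-order Taylor expansion of $V'$ along $u+tAS$, combined with the cohomological equation $[A,AS]_\Delta=AE$ and the chain-rule identity $AV''(\id_\T+u)-\Delta A=\partial_\theta E$. The only differences are minor. The paper first divides the cohomological equation by $A$ to get $\Delta(AS)=E+S\,\Delta A$, rather than multiplying through by $A$ and dividing at the end. It also checks explicitly that $S$ exists, which you took as part of the setup: $E\in\mathscr C_{\rho'}$ follows from Lemma~\ref{lem:composition-q-deriv} and the hypothesis $|u|_{\rho',0}<\rho-\rho'$, and $\langle AE\rangle=0$ follows from Fact~\ref{fact:zero-mean}.
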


\begin{proof}
First of all, applying Lemma~\ref{lem:composition-q-deriv} on every strip strictly smaller than $\rho'$ gives
$E\in\mathscr C_{\rho'}$, while
Fact~\ref{fact:zero-mean} gives $\langle AE\rangle=0$.
Thus Theorem~\ref{thm:solution-hol} applies and defines $S$.
The hypothesis  \eqref{eq:strong-for-id} allows us to apply Lemma~\ref{lem:composition-q-deriv} and obtain that $ V''' ( \id_\T + u + t AS )$ lies in 
$\mathscr C_{\rho''}$ for every $t \in [0,1]$. 
 Taylor's formula with integral remainder at order $2$ gives
\begin{equation*}
    V'(\id_\T+u+AS) = V'(\id_\T+u) + V''(\id_\T+u)\cdot AS
    + (AS)^2 \int_0^1(1-t)V'''(\id_\T+u+tAS)\,dt.
\end{equation*}
Using linearity of $\Delta$ and $E=V'(\id_\T+u) - \Delta u$, the above equation leads to 
\begin{equation}
    \label{eq:Etilde-expand}
    \widetilde E = E + \left[V''(\id_\T+u)\cdot AS -\Delta(AS)\right]
    + (AS)^2 \int_0^1(1-t)V'''(\id_\T+u+tAS)\,dt.
\end{equation}
By definition, $S$ solves $[A,AS]_\Delta=AE$, hence $\Delta(AS)=E+S \Delta A$ and substituting into~\eqref{eq:Etilde-expand} provides
\begin{equation*}
    \widetilde E = S \cdot \left[V''(\id_\T+u)\cdot A - \Delta A\right]
    + (AS)^2 \int_0^1(1-t)V'''(\id_\T+u+tAS)\,dt.
\end{equation*}
Using the chain rule and commutativity of $ \Delta$ with $ \partial_\theta$, one obtains 
\begin{equation*}
    V''(\id_\T+u)\cdot A - \Delta A	
    = \partial_\theta\left[V'(\id_\T+u)\right] - \partial_\theta(\Delta u)
    = \partial_\theta\left[V'(\id_\T+u) -\Delta u\right] = \partial_\theta E,
\end{equation*}
which gives~\eqref{eq:exact-identity} and ends the proof.
\end{proof}

\subsection{Convergence of the Levi--Moser iteration scheme at
order zero} \label{subsec:zero-scheme}
A quadratic estimate at arbitrary order would require smallness conditions
on the potential making the neighborhood of convergence of the iteration scheme shrink and preventing us from proving holomorphy in the potential.
We  overcome this difficulty by proceeding by induction on the order of the derivative in the $q \in \KK$ variable.
 
The first step consists in the initialization, that is, proving the Levi--Moser iteration scheme in the family of seminorms $( |\cdot|_{ \rho, 0})_{ \rho >0 } $, and this uniformly for every potential in $ \mathcal{U}$. 
The first step below gives the quadratic smallness of the updated error. 
\begin{proposition}
\label{prop:quadratic}
Let $0<\rho''<\rho'\le\rho_0$ with $\rho'-\rho''\le1$. Let
$V\in\Hol_\rho(\T)$ and $u\in C_{\rho'}^{\pm}$ such that
\begin{equation*}
    |\partial_\theta u|_{\pm,\rho',0} < \frac{1}{8}, \qquad
    |u|_{\pm,\rho',0} \le \frac{\bar{d}}{9} \;. 
\end{equation*}
Assume that  $E:=\mathcal E(u,V)=V'(\id_\T+u)-\Delta u$ satisfies
\begin{equation}
    \label{eq:E-small-quad}
    |E|_{\rho',0} \le \varepsilon_0(\rho'-\rho'')^{\tau_0}.
\end{equation}
Then  
$$\widetilde E:=\mathcal{E} \Big( u + (1 + \partial_\theta u) \mathcal{S} \left( 1 + \partial_\theta u , E ) \right)   ,V \Big) $$  is bounded by
\begin{equation*}
    |\widetilde E|_{\rho'',0} \le
    \frac{\xi^{\mathrm{err}}_{0,\gamma,\tau}\,(1+|V|_{\rho^\sharp})}
    {(\rho'-\rho'')^{2\tau_0}}\,|E|_{\rho',0}^2,
\end{equation*}
where $\xi^{\mathrm{err}}_{0,\gamma,\tau}$ is given in \eqref{eq:xi-err-0}. 
\end{proposition}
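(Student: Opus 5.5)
The plan is to combine the exact identity of Lemma~\ref{lem:exact-identity} with the order-zero bounds of Theorem~\ref{thm:solution-hol}, estimating each of the two terms of \eqref{eq:exact-identity} separately. Write $d:=\rho'-\rho''$, $A:=1+\partial_\theta u$, $S:=\mathcal S(A,E)$ and $h:=AS$. First, Theorem~\ref{thm:solution-hol} applies, since $|A-1|_{\pm,\rho',0}<1/8$ and $\langle AE\rangle=0$ by Fact~\ref{fact:zero-mean}. Its order-zero estimates give
\begin{equation*}
|S|_{\pm,\rho_1,0},\ |h|_{\pm,\rho_1,0}\le \frac{\xi_{0,\gamma,\tau}}{(\rho'-\rho_1)^{\tau_0}}|E|_{\rho',0}
\end{equation*}
for any intermediate strip $\rho_1$. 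I will take $\rho_1:=\rho'-d/2$ and keep the remaining half of the gap for a Cauchy estimate in $\theta$. Combining this with \eqref{eq:E-small-quad} and the choice $\varepsilon_0=\bar d/(18\xi_{0,\gamma,\tau})$, I expect $|h|_{\rho'',0}\le 2^{\tau_0}\bar d/18$. The factor $2^{\tau_0}$ coming from halving $d$ must be absorbed. If it is not, I would instead apply the solution estimate directly from $\rho'$ to $\rho''$ for $h$, and use the halved gap only for $\partial_\theta E$. That direct choice gives $|h|_{\rho'',0}\le \bar d/18$. Together with $|u|\le\bar d/9$, this yields $|u|_{\rho'',0}+|h|_{\rho'',0}\le \bar d/6<\rho-\rho''$, because $\rho''<\rho_0$ and $\rho-\rho_0=2\bar d$. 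So hypothesis \eqref{eq:strong-for-id} holds and Lemma~\ref{lem:exact-identity} is applicable.

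\emph{Linear term.} For $S\,\partial_\theta E$, I bound $|S|_{\rho'',0}\le \xi_{0,\gamma,\tau}d^{-\tau_0}|E|_{\rho',0}$ from the direct $\rho'\to\rho''$ estimate. By Lemma~\ref{lem:der-hol} with $p=1$, $|\partial_\theta E|_{\rho'',0}\le d^{-1}|E|_{\rho',0}$. The product is therefore at most $\xi_{0,\gamma,\tau}d^{-\tau_0-1}|E|_{\rho',0}^2$. Since $d\le1$ and $\tau_0+1\le 2\tau_0$, this is bounded by $\xi_{0,\gamma,\tau}d^{-2\tau_0}|E|^2_{\rho',0}$. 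This is the first summand of $\xi^{\mathrm{err}}_{0,\gamma,\tau}$.

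\emph{Quadratic term.} For $(AS)^2\int_0^1(1-t)V'''(\id_\T+u+tAS)\,dt$, the square gives $|h|_{\rho'',0}^2\le \xi_{0,\gamma,\tau}^2d^{-2\tau_0}|E|^2_{\rho',0}$. The integral is controlled by Lemma~\ref{lem:composition-q-deriv}, applied with $f=V'''$ on the strip $\rho^\star$ and $\phi=u+th$. Since $|\phi|_{\rho'',0}\le\bar d/6$ and $\rho^\star-\rho''\ge\bar d$, the hypothesis holds with $\kappa=1/6$. This gives $|V'''\circ(\id_\T+\phi)|_{\rho'',0}\le|V'''|_{\rho^\star}$. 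Lemma~\ref{lem:der-hol} with $p=3$ from $\rho^\sharp$ to $\rho^\star$, whose gap is $\bar d/2$, then gives $|V'''|_{\rho^\star}\le 6\cdot 8\,\bar d^{-3}|V|_{\rho^\sharp}$. The factor $1/2$ from $\int_0^1(1-t)\,dt$ brings this to $24\bar d^{-3}|V|_{\rho^\sharp}$, which lies well within the constant $1050\,\bar d^{-3}$ in \eqref{eq:xi-err-0}. Summing the two terms and bounding $1$ and $|V|_{\rho^\sharp}$ by $1+|V|_{\rho^\sharp}$ gives the claim.

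The main obstacle is the bookkeeping of strips. The bound on $h$ must be small enough to verify \eqref{eq:strong-for-id} and to keep the composition inside $\T_{\rho^\star}$ with $\kappa<1$. At the same time, the loss in $\partial_\theta E$ must not spoil the $d^{-2\tau_0}$ power. Using the unhalved gap for $S$ and $h$, and paying the one extra power $d^{-1}$ only on $\partial_\theta E$, should resolve both requirements.
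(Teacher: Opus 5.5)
Your proposal is correct and is essentially the paper's proof. You take the order-zero solution estimate directly from $\rho'$ to $\rho''$ to get $|h|_{\pm,\rho'',0}\le\bar d/18$. This validates Lemma~\ref{lem:exact-identity} and the composition lemma relative to $\rho^\star$, after which $S\,\partial_\theta E$ and $(AS)^2\int_0^1(1-t)V'''(\id_\T+u+tAS)\,dt$ are bounded separately exactly as in the paper. The intermediate strip $\rho_1$ you first consider is unnecessary and the paper never uses it. Your explicit constant $24\,\bar d^{-3}$ lies comfortably within the paper's $1050\,\bar d^{-3}$.
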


\begin{proof}
As in the preceding results, we use the notations $ A = 1 + \partial_\theta u$, $ S = \mathcal{S}(A,E)$ and $ h = AS$. 
The hypotheses of the proposition ensure that we can apply 
Theorem~\ref{thm:solution-hol}, at $m=0$,  giving that the increment $h$ is bounded by 
\begin{equation}
\label{eq:bound-AS-0}
    |h|_{\pm,\rho'',0} \le
    \frac{\,\xi_{0,\gamma,\tau}}{(\rho'-\rho'')^{\tau_0}}\,
    |E|_{\rho',0}.
\end{equation}
By~\eqref{eq:E-small-quad}  and since $\varepsilon_0 = \frac{ \bar{d}}{ 18 \xi_{ 0, \gamma,\tau} }$, this
implies $|h|_{\pm,\rho'',0}\le\bar{d}/18$. Therefore we have 
\begin{equation} \label{eq:hyp-star}
    |u|_{\pm,\rho'',0} +  |h|_{\pm,\rho'',0}
    \le \frac{\bar d}{9}+\frac{\bar d}{18}
    =\frac{\bar d}{6}
    \le \frac{\rho^\star-\rho''}{6} \;. 
\end{equation}
This allows us to apply Lemma~\ref{lem:exact-identity}  and write 
\begin{equation*}
    \widetilde E=T_1+T_2,
\end{equation*}
where 
\begin{equation*}
    T_1 := S\cdot\partial_\theta E \qand T_2 := (AS)^2\int_0^1(1-t)\,V'''(\id_\T+u+tAS)\,dt.
\end{equation*} 
We begin by bounding $T_1$.
Lemma~\ref{lem:der-hol} provides
$|\partial_\theta E|_{\rho'',0}\le(\rho'-\rho'')^{-1}|E|_{\rho',0}$. 
Using Theorem~\ref{thm:solution-hol} at $m=0$ and the product estimate~\eqref{eq:leibniz-step}, we therefore obtain 
\begin{equation}
    \label{eq:term1-final}
    |T_1|_{\rho'',0} \le
    \frac{\xi_{0,\gamma,\tau}}{(\rho'-\rho'')^{\tau_0+1}}\,|E|_{\rho',0}^2.
\end{equation}

We now bound the integral term $T_2$. First,
Lemma~\ref{lem:der-hol}, applied with $p=3$ and
$\rho^\sharp-\rho^\star=\bar{d}/2$, gives
\begin{equation*}
    |V'''|_{\rho^\star} \le \frac{3!}{(\bar{d}/2)^3}\,|V|_{\rho^\sharp}
    = \frac{48}{\bar{d}^3}\,|V|_{\rho^\sharp}.
\end{equation*}
Then  \eqref{eq:hyp-star} allows us to apply Lemma~\ref{lem:composition-q-deriv} for every $t\in[0,1]$ at $m=0$ with $f=V'''$, $\phi=u+tAS$ , using  $\rho^\star$ for  $ V'''$ and the present $\rho'$ for the strip of $\phi$, to obtain 
\begin{equation*}
    \sup_{t\in[0,1]}|V'''(\id_\T+u+tAS)|_{\rho'',0}
    \le |V'''|_{\rho^\star} \le \frac{48}{\bar{d}^3}\,|V|_{\rho^\sharp}.
\end{equation*}
Combining this estimate with~\eqref{eq:bound-AS-0} and~\eqref{eq:leibniz-step} gives
\begin{equation} \label{eq:term2-final}
    |T_2|_{\rho'',0} \le \frac{1050\,\xi_{0,\gamma,\tau}^{\,2}}{\bar d^3(\rho'-\rho'')^{2\tau_0}}\,
    |E|^2_{\rho',0} |V|_{\rho^\sharp}  \;.   
\end{equation}

Since $\rho'-\rho''\le1$ and $2\tau_0\ge\tau_0+1$, the bounds~\eqref{eq:term1-final} and~\eqref{eq:term2-final} give
\begin{align*}
    | \widetilde E |_{\rho'',0 } &\le  | T_1 |_{\rho'',0 } + | T_2 |_{\rho'',0 } \\
     &\le
    \frac{\xi_{0,\gamma,\tau}}{(\rho'-\rho'')^{\tau_0+1}}\,|E|_{\rho',0}^2 + \frac{1050\,\xi_{0,\gamma,\tau}^{\,2}}{\bar d^3(\rho'-\rho'')^{2\tau_0}}\,
    |E|^2_{\rho',0} |V|_{\rho^\sharp}  \\  
    &\le \xi^{\mathrm{err}}_{0,\gamma,\tau}\,\frac{1+|V|_{\rho^\sharp}}{(\rho'-\rho'')^{2\tau_0}}\,
    |E|^2_{\rho',0}  \;.  
\end{align*}
\end{proof}

We are now ready to run the Levi--Moser iteration scheme at order $0$ in the $q$-variable.

\smallskip
Fix $V\in\mathcal U$. With the notations from Section~\ref{subsec:setting-it-scheme}, set
\begin{equation}\label{eq:def-alpha-delta}
  \delta_n:=|E_n|_{\rho_n,0},
  \qquad
  \alpha_n := \alpha_n (V) := 
  \frac{2^{4\tau_0}\,\xi^{\mathrm{err}}_{0,\gamma,\tau}
  \left(1+|V|_{\rho^\sharp}\right)}{d_n^{2\tau_0}}\,\delta_n.
\end{equation}

Let us observe the following.

\begin{fact}
\label{fact:U-admissible}
It holds $ \alpha_0 <\varepsilon_{\mathcal U} < 1/8$.
\end{fact}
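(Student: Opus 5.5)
The plan is to compute $\alpha_0$ directly from the definitions, since at step $n=0$ everything is explicit. With $u_0=0$ we get $E_0=\mathcal E(0,V)=V'(\id_\T)-\Delta 0=V'$. This map does not depend on $q\in\KK$, and so does $E_0^\#$. Hence the seminorm $|E_0|_{\rho_0,0}$ in \eqref{eq:def-scale-KK-vector} reduces to the plain sup norm: $\delta_0=|V'|_{\rho_0}$. Since $\rho_0<\rho^\sharp$, Lemma~\ref{lem:der-hol} with $p=1$ gives
\begin{equation*}
\delta_0\le\frac{|V|_{\rho^\sharp}}{\rho^\sharp-\rho_0}.
\end{equation*}

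Next, write $r:=|V|_{\rho^\sharp}$; the assumption $V\in\mathcal U$ means $r<r_{\mathcal U}$. Put
\begin{equation*}
C:=\frac{2^{4\tau_0}\xi^{\mathrm{err}}_{0,\gamma,\tau}}{d_0^{2\tau_0}(\rho^\sharp-\rho_0)},
\end{equation*}
so that \eqref{eq:def-alpha-delta} gives $\alpha_0\le C(1+r)r$. Dividing numerator and denominator of \eqref{eq:def-eps-r-U} by $d_0^{2\tau_0}(\rho^\sharp-\rho_0)$ gives exactly $r_{\mathcal U}=\varepsilon_{\mathcal U}/(C+\varepsilon_{\mathcal U})$. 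In particular $r_{\mathcal U}<1$, hence $0\le r<1$.

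The strict inequality $r<r_{\mathcal U}$ rearranges to $r(C+\varepsilon_{\mathcal U})<\varepsilon_{\mathcal U}$, that is, $rC<\varepsilon_{\mathcal U}(1-r)$. Multiplying by $1+r>0$ then gives
\begin{equation*}
\alpha_0\le C r(1+r)<\varepsilon_{\mathcal U}(1-r^2)\le\varepsilon_{\mathcal U}.
\end{equation*}
For the second inequality, $\varepsilon_{\mathcal U}$ in \eqref{eq:def-eps-r-U} is the reciprocal of $8$ plus a strictly positive term, because $\xi_{0,\gamma,\tau}$, $\xi^{\mathrm{err}}_{0,\gamma,\tau}$ and $\rho_0-\rho_\infty$ are all positive. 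Hence $\varepsilon_{\mathcal U}<1/8$.

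None of these steps is a real obstacle. The only points needing care are that $q$-independence makes the $m=0$ seminorm coincide with the sup norm on the strip (for both charts), and the algebraic identification of $r_{\mathcal U}$ in terms of $C$.
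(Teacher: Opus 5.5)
Your proposal is correct and follows essentially the same route as the paper: you bound $\delta_0=|V'|_{\rho_0}$ by $|V|_{\rho^\sharp}/(\rho^\sharp-\rho_0)$ via Lemma~\ref{lem:der-hol}, then use the explicit algebraic relation between $r_{\mathcal U}$ and $\varepsilon_{\mathcal U}$. The paper writes that relation as $r_{\mathcal U}/(1-r_{\mathcal U})=\varepsilon_{\mathcal U}/C$ together with $(1+r_{\mathcal U})r_{\mathcal U}<r_{\mathcal U}/(1-r_{\mathcal U})$, which is the same computation as your rearrangement, and your remark that the strictly positive extra term in $\varepsilon_{\mathcal U}$ gives the strict bound $\varepsilon_{\mathcal U}<1/8$ is slightly more explicit than the paper, which only records $\varepsilon_{\mathcal U}\le 1/8$ in \eqref{eq:U-smallness}.
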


\begin{proof}
For $V\in\mathcal U$, Lemma~\ref{lem:der-hol} gives
\begin{equation*}
|V'|_{\rho_0}
\le
\frac{|V|_{\rho^\sharp}}{\rho^\sharp-\rho_0}.
\end{equation*}
By definition \eqref{eq:def-eps-r-U}, one has  $0<r_{\mathcal U}<1$ and
\begin{equation*}
\frac{r_{\mathcal U}}{1-r_{\mathcal U}}
=
\frac{\varepsilon_{\mathcal U}d_0^{2\tau_0}
(\rho^\sharp-\rho_0)}
{2^{4\tau_0}\xi^{\mathrm{err}}_{0,\gamma,\tau}}.
\end{equation*}
It follows that 
\begin{equation*}
\alpha_0 =
\frac{
2^{4\tau_0}\xi^{\mathrm{err}}_{0,\gamma,\tau}
\left(1+|V|_{\rho^\sharp}\right)
}{
d_0^{2\tau_0}
}
|V'|_{\rho_0} <
\frac{
2^{4\tau_0}\xi^{\mathrm{err}}_{0,\gamma,\tau}
}{
d_0^{2\tau_0}
}
\frac{(1+r_{\mathcal U})r_{\mathcal U}}
{\rho^\sharp-\rho_0}
<
\frac{2^{4\tau_0}\xi^{\mathrm{err}}_{0,\gamma,\tau}}
{d_0^{2\tau_0}(\rho^\sharp-\rho_0)}
\frac{r_{\mathcal U}}{1-r_{\mathcal U}}
=
\varepsilon_{\mathcal U}.
\end{equation*}
The second bound $ \varepsilon_{\mathcal U} < 1/8$ was provided by definition. 
\end{proof}

We show superexponential decay of $ (\alpha_n)_n$ in the following. 

\begin{proposition}
\label{prop:induction}
For every $n\ge0$:
\begin{itemize}
    \item[(I$_n$)] $\alpha_n\le\alpha_0^{2^n}$,
    \item[(II$_n$)] $u_n\in C_{\rho_n}^{\pm}$ with
    $|\partial_\theta u_n|_{\pm,\rho_n,0}\le\frac18(1-2^{-n})$ and
    $|u_n|_{\pm,\rho_n,0}\le\frac{\bar d}{9}(1-2^{-n})$.
\end{itemize}
\end{proposition}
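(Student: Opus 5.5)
We will prove (I$_n$) and (II$_n$) simultaneously by induction on $n$. The base case $n=0$ is immediate: $u_0=0$ lies in every $C^\pm_{\rho_0}$, both bounds in (II$_0$) read $0\le 0$, and (I$_0$) is trivial.

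For the inductive step, assume (I$_n$) and (II$_n$). The order is: first check the hypotheses of Proposition~\ref{prop:quadratic}, then deduce (I$_{n+1}$), then control the increment to get (II$_{n+1}$).

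\emph{Hypotheses of Proposition~\ref{prop:quadratic}.} We apply it with $\rho'=\rho_n$, $\rho''=\rho_{n+1}$ and $\rho'-\rho''=d_n\le\rho_0-\rho_\infty\le 1$.
\begin{itemize}
\item By (II$_n$), $|\partial_\theta u_n|_{\pm,\rho_n,0}<1/8$ and $|u_n|_{\pm,\rho_n,0}\le\bar d/9$.
\item By Lemma~\ref{lem:composition-q-deriv}, with $u_n$ small against the gap $\rho^\star-\rho_n\ge\bar d$, the error $E_n$ lies in $\mathscr C_{\rho_n}$. By Fact~\ref{fact:zero-mean}, $\langle A_nE_n\rangle=0$, so $S_n$ and $h_n$ are well defined by Theorem~\ref{thm:solution-hol}.
\item It remains to verify the smallness condition~\eqref{eq:E-small-quad}, namely $\delta_n\le\varepsilon_0 d_n^{\tau_0}$. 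By definition of $\alpha_n$,
\[
\delta_n=\frac{\alpha_n d_n^{2\tau_0}}{2^{4\tau_0}\xi^{\mathrm{err}}_{0,\gamma,\tau}(1+|V|_{\rho^\sharp})}\le\frac{d_n^{2\tau_0}}{2^{4\tau_0}\xi^{\mathrm{err}}_{0,\gamma,\tau}},
\]
using $\alpha_n\le\alpha_0<1$. Since $d_n^{\tau_0}\le(\rho_0-\rho_\infty)^{\tau_0}$, the choice~\eqref{eq:star-standing} then gives $\delta_n\le\varepsilon_0 d_n^{\tau_0}$.
\end{itemize}

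\emph{Proof of (I$_{n+1}$).} Proposition~\ref{prop:quadratic} gives
\[
\delta_{n+1}\le\xi^{\mathrm{err}}_{0,\gamma,\tau}(1+|V|_{\rho^\sharp})\,d_n^{-2\tau_0}\delta_n^2 .
\]
Rewriting in terms of $\alpha$ yields
\[
\alpha_{n+1}\le\alpha_n^2\,\frac{d_n^{2\tau_0}}{2^{4\tau_0}d_{n+1}^{2\tau_0}}\le\alpha_n^2,
\]
since $d_n/d_{n+1}=((n+2)/(n+1))^2\le 4$. Hence $\alpha_{n+1}\le\alpha_0^{2^{n+1}}$. This is exactly why the factor $2^{4\tau_0}$ was built into $\alpha_n$.

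\emph{Proof of (II$_{n+1}$).} This is the step I expect to be the main obstacle. The cumulative bounds must telescope to the constants $1/8$ and $\bar d/9$ with the precise factor $2^{-(n+1)}$, which forces a careful use of the definition~\eqref{eq:def-eps-r-U} of $\varepsilon_{\mathcal U}$ and of~\eqref{eq:U-smallness}.

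By Theorem~\ref{thm:solution-hol} at order zero,
\[
|h_n|_{\pm,\rho_{n+1},0}\le\xi_{0,\gamma,\tau}d_n^{-\tau_0}\delta_n=\frac{\xi_{0,\gamma,\tau}d_n^{\tau_0}\alpha_n}{2^{4\tau_0}\xi^{\mathrm{err}}_{0,\gamma,\tau}(1+|V|_{\rho^\sharp})}.
\]
For $\partial_\theta h_n$, we go to the slightly smaller strip $\rho_{n+1}$ with half the gap as loss, using Lemma~\ref{lem:der-hol} together with a version of the previous bound on an intermediate strip. This costs an extra factor $2d_n^{-1}$, giving
\[
|\partial_\theta h_n|_{\pm,\rho_{n+1},0}\lesssim\frac{2^{5}\xi_{0,\gamma,\tau}d_n^{\tau_0-1}\alpha_n}{2^{4\tau_0}\xi^{\mathrm{err}}_{0,\gamma,\tau}}\le\frac{2^{5}\xi_{0,\gamma,\tau}(\rho_0-\rho_\infty)^{\tau_0-1}}{2^{4\tau_0}\xi^{\mathrm{err}}_{0,\gamma,\tau}}\,\alpha_0^{2^n}.
\]
Here the powers of $2$ coming from the strip halving must be absorbed; if the $2^{4\tau_0}$ versus $2^{3\tau_0}$ bookkeeping is too tight, I would redistribute the loss between the two half-gaps.

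Now $\alpha_0<\varepsilon_{\mathcal U}\le 1/8$, so $\alpha_0^{2^n}\le\varepsilon_{\mathcal U}\,8^{-(2^n-1)}\le\varepsilon_{\mathcal U}2^{-n-1}$. Together with~\eqref{eq:U-smallness}, this makes the derivative increment at most $\frac18 2^{-(n+1)}$. The same computation, with $\bar d$ in place of the derivative loss, gives $|h_n|_{\pm,\rho_{n+1},0}\le\frac{\bar d}{9}2^{-(n+1)}$. Adding the increments to the bounds from (II$_n$) via the triangle inequality, with monotonicity of seminorms in the strip, yields (II$_{n+1}$). Finally, $u_{n+1}\in C^\pm_{\rho_{n+1}}$ because $h_n\in\mathscr C^\pm_{\rho_{n+1}}$ has zero mean.
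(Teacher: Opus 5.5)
Your route is the paper's: a simultaneous induction, checking \eqref{eq:E-small-quad} from (I$_n$) and \eqref{eq:star-standing}, deriving $\alpha_{n+1}\le 2^{-4\tau_0}(d_n/d_{n+1})^{2\tau_0}\alpha_n^2\le\alpha_n^2$, and then bounding $h_n$ and $\partial_\theta h_n$ through the intermediate strip $\rho_n^{\mathrm{mid}}=(\rho_n+\rho_{n+1})/2$. Your hypothesis check and your proof of (I$_{n+1}$) are correct. The step you flagged, (II$_{n+1}$), does not close as written. Since the statement is exactly about the constants $\frac18$ and $\frac{\bar d}{9}$, this bookkeeping cannot be left at the level of $\lesssim$. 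There are three problems.

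\emph{The intermediate-strip prefactor is wrong.} Theorem~\ref{thm:solution-hol} from $\rho_n$ to $\rho_n^{\mathrm{mid}}$ costs $(d_n/2)^{-\tau_0}=2^{\tau_0}d_n^{-\tau_0}$. Lemma~\ref{lem:der-hol} across the remaining gap $d_n/2$ costs $2/d_n$. The factor is therefore $2^{\tau_0+1}/2^{4\tau_0}=2^{1-3\tau_0}$, not $2^{5-4\tau_0}$. Since $\tau_0=2\tau+7$, your displayed quantity underestimates the true bound by $2^{\tau_0-4}$, so it is not an upper bound. The correct estimate is
\[
|\partial_\theta h_n|_{\pm,\rho_{n+1},0}\le\frac{\xi_{0,\gamma,\tau}(\rho_0-\rho_\infty)^{\tau_0-1}}{2^{3\tau_0-1}\,\xi^{\mathrm{err}}_{0,\gamma,\tau}}\,\alpha_0^{2^n}.
\]
No redistribution of the gap is needed: by \eqref{eq:U-smallness}, this prefactor times $\varepsilon_{\mathcal U}$ is at most $\frac1{16}$.

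\emph{The decay inequality fails at $n=0$.} Your bound $\alpha_0^{2^n}\le\varepsilon_{\mathcal U}2^{-n-1}$ would require $\alpha_0\le\varepsilon_{\mathcal U}/2$ when $n=0$. What does hold is $\alpha_0^{2^n}\le\alpha_0\,8^{-(2^n-1)}\le\alpha_0 2^{-n}\le\varepsilon_{\mathcal U}2^{-n}$. With both corrections you get $|\partial_\theta h_n|_{\pm,\rho_{n+1},0}\le\frac1{16}2^{-n}=\frac18 2^{-(n+1)}$ exactly. Each of your two errors manufactures slack that the true computation does not have.

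\emph{The bound on $h_n$ has a different source.} It is not the same computation run through $\varepsilon_{\mathcal U}$, which would give a bound in terms of $\rho_0-\rho_\infty$ rather than $\bar d$. It comes from \eqref{eq:star-standing} and $\varepsilon_0=\bar d/(18\xi_{0,\gamma,\tau})$, which give $\xi_{0,\gamma,\tau}d_n^{\tau_0}/(2^{4\tau_0}\xi^{\mathrm{err}}_{0,\gamma,\tau})\le\xi_{0,\gamma,\tau}\varepsilon_0=\bar d/18$. Hence $|h_n|_{\pm,\rho_{n+1},0}\le\frac{\bar d}{18}\alpha_n\le\frac{\bar d}{18}2^{-n}$.

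With these repairs, the triangle inequality gives (II$_{n+1}$), and your argument coincides with the paper's.
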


\begin{proof}
We proceed by induction on $n \ge 0$. 
For the base case $n=0$, (I$_0$) is trivial, and (II$_0$) holds for $u_0=0$. Assume (I$_n$) and (II$_n$) hold for $n\ge0$. Since
$d_n\le\rho_0-\rho_\infty$,  equation~\eqref{eq:star-standing} and (I$_n$) give
\begin{equation*}
    \delta_n
    \le \frac{d_n^{2\tau_0}}
    {2^{4\tau_0}\xi^{\mathrm{err}}_{0,\gamma,\tau}
    (1+|V|_{\rho^\sharp})}\,\alpha_0^{2^n}
    \le \varepsilon_0\,d_n^{\tau_0}.
\end{equation*}
Together with $|u_n|_{\pm,\rho_n,0}\le\bar d/9$ from (II$_n$),
the inequality $\rho^\star-\rho_n\geq\bar d$ allows us to apply
Lemma~\ref{lem:composition-q-deriv} to provide the required regularity of $V'\circ(\id_\T+u_n)$. Hence
Proposition~\ref{prop:quadratic} applies at $\rho'=\rho_n$
and $\rho''=\rho_{n+1}$, giving
$S_n=\mathcal{S}(A_n,E_n)\in\mathscr C_{\rho_{n+1}}^{\pm}$ and
\begin{equation*}
 \delta_{n+1}
 \le 
 \frac{\xi^{\mathrm{err}}_{0,\gamma,\tau}(1+|V|_{\rho^\sharp})}
 {d_n^{2\tau_0}}\,\delta_n^2.
\end{equation*}
It follows by definition of $\alpha_{n+1}$ that
\begin{equation*}
 \alpha_{n+1} 
 \le 
 \frac{2^{4\tau_0}\xi^{\mathrm{err}}_{0,\gamma,\tau}
 (1+|V|_{\rho^\sharp})}{d_{n+1}^{2\tau_0}}
 \frac{\xi^{\mathrm{err}}_{0,\gamma,\tau}
 (1+|V|_{\rho^\sharp})}{d_n^{2\tau_0}}\,\delta_n^2 
 = \frac{1}{2^{4\tau_0}}
 \left(\frac{d_n}{d_{n+1}}\right)^{2\tau_0}\alpha_n^2.
 \end{equation*}
The explicit choice~\eqref{eq:strip-seq} gives
\begin{equation*}
 \frac{d_n}{d_{n+1}}
 =\left(\frac{n+2}{n+1}\right)^2\le 4.
\end{equation*}
Consequently, the factor preceding $\alpha_n^2$ is at most one, and
(I$_n$) yields
\begin{equation*}
 \alpha_{n+1}\le \alpha_n^2
 \le \alpha_0^{2^{n+1}},
\end{equation*}
which proves (I$_{n+1}$).

To prove (II$_{n+1}$), we first bound $h_n$. Since
$|\partial_\theta u_n|_{\pm,\rho_n,0}<1/8$ by (II$_n$),
Theorem~\ref{thm:solution-hol}, applied from $\rho_n$ to
$\rho_{n+1}$, gives
\begin{equation*}
|h_n|_{\pm,\rho_{n+1},0}
\le
\frac{\xi_{0,\gamma,\tau}}{d_n^{\tau_0}}\delta_n
\le
\frac{\xi_{0,\gamma,\tau}d_n^{\tau_0}}
{2^{4\tau_0}\xi^{\mathrm{err}}_{0,\gamma,\tau}}
\alpha_n.
\end{equation*}
Since $d_n\le\rho_0-\rho_\infty$,
\eqref{eq:star-standing} and the definition of $\varepsilon_0$ yield
\begin{equation}
\label{eq:bound-h-n}
|h_n|_{\pm,\rho_{n+1},0}
\le
\frac{\bar d}{18}\alpha_n
\le
\frac{\bar d}{18}\alpha_0^{2^n}
\le
\frac{\bar d}{18}\,2^{-n}.
\end{equation}

To bound the derivative of $h_n$, consider the intermediate
strip
\begin{equation*}
\rho_n^{\mathrm{mid}}
:=
\frac{\rho_n+\rho_{n+1}}{2},
\qquad
\rho_n-\rho_n^{\mathrm{mid}}
=
\rho_n^{\mathrm{mid}}-\rho_{n+1}
=
\frac{d_n}{2}.
\end{equation*}
Applying Theorem~\ref{thm:solution-hol} from $\rho_n$ to
$\rho_n^{\mathrm{mid}}$ gives
\begin{equation}
\label{eq:bound-h-n-mid}
|h_n|_{\pm,\rho_n^{\mathrm{mid}},0}
\le
\frac{2^{\tau_0}\xi_{0,\gamma,\tau}}{d_n^{\tau_0}}\delta_n
\le
\frac{\xi_{0,\gamma,\tau}d_n^{\tau_0}}
{2^{3\tau_0}\xi^{\mathrm{err}}_{0,\gamma,\tau}}
\alpha_n.
\end{equation}
Applying Lemma~\ref{lem:der-hol} across the remaining strip gap $d_n/2$, we obtain that
\begin{equation}
\label{eq:bound-dh-n}
|\partial_\theta h_n|_{\pm,\rho_{n+1},0}
 \le
\frac{2}{d_n}|h_n|_{\pm,\rho_n^{\mathrm{mid}},0} \le
\frac{\xi_{0,\gamma,\tau}d_n^{\tau_0-1}}
{2^{3\tau_0-1}\xi^{\mathrm{err}}_{0,\gamma,\tau}}
\alpha_n \le
\frac{\xi_{0,\gamma,\tau}
(\rho_0-\rho_\infty)^{\tau_0-1}}
{2^{3\tau_0-1}\xi^{\mathrm{err}}_{0,\gamma,\tau}}
\alpha_0^{2^n}.
\end{equation}
The definition of $\varepsilon_{\mathcal U}$ in \eqref{eq:def-eps-r-U} and
$\alpha_0<\varepsilon_{\mathcal U}$ give
\begin{equation*}
\frac{\xi_{0,\gamma,\tau}
(\rho_0-\rho_\infty)^{\tau_0-1}}
{2^{3\tau_0-1}\xi^{\mathrm{err}}_{0,\gamma,\tau}}
\alpha_0
<
\frac{1}{16}.
\end{equation*}
Consequently,
\begin{equation}
\label{eq:bound-dh-n-final}
|\partial_\theta h_n|_{\pm,\rho_{n+1},0}
\le
\frac{1}{16}\,2^{-n}.
\end{equation}

Since $u_{n+1}=u_n+h_n$, we have
$u_{n+1}\in C_{\rho_{n+1}}^\pm$, and (II$_n$),
\eqref{eq:bound-h-n}, and~\eqref{eq:bound-dh-n-final} give
\begin{align*}
|u_{n+1}|_{\pm,\rho_{n+1},0}
&\le
\frac{\bar d}{9}\left(1-2^{-n}\right)
+\frac{\bar d}{18}\,2^{-n}
=
\frac{\bar d}{9}\left(1-2^{-(n+1)}\right),\\
|\partial_\theta u_{n+1}|_{\pm,\rho_{n+1},0}
&\le
\frac18\left(1-2^{-n}\right)
+\frac1{16}\,2^{-n}
=
\frac18\left(1-2^{-(n+1)}\right).
\end{align*}
This proves (II$_{n+1}$). We end the proof by running the induction.
\end{proof}
\subsection{Convergence of the scheme at every order}
\label{subsec:convergence-every-order}
The purpose here is to prove convergence of the scheme at every order. 
As at order zero, we bound the updated error term at order $m$, but this time linearly in the previous error at order $m$ and polynomially in the error at order $m-1$. 
 For $m\ge 0$ and $0<\rho''<\rho'\leq\rho_0$ with $\rho'-\rho''\leq1$, consider
$V\in\Hol_\rho(\T)$, and $u\in C_{\tilde{\rho}}^{\pm}$ for some $ \tilde{\rho} > \rho'$. Denote 
$A:=1+\partial_\theta u\in\mathscr C_{\tilde{\rho} }^{\pm}$,   $ 
 E:=\mathcal E(u,V)$, $   S:=\mathcal S(A,E),$ and $  h:=AS$ when they are well defined. 
 Then we denote 
\begin{equation}
 \label{eq:def-bm}
 b_{m}:=1+|E|_{\rho',0}+|u|_{\pm,\rho',m}
 +|\partial_\theta u|_{\pm,\rho',m}
 +|h|_{\pm,\rho'',m}.
\end{equation}
We introduce the explicit constants
\begin{equation} \label{eq:def-vartheta} 
\vartheta_0:=0,
\qquad
\vartheta_j
:=
2^{j^3+3j^2+9j+9}(j!)^{j+3} 
\quad \text{for }  j\geq1 ,  
\end{equation}

\begin{equation} \label{eq:xi-def}
\xi^{\mathrm{sol}}_{m,\gamma,\tau}
:=
8\max\left\{
\begin{gathered}
\xi_{0,\gamma,\tau},\\
\xi_{m-1,\gamma,\tau}\left(2+\vartheta_{m-1}\right),\\
\xi_{m,\gamma,\tau}\left(2+2^{2m}\vartheta_m\right)
\end{gathered}
\right\}, 
\end{equation}
with the convention  $\xi_{-1,\gamma,\tau}  = 0$; and
\begin{equation}\label{eq:xi-def-2}
\xi^{\mathrm{pot}}_{m,\bar d}
:=
\max_{0\leq r\leq m} (r+3)! \left( \frac{2}{\bar{d}} \right)^{r+3}
\qand 
\xi^{\mathrm{err}}_{m,\gamma,\tau,\bar d}
:=
2^{m+6}3^m m^m\,
\xi^{\mathrm{pot}}_{m,\bar d}
\left(1+\xi^{\mathrm{sol}}_{m,\gamma,\tau}\right)^3.
\end{equation}

Under the same assumptions as in Proposition~\ref{prop:quadratic}, we obtain the following bounds on the update. 
\begin{theorem}
\label{thm:error-order-m-new}
With the above notations and assumptions, assume moreover that 
\begin{equation*}
  |\partial_\theta u|_{\pm,\rho',0}<\frac18,
  \qquad |u|_{\pm,\rho',0}\le\frac{\bar d}{9} 
\end{equation*}
and
\begin{equation*}
 |E|_{\rho',0}\le\varepsilon_0(\rho'-\rho'')^{\tau_0}.
\end{equation*}
For $m \ge 1$ the updated error then verifies 
\begin{equation}
  \label{eq:error-order-m-new}
 \begin{aligned}
 & |\mathcal E(u+h,V)|_{\rho'',m} \\
&  \le 
  \frac{\xi^{\mathrm{err}}_{m,\gamma,\tau,\bar d}
  (1+|V|_{\rho^\sharp})}
  {(\rho'-\rho'')^{2\tau_m+m+3}}
  \left[
  \begin{aligned}
  &b_{m-1}|E|_{\rho',0}|E|_{\rho',m}
  +b_{m-1}|E|_{\rho',0}^{2}
  \left(|\partial_\theta u|_{\pm,\rho',m}
  +|u|_{\pm,\rho',m}\right)\\
  &\quad \quad  \quad \quad 
  +b_{m-1}^{2m^2+5m+6}
  \left(|E|_{\rho',m-1}^{2}
  +|h|_{\pm,\rho'',m-1}^{2}\right)
  \end{aligned}
  \right].
 \end{aligned} 
\end{equation}
\end{theorem}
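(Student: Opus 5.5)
The starting point is the exact identity of Lemma~\ref{lem:exact-identity}. Its hypotheses are met exactly as in the proof of Proposition~\ref{prop:quadratic}: the smallness assumptions give \eqref{eq:hyp-star}. We therefore write $\mathcal E(u+h,V)=T_1+T_2$ with
\begin{equation*}
T_1:=S\,\partial_\theta E,\qquad T_2:=h^2\int_0^1(1-t)\,V'''(\id_\T+u+th)\,dt,
\end{equation*}
and estimate the $|\cdot|_{\rho'',m}$ seminorm of each term separately. The guiding principle is the sharp Leibniz splitting \eqref{eq:leibniz-split}. Whenever all $m$ derivatives in $q$ fall on one factor, that factor is paired with an order-zero factor, and such terms produce the bilinear terms $|E|_{\rho',0}|E|_{\rho',m}$ and $|E|_{\rho',0}^2(|\partial_\theta u|_{\pm,\rho',m}+|u|_{\pm,\rho',m})$. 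All remaining terms involve only seminorms of order at most $m-1$. These are absorbed into $b_{m-1}^{N}(|E|_{\rho',m-1}^2+|h|_{\pm,\rho'',m-1}^2)$, using $|E|_{\rho',0}\le|E|_{\rho',m-1}$ and $1\le b_{m-1}$.

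For $T_1$, Theorem~\ref{thm:solution-hol} gives $|S|_{\pm,\rho^{\mathrm{mid}},m}\lesssim d^{-\tau_m}(e_m+w_me_0+\Theta_m)$, with $\Theta_m$ polynomial in $(1+w_{m-1})$ times $e_{m-1}$. Lemma~\ref{lem:der-hol} across half the gap gives $|\partial_\theta E|_{\rho'',k}\le 2d^{-1}|E|_{\rho',k}$. In the splitting of $S\,\partial_\theta E$, the top term $|S|_m|\partial_\theta E|_0$ yields $e_0(e_m+w_me_0)+e_0\Theta_m$. Since $e_0\le e_{m-1}$ and $1+w_{m-1}\le b_{m-1}$, the piece $e_0\Theta_m$ is bounded by $b_{m-1}^{2m^2+4m+2}e_{m-1}^2$. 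The other top term $|S|_0|\partial_\theta E|_m\lesssim e_0e_m$, and the cross term $|S|_{m-1}|\partial_\theta E|_{m-1}$ is bounded by $b_{m-1}|h|_{m-1}e_{m-1}$ up to constants. For the latter we pass from $S$ to $h=AS$ via $S=h/A$ and Lemma~\ref{lem:Ainv-linear}, or we simply use the order $m-1$ bound on $S$ again in terms of $e_{m-1}$. We then use $xy\le x^2+y^2$. Note that $|u|_{\pm,\rho',m}$ does not appear in $T_1$; it enters only through $T_2$.

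For $T_2$, first bound $V'''$ and its $\theta$-derivatives on $\rho^\star$ by $\xi^{\mathrm{pot}}_{m,\bar d}|V|_{\rho^\sharp}$ using Lemma~\ref{lem:der-hol}. Then apply the tame composition bound \eqref{eq:comp-bound-tame} of Lemma~\ref{lem:composition-q-deriv} with $\phi=u+th$, $\kappa$ fixed by \eqref{eq:hyp-star}, uniformly in $t$. This gives
\begin{equation*}
|V'''(\id_\T+u+th)|_{\rho'',m}\lesssim |V|_{\rho^\sharp}\bigl(|u|_{\pm,\rho',m}+|h|_{\pm,\rho'',m}+b_{m-1}^m\bigr).
\end{equation*}
Multiplying by $h^2$ via \eqref{eq:leibniz-split}, the top terms are $|h|_0^2(|u|_m+|h|_m)$ and $|h|_0|h|_m$. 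Since $|h|_0\lesssim d^{-\tau_0}e_0$, and $|h|_m$ is given by Theorem~\ref{thm:solution-hol} in terms of $e_m+w_me_0+\Theta_m$, these top terms fit the first two brackets of \eqref{eq:error-order-m-new}. The rest is of order $m-1$ and is absorbed into the third bracket.

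The main obstacle is pure bookkeeping. One must verify that every lower-order product is dominated by $b_{m-1}^{2m^2+5m+6}(|E|_{m-1}^2+|h|_{m-1}^2)$ with the stated exponent, and that the powers of $d=\rho'-\rho''$ collected from $\tau_m$, $\tau_0$, the Cauchy losses and the $m$ derivatives of $V'''$ stay within $2\tau_m+m+3$. Here we use $d\le1$ to raise every negative power to this maximal one. We must also check that all numerical constants fit under $\xi^{\mathrm{err}}_{m,\gamma,\tau,\bar d}$. For this we would collect the solution constants into $\xi^{\mathrm{sol}}_m$, cube it to account for the up to three solution-dependent factors in $h^2\cdot V'''(\cdots)$, and bound the combinatorial Leibniz and Faà di Bruno factors by $2^{m+6}3^mm^m$.
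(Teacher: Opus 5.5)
Your proposal is correct and follows essentially the same route as the paper: the split $\mathcal E(u+h,V)=S\,\partial_\theta E+h^2F$ from Lemma~\ref{lem:exact-identity}, the sharp Leibniz splitting \eqref{eq:leibniz-split} to isolate the top-order terms, Theorem~\ref{thm:solution-hol} at orders $0$, $m-1$ and $m$ (absorbing $e_0\Theta_m\le\vartheta_m b_{m-1}^{2m^2+4m+2}e_{m-1}^2$), and a Fa\`a di Bruno/composition bound for $V'''(\id_\T+u+th)$ in which the singleton partition is separated out. For the cross term in $T_1$, the paper uses your second option, namely the order-$(m-1)$ bound on $S$ in terms of $e_{m-1}$. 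The $S=h/A$ alternative also works, but Lemma~\ref{lem:Ainv-linear} then costs a factor of order $b_{m-1}^{m-1}$ rather than a single $b_{m-1}$; this is harmless given the exponent $2m^2+5m+6$.
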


\begin{proof}
Fix $m\ge1$, set  $d:=\rho'-\rho''$ and for 
$0\le j\le m$, denote
\begin{equation*}
 e_j:=|E|_{\rho',j},\qquad
 w_j:=|\partial_\theta u|_{\pm,\rho',j} \qand H_j:=|h|_{\pm,\rho'',j}.
\end{equation*}
Since in particular we satisfy hypotheses of 
Proposition~\ref{prop:quadratic}, we similarly apply Lemma~\ref{lem:exact-identity} and write 
\begin{equation*}
  \widetilde E=T_1+T_2,\qquad
  T_1:=S\,\partial_\theta E,\qquad T_2:=h^2F,  
\end{equation*}
where we denote  
\begin{equation*}
  F:=\int_0^1(1-t)F_t\,dt \qquad \text{ with }  F_t:=V'''(\id_\T+u+th), \quad \text{ for } 0 \le t \le 1 \;.
\end{equation*} 

We will bound $ T_1 $ and $T_2$ separately exactly as in Proposition~\ref{prop:quadratic} but we will separate terms of order $0$, $m-1$ and $m$. 	
 We first bound $T_1$. By Lemma~\ref{lem:der-hol}, for every $ 0\leq j\leq m$ it holds
\begin{equation*}
|\partial_\theta E|_{\rho'',j}
\leq
\frac{e_j}{d} \;. 
\end{equation*}
Hence the  estimate~\eqref{eq:leibniz-split} gives
\begin{equation}
\label{eq:T1-product}
|T_1|_{\rho'',m}
\leq
\frac{1}{d}
\left(
|S|_{\pm,\rho'',0}e_m
+
|S|_{\pm,\rho'',m}e_0
+
2^m|S|_{\pm,\rho'',m-1}e_{m-1}
\right).
\end{equation}

At order zero, Theorem~\ref{thm:solution-hol} gives
\begin{equation*}
|S|_{\pm,\rho'',0}
\leq
\xi_{0,\gamma,\tau}d^{-\tau_0}e_0.
\end{equation*}
By the definition of $\Theta_m$ and
\eqref{eq:def-bm}--\eqref{eq:def-vartheta}, it holds
\begin{equation*}
\Theta_m
\leq
\vartheta_m
b_{m-1}^{2m^2+4m+2}e_{m-1}.
\end{equation*}
Consequently, the order $m$ estimate of Theorem~\ref{thm:solution-hol} therefore gives
\begin{equation*}
|S|_{\pm,\rho'',m}
\leq
\xi_{m,\gamma,\tau}d^{-\tau_m}
\left(
e_m+w_me_0+
\vartheta_m b_{m-1}^{2m^2+4m+2}e_{m-1}
\right).
\end{equation*}
Similarly, at order $m-1$ we have
\begin{equation*}
\Theta_{m-1}
\leq
\vartheta_{m-1}b_{m-1}^{2m^2}e_{m-1},
\end{equation*}
so that 
\begin{equation*}
\begin{aligned}
|S|_{\pm,\rho'',m-1}
&\leq
\xi_{m-1,\gamma,\tau}(2+\vartheta_{m-1})
d^{-\tau_m}b_{m-1}^{2m^2}e_{m-1}\\
&\leq
\frac{\xi^{\mathrm{sol}}_{m,\gamma,\tau}}
{d^{\tau_m}}
b_{m-1}^{2m^2}e_{m-1}.
\end{aligned}
\end{equation*}
Substituting these three estimates into
\eqref{eq:T1-product}  yields
\begin{equation}
\label{eq:T1-final}
|T_1|_{\rho'',m}
\leq
\frac{2^{m+1}\xi^{\mathrm{sol}}_{m,\gamma,\tau}}
{d^{\tau_m+1}}
\left(
e_0e_m+e_0^2w_m+
b_{m-1}^{2m^2+4m+2}e_{m-1}^2
\right).
\end{equation}
Consequently,  we have in particular the cruder bound
\begin{equation}
\label{eq:error-order-first-term}
\begin{aligned}
|T_1|_{\rho'',m}
\leq{}&
\frac{\xi^{\mathrm{err}}_{m,\gamma,\tau,\bar d}
\left(1+|V|_{\rho^\sharp}\right)}
{2d^{2\tau_m+m+3}}
\left[
\begin{aligned}
&b_{m-1}e_0e_m
+b_{m-1}e_0^2
\left(w_m+|u|_{\pm,\rho',m}\right)\\
&+
b_{m-1}^{2m^2+5m+6}
\left(e_{m-1}^2+H_{m-1}^2\right)
\end{aligned}
\right].
\end{aligned}
\end{equation}

We now bound the term $T_2$. 
For $t\in[0,1]$, write
$\phi_t:=u+th$. Theorem~\ref{thm:solution-hol} and the
assumption on $e_0$ give
\begin{equation*}
H_0
\leq
\xi_{0,\gamma,\tau}d^{-\tau_0}e_0
\leq
\frac{\bar d}{18}.
\end{equation*}
Hence, uniformly for $t\in[0,1]$,
\begin{equation*}
|\phi_t|_{\pm,\rho'',0}
\leq
|u|_{\pm,\rho',0}+H_0
\leq
\frac{\bar d}{6}
\leq
\frac{\rho^\star-\rho''}{6}.
\end{equation*}
For every $0\leq r\leq m$,
Lemma~\ref{lem:composition-q-deriv} at order zero, followed by
Lemma~\ref{lem:der-hol}, gives
\begin{equation}
\label{eq:error-order-potential-composition}
\left|
V^{(r+3)}(\id_\T+\phi_t)
\right|_{\rho'',0}
\le \frac{(r+3)! 2^{r+3}}{\bar{d}^{r+3}} |V|_{\rho^\sharp}  \le 
\xi^{\mathrm{pot}}_{m,\bar d}|V|_{\rho^\sharp}.
\end{equation}

For $1\leq\ell\leq m$, the $\ell$-th derivative of $F_t$ is given by classical derivation formula as
\begin{equation*}
\partial_q^\ell F_t
=
\sum_{\mathcal P\vdash\{1,\ldots,\ell\}}
V^{(|\mathcal P|+3)}(\id_\T+\phi_t)
\prod_{B\in\mathcal P}\partial_q^{|B|}\phi_t.
\end{equation*}
We compute contributions of the different summands separately. 
The singleton partition is the only one whose term contains
$\partial_q^\ell \phi_t$. For every other term, all derivatives of $\phi_t$ have order at most $m-1$ and are therefore bounded by $b_{m-1}$. Since the number of partitions is at most $m^m$, \eqref{eq:error-order-potential-composition} gives, uniformly in
$t\in[0,1]$, that
\begin{align*}
|F_t|_{\rho'',0}
&\leq
\xi^{\mathrm{pot}}_{m,\bar d}|V|_{\rho^\sharp},\\
|F_t|_{\rho'',m}
&\leq
\xi^{\mathrm{pot}}_{m,\bar d}|V|_{\rho^\sharp}
\left(
|u|_{\pm,\rho',m}
+H_m
+m^m b_{m-1}^m
\right),\\
|F_t|_{\rho'',m-1}
&\leq
m^m\xi^{\mathrm{pot}}_{m,\bar d}|V|_{\rho^\sharp}
b_{m-1}^m.
\end{align*}
The above bound holds as well when substituting $F_t$ by 
$F=\int_0^1(1-t)F_t\,dt$.

Using the splitting~\eqref{eq:leibniz-split}, first to
$h^2$, and then to $h^2F$,  gives
\begin{equation*}
\begin{aligned}
|T_2|_{\rho'',m}
\leq{}&
\left(2H_0H_m+2^mH_{m-1}^2\right)|F|_{\rho'',0}
+H_0^2|F|_{\rho'',m}
+2^{2m-1}H_{m-1}^2|F|_{\rho'',m-1}\\
\leq{}&
2^{2m+1}m^m
\xi^{\mathrm{pot}}_{m,\bar d}|V|_{\rho^\sharp}
\bigg[
H_0H_m
+H_0^2\left(|u|_{\pm,\rho',m}+H_m\right)
+b_{m-1}^mH_{m-1}^2
\bigg].
\end{aligned}
\end{equation*}

At orders zero and $m$, Theorem~\ref{thm:solution-hol}, together with
the definition of $\xi^{\mathrm{sol}}_{m,\gamma,\tau}$, gives
\begin{equation*}
H_0
\leq
\frac{\xi^{\mathrm{sol}}_{m,\gamma,\tau}}
{d^{\tau_0}}e_0 \qand 
H_m \leq
\frac{\xi^{\mathrm{sol}}_{m,\gamma,\tau}}
{d^{\tau_m}}
\left(
e_m+w_me_0+
b_{m-1}^{2m^2+4m+2}e_{m-1}
\right).
\end{equation*}
We consequently obtain
\begin{equation*}
\begin{aligned}
|T_2|_{\rho'',m}
\leq {}&
\frac{
2^{2m+1}m^m\xi^{\mathrm{pot}}_{m,\bar d}
\left(1+\xi^{\mathrm{sol}}_{m,\gamma,\tau}\right)^3
|V|_{\rho^\sharp}}
{d^{2\tau_0+\tau_m}}
\\
&\times
\left[
b_{m-1}e_0e_m
+b_{m-1}e_0^2
\left(w_m+|u|_{\pm,\rho',m}\right)
+b_{m-1}^{2m^2+4m+3}e_{m-1}^2
+b_{m-1}^mH_{m-1}^2
\right].
\end{aligned}
\end{equation*}
Finally, using the trivial bounds
\begin{equation*}
2\tau_0+\tau_m\leq2\tau_m+m+3,
\qquad
2m^2+4m+3\leq2m^2+5m+6,
\qquad
m\leq2m^2+5m+6,
\end{equation*}
the definition of $\xi^{\mathrm{err}}_{m,\gamma,\tau,\bar d}$ yields then 
\begin{equation}
\label{eq:error-order-second-term}
\begin{aligned}
|T_2|_{\rho'',m}
\leq{}&
\frac{\xi^{\mathrm{err}}_{m,\gamma,\tau,\bar d}
\left(1+|V|_{\rho^\sharp}\right)}
{2d^{2\tau_m+m+3}}
\left[
\begin{aligned}
&b_{m-1}e_0e_m
+b_{m-1}e_0^2
\left(w_m+|u|_{\pm,\rho',m}\right)\\
&+
b_{m-1}^{2m^2+5m+6}
\left(e_{m-1}^2+H_{m-1}^2\right)
\end{aligned}
\right].
\end{aligned}
\end{equation}
Combining
\eqref{eq:error-order-first-term} and
\eqref{eq:error-order-second-term} completes the proof. 
\end{proof}

We now combine the preceding theorem with the order-zero quadratic
estimate to prove convergence at every order.

\begin{theorem}
\label{thm:convergence-order-m-new}
For every $m\ge0$, there exist an exponent $p_m\ge0$ and finite
constants $K_m,L_m>0$, independent of $V$, such that for every
$V\in\mathcal U$ and every $n \ge 0$, letting
$\alpha_0=\alpha_0(V)$ be given by
\eqref{eq:def-alpha-delta}, we have the bounds
\begin{equation}
  \label{eq:Hm-new}
  |E_n|_{\rho_n,m}
  \le K_m(n+1)^{p_m}\alpha_0^{2^{n-1}},
  \qquad
  |h_n|_{\pm,\rho_{n+1},m}
  \le L_m(n+1)^{p_m+2\tau_m}\alpha_0^{2^{n-1}} \;.
\end{equation}
Consequently, for every $V\in\mathcal U$, the limit
\begin{equation*}
 U(V):=\lim_{n\to\infty}u_n\in C_{\rho_\infty}^{\pm}
\end{equation*}
exists and satisfies $\mathcal E(U(V),V)=0$ and  $ U  \in \Hol^\infty ( \mathcal{U} ,  C ^\pm_{\rho_\infty} )$. Moreover, $U$ is tame holomorphic from $ \mathcal{U} $ to $ C^\pm_{\rho_\infty}$. 
\end{theorem}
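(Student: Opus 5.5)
The proof goes by induction on $m\ge0$, feeding Theorem~\ref{thm:error-order-m-new} with the order-zero information from Proposition~\ref{prop:induction}.

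\textbf{Base case $m=0$.} Proposition~\ref{prop:induction}(I$_n$) gives
\[
\delta_n=\frac{d_n^{2\tau_0}\alpha_n}{2^{4\tau_0}\xi^{\mathrm{err}}_{0,\gamma,\tau}(1+|V|_{\rho^\sharp})}\le C\,\alpha_0^{2^n}.
\]
The bound on $h_n$ then follows from \eqref{eq:bound-h-n} with $p_0=0$. The harmless loss of the exponent $2^{n}$ down to $2^{n-1}$ is kept so that polynomial factors can later be absorbed.

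\textbf{Inductive step.} Assume \eqref{eq:Hm-new} holds at all orders $\le m-1$. Then $u_n=\sum_{k<n}h_k$, so $|u_n|_{\pm,\rho_n,m-1}$ is bounded uniformly in $n$, because $\sum_k (k+1)^{p}\alpha_0^{2^{k-1}}<\infty$ since $\alpha_0<1/8$. By Lemma~\ref{lem:der-hol} on a half gap, $|\partial_\theta u_n|_{\pm,\rho_n,m-1}$ is bounded by a polynomial in $n$, since $1/d_n\sim (n+1)^2$. Hence $b_{m-1}$ in \eqref{eq:def-bm} is at most polynomial in $n$, uniformly in $V\in\mathcal U$. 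To set this up cleanly, I would apply the estimates on slightly shrunk strips (e.g.\ $\rho_n$ versus midpoints), paying only powers of $(n+1)$.

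Theorem~\ref{thm:error-order-m-new} then yields a linear recursion
\[
x_{n+1}\le P(n)\,\alpha_0^{2^{n}}\,x_n+P(n)\,\alpha_0^{2^{n}}\,y_n+Q(n)\,\alpha_0^{2^{n-1}\cdot 2},
\qquad x_n:=|E_n|_{\rho_n,m}.
\]
Here $y_n:=|u_n|_{\pm,\rho_n,m}+|\partial_\theta u_n|_{\pm,\rho_n,m}$, and $P,Q$ are polynomials with exponent $\sim 2\tau_m+m+3$ coming from $d_n^{-1}$. The terms $e_{m-1}^2$ and $H_{m-1}^2$ are quadratic in quantities that are $O(\alpha_0^{2^{n-1}})$, which gives the third term. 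Theorem~\ref{thm:solution-hol} at order $m$ bounds
\[
|h_n|_{\pm,\rho_{n+1},m}\le C\,d_n^{-\tau_m}\bigl(x_n+w_m\delta_n+\Theta_m\bigr),
\]
so that $y_{n+1}\le y_n+C(n+1)^{2\tau_m}\bigl(x_n+y_n\alpha_0^{2^n}+\text{poly}\cdot\alpha_0^{2^{n-1}}\bigr)$.

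\textbf{Main obstacle: closing the coupled recursion uniformly in $V$ without shrinking $\mathcal U$.} I would first prove that $y_n$ stays bounded. Take the ansatz $x_n\le K(n+1)^p\alpha_0^{2^{n-1}}$ and $y_n\le Y$. Since $\prod_n\bigl(1+P(n)\alpha_0^{2^n}\bigr)$ converges, a discrete Gronwall argument closes the estimate. The needed inequality is
\[
P(n)\,\alpha_0^{2^n}\,(n+1)^p\,\alpha_0^{2^{n-1}}\le (n+2)^p\,\alpha_0^{2^n},
\]
and it holds because $\alpha_0^{2^{n-1}}\le 8^{-2^{n-1}}$ beats any polynomial. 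The exponent $p$ is chosen large enough to absorb $Q(n)$. Any finitely many initial steps are handled by enlarging $K_m$. Everything is uniform because $\alpha_0<\varepsilon_{\mathcal U}<1/8$ for all $V\in\mathcal U$.

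\textbf{Convergence and regularity.} The series $\sum h_n$ converges in every seminorm $|\cdot|_{\pm,\rho_\infty,m}$, so $U(V)\in C^\pm_{\rho_\infty}$. Continuity of $\mathcal E$ (Lemma~\ref{lem:composition-q-deriv}) together with $E_n\to0$ gives $\mathcal E(U(V),V)=0$.

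For holomorphy in $V$: each $V\mapsto u_n$ is holomorphic, being a composition of the holomorphic maps in Theorem~\ref{thm:solution-hol} and Lemma~\ref{lem:composition-q-deriv}. The convergence is locally uniform on $\mathcal U$ in every seminorm. A locally uniform limit of holomorphic Fréchet-valued maps is holomorphic, by restricting to complex lines and using Cauchy estimates.

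Tameness follows from the uniform bounds $|U(V)|_{\pm,\rho_\infty,m}\le\sum_n L_m(n+1)^{p_m+2\tau_m}\alpha_0^{2^{n-1}}$. These are controlled by $|V|_{\rho^\sharp}$ with a fixed loss, independent of $m$.
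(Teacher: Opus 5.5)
Your proposal follows essentially the same route as the paper: induction on $m$ seeded by Proposition~\ref{prop:induction}, then Theorem~\ref{thm:error-order-m-new} giving a recursion for $|E_{n+1}|_{\rho_{n+1},m}$ that is linear in the order-$m$ quantities, with coefficients and inhomogeneous term of size a polynomial in $n$ times $\alpha_0^{2^n}$. As in the paper, you close a simultaneous ansatz for $|E_n|_{\rho_n,m}$ and $|u_n|_{\pm,\rho_n,m}+|\partial_\theta u_n|_{\pm,\rho_n,m}$ for $n\ge N(m)$ via the superexponential decay of $\varepsilon_{\mathcal U}^{2^{n}}$, absorb the finitely many steps $n\le N$ into $K_m$ uniformly in $V$, and conclude with normal convergence of $\sum h_n$ and holomorphy as a uniform limit. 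The one point to make explicit is the one you flag with the midpoints: the induction hypothesis must carry, as the paper's $(\mathcal Q_m)$ does, the bound on $h_n$ on the intermediate strip $\widetilde\rho_n=(\rho_n+\rho_{n+1})/2$ together with the bound on $|u_n|_{\pm,\rho_n,m}+|\partial_\theta u_n|_{\pm,\rho_n,m}$, because \eqref{eq:Hm-new} alone controls $h_{n-1}$ only on $\rho_n$ and hence not $\partial_\theta h_{n-1}$ there; with this strengthening, $b_{m-1}$ is in fact uniformly bounded rather than merely polynomial in $n$.
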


\begin{proof}
For every $n\geq0$, set the intermediate strip 
\begin{equation*}
\widetilde\rho_n:=\frac{\rho_n+\rho_{n+1}}2,
\qquad \text{ so that } 
\rho_n-\widetilde\rho_n
=\widetilde\rho_n-\rho_{n+1}
=\frac{d_n}{2}.
\end{equation*}
We prove by induction on $m$ the stronger conclusion that there exist
$p_m\geq0$ and $K_m,L_m>0$, independent of $V$, such that for every
$V\in\mathcal U$ and every $n\geq0$,
\begin{equation*}
(\mathcal Q_m) \colon \qquad
\left\{
\begin{aligned}
&|E_n|_{\rho_n,m} \leq K_m(n+1)^{p_m}\alpha_0^{2^{n-1}} \\ 
&|h_n|_{\pm,\widetilde\rho_n,m} \leq L_m(n+1)^{p_m+2\tau_m}\alpha_0^{2^{n-1}}\\
&|u_n|_{\pm,\rho_n,m}
+|\partial_\theta u_n|_{\pm,\rho_n,m}
\leq 2K_m
\end{aligned}
\right.
\end{equation*}
holds. By Fact~\ref{fact:U-admissible},
\begin{equation*}
0\leq\alpha_0(V)<\varepsilon_{\mathcal U}<\frac18
\qquad\text{for every }V\in\mathcal U.
\end{equation*}
Note that if $V=0$, then $E_n=h_n=u_n=0$ for every $n$ and $ (\mathcal{Q}_m)$ trivially holds independently of the choice of $ p_m, K_m$ and $L_m$. In the estimates containing negative powers of $\alpha_0$, we may consequently restrict ourselves to $V\neq0$ for which $\alpha_0>0$.

We first show $(\mathcal Q_0)$. Set
\begin{equation*}
 \mu_0:=
 \frac{d_0^{2\tau_0}}
 {2^{4\tau_0}\xi^{\mathrm{err}}_{0,\gamma,\tau}}.
\end{equation*}
Proposition~\ref{prop:induction} gives
\begin{equation} \label{eq:e-0}
|E_n|_{\rho_n,0} ( = \delta_n) 
\leq\mu_0\alpha_0^{2^n}
\leq\mu_0\alpha_0^{2^{n-1}}.
\end{equation}
Thus the first inequality in $(\mathcal Q_0)$ holds
with $p_0=0$ and, for example,
\begin{equation*}
K_0:=\max\left\{1,\mu_0\right\} ,\qquad
 L_0:=
 \max\left\{1, \;
 2^{\tau_0}\xi_{0,\gamma,\tau}
 \left(\frac{\pi^2}{6(\rho_0-\rho_\infty)}\right)^{\tau_0}\mu_0\right\}.
\end{equation*}
Indeed, the first inequality in $(\mathcal Q_0)$  is  \eqref{eq:e-0}. For the second inequality,  Theorem~\ref{thm:solution-hol}, applied from $\rho_n$ to
$\widetilde\rho_n$, gives
\begin{align*}
|h_n|_{\pm,\widetilde\rho_n,0}
\leq
\frac{2^{\tau_0}\xi_{0,\gamma,\tau}}{d_n^{\tau_0}}
|E_n|_{\rho_n,0}\leq
2^{\tau_0}\xi_{0,\gamma,\tau}
\left(\frac{\pi^2}{6(\rho_0-\rho_\infty)}\right)^{\tau_0}
\mu_0(n+1)^{2\tau_0}\alpha_0^{2^n} \leq
L_0(n+1)^{2\tau_0}\alpha_0^{2^{n-1}}.
\end{align*}
Finally, (II$_n$) in Proposition~\ref{prop:induction} gives the third
inequality in $(\mathcal Q_0)$, concluding the proof for the base case $m = 0$.

Fix $m\ge1$ for which $(\mathcal Q_{m-1})$ holds true and let us show $(\mathcal{Q}_m)$. Set then 
\begin{equation*}
 B_{m-1}:=1+\sup_{\substack{V\in\mathcal U\\n\ge0}}\left(
 |E_n|_{\rho_n,0}
 +|u_n|_{\pm,\rho_n,m-1}
 +|\partial_\theta u_n|_{\pm,\rho_n,m-1}
 +|h_n|_{\pm,\widetilde\rho_n,m-1}\right)<\infty.
\end{equation*}
Its finiteness follows from  all three inequalities in $(\mathcal Q_{m-1})$. 
Theorem~\ref{thm:solution-hol} applies at order $m$, with source strip
$\rho_n$ and target strip $\widetilde\rho_n$, giving
\begin{equation*}
\begin{aligned}
|h_n|_{\pm,\widetilde\rho_n,m}
\leq
\frac{\xi_{m,\gamma,\tau}}{(d_n/2)^{\tau_m}}
\Bigl[
&|E_n|_{\rho_n,m}
+|\partial_\theta u_n|_{\pm,\rho_n,m}
 |E_n|_{\rho_n,0}\\
&+\vartheta_m
 \left(1+|\partial_\theta u_n|_{\pm,\rho_n,m-1}\right)^{2m^2+4m+2}
 |E_n|_{\rho_n,m-1}
\Bigr].
\end{aligned}
\end{equation*}
Using the inequalities 
\begin{equation*}
1+|\partial_\theta u_n|_{\pm,\rho_n,m-1}
\leq B_{m-1}
\qand 
|E_n|_{\rho_n,m-1}
\leq
K_{m-1}(n+1)^{p_{m-1}}
\alpha_0^{2^{n-1}}, 
\end{equation*}
we consequently obtain that 
\begin{equation*}
\begin{aligned}
|h_n|_{\pm,\widetilde\rho_n,m}
\leq {}&
2^{\tau_m}\xi_{m,\gamma,\tau}
\left(\frac{\pi^2}{6(\rho_0-\rho_\infty)}\right)^{\tau_m}
(n+1)^{2\tau_m}\\
&\times
\Bigl[
|E_n|_{\rho_n,m}
+\mu_0\alpha_0^{2^n}
 |\partial_\theta u_n|_{\pm,\rho_n,m}
+\vartheta_m B_{m-1}^{2m^2+4m+2}
 K_{m-1}(n+1)^{p_{m-1}}
 \alpha_0^{2^{n-1}}
\Bigr]\;. 
\end{aligned} 
\end{equation*}
Set 
\begin{align*}
\xi_m:={}&
2^{\tau_m+m^3+3m^2+11m+18}(m!)^{m+3}
\left(
1+\xi_{m,\gamma,\tau}
+\xi^{\mathrm{err}}_{m,\gamma,\tau,\bar d}
\right)
\left(1+r_{\mathcal U}\right)\\
&{}\times
\left(
1+\mu_0+B_{m-1}+K_{m-1}+L_{m-1}
+\frac{\pi^2}{6(\rho_0-\rho_\infty)}
\right)^{2\tau_m+2m^2+6m+12}.
\end{align*}
We therefore obtain the cruder bound
\begin{equation}
\label{eq:order-m-increment-bootstrap}
|h_n|_{\pm,\widetilde\rho_n,m}
\leq
\xi_m(n+1)^{2\tau_m}
\left[
|E_n|_{\rho_n,m}
+\alpha_0^{2^n}|\partial_\theta u_n|_{\pm,\rho_n,m}
+(n+1)^{p_{m-1}}\alpha_0^{2^{n-1}}
\right].
\end{equation}

Since $u_0$ is entire, an induction on $n \ge 0$  gives the stronger regularity
\begin{equation*}
u_n=u_{n-1}+h_{n-1}\in C^\pm_{\widetilde\rho_{n-1}} . 
\end{equation*}
We now bound the updated error $E_{n+1}$. By Theorem~\ref{thm:error-order-m-new}  with $\rho'=\rho_n$, $ \rho''=\rho_{n+1} $ it holds
\begin{equation*}
\begin{aligned}
|E_{n+1}|_{\rho_{n+1},m}
\leq {}&
\frac{\xi^{\mathrm{err}}_{m,\gamma,\tau,\bar d}
\left(1+|V|_{\rho^\sharp}\right)}
{d_n^{2\tau_m+m+3}}
\Bigl[
  B_{m-1} |E_n|_{\rho_n,0}|E_n|_{\rho_n,m}\\
&\quad
  + B_{m-1} |E_n|_{\rho_n,0}^2
\left(
|\partial_\theta u_n|_{\pm,\rho_n,m}
+|u_n|_{\pm,\rho_n,m}
\right)\\
&\quad
  + B_{m-1}^{2m^2+5m+6} 
\left(
|E_n|_{\rho_n,m-1}^2
+|h_n|_{\pm,\rho_{n+1},m-1}^2
\right)
\Bigr].
\end{aligned}
\end{equation*}

The induction hypothesis then provides
\begin{equation*}
\begin{aligned}
|E_n|_{\rho_n,m-1}^2
+|h_n|_{\pm,\rho_{n+1},m-1}^2
&\leq
|E_n|_{\rho_n,m-1}^2
+|h_n|_{\pm,\widetilde\rho_n,m-1}^2\\
&\leq
\left(K_{m-1}^2+L_{m-1}^2\right)
(n+1)^{2(p_{m-1}+2\tau_{m-1})}
\alpha_0^{2^n}.
\end{aligned}
\end{equation*}

Substituting these estimates into the preceding bound yields
\begin{equation*}
\begin{aligned}
|E_{n+1}|_{\rho_{n+1},m}
\leq {}&
\frac{\xi^{\mathrm{err}}_{m,\gamma,\tau,\bar d}
\left(1+|V|_{\rho^\sharp}\right)}
{d_n^{2\tau_m+m+3}}
\Bigl[
B_{m-1}\mu_0\alpha_0^{2^n}|E_n|_{\rho_n,m}\\
&\quad
+B_{m-1}\mu_0^2\alpha_0^{2^{n+1}}
\left(
|\partial_\theta u_n|_{\pm,\rho_n,m}
+|u_n|_{\pm,\rho_n,m}
\right)\\
&\quad
+B_{m-1}^{2m^2+5m+6}
\left(K_{m-1}^2+L_{m-1}^2\right)
(n+1)^{2(p_{m-1}+2\tau_{m-1})}
\alpha_0^{2^n}
\Bigr].
\end{aligned}
\end{equation*}
Set
\begin{equation*}
p_m
:=
2\left(p_{m-1}+2\tau_{m-1}\right)
+2\left(2\tau_m+m+3\right).
\end{equation*}
Using
\begin{equation*}
d_n^{-1}
=
\frac{\pi^2(n+1)^2}
{6(\rho_0-\rho_\infty)}
\end{equation*}
and the definition of $\xi_m$, we finally obtain
\begin{equation}
\label{eq:order-m-error-bootstrap}
|E_{n+1}|_{\rho_{n+1},m}
\leq
\xi_m\alpha_0^{2^n}
\Bigl[
(n+1)^{2(2\tau_m+m+3)} 
\Bigl(
|E_n|_{\rho_n,m}
+\alpha_0^{2^n}
\left(
|u_n|_{\pm,\rho_n,m}
+|\partial_\theta u_n|_{\pm,\rho_n,m}
\right)
\Bigr)
+(n+1)^{p_m}
\Bigr].
\end{equation}

Note that we end up with a coupled system \eqref{eq:order-m-error-bootstrap} and \eqref{eq:order-m-increment-bootstrap}. We will solve it using another induction on $n$. 

First observe that the series
\begin{equation*}
 \sum_{n\ge0}(n+1)^{p_m+2\tau_m+2}
 \varepsilon_{\mathcal U}^{2^{n-1}}
\end{equation*}
converges since $\varepsilon_{\mathcal U}\in(0,1)$. Choose an integer
$N\geq1$, depending on $m$, large
enough such that
\begin{equation}
 \label{eq:order-m-tail-choice}
 \frac{6\pi^2\xi_m}{\rho_0-\rho_\infty}
 \sum_{n\ge N}(n+1)^{p_m+2\tau_m+2}
 \varepsilon_{\mathcal U}^{2^{n-1}}\le 1 
\end{equation}
and, for every $n\ge N$,
\begin{equation}
 \label{eq:order-m-small-coefficients}
 \xi_m\left[
 (n+1)^{2(2\tau_m+m+3)+p_m}
 \varepsilon_{\mathcal U}^{2^{n-1}}
 +2(n+1)^{2(2\tau_m+m+3)}
 \varepsilon_{\mathcal U}^{2^n}\right]\le\frac14.
\end{equation}
Both requirements are possible by the superexponential decay of
$(\varepsilon_{\mathcal U}^{2^{n}})_n$.

To define $ K_m $ and $ L_m$, for every $0\leq n\leq N$  we first show recursively  that
\begin{equation}
\label{eq:finite-initial-sup}
 \mathscr{K}_n := \sup_{V\in\mathcal U \setminus  \lbrace 0 \rbrace  }
\left(
|u_n|_{\pm,\rho_n,m}
+|\partial_\theta u_n|_{\pm,\rho_n,m}
+\frac{|E_n|_{\rho_n,m}}
{(n+1)^{p_m}\alpha_0^{2^{n-1}}}
+\frac{|h_n|_{\pm,\widetilde\rho_n,m}}
{(n+1)^{p_m+2\tau_m}\alpha_0^{2^{n-1}}}
\right)<\infty.
\end{equation}
Let us start with the base case $ n =0$. 
Note that  $E_0=V'$ does not depend on $q$, and, since
$0<\alpha_0<1$,
\begin{equation*}
\frac{|E_0|_{\rho_0,m}}{\alpha_0^{1/2}}
=
\frac{\delta_0}{\alpha_0^{1/2}}
\leq
\mu_0\alpha_0^{1/2}
\leq
\mu_0.
\end{equation*}
As $u_0=0$, equation \eqref{eq:order-m-increment-bootstrap}  gives
\begin{equation*}
\frac{|h_0|_{\pm,\widetilde\rho_0,m}}{\alpha_0^{1/2}}
\leq 
 \frac{\xi_m}{ \alpha_0^{1/2} } \left( |E_0 |_{\rho_0, m} + \alpha_0^{1/2} \right)
\leq
\xi_m(\mu_0 +1 )  < \infty , 
\end{equation*}
and proves consequently $\mathscr{K}_0  < \infty$.  

Assume that $\mathscr{K}_n < \infty $ for some  $n<N$ and let us show that $\mathscr{K}_{n+1} < \infty $.  
For $V\neq0$, the identity $u_{n+1}=u_n+h_n$ and
Lemma~\ref{lem:der-hol} applied with gap $\widetilde\rho_n-\rho_{n+1}=d_n/2$, give 
\begin{equation*}
\begin{split}
|u_{n+1}|_{\pm,\rho_{n+1},m}
+|\partial_\theta u_{n+1}|_{\pm,\rho_{n+1},m}
& \leq
|u_n|_{\pm,\rho_n,m}
+|\partial_\theta u_n|_{\pm,\rho_n,m} +
\left(1+\frac{2}{d_n}\right)
|h_n|_{\pm,\widetilde\rho_n,m} \\ 
&\leq
\mathscr{K}_n\left[
1+
\left(1+\frac{2}{d_n}\right)
(n+1)^{p_m+2\tau_m}
\right] < \infty  \;. 
\end{split}
\end{equation*}
Dividing \eqref{eq:order-m-error-bootstrap}  by
$(n+2)^{p_m}\alpha_0^{2^n}$ gives
\begin{equation*}
\begin{aligned}
\frac{|E_{n+1}|_{\rho_{n+1},m}}
{(n+2)^{p_m}\alpha_0^{2^n}}
\leq
\frac{\xi_m}{(n+2)^{p_m}}
\left[
(n+1)^{2(2\tau_m+m+3)}
\mathscr{K}_n\left((n+1)^{p_m}+1\right)
+(n+1)^{p_m}
\right] < \infty .
\end{aligned}
\end{equation*} 

Finally, \eqref{eq:order-m-increment-bootstrap} at index $n+1$ gives
\begin{equation*}
\begin{aligned}
\frac{|h_{n+1}|_{\pm,\widetilde\rho_{n+1},m}}
{(n+2)^{p_m+2\tau_m}\alpha_0^{2^n}}
&\leq
\xi_m\left[
\frac{|E_{n+1}|_{\rho_{n+1},m}}
{(n+2)^{p_m}\alpha_0^{2^n}}
+
\frac{\alpha_0^{2^n}
|\partial_\theta u_{n+1}|_{\pm,\rho_{n+1},m}}
{(n+2)^{p_m}}
+
(n+2)^{p_{m-1}-p_m}
\right]\\
&\leq
\xi_m\left\{
\frac{\xi_m}{(n+2)^{p_m}}
\left[
(n+1)^{2(2\tau_m+m+3)}
\mathscr{K}_n\left((n+1)^{p_m}+1\right)
+(n+1)^{p_m}
\right]\right.\\
&\qquad\left.
+\mathscr{K}_n\left[
1+
\left(1+\frac{2}{d_n}\right)
(n+1)^{p_m+2\tau_m}
\right]
+1
\right\}
<\infty.
\end{aligned}
\end{equation*}
Taking the supremum over $V\in\mathcal U \setminus \{ 0 \}$ in the three preceding
estimates and adding them gives
\begin{equation*}
\begin{aligned}
\mathscr{K}_{n+1}
\leq {}&
(1+\xi_m)\mathscr{K}_n
\left[
1+
\left(1+\frac{2}{d_n}\right)
(n+1)^{p_m+2\tau_m}
\right]\\
&+
\frac{(1+\xi_m)\xi_m}{(n+2)^{p_m}}
\left[
(n+1)^{2(2\tau_m+m+3)}
\mathscr{K}_n\left((n+1)^{p_m}+1\right)
+(n+1)^{p_m}
\right]\\
&+
\xi_m(n+2)^{p_{m-1}-p_m}
<\infty.
\end{aligned}
\end{equation*}
We then conclude by induction that 
\begin{equation*}
K_m^0 := \sup_{0 \le n \le N}  \mathscr{K}_n < \infty. 
\end{equation*}
We are now ready to define the constants
\begin{equation*}
K_m:=\max\left\{4\xi_m,K_m^0\right\}
\qand
L_m:=3\xi_m(3K_m+1).
\end{equation*}


For every $V\in\mathcal U$, we prove by induction on $n\ge N$ the property 
\begin{equation*}
 (\mathcal{P}_n)\qquad
 \left\{
 \begin{aligned}
 |E_j|_{\rho_j,m}
 &\le K_m(j+1)^{p_m}\alpha_0^{2^{j-1}}
 && \forall \,0\le j\le n	,\\
 |h_j|_{\pm,\widetilde\rho_j,m}
 &\le L_m(j+1)^{p_m+2\tau_m}\alpha_0^{2^{j-1}}
 && \forall \, 0\le j<n.
 \end{aligned}
 \right.
\end{equation*}
The initial case $(\mathcal{P}_N)$ follows from the definition of $K_m$ and $ L_m \ge K_m$.

Assume now that $(\mathcal P_n)$ holds for some $n\geq N$ and let us
prove $(\mathcal P_{n+1})$. Since
\begin{equation*}
u_n=u_N+\sum_{i=N}^{n-1}h_i,
\end{equation*}
the definition of $K_m^0$ and the inequality $K_m\geq K_m^0$ give
\begin{equation*}
|u_N|_{\pm,\rho_N,m}
+
|\partial_\theta u_N|_{\pm,\rho_N,m}
\leq K_m.
\end{equation*}
Moreover, for every $i<n$,
\begin{equation*}
\widetilde\rho_i-\rho_n
\geq
\widetilde\rho_i-\rho_{i+1}
=
\frac{d_i}{2}.
\end{equation*}
Hence, by Lemma~\ref{lem:der-hol},
\begin{equation*}
|h_i|_{\pm,\rho_n,m}
+
|\partial_\theta h_i|_{\pm,\rho_n,m}
\leq
\left(1+\frac{2}{d_i}\right)
|h_i|_{\pm,\widetilde\rho_i,m}.
\end{equation*}
Using $(\mathcal P_n)$,  and since
\begin{equation*}
1+\frac2{d_i}
\leq
\frac{\pi^2(i+1)^2}{2(\rho_0-\rho_\infty)},
\qquad
\alpha_0\leq\varepsilon_{\mathcal U},
\qand
L_m\leq12\xi_mK_m,
\end{equation*}
we therefore obtain
\begin{equation*}
\begin{split}
|u_n|_{\pm,\rho_n,m}
+|\partial_\theta u_n|_{\pm,\rho_n,m}
&\leq
K_m+
\sum_{i=N}^{n-1}
\left(1+\frac2{d_i}\right)
|h_i|_{\pm,\widetilde\rho_i,m}\\
&\leq
K_m+
\frac{\pi^2L_m}{2(\rho_0-\rho_\infty)}
\sum_{i\geq N}
(i+1)^{p_m+2\tau_m+2}
\alpha_0^{2^{i-1}}\\
&\leq
K_m+
\frac{6\pi^2\xi_mK_m}{\rho_0-\rho_\infty}
\sum_{i\geq N}
(i+1)^{p_m+2\tau_m+2}
\varepsilon_{\mathcal U}^{2^{i-1}}. 
\end{split}
\end{equation*}
By ~\eqref{eq:order-m-tail-choice} it then comes 
\begin{equation}\label{eq:bound-2Km}
|u_n|_{\pm,\rho_n,m}
+|\partial_\theta u_n|_{\pm,\rho_n,m} \leq 2K_m. 
\end{equation}
Note that this is actually the last inequality of $ (\mathcal{Q}_m)$ at fixed $n$. We will use this to prove $ (\mathcal{P}_{n+1}) $. 
We inject the latter bound in ~\eqref{eq:order-m-error-bootstrap}, and then using
$(\mathcal P_n)$ to obtain
\begin{align*}
|E_{n+1}|_{\rho_{n+1},m}
&\leq
\xi_m\alpha_0^{2^n}
\Bigl[
(n+1)^{2(2\tau_m+m+3)}
\Bigl(
K_m(n+1)^{p_m}\alpha_0^{2^{n-1}}
+2K_m\alpha_0^{2^n}
\Bigr)
+(n+1)^{p_m}
\Bigr]\\
&=
K_m\alpha_0^{2^n}\xi_m
\left[
(n+1)^{2(2\tau_m+m+3)+p_m}
\alpha_0^{2^{n-1}}
+
2(n+1)^{2(2\tau_m+m+3)}
\alpha_0^{2^n}
\right] 
+\xi_m(n+1)^{p_m}\alpha_0^{2^n}.
\end{align*}
Since $\alpha_0\leq\varepsilon_{\mathcal U}$,
\eqref{eq:order-m-small-coefficients} bounds the first term on the
last line by $\frac{K_m}{4}\alpha_0^{2^n}$.
Moreover, since $K_m\geq4\xi_m$, the second term is bounded by $ \frac{K_m}{4}(n+1)^{p_m}\alpha_0^{2^n}$.

Consequently, it follows
\[
|E_{n+1}|_{\rho_{n+1},m}
\leq
\frac{K_m}{4}
\left(1+(n+1)^{p_m}\right)
\alpha_0^{2^n}
\leq
K_m(n+2)^{p_m}\alpha_0^{2^n}, 
\]
which is the first inequality in $ ( \mathcal{P}_{n+1} )$. 

Likewise, \eqref{eq:order-m-increment-bootstrap}, $(\mathcal P_n)$, and
the latter bound give the second inequality
\begin{align*}
|h_n|_{\pm,\widetilde\rho_n,m}
&\leq
\xi_m(n+1)^{2\tau_m}
\left[
K_m(n+1)^{p_m}\alpha_0^{2^{n-1}}
+2K_m\alpha_0^{2^n}
+(n+1)^{p_{m-1}}\alpha_0^{2^{n-1}}
\right]\\
&\leq
\xi_m(3K_m+1)
(n+1)^{p_m+2\tau_m}\alpha_0^{2^{n-1}}\\
&\leq
L_m(n+1)^{p_m+2\tau_m}\alpha_0^{2^{n-1}} . 
\end{align*}
Thus $(\mathcal P_{n+1})$ holds. Induction then gives $(\mathcal P_n)$ for every $n\geq N$.

We now resume the proof of $(\mathcal{Q}_{m})$.
For $0\leq n \leq N$, the definition of $\mathscr K_n$, together with
$K_m\geq K_m^0$ and $L_m\geq K_m$, gives
\begin{equation*}
|E_n|_{\rho_n,m}
\leq
K_m(n+1)^{p_m}\alpha_0^{2^{n-1}}
\qand
|h_n|_{\pm,\widetilde\rho_n,m}
\leq
L_m(n+1)^{p_m+2\tau_m}\alpha_0^{2^{n-1}}.
\end{equation*}
For every $n>N$, the first estimate follows from $(\mathcal P_n)$ and
the second from $(\mathcal P_{n+1})$. Hence the first two inequalities
in $(\mathcal Q_m)$ hold for every $n\geq0$.

Moreover, for $0\leq n\leq N$,
\begin{equation*}
|u_n|_{\pm,\rho_n,m}
+|\partial_\theta u_n|_{\pm,\rho_n,m}
\leq
\mathscr K_n
\leq
K_m^0
\leq
K_m,
\end{equation*}
while for $n\geq N$ the estimate~\eqref{eq:bound-2Km}  obtained above gives
\begin{equation*}
|u_n|_{\pm,\rho_n,m}
+|\partial_\theta u_n|_{\pm,\rho_n,m}
\leq2K_m.
\end{equation*}
Thus the third inequality in $(\mathcal Q_m)$ also holds for every
$n\geq0$. This proves $(\mathcal Q_m)$ and completes the induction on
$m$. Since $\rho_{n+1}<\widetilde\rho_n$, the first two inequalities in
$(\mathcal Q_m)$ imply \eqref{eq:Hm-new}.

We now show that the limit $ U $ exists and lies in $ \Hol^\infty( \mathcal{U} , C^\pm_{\rho_\infty} )  $.
The property  $(\mathcal Q_m)$ gives for every $ m \ge 0$ that
\begin{equation*}
\sum_{n\geq0}
\sup_{V\in\mathcal U}|h_n(V)|_{\pm,\rho_\infty,m}
\leq
L_m\sum_{n\geq0}
(n+1)^{p_m+2\tau_m}
\varepsilon_{\mathcal U}^{2^{n-1}} <\infty.
\end{equation*}
Thus the series
\begin{equation*}
\sum_{n\geq0}h_n
\end{equation*}
converges normally in every defining seminorm of
$C^\pm_{\rho_\infty}$ and uniformly for $V\in\mathcal U$.

By induction, every map
\begin{equation*}
V\in\mathcal U\longmapsto u_n(V)\in C^\pm_{\rho_\infty}
\end{equation*}
is holomorphic, so is $ U$ by uniform convergence of the series in a complete space. Moreover, $(\mathcal Q_m)$ gives
\begin{equation*}
\sup_{V\in\mathcal U}
|U(V)|_{\pm,\rho_\infty,m}
\leq 2K_m
\qquad\text{for every }m\geq0.
\end{equation*}
Thus $U$ is uniformly bounded on $\mathcal U$ in every defining
seminorm of $C_{\rho_\infty}^{\pm}$. By the definition of tameness and since $\mathcal{U}$ is open, this
implies that $U$ is tame.

It remains to show that $U$ satisfies the Euler--Lagrange equation.
Normal convergence gives
$u_n(V)\to U(V)$ in every seminorm
$|\cdot|_{\pm,\rho_\infty,m}$. The identity
\begin{equation*}
\begin{aligned}
&V'\circ(\id_\T+u_n)-V'\circ(\id_\T+U(V))\\
&\qquad=
(u_n-U(V))
\int_0^1
V''\circ\bigl(\id_\T+U(V)+t(u_n-U(V))\bigr)\,dt
\end{aligned}
\end{equation*}
together with~\eqref{eq:comp-bound-order-zero} and
\eqref{eq:leibniz-step} therefore gives
\begin{equation*}
V'\circ(\id_\T+u_n)
\longrightarrow
V'\circ(\id_\T+U(V))
\qquad\text{in }\mathscr C_{\rho_\infty}.
\end{equation*}
Since $\Delta$ is continuous, it follows that the sequence  $ 
(E_n)_{ n \ge 0} $ converges to $  \mathcal{E}(U(V),V) $ in  $ \mathscr{C}_{\rho_\infty}$. 

On the other hand, since $\rho_\infty<\rho_n$, $(\mathcal Q_m)$ gives
\begin{equation*}
|E_n|_{\rho_\infty,m}
\leq
|E_n|_{\rho_n,m}
\leq
K_m(n+1)^{p_m}\alpha_0^{2^{n-1}}
\longrightarrow0 \quad \text{ as } n \to \infty , 
\end{equation*}
for every $m\geq0$. Therefore
$\mathcal E(U(V),V)=0$ in $\mathscr C_{\rho_\infty}$.

\end{proof}

We conclude this section by completing the proof of the KAM theorem.

\begin{proof}[Proof of Theorem~\ref{thm:hol-KAM}]
In virtue of Theorem~\ref{thm:convergence-order-m-new}, we only need to
show that, for every $V\in\mathcal U^\R$ and $\omega\in\DC$, the map
$\id_\T+U(V,\e(\omega))$ is a degree-one orientation-preserving analytic
circle diffeomorphism and that the corresponding invariant graph is a
KAM curve of rotation number $\omega$.

Let us fix $V\in\mathcal U^\R$. For $\omega\in\DC$ with 
$q:=\e(\omega)$ the shifts are the real
translations
\begin{equation*}
f^\pm(\theta,q)=f(\theta\pm\omega,q).
\end{equation*}
Consequently, the solution $S$ of $[A,AS]_\Delta=AE$ is real-valued whenever $A$ and $E$ are.
Since $u_0=0$, the iteration then shows inductively that every $u_n(V,q,\cdot)$ is
real valued and this obviously passes to the limit $U(V)$. 
Set
\begin{equation*}
\psi :=\id_\T+U(V,q).
\end{equation*}
Proposition~\ref{prop:induction} and passage to the limit give
\begin{equation*}
\sup_{\theta\in\T}|\partial_\theta U(V,q,\theta)|\leq\frac{1}{8} .
\end{equation*}
Thus $\psi$ is the lift of a degree-one orientation-preserving analytic circle diffeomorphism. Let $\psi^\pm(\theta):=\psi(\theta\pm\omega)$ and define 
\begin{equation*}
\Psi 
:= \left( \psi, \psi - \psi^{-}   \right).
\end{equation*}
Since $ V' ( \psi) =  \psi^+ + \psi^- - 2 \psi $, it follows that
\begin{equation*}
F_V \circ \Psi
= \left( \psi + (\psi - \psi^{-}) +   V' (\psi) ,  (\psi - \psi^{-}) +   V' (\psi) \right) 
= \left( \psi^+ , \psi^+ - \psi \right) = \Psi^+ \;.
\end{equation*}
Hence the image of $\Psi$ is an analytic invariant graph, and the
dynamics induced on it is conjugate by $\psi$ to the rigid rotation of
angle $\omega$. Namely, it is a KAM curve of rotation number
$\omega$.
\end{proof}

\subsection{Holomorphy of the
\texorpdfstring{$\beta$}{beta}-function}
\label{subsec:beta-holo}
We conclude the section by deducing the holomorphy of Mather's
$\beta$-function from Theorem~\ref{thm:hol-KAM}.

\begin{proof}[Proof of Theorem~\ref{thm:kam-main-beta}]
For $V\in\mathcal U$, set
\begin{equation*}
h_V:=\id_\T+U(V)
\end{equation*}
and define
\begin{equation}
\label{eq:Phi-formula}
\Phi(V)
:=
\frac12\int_\T\left(U(V)^+-U(V)\right)^2
+
\int_\T V\circ h_V.
\end{equation}

Since $V$ has zero mean, it admits a periodic holomorphic primitive
$P_V$ satisfying $P_V'=V$. Hence
\begin{equation*}
0
=
\int_\T\partial_\theta(P_V\circ h_V)
=
\int_\T(V\circ h_V)(1+\partial_\theta U(V)).
\end{equation*}
Integration by parts and the Euler--Lagrange
equation~\eqref{eq:EL-U} therefore give
\begin{align*}
\int_\T V\circ h_V
&=
-\int_\T(V\circ h_V)\,\partial_\theta U(V)\\
&=
\int_\T U(V)\,\partial_\theta(V\circ h_V)\\
&=
\int_\T
U(V)(1+\partial_\theta U(V))(V'\circ h_V)\\
&=
\int_\T
U(V)(1+\partial_\theta U(V))\Delta U(V).
\end{align*}
Consequently,
\begin{equation}
\label{eq:Phi-U-formula}
\Phi(V)
=
\frac12\int_\T\left(U(V)^+-U(V)\right)^2
+
\int_\T
U(V)(1+\partial_\theta U(V))\Delta U(V).
\end{equation}

By Theorem~\ref{thm:hol-KAM},
Proposition~\ref{prop:Cpm-properties}(ii),
Lemma~\ref{lem:der-hol}, the product
estimate~\eqref{eq:leibniz-step}, and
Lemma~\ref{lem:linear-postcomposition}, the right-hand side
of~\eqref{eq:Phi-U-formula} is holomorphic.
Moreover, passage to the limit in the third estimate of
$(\mathcal Q_m)$ bounds all its factors uniformly at every order.
Thus $\Phi$ is bounded in every target seminorm and is tame without any shift of seminorms. 

Finally, let $V\in\mathcal U\cap\Hol_\rho^\R(\T)$ and
$\omega\in\DC$. Theorem~\ref{thm:hol-KAM} provides the KAM curve
parametrized by
$\id_\T+U(V,e^{2\pi i\omega})$. Formula~\eqref{eq:beta-general}
and~\eqref{eq:Phi-formula} give
\begin{equation*}
\beta_V(\omega)
=
\frac{\omega^2}{2}
+
\Phi(V)(e^{2\pi i\omega}),
\end{equation*}
which proves the theorem.
\end{proof}

\section{Perturbative theory and rigidity of the
\texorpdfstring{$\beta$}{beta}-functional}
\label{sec:perturbative}

Fix $\gamma,\tau,\rho > 0$ and let $\mathcal{U}$ be the neighbourhood
of the zero potential provided by Theorem~\ref{thm:hol-KAM}, $\Phi$ the
corresponding tame holomorphic extension of
$\beta_V(\omega)-\omega^2/2$ given by
Theorem~\ref{thm:kam-main-beta}, and
$U$ the complex KAM solution on $\mathcal{U}\times\KK$. Its restriction to real potentials and real Diophantine frequencies parametrizes the KAM
curves constructed above.
Recall also that for a real potential $ V$, the map 
$\Phi(V)$ is real on $ \DC$. 

This section consists of the proof of all rigidity results. The main tool is the expansion at the zero potential of the map $V\mapsto D\Phi(V)$.

\subsection{Fréchet derivatives of the \texorpdfstring{$\beta$}{beta}-function}
\label{subsec:frechet-deriv}

We compute the first and second Fréchet derivatives of $\Phi$
in the potential and evaluate them at $V=0$.

\begin{lemma}
\label{lem:beta-potential}
Let $V \in \mathcal{U}$. The Fréchet derivative of $\Phi$ at $V$
in any direction $W \in \Hol_\rho(\T)$ is
\begin{equation}
    \label{eq:D1Phi}
    D\Phi(V)[W] \;=\; \int_\T W\circ(\id_\T+U(V)).
\end{equation}
\end{lemma}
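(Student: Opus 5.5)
We differentiate the defining formula \eqref{eq:Phi-formula},
\begin{equation*}
\Phi(V)=\frac12\int_\T\left(U(V)^+-U(V)\right)^2+\int_\T V\circ(\id_\T+U(V)),
\end{equation*}
in the direction $W$, writing $u:=U(V)$ and $\dot u:=DU(V)[W]\in C^\pm_{\rho_\infty}$. The latter exists and has zero mean because $U$ is holomorphic into $C^\pm_{\rho_\infty}$ (Theorem~\ref{thm:convergence-order-m-new}) and every $U(V)$ has zero mean. Composition is holomorphic by Lemma~\ref{lem:composition-q-deriv}, shifts and products are continuous by Proposition~\ref{prop:Cpm-properties}, and the mean is linear and continuous. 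Each operation in the formula is therefore holomorphic, and the chain rule applies pointwise in $q\in\KK$ with values in $\Hol^\infty(\KK,\C)$. This gives
\begin{equation*}
D\Phi(V)[W]=\int_\T (u^+-u)(\dot u^+-\dot u)+\int_\T V'(\id_\T+u)\,\dot u+\int_\T W\circ(\id_\T+u).
\end{equation*}

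The key step is to show that the first two terms cancel. This is the usual fact that the action is stationary along solutions of the Euler--Lagrange equation. For a fixed $q\in\KK\cap\C^\ast$ we work on Fourier coefficients, where the shift acts by $q^{\pm k}$. The pairing $\int_\T f^+ g$ equals $\sum_k q^k\widehat f_k\widehat g_{-k}$, which is $\int_\T f\,g^-$. This discrete summation-by-parts identity holds for $f,g\in\mathscr C^\pm$ even for complex $q$, since both sides are the same absolutely convergent series. It yields
\begin{equation*}
\int_\T (u^+-u)(\dot u^+-\dot u)=-\int_\T (u^+-2u+u^-)\,\dot u=-\int_\T \Delta u\;\dot u.
\end{equation*}
By \eqref{eq:EL-U}, $\Delta u=V'(\id_\T+u)$, so the first two terms cancel and only $\int_\T W\circ(\id_\T+U(V))$ remains.

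The points $q\in\{0,\infty\}$, and more generally the identity in $\Hol^\infty(\KK,\C)$ as a whole, follow by continuity. Both sides of the shift identity are continuous in $q$ on $\KK$, because $u^\pm,\dot u^\pm$ lie in $C_{\rho_\infty}$, and $\KK\cap\C^\ast$ is dense in $\KK$.

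I expect the main obstacle to be justifying the termwise differentiation in the correct Fréchet topology, that is, that $V\mapsto U(V)^+$ has derivative $\dot u^+$. This holds because $f\mapsto f^+$ is continuous and linear from $C^\pm_{\rho_\infty}$ to $C_{\rho_\infty}$, so Lemma~\ref{lem:linear-postcomposition} applies. The other check is that composition with $\id_\T+u$ stays within a smaller strip, which is ensured by $|u|_{\rho_\infty,0}\le\bar d/9$. Once these are in place, the summation by parts itself is routine.
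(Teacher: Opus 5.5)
Your proposal is correct and follows the paper's proof. Both differentiate the discrete Lagrangian in $V$ and use the Euler--Lagrange equation \eqref{eq:EL-U} to cancel every term containing $DU(V)[W]$. You also spell out two steps the paper uses only implicitly: the summation by parts $\int_\T(u^+-u)(\dot u^+-\dot u)=-\int_\T\Delta u\,\dot u$ for complex $q$, via Fourier coefficients, and the extension to $q\in\{0,\infty\}$ by continuity.
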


\begin{proof}
Theorems~\ref{thm:hol-KAM}
and~\ref{thm:kam-main-beta} justify differentiation.
Differentiating the discrete Lagrangian 
$\frac12(U^+-U)^2+V(\id_\T+U)$
with respect to $V$ in the direction $W$ gives
\begin{equation}
    \label{eq:cal-1}
    \begin{aligned}
    D_V\left[\frac12(U^+-U)^2+V(\id_\T+U)\right][W]
    &= (U^+-U)\cdot D_V(U^+-U)[W] \\
    &\quad{}+ V'(\id_\T+U)\cdot D_VU[W]
    + W(\id_\T+U).
    \end{aligned}
\end{equation}
Integrating~\eqref{eq:cal-1} over $\T$ and using the Euler--Lagrange
equation $V'(\id_\T+U)=\Delta U$ to cancel the terms containing
$D_VU[W]$ gives
\begin{equation*}
    D\Phi(V)[W]
    = \int_\T W(\id_\T+U)
    + \int_\T\left(V'(\id_\T+U)-\Delta U\right)\cdot D_VU[W]
    = \int_\T W\circ(\id_\T+U).
\end{equation*}
This ends the proof.
\end{proof}

At the zero potential, this vanishes.

\begin{corollary}
\label{cor:D1-at-zero}
At the zero potential, $$ D\Phi(0)[W] = 0$$ 
for every $W \in \Hol_\rho (\T)$. 
\end{corollary}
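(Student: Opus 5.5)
The plan is to evaluate formula~\eqref{eq:D1Phi} of Lemma~\ref{lem:beta-potential} at $V=0$. Theorem~\ref{thm:hol-KAM} gives $U(0)\equiv 0$, so for every $q\in\KK$ we get $\id_\T+U(0)(q)=\id_\T$. Hence
\begin{equation*}
D\Phi(0)[W](q)=\int_\T W\circ \id_\T=\langle W\rangle .
\end{equation*}

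By definition, every element of $\Hol_\rho(\T)$ has zero mean. This is condition (a) in the definition of zero-mean analytic potentials. So $\langle W\rangle=0$ for every direction $W$, and therefore $D\Phi(0)[W]$ vanishes identically as an element of $\Hol^\infty(\KK)$.

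Two points need checking, and neither is a real obstacle. First, $0\in\mathcal U$: this holds because $\mathcal U$ is the open ball $\{|V|_{\rho^\sharp}<r_{\mathcal U}\}$ of~\eqref{eq:U-def}. Second, Lemma~\ref{lem:beta-potential} applies at $V=0$. It does, since $\Phi$ is holomorphic on all of $\mathcal U$ by Theorem~\ref{thm:kam-main-beta}, and its derivative is given by~\eqref{eq:D1Phi} at every point of $\mathcal U$.

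As a consistency check, one can also see $U(0)=0$ directly from the iteration: $E_0=V'=0$, so $S_0=0$, $h_0=0$, and inductively $u_n=0$ for all $n$.
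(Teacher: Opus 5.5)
Your proposal is correct and follows the paper's proof exactly. You evaluate the first-variation formula~\eqref{eq:D1Phi} at $V=0$, use $U(0)\equiv 0$, and conclude from the zero-mean condition on $W$ that $D\Phi(0)[W]=\langle W\rangle=0$.
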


\begin{proof}
At $V=0$, $U=0$, so $D\Phi(0)[W]
= \int_\T W(\theta)\,d\theta = 0$ for zero-mean $W$.
\end{proof} 

We then compute the second derivative at $V=0$.
\begin{lemma}\label{lem:D2-at-zero}
The second Fréchet derivative of $\Phi$ at the zero potential in the directions $Z,W \in \Hol_\rho(\T)$ is given by
\begin{equation*}
    D^2\Phi|_{V=0}[W,Z] \;=\; \int_\T W\cdot\Green Z,
\end{equation*}
where  
\begin{equation}\label{eq:def-green}
\Green := -\Delta^{-1}\partial_\theta^2 \;. 
\end{equation}
\end{lemma}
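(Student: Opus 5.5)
Starting from Lemma~\ref{lem:beta-potential}, we have $D\Phi(V)[W]=\int_\T W\circ(\id_\T+U(V))$ for every $V\in\mathcal U$. Since $U$ is holomorphic in $V$ by Theorem~\ref{thm:hol-KAM} and composition is holomorphic by Lemma~\ref{lem:composition-q-deriv}, we may differentiate this once more in the direction $Z$. This gives
\begin{equation*}
D^2\Phi(V)[W,Z]=\int_\T W'\circ(\id_\T+U(V))\cdot DU(V)[Z].
\end{equation*}
At $V=0$ we have $U(0)=0$, so this reduces to $\int_\T W'\cdot DU(0)[Z]$. Everything therefore hinges on computing the linearized KAM solution $\dot U:=DU(0)[Z]$.

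To find $\dot U$, we differentiate the Euler--Lagrange equation~\eqref{eq:EL-U}, $V'(\id_\T+U(V))=\Delta U(V)$, at $V=0$ in the direction $Z$. The term $V''(\id_\T+U)\,DU[Z]$ vanishes at $V=0$, and $U(0)=0$, so we are left with $Z'=\Delta\dot U$. By Theorem~\ref{thm:hol-KAM}, $\dot U$ has zero mean. For $q\in\KK\setminus\{0,\infty\}$, $\Delta$ acts diagonally on the Fourier mode $k\neq0$ by the nonvanishing multiplier $q^{-k}(q^k-1)^2$, since $q$ is not a root of unity. It follows that $\dot U=\Delta^{-1}\partial_\theta Z$, understood as $\nabla^{-1}\nabla_-^{-1}\partial_\theta Z$, which is well defined by Proposition~\ref{prop:Cpm-properties}(iii). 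The points $q=0,\infty$ are handled by continuity, since both sides are Whitney-holomorphic and agree on a dense set.

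We then integrate by parts on $\T$. Since $\Delta^{-1}$ commutes with $\partial_\theta$, this gives
\begin{equation*}
\int_\T W'\,\Delta^{-1}\partial_\theta Z=-\int_\T W\,\Delta^{-1}\partial_\theta^2 Z=\int_\T W\,\Green Z .
\end{equation*}
Equivalently, in Fourier coefficients, $D^2\Phi(0)[W,Z]=\sum_{k\neq0}\widehat W_{-k}\,\frac{4\pi^2k^2q^k}{(q^k-1)^2}\widehat Z_k$.

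The main obstacle is justifying that the Fréchet derivatives can be computed termwise and that the differentiated equation holds in $C_{\rho_\infty}$. The holomorphy of $U$ from Theorem~\ref{thm:hol-KAM}, together with the continuity of composition and of $\partial_\theta$ (Lemmas~\ref{lem:composition-q-deriv} and~\ref{lem:der-hol}), allows differentiation under the integral. Uniqueness of the zero-mean solution of $\Delta\dot U=Z'$ then identifies $\dot U$.
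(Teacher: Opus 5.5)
Your proposal is correct and follows essentially the paper's proof: you differentiate the Euler--Lagrange equation at $V=0$ to identify $D_VU(0)[Z]=\Delta^{-1}\partial_\theta Z$, differentiate the first-variation formula~\eqref{eq:D1Phi} at $V=0$, and integrate by parts using that $\partial_\theta$ and $\Delta^{-1}$ commute. Your additional remarks on the zero mean of $\dot U$, the uniqueness of the zero-mean solution of $\Delta\dot U=\partial_\theta Z$, and the treatment of $q=0,\infty$ by continuity make explicit details that the paper leaves implicit.
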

From now on, we identify $\Hol_\rho(\T) \subset C_\rho$
via the embedding $W \mapsto ((q,\theta)\mapsto W(\theta))$. On zero-mean functions, $\Delta=\nabla\nabla_-$, so its inverse is obtained by applying the two cohomological inverses successively. Lemma~\ref{lem:der-hol} and Proposition~\ref{prop:Cpm-properties}(iii) therefore show that
$\Green=-\Delta^{-1}\partial_\theta^2$ is a tame continuous linear map.

\begin{proof}
Again, the holomorphy of $U$ and $\Phi$ makes the following differentiations
legitimate. 
Differentiating
$$ V'(\id_\T+U(V))=\Delta U(V) $$ 
at $V=0$ gives
$$ \Delta\,D_VU(0)[Z]=\partial_\theta Z .$$
 Hence we identify 
$$ D_VU(0)[Z]=\Delta^{-1}\partial_\theta Z . $$  Differentiating~\eqref{eq:D1Phi} at $V=0$ therefore gives
\begin{equation*}
    D^2\Phi|_{V=0}[W,Z]
    = \int_\T \partial_\theta W\cdot\Delta^{-1}\partial_\theta Z
    = \int_\T W\cdot(-\Delta^{-1}\partial_\theta^2)Z
    = \int_\T W\cdot\Green Z,
\end{equation*}
using integration by parts and the fact that $\partial_\theta$
and $\Delta^{-1}$ commute.
This ends the proof. 
\end{proof}

\subsection{The operator \texorpdfstring{$\Green$}{G} }
\label{subsec:linearization}

We now show that $\Green$ acts diagonally on Fourier modes with holomorphic multipliers on $\KK$. This will then be used to build the Wronskian criterion for the proof of the rigidity statements. 

For $n\in\N^*$, consider the rational function on the Riemann sphere $ \hat{\C}$ 
away from the $n$th roots of unity,
\begin{equation}
    \label{eq:sigma-n-q}
    \sigma_n(q) := \frac{4\pi^2n^2\,q^n}{(q^n-1)^2},
\end{equation}
and set $$ \sigma_{-n}:=\sigma_n . $$
The displayed formula gives $0$ at
$q=0$ and $0$ at $q=\infty$. 
Its restriction to $\KK$ is therefore well defined and holomorphic in
the interior of $\KK$.
Writing
$q=\e(\omega)$ in~\eqref{eq:sigma-n-q} yields
\begin{equation*}
    \sigma_n(\e(\omega)) = -\frac{\pi^2n^2}{\sin^2(\pi n\omega)}.
\end{equation*}
Equivalently, the identity
$q^n\lambda_n(q)=1+\lambda_n(q)$ gives
$\sigma_n(q)=4\pi^2n^2\left(\lambda_n(q)+\lambda_n(q)^2\right)$.
Corollary~\ref{cor:der-lambda-seminorm} therefore implies that
$\sigma_n\in\Hol^\infty(\KK,\C)$. 
The next fact records the multipliers of $\Delta$ and of
$\Green=-\Delta^{-1}\partial_\theta^2$.
\begin{fact} \label{fact:eigen}
For every $n\in\Z^*$,
\begin{equation*}
\Delta e_n=(q^n-2+q^{-n})e_n,
\qquad \partial_\theta^2e_n=-4\pi^2n^2e_n,
\qquad \Green e_n=\sigma_n(q)e_n.
\end{equation*}
\end{fact}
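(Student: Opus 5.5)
This is a direct Fourier computation, and I would carry it out in three steps.

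First, the multiplier of $\Delta$. By the definition of the shifts on Fourier coefficients, the constant-in-$q$ function $e_n$ (embedded in $C_\rho$ as above) satisfies $\widehat{(e_n)^{\pm}}_k=q^{\pm k}\delta_{kn}$. This gives $e_n^+=q^ne_n$ and $e_n^-=q^{-n}e_n$. Consequently
\begin{equation*}
\Delta e_n=e_n^++e_n^--2e_n=(q^n-2+q^{-n})e_n,
\end{equation*}
which agrees with the Fourier formula $\widehat{(\Delta f)}_k=q^{-k}(q^k-1)^2\hat f_k$ recorded in Section~\ref{subsec:discrete}.

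Second, the identity $\partial_\theta^2e_n=(2\pi i n)^2e_n=-4\pi^2n^2e_n$ is immediate from $e_n(\theta)=e^{2i\pi n\theta}$.

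Third, I compute $\Green e_n$. Since $n\neq0$, $e_n$ has zero mean, so $\Delta^{-1}=\nabla_-^{-1}\nabla^{-1}$ is defined on it. Its multiplier on mode $n$ is
\begin{equation*}
\lambda_n(q)\cdot q^n\lambda_n(q)=\frac{q^n}{(q^n-1)^2},
\end{equation*}
which is exactly the reciprocal of $q^{-n}(q^n-1)^2=q^n-2+q^{-n}$. Hence
\begin{equation*}
\Green e_n=-\Delta^{-1}\partial_\theta^2e_n=4\pi^2n^2\frac{q^n}{(q^n-1)^2}e_n=\sigma_n(q)e_n
\end{equation*}
for $n\ge1$.

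For $n<0$, write $n=-m$ with $m\ge1$. Then
\begin{equation*}
\frac{q^{-m}}{(q^{-m}-1)^2}=\frac{q^{m}}{(1-q^{m})^2},
\end{equation*}
so the multiplier equals $\sigma_m$, and $\sigma_m=\sigma_{-m}$ by convention. This settles the statement for every $n\in\Z^*$.

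The only care needed is at $q\in\{0,\infty\}$ and in making sure the identity holds in $\Hol^\infty(\KK,\C)$ rather than just pointwise. Both sides are continuous on $\KK$, with $\lambda_n$ and $\sigma_n$ holomorphic there by Corollary~\ref{cor:der-lambda-seminorm}. The identity therefore extends from the dense set $\KK\cap\C^*$ to all of $\KK$, using the limiting values $\sigma_n(0)=\sigma_n(\infty)=0$. I expect no real obstacle here.
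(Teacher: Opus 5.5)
Your proposal is correct and is exactly the direct Fourier-multiplier computation the paper has in mind: the fact is read off from the formula for $\widehat{(\Delta f)}_k$ and the multipliers $\lambda_k$ and $q^k\lambda_k$ of $\nabla^{-1}$ and $\nabla_-^{-1}$ recorded in Section~\ref{subsec:discrete}, together with the convention $\sigma_{-n}=\sigma_n$. One small precision: your extension-by-continuity remark applies only to the third identity, because $q^n-2+q^{-n}$ has poles at both $q=0$ and $q=\infty$ (so $e_n\notin\mathscr C_\rho^{\pm}$), and the first identity should therefore be read pointwise on $\KK\cap\C^*$.
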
 
The next lemma justifies termwise use of the identity
$\Green(\sum_n\widehat W_ne_n)=\sum_n\widehat W_n\sigma_ne_n$ for
$W\in\Hol_\rho(\T)$.

\begin{lemma}
\label{lem:sum-sin}
For every $0<\rho'<\rho$, the operator $\Green$ is a tame continuous
linear map from $\Hol_\rho(\T)$ to
$\Hol^\infty(\KK,\Hol_{\rho'}(\T))$, and it coincides with the series
\begin{equation}
    \label{eq:series-hess}
    \Green W(q) =  \sum_{n\in\Z^*}\sigma_n(q) \hat{W}_n e_n ,
\end{equation}
the series being normally convergent in
$\Hol^\infty(\KK,\Hol_{\rho'}(\T))$.
In particular, for every $W,Z\in\Hol_\rho(\T)$,
\begin{equation} \label{eq:repr-D2phi}
D^2 \Phi \vert_{V =0}[W,Z](q)
=\sum_{n\in\Z^*}\sigma_n(q)\,\hat W_{-n}\hat Z_n.
\end{equation}
\end{lemma}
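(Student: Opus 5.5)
The plan is to show that the series in \eqref{eq:series-hess} converges normally in every seminorm $|\cdot|_{\rho',m}$ of $\Hol^\infty(\KK,\Hol_{\rho'}(\T))$. We then identify its sum with $\Green W$ as defined through $-\nabla^{-1}\nabla_-^{-1}\partial_\theta^2$, and finally deduce \eqref{eq:repr-D2phi} from Lemma~\ref{lem:D2-at-zero} by Parseval.

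\emph{Normal convergence.} Each term $\sigma_n\widehat W_n e_n$ is a product of a scalar function in $\Hol^\infty(\KK,\C)$ with a fixed element of $\Hol_{\rho'}(\T)$, so
\begin{equation*}
|\sigma_n\widehat W_ne_n|_{\rho',m}\le |\sigma_n|_m\,|\widehat W_n|\,e^{2\pi\rho'|n|}.
\end{equation*}
We use $\sigma_n=4\pi^2n^2(\lambda_n+\lambda_n^2)$ together with the Leibniz rule and Corollary~\ref{cor:der-lambda-seminorm}; this gives $|\sigma_n|_m\le C_m|n|^{2+2(m+1)(\tau+2)}$ with $C_m$ depending only on $m,\gamma,\tau$. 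On the chart $\KK^\infty$ we invoke $\sigma_n^\#=\sigma_{-n}=\sigma_n$, so the same bound holds there. We choose $\rho'<\rho_1<\rho$ and apply Lemma~\ref{lem:fourier-extraction}, which gives $|\widehat W_n|\le e^{-2\pi\rho_1|n|}|W|_{\rho_1}$. Lemma~\ref{lem:k-sum} with $D=\rho_1-\rho'$ then bounds the sum of the terms by $C'_m(\rho_1-\rho')^{-\alpha_m-1}|W|_{\rho_1}$, where $\alpha_m=2+2(m+1)(\tau+2)$. This is a tame continuous linear estimate with a loss independent of $m$, and completeness of the target space yields normal convergence.

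\emph{Identification.} The operators $\partial_\theta^2$, $\nabla_-^{-1}$ and $\nabla^{-1}$ are continuous by Lemma~\ref{lem:der-hol} and Proposition~\ref{prop:Cpm-properties}(iii), and they act diagonally on Fourier modes; by Fact~\ref{fact:eigen} their composition acts on $e_n$ as multiplication by $\sigma_n$. The truncations $\mathcal T_NW$ converge to $W$ in $\Hol_\rho(\T)$. Continuity of $\Green$ gives $\Green\mathcal T_NW\to\Green W$, and on finite sums $\Green$ agrees with the series; passing to the limit identifies $\Green W$ with the series. Zero-mean compatibility is clear because $n\ne0$.

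\emph{Formula for $D^2\Phi$.} Lemma~\ref{lem:D2-at-zero} gives $D^2\Phi|_{V=0}[W,Z](q)=\int_\T W\,\Green Z(q)$. The map $f\mapsto\int_\T Wf$ is continuous and linear from $\Hol_{\rho'}(\T)$ to $\C$, so by Lemma~\ref{lem:linear-postcomposition} it can be applied termwise to the normally convergent series. Since $\int_\T W e_n=\widehat W_{-n}$, this gives \eqref{eq:repr-D2phi}.

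The main obstacle is the uniform polynomial derivative bound on $\sigma_n$ in both charts, including near $q=0$ and $q=\infty$; this is essentially supplied by Corollary~\ref{cor:der-lambda-seminorm}.
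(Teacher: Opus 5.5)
Your proposal is correct and follows essentially the same route as the paper. Both arguments bound the $q$-derivatives of $\sigma_n=4\pi^2n^2(\lambda_n+\lambda_n^2)$ polynomially in $|n|$ on both charts, using the symmetry $\sigma_n(1/q)=\sigma_n(q)$. Combined with the exponential decay of $\hat W_n$ at an intermediate strip, this gives normal convergence and tameness, and \eqref{eq:repr-D2phi} then follows from Lemma~\ref{lem:D2-at-zero}.

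Your explicit identification through Fourier truncations and the cruder exponent you get from Corollary~\ref{cor:der-lambda-seminorm} are harmless refinements. When you invoke Lemma~\ref{lem:k-sum}, just choose $\rho_1$ with $\rho_1-\rho'\le 1$.
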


\begin{proof}
By Fact~\ref{fact:eigen}, equation~\eqref{eq:series-hess} is formally
the Fourier expression of $-\Delta^{-1}\partial_\theta^2$. It remains to
prove normal convergence. Fix $0<\rho'<\rho''<\rho$.
Since
$\sigma_n=4\pi^{2}n^{2}\left(\lambda_n+\lambda_n^{2}\right)$, the
Leibniz rule together with Proposition~\ref{prop:der-lambda-bound} and
$ \sum_{i=0}^{k}\binom{k}{i} \,i! \,(k-i)! = (k+1)!$ gives, for every
$k\geq0$ and every $q\in\KK^0$,
\begin{equation}
    \label{eq:der-sigma-bound}
    \left|\partial_q^{k} \sigma_n(q) \right|
    \le  8 \pi^{2} \,(k+1)! \,c_\gamma^{\,k+2} \,
    |n|^{(k+2)(\tau+1)+k+2} \;.
\end{equation}
The bound~\eqref{eq:der-sigma-bound} also holds in the inversion chart by
the symmetry $\sigma_n(1/q)=\sigma_{-n}(q)$. Hence, for
$W\in\Hol_\rho(\T)$, Lemma~\ref{lem:fourier-extraction} at the intermediate strip $\rho''$ gives
$|\hat W_n|\le |W|_{\rho''}e^{-2\pi\rho''|n|}$, while
$|e_n|_{\rho'}=e^{2\pi\rho'|n|}$. It follows that
\begin{equation*}
    \sum_{n \in \Z^*} \left|\sigma_n \, \hat W_n \, e_n \right|_{\rho',m}
   \le  8 \pi^{2} \, |W|_{\rho''} (m+1)! \,c_\gamma^{m+2}
    \sum_{n \neq 0} |n|^{(m+2)(\tau+1)+m+2} 
    e^{-2\pi(\rho''-\rho') |n|} < \infty \;. 
\end{equation*}
Thus the series~\eqref{eq:series-hess} converges normally in every Whitney seminorm. The displayed bounds are linear in $|W|_{\rho''}$ and therefore also show that $\Green$ is tame. Finally, the identity~\eqref{eq:repr-D2phi} is directly obtained from Lemma~\ref{lem:D2-at-zero}. 
\end{proof}

\subsection{The linearized jet map and the Wronskian}
\label{subsec:wronskian}

Since $D\Phi(0)=0$, the linear term in the expansion of $D\Phi(V)$ at
the zero potential is $D^2\Phi(0)[V,\cdot]$. Accordingly, for
$V\in\Hol_\rho(\T)$, define
\begin{equation*}
\begin{array}{rcl}
\Green_V\colon\Hol_\rho(\T)
&\longrightarrow&
\Hol^\infty(\KK)\\
h
&\longmapsto&
D^2\Phi(0)[V,h].
\end{array}
\end{equation*}
Set the \emph{remainder}
\begin{equation*}
\begin{array}{rcl}
R\colon\mathcal U\times\Hol_\rho(\T)
&\longrightarrow&
\Hol^\infty(\KK)\\
(V,h)
&\longmapsto&
D\Phi(V)[h]-\Green_V(h).
\end{array}
\end{equation*}
We will use the following fact for the proof of rigidity theorems. 

\begin{fact} \label{fact:remainder}
The bilinear map $(V,h)\mapsto\Green_V(h)$ is continuous.
For a fixed $V\in\Hol_\rho(\T)$, the map $\Green_V$ is linear and continuous. The map $R$ is holomorphic and linear in the second variable. 
Furthermore, we have the identities
\begin{equation*}
 R(0,h)=0,
 \qquad D_V R(0,h)[Z]=0 \;
\end{equation*}
for every $ h,Z\in\Hol_\rho(\T)$.
 \end{fact}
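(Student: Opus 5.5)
We prove the four assertions of Fact~\ref{fact:remainder} in turn, using Lemma~\ref{lem:sum-sin}, Theorem~\ref{thm:kam-main-beta} and the definition of Fréchet holomorphy.

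\emph{Continuity of $(V,h)\mapsto\Green_V(h)$.} By~\eqref{eq:repr-D2phi},
\begin{equation*}
\Green_V(h)=\sum_{n\in\Z^*}\sigma_n\,\widehat V_{-n}\widehat h_n .
\end{equation*}
Fix $0<\rho''<\rho$. By~\eqref{eq:der-sigma-bound} and Lemma~\ref{lem:fourier-extraction}, each seminorm $|\Green_V(h)|_m$ is bounded by
\begin{equation*}
8\pi^2(m+1)!\,c_\gamma^{m+2}\,|V|_{\rho''}|h|_{\rho''}\sum_{n\neq0}|n|^{(m+2)(\tau+1)+m+2}e^{-4\pi\rho''|n|}<\infty .
\end{equation*}
This is a joint continuity estimate for the bilinear map, and continuity of $\Green_V$ for fixed $V$ follows at once. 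Alternatively, one can simply note that $D^2\Phi(0)$ is a continuous bilinear map, since $\Phi$ is holomorphic.

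\emph{Holomorphy and linearity of $R$.} Since $\Phi:\mathcal U\to\Hol^\infty(\KK)$ is holomorphic, the map $(V,h)\mapsto D\Phi(V)[h]$ is holomorphic on $\mathcal U\times\Hol_\rho(\T)$. Indeed, by the Bastiani definition recalled above, $D\Phi$ depends continuously on $(V,h)$, is linear in $h$, and is holomorphic in $V$ along complex lines, because derivatives of holomorphic maps are holomorphic. Subtracting the continuous bilinear map $\Green_V(h)$, which is itself holomorphic, shows that $R$ is holomorphic and linear in $h$.

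\emph{The identities at $V=0$.} First, $R(0,h)=D\Phi(0)[h]-D^2\Phi(0)[0,h]=0$ by Corollary~\ref{cor:D1-at-zero} and bilinearity. Next, differentiating in $V$ at $0$ in the direction $Z$ gives
\begin{equation*}
D_VR(0,h)[Z]=D^2\Phi(0)[Z,h]-D^2\Phi(0)[Z,h]=0 .
\end{equation*}
Here the second term comes from the fact that $V\mapsto D^2\Phi(0)[V,h]$ is linear, so its derivative in the direction $Z$ is $D^2\Phi(0)[Z,h]$.

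The only delicate point is consistency of conventions: $\Green_V(h)=D^2\Phi(0)[V,h]$ puts $V$ in the first slot, while the derivative of $D\Phi(V)[h]$ in $V$ produces $D^2\Phi(0)[Z,h]$. These agree because of the symmetry of the second derivative of a holomorphic map, which is also visible in~\eqref{eq:repr-D2phi} since $\sigma_{-n}=\sigma_n$.
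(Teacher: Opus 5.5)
Your proof is correct and follows essentially the same route as the paper. Continuity of $(V,h)\mapsto\Green_V(h)$ comes from the Fourier representation (the paper invokes Lemma~\ref{lem:sum-sin} through $\Green_V(h)=\int_\T V\cdot\Green h$); holomorphy of $R$ is that of $(V,h)\mapsto D\Phi(V)[h]$ minus a continuous bilinear map; and the two identities follow from Corollary~\ref{cor:D1-at-zero} and the definition of $\Green_Z$. The one difference is that the paper gets joint holomorphy of $(V,h)\mapsto D\Phi(V)[h]$ from the explicit formula $\int_\T h\circ(\id_\T+U(V))$ together with Lemma~\ref{lem:composition-q-deriv} and Theorem~\ref{thm:hol-KAM}, whereas you use the general fact that derivatives of Bastiani-holomorphic maps are jointly holomorphic, which is equally valid.
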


\begin{proof}
Fix $0<\sigma<\rho_\infty$. By~\eqref{eq:D1Phi},
Lemma~\ref{lem:composition-q-deriv} using the order-zero
bound above and applied with
$\rho'=\rho^\star$, $\rho''=\sigma$, $\widetilde\rho=\rho_\infty$ and
$\kappa=1/9$, Lemma~\ref{lem:linear-postcomposition}
and Theorem~\ref{thm:hol-KAM}, the map
\begin{equation*}
(V,h)\longmapsto D\Phi(V)[h]
=\int_\T h\circ(\id_\T+U(V))
\end{equation*}
is holomorphic and linear in $h$.  
On the other hand, Lemma~\ref{lem:D2-at-zero} gives
\begin{equation*}
\Green_V(h)=D^2\Phi(0)[V,h]=\int_\T V\cdot\Green h.
\end{equation*}
By Lemma~\ref{lem:sum-sin}, $\Green$ is continuous and coincides with
its Fourier representation. Hence $(V,h)\mapsto\Green_V(h)$ is
continuous bilinear, and it follows that $R$ is holomorphic and linear
in its second variable. Finally,
\begin{equation*}
R(0,h)=D\Phi(0)[h]=0
\end{equation*}
by Corollary~\ref{cor:D1-at-zero}, while
\begin{equation*}
D_VR(0,h)[Z]=D^2\Phi(0)[Z,h]-\Green_Z(h)=0
\end{equation*}
by the definition of $\Green_Z(h)$.
\end{proof}

\smallskip 
For the rest of this section, fix $\omega_0\in\DC$ and set
$q_0:=\e(\omega_0)$. 
Let $ \mathscr{V}$ be a  Fourier-separated
subspace of $\Hol_\rho^\R(\T) $. Let $ \V  \subset \mathscr{V}$ be a subspace of dimension $d \ge 1$ with a basis
$(h^{(1)},\ldots,h^{(d)})$. Choose pairwise distinct positive integers
$n_1,\ldots,n_d$ and phases $\xi_1,\ldots,\xi_d\in\mathbb S^1$
as in Definition~\ref{def:fourier-separated}, that is such that the real matrix 
\begin{equation} \label{def:H}
H := \left[
\Re \!\left(
\xi_k\,\widehat{h^{(j)}}_{n_k}
\right)
\right]_{1\le k,j\le d}
\end{equation}
is invertible. 

The Wronskian of the linearized jet operator in the $q$-variable is denoted by
\begin{equation}
    \label{eq:wronskian-q}
    \Pi_V^{\mathrm{lin}} := \det\left[\partial_q^{k-1}\Green_V(h^{(j)})(q_0)\right]_{1\le k,j\le d} \;. 
\end{equation}  

Since $\Green_V$ is linear in $V$ and the determinant in
\eqref{eq:wronskian-q} is multilinear in its columns, $V \mapsto
\Pi_V^{\mathrm{lin}}$ is a homogeneous holomorphic polynomial of degree
$d$.

We now choose the particular potential
\begin{equation} \label{eq:part-potential}
V^* := \frac{1}{2} \sum_{k=1}^d
\left( \bar{\xi}_k e_{n_k}+ \xi_k e_{-n_k}\right) \;. 
\end{equation}
It is obviously not small, so we will also denote $ V_t^* := t V^*$ 
where $t\in\C$ is a free parameter. 
 In particular for real values of $t$, this potential   
$V_t^*$ belongs to $\Hol_\rho^\R(\T)$ for every
$\rho>0$ and moreover to $ \mathcal{U}^\R$ for small enough values of $t \in \R$. Under stability of the Fourier truncations, we can actually tune the phases to obtain that $V^*$ genuinely lies in $\mathscr{V}$. 
\begin{proposition}
\label{prop:phase-adapted-potential}
Assume that $\mathscr V$ is stable under Fourier truncations. Then the phases $ (\xi_k )_{ 1 \le k \le d} $ can be chosen so that, for every $ h \in \mathscr{V}$, we have $ \xi_k \hat{h}_{n_k}  \in \R $. 
In particular, the corresponding matrix is simply $$ H := \left[ \xi_k \widehat{h^{(j)}}_{n_k} \right]_{1 \le j,k \le d} . $$ 
Moreover with the above choice we obtain that $ V^* \in \mathscr{V}$. 
\end{proposition}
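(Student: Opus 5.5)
The plan is to use stability under Fourier truncation to isolate single Fourier modes inside $\mathscr V$, and Fourier separation to show that at each frequency only one real direction of coefficients occurs. For $n\ge 1$ and $h\in\mathscr V$, the projection
\begin{equation*}
P_nh:=\mathcal T_nh-\mathcal T_{n-1}h=\widehat h_n e_n+\widehat h_{-n}e_{-n}
\end{equation*}
lies in $\mathscr V$ by Definition~\ref{def:truncation}. Since $h$ is real, $\widehat h_{-n}=\overline{\widehat h_n}$. Define the real linear subspace
\begin{equation*}
L_n:=\{\widehat h_n:h\in\mathscr V\}\subset\C .
\end{equation*}
It has real dimension at most $2$.

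\emph{Step 1: $\dim_\R L_n\le 1$ for every $n\ge1$.} Suppose instead $L_n=\C$. Choose $h,g\in\mathscr V$ with $\widehat h_n=1$ and $\widehat g_n=i$. Then $P_nh=e_n+e_{-n}$ and $P_ng=i(e_n-e_{-n})$ lie in $\mathscr V$, and they span a $2$-dimensional subspace $\V\subset\mathscr V$. Every element of $\V$ has all positive-frequency Fourier coefficients equal to zero except possibly at $n$. Fourier separation requires two distinct positive integers $n_1\neq n_2$, so at least one of them differs from $n$. The corresponding row of the matrix in~\eqref{eq:fourier-separation} is then identically zero, and its determinant vanishes. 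This contradicts Definition~\ref{def:fourier-separated}.

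\emph{Step 2: choice of phases.} Keep the integers $n_1,\dots,n_d$ and the original phases $\xi'_k$ for which $H$ is invertible. Row $k$ of $H$ is nonzero, so $L_{n_k}\neq\{0\}$. By Step~1 it is therefore a real line $\R z_k$ with $z_k\neq0$. Set $\xi_k:=\overline{z_k}/|z_k|$. Then $\xi_k\widehat h_{n_k}\in\R$ for every $h\in\mathscr V$.

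Next compare the two choices of phases. Write $\widehat{h^{(j)}}_{n_k}=t_{kj}z_k$ with $t_{kj}\in\R$. This gives
\begin{equation*}
\Re\bigl(\xi'_k\widehat{h^{(j)}}_{n_k}\bigr)=t_{kj}\Re(\xi'_kz_k)
\qquad\text{and}\qquad
\xi_k\widehat{h^{(j)}}_{n_k}=t_{kj}|z_k| .
\end{equation*}
So the original matrix is obtained from the new one by multiplying row $k$ by $c_k:=\Re(\xi'_kz_k)/|z_k|$. Invertibility of the original matrix forces every $c_k\neq0$, hence the new matrix $\bigl[\xi_k\widehat{h^{(j)}}_{n_k}\bigr]$ is invertible as well. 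It coincides with its real part, which gives the stated form of $H$.

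\emph{Step 3: $V^*\in\mathscr V$.} Pick $g_k\in\mathscr V$ with $\widehat{(g_k)}_{n_k}=z_k$. Then
\begin{equation*}
P_{n_k}g_k=z_ke_{n_k}+\overline{z_k}e_{-n_k}\in\mathscr V .
\end{equation*}
Since $\overline{\xi_k}=z_k/|z_k|$, we get
\begin{equation*}
\tfrac{1}{2|z_k|}P_{n_k}g_k=\tfrac12\bigl(\overline{\xi_k}e_{n_k}+\xi_ke_{-n_k}\bigr).
\end{equation*}
Summing over $k$ gives $V^*$, which lies in $\mathscr V$ because $\mathscr V$ is a vector space.

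I expect the main obstacle to be Step~1. One has to check that Fourier separation, as stated for all finite-dimensional subspaces with \emph{distinct} frequencies, genuinely excludes a two-dimensional fibre at a single frequency. The argument above does this through the zero row.
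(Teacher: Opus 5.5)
Your proof is correct and follows the paper's argument. The projections $P_n=\mathcal T_n-\mathcal T_{n-1}$ preserve $\mathscr V$, so each frequency-$n_k$ fibre is a real line. The new phases are read off from a nonzero element of that line, and the old and new matrices differ by a nonzero real diagonal row scaling. Finally, $V^*$ is half the sum of the normalized elements of $P_{n_k}(\mathscr V)$.

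You go further than the paper only in Step~1. Your zero-row argument makes explicit why Fourier separation forces $\dim_\R L_n\le 1$, which the paper simply asserts.
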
 
\begin{proof}
Let $\xi_1^0,\ldots,\xi_d^0$ be phases initially supplied by
Definition~\ref{def:fourier-separated}, so that
\begin{equation*}
H^0
:=
\left[
\Re \left(
\xi_k^0\widehat{h^{(j)}}_{n_k}
\right)
\right]_{1\leq k,j\leq d}
\end{equation*}
is invertible.  For $ n \ge 1$, set 
\begin{equation*}
\begin{array}{r@{\;}c@{\quad}c@{\quad}c}
P_n:=\mathcal T_n-\mathcal T_{n-1}\colon
& \Hol_\rho^\R(\T)
& \longrightarrow
& \Hol_\rho^\R(\T)\\
& h
& \longmapsto
& \widehat h_n e_n+\overline{\widehat h_n}\,e_{-n}.
\end{array}
\end{equation*}
Stability under Fourier truncations gives
$P_n(\mathscr V)\subset\mathscr V$ for every $ n \ge 1$. 
Since $\mathscr{V}$ is Fourier separated, $P_n(\mathscr V)$ has
dimension at most one. Since $H^0$ is chosen to be invertible, each of its rows is nonzero. Consequently, for every $1\le k\le d$, it follows that 
$P_{n_k}(\mathscr V)\neq\{0\}$ and thus has dimension exactly $1$. Choose $ u^{(k)} $ to be a vector in $P_{n_k}(\mathscr V)$  such that 
\begin{equation}
\xi_k :=   \overline{ \mathcal{F}_{n_k}  ( u^{(k)} )  }  \in \S^1 \;. 
\end{equation}
Note that $$ u^{(k)} = \bar{\xi}
_k e_{n_k} + \xi_k e_{-n_k} .$$
Then, for every $h\in\mathscr V$, we have $P_{n_k} (h)  \in \R \, u^{(k)}$, and consequently $ \xi_k \hat{h}_{n_k} \in \R$.  

It remains to verify that the new phases still satisfy the
Fourier-separation condition. Write the new matrix $ H :=\left[  \Re \left( \xi_k \widehat{h^{(j)}}_{n_k}  \right) \right]_{1 \le j,k \le d} $ which is obviously equal to $\left[ \xi_k \widehat{h^{(j)}}_{n_k} \right]_{1 \le j,k \le d}$. 
Observe that, for every $ 1 \le k, j \le d$, we have 
\[    \Re \left( \xi_k^0  \widehat{  h^{(j)}}_{n_k}  \right) = \Re \left( \xi_k^0   \bar{\xi}_k  \right) \cdot  \xi_k  \widehat{  h^{(j)}}_{n_k} \;.    
\]
Thus, one has 
\begin{equation*}
H^0
=
\operatorname{diag}\left(
\Re \left(\xi_1^0\overline{\xi_1}\right),
\ldots,
\Re \left(\xi_d^0\overline{\xi_d}\right)
\right) H \; . 
\end{equation*}
The invertibility of $H^0$ implies that of $H$, since the diagonal terms are all nonzero. 
To end the proof, note that since every $u^{(k)} \in \mathscr{V}$ and $ V^* = \frac{1}{2} \sum_{k=1}^d u^{(k)}$ we obtain that $ V^* $ lies in $\mathscr{V}$ as well.
\end{proof}

Define the matrix
\begin{equation} \label{eq:def-Sigma}
\Sigma (q) := \left[\partial_q^{k-1} \sigma_{n_j} (q) \right]_{1\le k,j\le d}   \;. 
\end{equation}
We now provide identities on the Jacobian and Wronskian of the operator $ \Green$ at this potential. 
 
\begin{lemma} \label{lem:factorization-det}
For every $ t \in \C$, we have 
\begin{equation*}
\left[\partial_q^{k-1}\Green_{V_t^*}(h^{(j)})(q_0)\right]_{1\le k,j\le d}
=t\,\Sigma(q_0)H.
\end{equation*}
In particular, the linearized Wronskian is given by
\begin{equation*}
\Pi_{V_t^*}^{\mathrm{lin}} = t^d \det H\,\det \Sigma(q_0) \;. 
\end{equation*}
\end{lemma}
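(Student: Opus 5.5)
The plan is to compute $\Green_{V_t^*}(h^{(j)})$ explicitly from the Fourier representation~\eqref{eq:repr-D2phi} of Lemma~\ref{lem:sum-sin}, and then differentiate in $q$ at $q_0$.

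First, since $\Green_V(h)=D^2\Phi(0)[V,h]$ is linear in $V$, we have $\Green_{V_t^*}=t\,\Green_{V^*}$, and it suffices to treat $t=1$. By~\eqref{eq:part-potential}, $V^*$ is a trigonometric polynomial. Its only nonzero Fourier coefficients are
\begin{equation*}
\widehat{V^*}_{n_k}=\tfrac12\bar\xi_k,
\qquad
\widehat{V^*}_{-n_k}=\tfrac12\xi_k,
\qquad 1\le k\le d,
\end{equation*}
and these are well defined because the $n_k$ are pairwise distinct positive integers. In~\eqref{eq:repr-D2phi}, which reads $\Green_{V^*}(h)=\sum_{n\in\Z^*}\sigma_n\,\widehat{V^*}_{-n}\hat h_n$, only the indices $n=\pm n_k$ contribute, so the sum is finite.

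For each $k$, the pair of indices $n=-n_k$ and $n=n_k$ contributes
\begin{equation*}
\sigma_{n_k}(q)\left(\tfrac12\bar\xi_k\,\hat h_{-n_k}+\tfrac12\xi_k\,\hat h_{n_k}\right),
\end{equation*}
where we used $\sigma_{-n}=\sigma_n$. For $h=h^{(j)}$, which is real-valued, we have $\hat h_{-n_k}=\overline{\hat h_{n_k}}$. The bracket therefore equals $\Re(\xi_k\widehat{h^{(j)}}_{n_k})=H_{kj}$ by~\eqref{def:H}. This gives the identity
\begin{equation*}
\Green_{V_t^*}(h^{(j)})(q)=t\sum_{k=1}^d\sigma_{n_k}(q)\,H_{kj},
\qquad q\in\KK,
\end{equation*}
valid in $\Hol^\infty(\KK)$. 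Differentiation in $q$ commutes with this finite sum, and each $\sigma_{n_k}$ lies in $\Hol^\infty(\KK,\C)$. Applying $\partial_q^{\ell-1}$ and evaluating at $q_0$ gives the $(\ell,j)$ entry $t\sum_k\partial_q^{\ell-1}\sigma_{n_k}(q_0)H_{kj}=t\,(\Sigma(q_0)H)_{\ell j}$, by the definition~\eqref{eq:def-Sigma} of $\Sigma$ with column index $k$. This is the claimed matrix identity.

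Finally, taking determinants of the $d\times d$ matrix $t\,\Sigma(q_0)H$ yields $\Pi^{\mathrm{lin}}_{V_t^*}=t^d\det\Sigma(q_0)\det H$. The only step requiring care is the bookkeeping of conjugate Fourier modes, namely checking that the $\pm n_k$ contributions combine exactly into the real part defining $H$. Everything else is immediate from Lemma~\ref{lem:sum-sin}. When $\mathscr V$ is stable under Fourier truncations, Proposition~\ref{prop:phase-adapted-potential} makes $\xi_k\hat h_{n_k}$ real, so the real part is redundant but the computation is identical.
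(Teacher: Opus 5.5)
Your proposal is correct and is essentially the paper's own proof. Both arguments expand $\Green_{V_t^*}(h^{(j)})$ through the Fourier representation \eqref{eq:repr-D2phi} and pair the $\pm n_k$ modes using $\sigma_{-n}=\sigma_n$ and $\widehat{h^{(j)}}_{-n}=\overline{\widehat{h^{(j)}}_{n}}$, which gives $t\sum_k\sigma_{n_k}\,\Re\bigl(\xi_k\widehat{h^{(j)}}_{n_k}\bigr)$; you then differentiate termwise at $q_0$ and take determinants, exactly as the paper does.
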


\begin{proof}
By Lemma~\ref{lem:sum-sin}, for every $1\leq j\leq d$,
\begin{equation*}
\begin{aligned}
\Green_{V_t^*}(h^{(j)})
&=
\frac{t}{2}
\sum_{\ell=1}^d
\left(
\sigma_{n_\ell}\xi_\ell
\widehat{h^{(j)}}_{n_\ell}
+
\sigma_{-n_\ell}\overline{\xi_\ell}
\widehat{h^{(j)}}_{-n_\ell}
\right)\\
&=
t\sum_{\ell=1}^d
\sigma_{n_\ell}
\operatorname{Re}\left(
\xi_\ell\widehat{h^{(j)}}_{n_\ell}
\right).
\end{aligned}
\end{equation*}
Here we used $\sigma_{-n_\ell}=\sigma_{n_\ell}$ and
$ \widehat{h^{(j)}}_{-n_\ell} = \overline{\widehat{h^{(j)}}_{n_\ell}}$ since $h^{(j)}$ is real-valued. 
Consequently, for every
$1\leq k,j\leq d$,
\begin{equation*}
\partial_q^{k-1}
\Green_{V_t^*}(h^{(j)})(q_0)
=
t\sum_{\ell=1}^d
\partial_q^{k-1}\sigma_{n_\ell}(q_0)
\operatorname{Re}\left(
\xi_\ell\widehat{h^{(j)}}_{n_\ell}
\right).
\end{equation*}
By the definitions of $\Sigma$ and $H$, this is precisely the
$(k,j)$-entry of $t\,\Sigma(q_0)H$. Hence
\begin{equation*}
\left[
\partial_q^{k-1}
\Green_{V_t^*}(h^{(j)})(q_0)
\right]_{1\leq k,j\leq d}
=
t\,\Sigma(q_0)H.
\end{equation*}
Taking determinants ends the proof. 
\end{proof}

Consequently, to show that 
$\Pi_{V_t^*}^{\mathrm{lin}}\neq0$ for nonzero $t$, it is sufficient to prove that $\Sigma(q_0)$ is invertible. 
To that end, we derive a workable expression for the derivatives of the
"eigenvalue functions" $\sigma_n$.

\begin{lemma}
\label{lem:der-sig-n}
For all integers $n,k\geq1$, there is a polynomial
$\sig_{n,k}\in\pi^2\Q[q]$ such that
\begin{equation*}
    \partial_q^{k-1}\sigma_n(q)
    =\frac{\sig_{n,k}(q)}{(q^n-1)^{k+1}}.
\end{equation*}
Moreover, $\deg\sig_{n,k}=n+(k-1)(n-1)$, and its leading coefficient is
\begin{equation*}
    s_{n,k}:=4\pi^2n^2(-1)^{k-1}
    \frac{(n+k-2)!}{(n-1)!}.
\end{equation*}
\end{lemma}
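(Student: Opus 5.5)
The proof is an induction on $k\ge1$ with $n$ fixed. At each step the fraction is differentiated by the quotient rule, and the rational function is kept in the form $\sig/(q^n-1)^{k+1}$.

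\emph{Base case.} For $k=1$ we have $\partial_q^{0}\sigma_n=\sigma_n=4\pi^2n^2q^n/(q^n-1)^2$. So we take $\sig_{n,1}:=4\pi^2n^2q^n\in\pi^2\Q[q]$. Its degree is $n=n+0\cdot(n-1)$ and its leading coefficient is $4\pi^2n^2=s_{n,1}$, as claimed.

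\emph{Inductive step.} Suppose $\partial_q^{k-1}\sigma_n=\sig_{n,k}/(q^n-1)^{k+1}$. Differentiating gives
\begin{equation*}
\partial_q^{k}\sigma_n=\frac{(q^n-1)\,\sig_{n,k}'(q)-(k+1)\,n\,q^{n-1}\,\sig_{n,k}(q)}{(q^n-1)^{k+2}},
\end{equation*}
so we define $\sig_{n,k+1}:=(q^n-1)\sig_{n,k}'-(k+1)nq^{n-1}\sig_{n,k}$. This is clearly in $\pi^2\Q[q]$ (in fact $\pi^2\Z[q]$).

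\emph{Degree and leading coefficient.} Write $D:=\deg\sig_{n,k}=n+(k-1)(n-1)$ and $s:=s_{n,k}$. Both terms of $\sig_{n,k+1}$ have degree at most $D+n-1$. In the first term the coefficient of $q^{D+n-1}$ is $Ds$. In the second it is $-(k+1)ns$. Adding them, the coefficient of $q^{D+n-1}$ in $\sig_{n,k+1}$ is
\begin{equation*}
s\bigl(D-(k+1)n\bigr)=s\bigl(n+(k-1)(n-1)-(k+1)n\bigr)=-s\,(n+k-1).
\end{equation*}
Since $n+k-1\ge1$, this coefficient is nonzero. Hence $\deg\sig_{n,k+1}=D+n-1=n+k(n-1)$, as required. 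Moreover
\begin{equation*}
-s_{n,k}(n+k-1)=4\pi^2n^2(-1)^{k}\frac{(n+k-1)!}{(n-1)!}=s_{n,k+1},
\end{equation*}
which closes the induction.

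The only delicate point is checking that the top coefficients do not cancel. This amounts to the arithmetic identity $D-(k+1)n=-(n+k-1)$, which is never zero. That nonvanishing is exactly what makes the degree exact rather than merely an upper bound.
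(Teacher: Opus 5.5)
Your proof is correct and follows the paper's argument exactly: the same base case $\sig_{n,1}=4\pi^2n^2q^n$, the same recursion $\sig_{n,k+1}=(q^n-1)\sig_{n,k}'-(k+1)nq^{n-1}\sig_{n,k}$, and the same top-coefficient computation giving $-(n+k-1)s_{n,k}=s_{n,k+1}\neq0$. You also spell out explicitly that the lower-order term $-\sig_{n,k}'$ cannot reach degree $D+n-1$ because $n\ge1$, which the paper leaves implicit.
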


\begin{proof}
Fix $n\geq1$. We prove the statements by induction on $k\geq1$.
For $k=1$, set $\sig_{n,1}:=4\pi^2n^2q^n$.

Suppose that the statements hold for some $k\geq1$. Differentiating
$\partial_q^{k-1}\sigma_n=\sig_{n,k}(q^n-1)^{-(k+1)}$ gives
\begin{equation*}
    \sig_{n,k+1}
    =(q^n-1)\sig_{n,k}'-(k+1)nq^{n-1}\sig_{n,k}.
\end{equation*}
The coefficient of $q^{\deg\sig_{n,k}+n-1}$ in the right-hand side is
\begin{equation*}
    s_{n,k}\left[n+(k-1)(n-1)-(k+1)n\right]
    =-(n+k-1)s_{n,k}=s_{n,k+1},
\end{equation*}
which is nonzero. Therefore
$\deg\sig_{n,k+1}=n+k(n-1)$, and the asserted formula for the leading
coefficient follows as well. We complete the proof by running the induction.
\end{proof}

We are now ready to prove that $\det\Sigma(q_0)\neq0$  whenever $\omega_0$ is algebraic.

\begin{proposition}
\label{prop:S-invertible}
If $\omega_0 \in \DC $ is algebraic  then
$\Sigma(q_0)  $ is invertible.
\end{proposition}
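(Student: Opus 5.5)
The plan is to combine a transcendence statement for $q_0$ with the fact that $\det\Sigma(q)$, as a function of $q$, is a rational function with rational coefficients (up to a power of $\pi$) that does not vanish identically.

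\emph{Step 1: $q_0$ is transcendental.} Since $\omega_0\in\DC$ is irrational and algebraic, $2\omega_0$ is algebraic and irrational. Writing $q_0=e^{2\pi i\omega_0}=(-1)^{2\omega_0}$ (principal branch $e^{i\pi\cdot 2\omega_0}$), the Gelfond--Schneider theorem shows that $q_0$ is transcendental.

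\emph{Step 2: $\det\Sigma$ is a rational function over $\Q$.} By Lemma~\ref{lem:der-sig-n}, the $(k,j)$ entry of $\Sigma(q)$ equals $\sig_{n_j,k}(q)/(q^{n_j}-1)^{k+1}$ with $\sig_{n_j,k}\in\pi^2\Q[q]$. Multiply column $j$ by $(q^{n_j}-1)^{d+1}$. Every entry then becomes $\pi^2$ times a polynomial in $\Q[q]$. Hence
\begin{equation*}
\det\Sigma(q)=\frac{\pi^{2d}\,P(q)}{\prod_{j=1}^d(q^{n_j}-1)^{d+1}},\qquad P\in\Q[q].
\end{equation*}
The denominator does not vanish at $q_0$, since $q_0$ is not a root of unity. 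So if $P\not\equiv0$, then $P(q_0)\neq0$ because $q_0$ is transcendental, and therefore $\Sigma(q_0)$ is invertible.

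\emph{Step 3: $P\not\equiv0$.} The matrix $\Sigma(q)$ is exactly the Wronskian matrix of $\sigma_{n_1},\dots,\sigma_{n_d}$ in the variable $q$. By the classical criterion, the Wronskian of finitely many meromorphic functions on a connected domain vanishes identically if and only if the functions are linearly dependent over $\C$. It therefore suffices to prove that these functions are linearly independent. Near $q=0$,
\begin{equation*}
\sigma_n(q)=4\pi^2n^2\frac{q^n}{(1-q^n)^2}=4\pi^2n^2\sum_{m\ge1}m\,q^{mn},
\end{equation*}
so $\sigma_n$ vanishes at $0$ to order exactly $n$. Since the $n_j$ are pairwise distinct, suppose a nontrivial vanishing combination $\sum_j c_j\sigma_{n_j}$ existed, and let $n_{j_0}$ be the smallest index $n_j$ with $c_j\neq0$. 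The coefficient of $q^{n_{j_0}}$ in the combination is then $4\pi^2 n_{j_0}^2c_{j_0}\neq0$, a contradiction. Hence the functions are independent, $\det\Sigma\not\equiv0$, and so $P\not\equiv0$.

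The only genuinely deep input is Step~1, which requires algebraicity of $\omega_0$ together with irrationality; the latter is guaranteed by the Diophantine condition. This is where the hypothesis of the proposition is used. Steps~2 and~3 are routine; the one point to check is that column scaling keeps all coefficients in $\Q$, which follows from $\sig_{n,k}\in\pi^2\Q[q]$.
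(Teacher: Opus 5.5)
Your proof is correct, and your Steps 1 and 2 are the paper's argument: you clear denominators column by column to get $\det\Sigma=\pi^{2d}P/\prod_j(q^{n_j}-1)^{d+1}$ with $P\in\Q[q]$, then use that $q_0$ is transcendental and not a root of unity. Your explicit $q_0=(-1)^{2\omega_0}$ just spells out the Gelfond--Schneider citation.

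Where you differ is in proving $P\not\equiv0$.

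\begin{itemize}
\item The paper works at $q=\infty$. Every permutation term of the cleared determinant has the same degree $d\sum_\ell n_\ell-d(d-1)/2$. The coefficient of that power is $\det[s_{n_\ell,k}]$, which the leading coefficients from Lemma~\ref{lem:der-sig-n} reduce to a nonzero multiple of the Vandermonde product $\prod_{i<j}(n_j-n_i)$.
\item You work at $q=0$. Distinct vanishing orders give linear independence of the $\sigma_{n_j}$. The Wronskian criterion for functions holomorphic on the unit disc then gives $\det\Sigma\not\equiv0$. You are right to insist on the analytic version, since the criterion fails for merely smooth functions.
\end{itemize}

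Your route needs only the statement $\sig_{n,k}\in\pi^2\Q[q]$ from Lemma~\ref{lem:der-sig-n}, not its degree and leading-coefficient claims. It would also work for any linearly independent family of rational functions with coefficients in $\pi^2\Q$. The paper's route is self-contained and identifies the exact degree and top coefficient of $P$.

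The two computations mirror each other, since $\sigma_n(1/q)=\sigma_n(q)$. If you prefer not to cite the Wronskian theorem, expanding $\det\Sigma$ at $q=0$ directly gives the lowest-order term $\prod_j 4\pi^2n_j^2\prod_{i<j}(n_j-n_i)\,q^{\sum_j n_j-d(d-1)/2}$. This is the paper's Vandermonde computation transported from $\infty$ to $0$.
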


\begin{proof}
By Lemma~\ref{lem:der-sig-n}, multiplying the $\ell$-th column of
$\Sigma(q)$ by $(q^{n_\ell}-1)^{d+1}$ clears the denominators of all
its entries, hence
\begin{equation*}
\begin{aligned}
P(q)
&:=\prod_{\ell=1}^d(q^{n_\ell}-1)^{d+1}
\det\left[\partial_q^{k-1}\sigma_{n_\ell}(q)\right]_{1\le  k,\ell\le  d}\\
&=\det\left[\sig_{n_\ell,k}(q)
(q^{n_\ell}-1)^{d-k}\right]_{1\le  k,\ell\le  d}
\end{aligned}
\end{equation*}
is a polynomial in $\pi^{2d}\Q[q]$.

We compute its degree and leading coefficient. The $(k,\ell)$-entry in
the last determinant has degree
\begin{equation*}
    n_\ell+(k-1)(n_\ell-1)+n_\ell(d-k)
    =n_\ell d-k+1.
\end{equation*}
For any permutation $\varpi$ of $\{1,\ldots,d\}$, the corresponding
term in the determinant expansion is
\begin{equation*}
    \operatorname{sgn}(\varpi)\prod_{\ell=1}^d
    \sig_{n_\ell,\varpi(\ell)}(q)
    (q^{n_\ell}-1)^{d-\varpi(\ell)},
\end{equation*}
which has degree
\begin{equation*}
\begin{aligned}
    \sum_{\ell=1}^d\left[n_\ell d-\varpi(\ell)+1\right]
    &=d\sum_{\ell=1}^d n_\ell-\frac{d(d-1)}2
    =:p.
\end{aligned}
\end{equation*}
This value $ p$ is independent of the choice of the permutation. 
Thus $\deg P\le  p$, and the coefficient of $q^p$ is the determinant of the leading coefficients of the matrix with entries $\sig_{n_\ell,k}$ for $ 1 \le k, \ell \le d$,  that is 
\begin{equation*}
\det\left[s_{n_\ell,k}\right]_{1\le  k,\ell\le  d} =(-1)^{d(d-1)/2}(2\pi)^{2d} \prod_{\ell=1}^d n_\ell^2  \det D 
\end{equation*}
where 
\begin{equation*}
\begin{aligned}
D
&:=
\left[
n_\ell(n_\ell+1)\cdots(n_\ell+k-2)
\right]_{1\le  k,\ell\le  d}.
\end{aligned}
\end{equation*}
We claim that $ \det D = \prod_{i<j}(n_j-n_i)$.
Indeed, let us denote 
\begin{equation*}
d_1(x):=1,\qquad d_k(x):=x(x+1)\cdots(x+k-2),\quad k\ge2.
\end{equation*}
Each $d_k$ is a monic polynomial of degree $k-1$, and
$D=[d_k(n_\ell)]_{1\le  k,\ell\le  d}$. Therefore elementary row
operations transform $D$ into the Vandermonde matrix
\begin{equation*}
\begin{pmatrix}
1&1&\cdots&1\\
n_1&n_2&\cdots&n_d\\
n_1^2&n_2^2&\cdots&n_d^2\\
\vdots&\vdots&&\vdots\\
n_1^{d-1}&n_2^{d-1}&\cdots&n_d^{d-1}
\end{pmatrix} 
\end{equation*}
without modifying its determinant. 

Hence
$\det D=\prod_{1\le  i<j\le  d}(n_j-n_i)$ and the leading
coefficient of $P$ is
\begin{equation*}
    (-1)^{d(d-1)/2}(2\pi)^{2d}
    \prod_{\ell=1}^d n_\ell^2
    \prod_{i<j}(n_j-n_i)\neq0.
\end{equation*}

In particular, $P$ is a nonzero polynomial of degree $p$.
By Lemma~\ref{lem:der-sig-n}, each
$\sig_{n,k}$ lies in $\pi^2\Q[q]$, so $\pi^{-2d}P\in\Q[q]$.
Consequently, every zero of $P$ is algebraic.
Since $\omega_0$ is algebraic and irrational, the Gelfond--Schneider
theorem (see e.g. ~\cite{Baker1975}) therefore shows
that $q_0$ is transcendental. Hence $P(q_0)\neq0$. Moreover,
$q_0^{n_\ell}\neq1$ for every $\ell$ because $\omega_0$ is irrational,
so the definition of $P$ gives $\det\Sigma(q_0)\neq0$.
\end{proof}

\subsection{From the Wronskian to rigidity} \label{subsec:wronsk-to-rig}
We keep all notations and the setting of the previous subsection. If $\mathscr{V}$ is assumed to be stable under Fourier truncation, it will be specified. 
Before providing the proofs of the rigidity results we 
first show how the linearized Wronskian being nonzero allows to obtain that the real nonlinear Wronskian  is as well.
Recall that
\begin{equation*}
\Pi_V := \det \left[
\partial_\omega^{k-1}D\beta(V)[h^{(j)}](\omega_0)
\right]_{1\le k,j\le d}.
\end{equation*}     
\begin{theorem}
\label{thm:wronskian-small}
Assume that $\omega_0$ is algebraic.
There exists $t_0>0$ such that
\begin{equation*}
    \Pi_{V^*_{t}}\neq 0
\end{equation*}
for every $t\in\R$ such that $0<|t|<t_0$.
\end{theorem}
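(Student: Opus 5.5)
The plan is to compare the nonlinear Wronskian $\Pi_{V_t^*}$ with the linearized one $\Pi^{\mathrm{lin}}_{V_t^*}=t^d\det H\det\Sigma(q_0)$ from Lemma~\ref{lem:factorization-det}. The latter is nonzero for $t\neq0$ by Proposition~\ref{prop:S-invertible}, since $\omega_0$ is algebraic. I will show that the difference is $O(t^{d+1})$.

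\textbf{Step 1: pass from $\omega$ to $q$.} For real $V$ and $\omega\in\DC$, Theorem~\ref{thm:kam-main-beta} gives $D\beta(V)[h](\omega)=D\Phi(V)[h](\e(\omega))$, because the term $\omega^2/2$ does not depend on $V$. The maps $\Phi(V)\in\Hol^\infty(\KK)$ are Whitney-holomorphic, so the chain rule for jets gives
\[
\partial_\omega^{k-1}\bigl(f\circ\e\bigr)(\omega_0)=\sum_{i=1}^{k} c_{k,i}\,\partial_q^{i-1}f(q_0),
\]
with a lower-triangular matrix $C=(c_{k,i})$ that does not depend on $f$ and has diagonal entries $c_{k,k}=(2\pi i q_0)^{k-1}\neq0$. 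Hence
\[
\Pi_V=\det C\cdot\det\bigl[\partial_q^{k-1}D\Phi(V)[h^{(j)}](q_0)\bigr]_{k,j},
\]
and it suffices to prove that the $q$-Wronskian $\widetilde\Pi_V:=\det\bigl[\partial_q^{k-1}D\Phi(V)[h^{(j)}](q_0)\bigr]_{k,j}$ does not vanish at $V=V_t^*$ for small $t\neq0$.

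\textbf{Step 2: split off the remainder.} Write $D\Phi(V)[h]=\Green_V(h)+R(V,h)$. The evaluation $f\mapsto\partial_q^{k-1}f(q_0)$ is a continuous linear functional on $\Hol^\infty(\KK)$, so by Fact~\ref{fact:remainder} each function
\[
r_{kj}(t):=\partial_q^{k-1}R(V_t^*,h^{(j)})(q_0)
\]
is holomorphic in $t\in\C$ near $0$. It satisfies $r_{kj}(0)=0$ because $R(0,h)=0$, and $r_{kj}'(0)=D_VR(0,h^{(j)})[V^*]$ evaluated in the same way, which vanishes by the same fact. So $r_{kj}(t)=O(t^2)$. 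Using Lemma~\ref{lem:factorization-det}, the matrix of the $q$-Wronskian is
\[
t\,\Sigma(q_0)H+O(t^2)=t\bigl(\Sigma(q_0)H+O(t)\bigr),
\]
and therefore
\[
\widetilde\Pi_{V_t^*}=t^d\bigl(\det\Sigma(q_0)\det H+O(t)\bigr).
\]
By continuity of the determinant, the bracket is nonzero for $|t|<t_0$. Shrinking $t_0$ so that $V_t^*\in\mathcal U^\R$ then gives $\Pi_{V_t^*}\neq0$ for real $0<|t|<t_0$.

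\textbf{Main obstacle.} The delicate point is the justification in Step~2 that $t\mapsto D\Phi(tV^*)[h]$ is holomorphic into $\Hol^\infty(\KK)$, so that Taylor expansion in $t$ commutes with the jet evaluation at the boundary point $q_0$. This follows from the tame holomorphy in Theorem~\ref{thm:kam-main-beta} together with continuity of point evaluations of Whitney jets. The chain-rule step needs care only because $\omega_0$ lies on the boundary of $\AA$. There I would use that Whitney jets compose with the entire map $\e$, checking the remainder condition via the quasiconvexity of $\AA$ (Lemma~\ref{lem:quasiconvex}).
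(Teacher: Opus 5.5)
Your proposal is correct and follows the paper's proof essentially step for step. You pass to the $q$-Wronskian via $\partial_\omega = 2\pi i q\,\partial_q$, which gives $\Pi_V = (2\pi i q_0)^{d(d-1)/2}\,\widetilde\Pi_V$. You then use Fact~\ref{fact:remainder} to see that $t\mapsto R(V_t^*,h)$ vanishes to order two, so that $\widetilde\Pi_{V_t^*} = t^d\bigl(\det\Sigma(q_0)\det H + O(t)\bigr)$; the only cosmetic difference is that the paper makes $t_0$ explicit through a bound on the holomorphic remainder where you appeal to continuity.
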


\begin{proof}
We first shift from the $\omega$ variable to the $q$ variable. 
Since $\partial_\omega=2\pi i q\partial_q$, for every integer $k\geq0$ one has
\begin{equation*}
  \partial_\omega^k
  = (2\pi i)^k \sum_{\ell=0}^k \stirling{k}{\ell}\,q^\ell\partial_q^\ell.
\end{equation*}
Elementary row operations on the jet matrices yield, for every potential $V$,
\begin{equation}
    \label{eq:wronskian-change}
    \Pi_V   = (2\pi i q_0)^{d(d-1)/2}\,\tilde{\Pi}_V,
    \qquad \text{where}  \quad 
    \tilde{\Pi}_V := \det\left[
    \partial_q^{k-1}D\Phi(V)[h^{(j)}](q_0)
    \right]_{1\le k,j\le d}.
\end{equation}
Consequently we reduce to showing that
$\tilde{\Pi}_{V_t^*}\neq0$ for small $t \neq 0$.

Since $V_t^*=tV_1^*$, Fact~\ref{fact:remainder} shows that the map $t\mapsto R(V_t^*,h)$ is holomorphic and has a zero of
order at least two at the origin. Therefore, there exists a matrix-valued map $\check{R} : \C \to M_d(\C)$ that is holomorphic near $0$ and such that
\begin{equation*}
\left[
\partial_q^{k-1}D\Phi(V_t^*)[h^{(j)}](q_0)
\right]_{1\leq k,j\leq d}
=
t\,\Sigma(q_0)H+t^2 \check{R}(t).
\end{equation*}
Consequently, by Lemma~\ref{lem:factorization-det} one has
\begin{equation*}
\widetilde\Pi_{V_t^*}
=t^d\det\left(\Sigma(q_0)H+t\check R(t)\right) =t^d\det\Sigma(q_0)\det H+t^{d+1}\mathcal R(t).
\end{equation*}
where $\mathcal{R} : \C \to \C $ is a holomorphic function on a complex disk $\{\,|t|<r\,\}$ for some $r>0$. Up to reducing $r$, there exists $\kappa>0$ such that
\begin{equation*}
|\mathcal R(t)|\leq\kappa
\qquad\text{for every }|t|\leq r/2.
\end{equation*}
Proposition~\ref{prop:S-invertible} gives
$\det\Sigma(q_0)\neq0$, while $\det H\neq0$ by the
Fourier-separation assumption. Set
\begin{equation*}
t_0
:=
\min\left\{
\frac r2,
\frac{\left|\det\Sigma(q_0)\det H\right|}{2\kappa}
\right\} > 0 . 
\end{equation*}
Then, for every $t$ such that  $0<|t|<t_0$, we obtain
\begin{equation*}
\left|\widetilde\Pi_{V_t^*}\right|
\geq
|t|^d
\left(
\left|\det\Sigma(q_0)\det H\right|-\kappa|t|
\right)
>0.
\end{equation*}
\end{proof}

Shyness of the set involved in the rigidity result is actually  provided by Theorem~\ref{thm:shyness}. It states that, with the present definition of real analyticity, the zero set of a nonzero real-analytic function is shy. Since the proof relies on different techniques, the statement and its proof are provided in the last subsection. 
We are now ready to prove the main result of this paper.

 \begin{proof}[Proof of Theorem~\ref{thm:main-rig}]
Fix a linear subspace
$\V\subset\mathscr{V}$ of finite dimension $d \ge 1  $.  Pick a basis $(h^{(1)},\ldots,h^{(d)})$ of $ \V$ and 
$(n_k,\xi_k)_{1\le  k\le  d}$ as in
Definition~\ref{def:fourier-separated}, that is such that $ H$, as defined in \eqref{def:H}, is invertible. 
Write the Wronskian as a map
\begin{equation*}
\begin{array}{r@{\;}c@{\quad}c@{\quad}c@{\;}l}
\Pi\colon
& \mathcal U^\R
& \longrightarrow
& \R
& \\ 
& V
& \longmapsto
& \Pi_V
& \displaystyle :=
\det\left[
\partial_\omega^{k-1}
D\beta(V)[h^{(j)}](\omega_0)
\right]_{1\leq k,j\leq d}.
\end{array}
\end{equation*}
By Theorem~\ref{thm:kam-main-beta}, this map $\Pi$ is real analytic on
$\mathcal U^\R$. Then Theorem~\ref{thm:wronskian-small} provides a real
potential $V_t^*\in\mathcal U^\R$, for $t\neq0$ sufficiently small,
such that $\Pi(V_t^*)\neq0$. Hence
\begin{equation*}
    \mathcal Z_{\V}
    :=
    \left\{V\in\mathcal U^\R:\Pi(V)=0\right\}
\end{equation*}
is a proper real analytic subset of $\mathcal U^\R$. It is relatively
closed in $ \mathcal{U}^\R$ by continuity of $\Pi$. 

The real Fréchet space $\Hol_\rho^\R(\T)$ is separable, and
$\mathcal U^\R$ is open and connected. Therefore, by Theorem~\ref{thm:shyness}, the zero set 
$\mathcal Z_{\V}$ is shy in $\Hol_\rho^\R(\T)$.

For $V\in\mathcal U^\R$, consider the jet-map
\begin{equation*}
\begin{array}{r@{\;}c@{\;}c@{\;}c}
\mathcal J_V\colon
& \V\cap\left(-V+\mathcal U^\R\right)
& \longrightarrow
& \R^d\\
& W
& \longmapsto
& J_{\omega_0}^{d-1}\left(\beta_{V+W}\right).
\end{array}
\end{equation*}
In the basis $(h^{(1)},\ldots,h^{(d)})$, its Jacobian at $ 0$ is 
\begin{equation*}
    D \mathcal{J}_V(0)  =  \left[
    \partial_\omega^{k-1}
    D\beta(V)[h^{(j)}](\omega_0)
    \right]_{1\le  k,j\le  d}.
\end{equation*}
Thus, whenever $V \notin \mathcal Z_{\V}$, by the inverse function
theorem, $\mathcal J_V$ is a local diffeomorphism at $0$.

It remains to prove that whenever  $\mathscr{V}$ is
stable under Fourier truncations then $\mathcal Z_{\V} \cap \mathscr{V} $ is shy in $\mathscr{V}$. By
Proposition~\ref{prop:phase-adapted-potential}, the potential $V_t^*$ belongs to
$\mathscr V$ for every real $t$. 
Thus the restriction of  $\Pi$ to 
$\mathcal U^\R\cap\mathscr V$ does not vanish identically. Hence
Theorem~\ref{thm:shyness}, applied to the restriction of $\Pi$,
shows that its zero set $\mathcal Z_\V\cap\mathscr V$ is shy in $\mathscr V$.
\end{proof}

We now provide the proof of the more general version of Corollary~\ref{cor:def-rig-gen}, which gives in particular Example~\ref{ex:def-rig}.

\begin{proof}[ Proof of Corollary~\ref{cor:def-rig-gen}]
For every $n\in\N^*$, let $\mathcal Z_n:=\mathcal Z_{\V_n}\cap\mathscr V$,
where $\mathcal Z_{\V_n}$ is the exceptional set provided by
Theorem~\ref{thm:main-rig}, and set
\begin{equation*}
    \mathcal R
    :=
    \left(\mathcal U^\R\cap\mathscr V\right)
    \setminus\bigcup_{n\in\N^*}\mathcal Z_n.
\end{equation*}
Actually $\mathcal Z_{\V_n}$ is only provided by the above Theorem when $ \dim  \V_n \ge 1$. If $\V_n = \lbrace 0 \rbrace$, we choose $ \mathcal{Z}_n = \emptyset $ as a convention.
By Theorem~\ref{thm:main-rig}, every $\mathcal Z_n$ is relatively closed in $\mathcal U^\R\cap\mathscr V$ and shy in $\mathscr V$. Hence $\mathcal R$ is residual and prevalent in $\mathcal U^\R\cap\mathscr V$.

Let $V_0\in\mathcal R$ and suppose that the deformation is not constant.
Since it is real analytic and $I$ is connected, there exists a smallest
integer $\ell\geq1$ such that
\begin{equation*}
    W_\ell
    :=
    \left.\partial_t^\ell V_t\right|_{t=0}
    \neq0.
\end{equation*}
Choose $n_\ell$ such that $W_\ell\in\V_{n_\ell}$ and set
$d:=\dim\V_{n_\ell}$. The hypothesis on the infinite jets implies that
\begin{equation*}
    J_{\omega_0}^{d-1}\left(\beta_{V_t}\right)
\end{equation*}
is independent of $t$. Differentiating $\ell$ times at $t=0$, all terms
involving derivatives of $V_t$ of order smaller than $\ell$ vanish, and
therefore
\begin{equation*}   
    D  J_{\omega_0}^{d-1}  \beta  (V_0)[W_\ell] = 0  \;.
\end{equation*}
Since $V_0\notin\mathcal Z_{n_\ell}$, this leaves no choice but $W_\ell=0$, yielding a contradiction. Thus the deformation is constant in
a neighbourhood of $0$, and consequently on $I$ by real analyticity and connectedness. 
\end{proof}

We next let both the base potential and the deformation directions vary and
deduce prevalence of the tuples that define an affine deformation space.

\begin{proof}[Proof of Theorem~\ref{thm:prevalent-affine}]
Consider  the function
\begin{equation}
\label{eq:tuple-wronskian}
\begin{array}{rcl}
\mathfrak P\colon \mathscr P_d
&\longrightarrow&
\R
\\[4pt]
\bigl(V,h^{(1)},\ldots,h^{(d)}\bigr)
&\longmapsto&
\displaystyle
\det\left[
\partial_\omega^{k-1}
D\beta(V)[h^{(j)}](\omega_0)
\right]_{1\leq k,j\leq d}.
\end{array}
\end{equation}
By 
Theorem~\ref{thm:kam-main-beta} this function is real analytic on $ \mathscr{P}_d$. We now show that it is not identically $ 0$. Consider the subspace 
\begin{equation*}
 E_d:=\Vect_{\R}\{\cos(2\pi j\,\cdot):1\le  j\le  d\},
\end{equation*}
with its displayed basis, $n_k=k$, and $\xi_k=1$. The corresponding
Fourier-separation matrix is $\frac12\operatorname{Id}_d$.
Theorem~\ref{thm:wronskian-small} then provides a real even potential
$V_t^*\in\mathcal U^\ev$ for which the
determinant in~\eqref{eq:tuple-wronskian} is nonzero.

The space $\Hol_\rho^\R(\T)$ is separable, and $\mathscr P_d$ is connected
because $\mathcal U^\R$ is star-shaped. Applying
Theorem~\ref{thm:shyness}  to $\mathfrak P$ shows that
\begin{equation*}
    \mathscr N_d:=\{P\in\mathscr P_d:\mathfrak P(P)=0\}
\end{equation*}
is shy in the ambient space
$\left(\Hol_\rho^\R(\T)\right)^{d+1}$. Hence
$\mathscr R_d:=\mathscr P_d\setminus\mathscr N_d$ is prevalent in
$\mathscr P_d$. It is open by continuity of $\mathfrak P$ and
dense by the empty-interior property of shy sets proved
in~\cite{hunt1992prevalence}.

For $P=(V,h^{(1)},\ldots,h^{(d)})\in\mathscr R_d$, the matrix in
\eqref{eq:tuple-wronskian} is the differential at the origin of the locally defined map
\begin{equation*}
    a\longmapsto
    J_{\omega_0}^{d-1}\!\left(
      \beta_{V+\sum_{j=1}^d a_jh^{(j)}}
    \right).
\end{equation*}
 The inverse function theorem gives the asserted local diffeomorphism.
Its invertible differential also implies that the directions $h^{(j)}$
are linearly independent, so they define a $d$-dimensional affine space.
If two potentials in a sufficiently small neighbourhood of the base
potential have the same $\beta$-function, their displayed jets are equal. Local injectivity then makes the
two parameters, and hence the two potentials, equal. This proves the
stated local $\beta$-rigidity.
\end{proof}

The open and prevalent sets just obtained can be chosen simultaneously in
every finite dimension.

\begin{proof}[Proof of Corollary~\ref{cor:simultaneous-affine}]
Set $H:=\Hol_\rho^\R(\T)$, equip $H^\N=H\times H^{\N^*}$ with its product Fréchet topology, and let
\begin{equation*}
    \pi_d:
    H^\N
    \longrightarrow H^{d+1}
\end{equation*}
be the projection onto the first $d+1$ components, that is the base potential and the first $d$ directions of perturbation. Let $\mathscr R_d$ be the set supplied by Theorem~\ref{thm:prevalent-affine}. Then
\begin{equation*}
    \mathscr{R} :=\bigcap_{d\geq1}\pi_d^{-1}(\mathscr R_d) \;.
\end{equation*}
Each $\pi_d^{-1}(\mathscr N_d)$ is a shy cylinder in $H^\N$, as one may embed a compactly supported transverse measure for $\mathscr N_d$ in the first $d+1$ coordinates. Since shy Borel sets form a $\sigma$-ideal, the complement of $\mathscr R$ in $\mathcal U^\R\times H^{\N^*}$ is shy in $H^\N$. Moreover, every $\pi_d^{-1}(\mathscr R_d)$ is open and dense in this KAM product domain. Thus $\mathscr R$ is prevalent and residual in $\mathcal U^\R\times H^{\N^*}$.  Then $ \mathscr{R}$ obviously satisfies the conclusion of the corollary. 
\end{proof}

We end this subsection with the one-frequency rigidity argument.

\begin{proof}[Proof of Theorem~\ref{thm:one-frequency-rigidity}]
Set $Z:=V_2-V_1$ and $q_0:=\e(\omega_0)$. Since $\mathcal B$ is balanced, $Z\in\mathcal{B}+\mathcal{B} \subset \mathcal{U}^\R$. On the open convex set
\begin{equation*}
    \mathcal{B} \cap(\mathcal{B} + \mathcal{B}-Z),
\end{equation*}
which contains both $0$ and $V_1$, the function
\begin{equation*}
    F(V):=\Phi(V+Z)(q_0)-\Phi(V)(q_0)
\end{equation*}
is real analytic by Theorem~\ref{thm:kam-main-beta}. Indeed, on this set
$V\in\mathcal B\subset\mathcal U^\R$ and
$V+Z\in\mathcal B+\mathcal B\subset\mathcal U^\R$.

After shrinking $\mathcal W$, assume that
$V_i+\mathcal W\subset\mathcal B$ for $i=1,2$. The hypothesis and the formula for $\beta$ in Theorem~\ref{thm:kam-main-beta} give
\begin{equation*}
    F(V_1+W)=0
    \qquad \text{for every } W \in \mathcal{W} .
\end{equation*}
For any point in the displayed convex domain, restrict $F$ to the real line segment joining that point to $V_1$. Consequently, by analyticity, $F$ vanishes on the whole domain. Differentiating at $0$ in a direction $W\in\Hol_\rho^\R(\T)$ therefore yields
\begin{equation*}
    D\Phi(Z)[W](q_0)=D\Phi(0)[W](q_0).
\end{equation*}
Set
\begin{equation*}
    \phi:=\id_\T+U(Z,q_0).
\end{equation*}
Since $U(0)=0$, the first-variation formula~\eqref{eq:D1Phi} and the
zero-mean condition on $W$ give
\begin{equation*}
    \int_\T W\circ\phi
    = \int_\T W =0  
\end{equation*}
 for every $W \in \Hol_\rho^\R(\T)$.
By Theorem~\ref{thm:hol-KAM}, $\phi$ is an orientation-preserving
analytic circle diffeomorphism. A change of variables therefore gives
\begin{equation*}
    \int_\T (\phi^{-1})' \cdot W =0
    \qquad\text{for every }W\in\Hol_\rho^\R(\T).
\end{equation*}
Testing against the real zero-mean trigonometric polynomials shows that
$(\phi^{-1})'$ is constant. Its integral is $1$, so
$(\phi^{-1})'=1$. Thus $\phi$ is a rotation and by the normalization assumptions it must be the identity. Finally,
\eqref{eq:EL-U} gives $Z'=0$. Since $Z$ has zero mean over $\T$, it follows
that $Z=0$, and hence $V_1=V_2$.
\end{proof}

\subsection{Shyness of zero sets of holomorphic maps in general Fréchet spaces}

We conclude with the proof of shyness of the zero sets of nonzero analytic maps.
\begin{theorem}\label{thm:shyness}
Let $X$ be a separable real Fréchet space, and let $ Z \subset X$ be a proper real analytic subset. Then $Z$ is shy in $X$. 
\end{theorem}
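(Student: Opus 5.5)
Throughout, $Z=\{x\in\mathcal O: f(x)=0\}$ with $f:\mathcal O\to\R$ real analytic and not identically zero on the connected open set $\mathcal O\subset X$. The plan is to exhibit a compactly supported probability measure on a one-dimensional line segment that is transverse to $Z$. The natural candidate is normalized Lebesgue measure on $\{tv: t\in[0,1]\}$ for a well-chosen direction $v$, so that every translate $x+\{tv\}$ meets $Z$ in a Lebesgue-null set of parameters. Since a single direction may fail for some base points, I would instead cover $\mathcal O$ by countably many pieces, show each piece is shy, and use that shy Borel sets form a $\sigma$-ideal in a separable Fréchet space.

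\textbf{Step 1 (identity principle).} Let $\Omega:=\{x\in\mathcal O: \text{all } D^kf(x)=0,\ k\ge0\}$. Local analyticity (local restriction of a holomorphic map on the complexification, hence locally a convergent power series along lines) shows that $\Omega$ is open. Continuity of every $D^kf$ shows that $\Omega$ is relatively closed. Connectedness of $\mathcal O$ and $f\not\equiv0$ then give $\Omega=\emptyset$. Consequently, for each $x\in\mathcal O$ there are $k\ge0$ and $v\in X$ with $D^kf(x)[v,\dots,v]\neq0$; polarization lets one take equal entries. Since the directional derivative is continuous in $(x,v)$, this nonvanishing persists for $(x',v)$ in a neighbourhood of $(x,v)$.

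\textbf{Step 2 (countable cover).} Separability of $X$ makes $\mathcal O$ Lindelöf. So we can choose countably many open sets $\mathcal O_i\subset\mathcal O$, each with a fixed direction $v_i$, an order $k_i$, and a radius $r_i>0$, such that for every $x\in\mathcal O_i$ the segment $x+[-r_i,r_i]v_i$ lies in $\mathcal O$ and $t\mapsto f(x+tv_i)$ is real analytic with $\partial_t^{k_i}f(x+tv_i)|_{t=0}\neq0$. For this one-variable analytic function, nonvanishing of one derivative at $0$ means it is not identically zero on the connected interval. Hence its zero set there is discrete, in particular Lebesgue-null. The analyticity radius along the segment comes from the holomorphic extension being defined on a neighbourhood in the complexification, shrunk uniformly on $\mathcal O_i$.

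\textbf{Step 3 (transversality of each piece).} Let $\mu_i$ be normalized Lebesgue measure on $\{tv_i: t\in[0,r_i]\}$, which is compactly supported. Fix $y\in X$; we must show $\mu_i(y+(Z\cap\mathcal O_i))=0$. Equivalently, the set of $t$ with $tv_i-y\in Z\cap\mathcal O_i$ must be null. If this set is empty we are done. Otherwise pick one such $t_0$ and set $x=t_0v_i-y\in\mathcal O_i$. The set is then contained in $\{t: f(x+(t-t_0)v_i)=0,\ |t-t_0|\le r_i\}$, which is discrete, hence null. Each $Z\cap\mathcal O_i$ is Borel, being relatively closed in an open set, so it is shy. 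Then $Z=\bigcup_i Z\cap\mathcal O_i$ is shy by the $\sigma$-ideal property.

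The main obstacle I expect is the uniformity in Step 2: we need a radius $r_i$ and an analytic extension along segments that work uniformly over a neighbourhood. I would handle this by taking $\mathcal O_i$ inside a convex neighbourhood on which $f$ is the restriction of a single holomorphic $\tilde f$ defined on an open set of $X_\C$ containing a product-type neighbourhood. Restricting $\tilde f$ to complex discs $x+\zeta v_i$ with $|\zeta|<2r_i$ then gives the one-variable holomorphy needed.
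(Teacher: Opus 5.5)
Your argument is correct, but it takes a different route from the paper.

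\textbf{The paper's route.} It builds a single centred Radon Gaussian measure $\gamma$ on $X$ whose Cameron--Martin space $H$ is dense; separability is first used here. It picks the nondegenerate direction $h$ inside $H$ and integrates the one-variable null-set statement over base points by Tonelli, which gives $\gamma((Z\cap\mathcal O_z)-th)=0$ for some $t$. Cameron--Martin quasi-invariance then yields $\gamma(Z\cap\mathcal O_z)=0$. A countable cover and $\sigma$-subadditivity of $\gamma$ give $\gamma(x+Z)=0$ for every $x$, and the witness is $\gamma$ restricted to a compact set.

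\textbf{Your route.} You use one-dimensional Lebesgue probes on segments. Choosing the segment length equal to $r_i$ guarantees that any two parameters in $[0,r_i]$ differ by at most $r_i$. Each local piece $Z\cap\mathcal O_i$ is therefore shy by the one-variable identity theorem alone. You then glue the pieces with the $\sigma$-ideal property of shy Borel sets, which the paper also recalls.

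\textbf{What each buys.} Your approach is more elementary: no Gaussian measures, no Cameron--Martin formula, and $v_i$ may be any vector of $X$. On the other hand, your transverse measure depends on $Z$, and the gluing step relies on the Christensen/Hunt--Sauer--Yorke countable-union theorem, which is an infinite-convolution argument. The paper's approach produces one measure, independent of $f$, that is simultaneously transverse to every proper real analytic subset and all its translates. Its gluing needs only countable subadditivity of that single measure.

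\textbf{On your flagged obstacle.} The uniform analyticity radius you worry about in Step 2 is not actually needed. Since $x+[-r_i,r_i]v_i\subset\mathcal O$, the map $s\mapsto f(x+sv_i)$ is real analytic on an open interval containing $[-r_i,r_i]$, using the local holomorphic extension at each point of the segment. Together with the nonvanishing $k_i$-th derivative at $s=0$, this is all the one-variable identity theorem requires.
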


Some similar results can be found, for instance, in~\cite[Corollary~2]{BogachevMalofeev2014} and the result in finite dimension is well known. 
The proof relies on the construction of a  Gaussian measure  whose Cameron--Martin space is dense in the ambient Fréchet space. 
General references on the topic are found in the book of Bogachev
\cite[Chapters~2 and~3]{BogachevGaussian}.

\medskip
Let us recall the setting and provide preliminaries for the proof. 
Let $ X $ be a real Fréchet space and consider $(p_k)_{k\ge 0 }$ to be an increasing sequence of seminorms defining the Fréchet topology on $X$. 
We first construct a centred Radon Gaussian probability measure $\gamma$
on $X$ whose Cameron--Martin space is dense in $X$ for the Fréchet topology. This is well known in the case of Banach spaces.

 Since $X$ is separable, there exists a sequence $ (x_n)_{n \in \N}$ of points in $X$ with dense span in $X$ and such that for every $n\geq 0 $, the seminorms of $x_n$ satisfy
\begin{equation*}
    p_k(x_n)\leq2^{-n} , \quad \forall  \, 0 \le k \le n \;.
\end{equation*}
In particular, for every $ k \ge 0$ note that 
\[ \sum_{ n\ge 0} p_k(x_n) \le \sum_{ n \le k}  p_k(x_n)  +  2  < \infty \;.  \]
Let $(g_n)_{n \in\N}$ be a sequence of independent standard Gaussian variables and consider the series
\[ G := \sum_{n \ge 0} g_n \cdot x_n \;. \]
 For every fixed $k\ge 0 $, we have
\begin{equation*}
\E (p_k(G)) \le  \E \left[ \sum_{n \ge 0 }|g_n| \cdot p_k(x_n) \right]
\le   \E   ( |g_0|) \cdot \sum_{n \ge 0 } p_k(x_n)  
<\infty \;.
\end{equation*}
Consequently, the series $G$ converges almost surely in every seminorm, and hence by completeness and countability of the family of the seminorms, almost surely in the Fréchet topology. We then denote by $ \gamma$ the law of $ G$ which is a finite Radon measure.
Let $ X' $ denote the continuous dual of $X$.
 By definition, for every  $\ell\in X'$, one has
\begin{equation} \label{eq:sum-l-G}
    \ell(G)=\sum_{n\ge 0}g_n\ell(x_n),
\end{equation} 
We now consider the Cameron--Martin space of $\gamma$ and show that it is dense in $X$. Recall that this space is classically defined as 
\begin{equation*}
H
:=
\left\{
h\in X:
\|h\|_{H}<\infty
\right\},
\end{equation*}
where
\begin{equation*}
\|h\|_{H}
:=
\sup\left\{
|\ell(h)|:
\ell\in X',
\quad
\int_X\ell(x)^2\,d\gamma(x)\leq1
\right\}, 
\end{equation*}
see \cite[Definition~2.2.7]{BogachevGaussian}. 
 For every $\ell\in X'$, since the  $g_n$ are independent, it holds 
\begin{equation}
\label{eq:gaussian-series-variance}
    \int_X\ell(x)^2\,d\gamma(x)
    =
    \mathbb E\left[\ell(G)^2\right]
    =
    \sum_{j\ge 0}\ell(x_j)^2.
\end{equation}
In particular, for every $n\geq0$, one has
\begin{equation*}
    |\ell(x_n)|
    \leq
    \left(
        \int_X\ell(x)^2\,d\gamma(x)
    \right)^{1/2}.
\end{equation*}
In other words, we have   $x_n \in H$ with $ \|x_n\|_H \le 1$ and the vector space spanned by these points is, by construction, dense in $X$. 

We are now ready to prove shyness of proper analytic subsets. 

\begin{proof}[Proof of Theorem~\ref{thm:shyness}]
Consider a connected open subset $\mathcal O\subset X$ and a nonzero
real-analytic function $f:\mathcal O\to\R$. We must show that
\begin{equation*}
Z:=\{x\in\mathcal O:f(x)=0\}
\end{equation*}
is shy in $X$.
That is, there exists a compactly supported probability measure $\mu$ on
$X$ such that $\mu(x+Z)=0$ for every $x\in X$. Since $\gamma$ is finite
and Radon, there exists a compact set $K\subset X$ such that
$0<\gamma(K)<+\infty$. We then choose
\begin{equation*}
\mu:=\frac{\gamma ( \cdot \cap K ) }{\gamma(K)}.
\end{equation*}
Actually, it is sufficient to prove that $\gamma(Z)=0$. Indeed, once
this is established for any real-analytic function
$f:\mathcal O\to\R$, for $x\in X$ we apply it to the shifted map
\begin{equation*}
y\in x+\mathcal O\longmapsto f(y-x),
\end{equation*}
whose zero set is exactly $x+Z$. Consequently, we would have that
$$\mu(x+Z)  \le \frac{\gamma(x+Z) }{\gamma(K) } =  0 .$$

We exclude the trivial cases where $f$ is constant or $Z$ is empty and
provide the proof that $\gamma(Z)=0$. Fix $z\in Z$.

Since $\mathcal O$ is connected, the jet of $f$ is nowhere zero. In
particular, there exist a neighbourhood of $z$ in $\mathcal O$ and an
integer $n\geq1$ such that $D^nf\neq0$ pointwise on this neighbourhood.
Since the Cameron--Martin space $H$ is dense in $X$, there exist a vector
$h\in H$ and a convex neighbourhood $\mathcal O_z\subset\mathcal O$ of
$z$ such that
\begin{equation}
\label{eq:dnf}
D^n f(y) [h,\dots,h ]\ne 0
\qquad\text{for every } y \in \mathcal{O}_z.
\end{equation}
	For every fixed $y\in  X$, set 
\begin{equation*}
I_y:=\{t\in\R:y+th\in\mathcal O_z\} \qand  Z_y:=\{t\in I_y:y+th\in Z\} \;. 
\end{equation*}
If $ I_y$ is not empty, then the function
\begin{equation*}
t\in I_y\longmapsto f(y+th)\in\R
\end{equation*}
is real analytic and, by~\eqref{eq:dnf}, is nonconstant. Its zero set is $Z_y$ which consequently has one-dimensional Lebesgue measure zero. This holds obviously in the case of $ I_y = \emptyset $ since $Z_y = \emptyset$ as well. 

Integrating over $y \in X $ and applying Tonelli's version of
Fubini's theorem gives
\begin{equation}
0
=
\int_{X}\Leb(Z_y)\,d\gamma(y)
=
\int_{\R}
\gamma\left((Z\cap\mathcal O_z)-th\right)\,dt.
\end{equation}
In particular, there exists $t\in\R$ such that
\begin{equation*}
\gamma\left((Z\cap\mathcal O_z)-th\right)=0.
\end{equation*}
By the Cameron--Martin formula, see
\cite[Corollary~2.4.3]{BogachevGaussian}, the measure $\gamma$ is
absolutely continuous with respect to the shifted measure
$\gamma(\,\cdot-th)$. Thus $ \gamma\left((Z\cap\mathcal O_z)-th\right) = 0$ implies
\begin{equation*}
\gamma(Z\cap\mathcal O_z) = 0 \;. 
\end{equation*}
Since $Z$ can be covered by countably many such open neighbourhoods
$\mathcal O_z$, it follows that $\gamma(Z)=0$. This ends the proof.
\end{proof}

                    
\bibliographystyle{alpha}
\small
\bibliography{references}

@article{Aubry,
  author  = {Serge Aubry},
  title   = {The Twist Map, the Extended {F}renkel--{K}ontorova Model and the Devil's Staircase},
  journal = {Physica D: Nonlinear Phenomena},
  volume  = {7},
  number  = {1--3},
  pages   = {240--258},
  year    = {1983},
  doi     = {10.1016/0167-2789(83)90129-X}
}

@book{Baker1975,
  author    = {Alan Baker},
  title     = {Transcendental Number Theory},
  publisher = {Cambridge University Press},
  address   = {Cambridge},
  year      = {1975},
  doi       = {10.1017/CBO9780511565977}
}

@incollection{Bangert,
  author    = {Victor Bangert},
  title     = {Mather Sets for Twist Maps and Geodesics on Tori},
  booktitle = {Dynamics Reported},
  editor    = {Urs Kirchgraber and Hans-Otto Walther},
  volume    = {1},
  pages     = {1--56},
  publisher = {B. G. Teubner and John Wiley \& Sons},
  address   = {Stuttgart and Chichester},
  year      = {1988},
  doi       = {10.1007/978-3-322-96656-8_1}
}

@article{Bastiani1964,
  author  = {Andr{\'e}e Bastiani},
  title   = {Applications diff{\'e}rentiables et vari{\'e}t{\'e}s diff{\'e}rentiables de dimension infinie},
  journal = {Journal d'Analyse Math{\'e}matique},
  volume  = {13},
  pages   = {1--114},
  year    = {1964},
  doi     = {10.1007/BF02786619}
}

@article{Bialy2022,
  author  = {Michael Bialy},
  title   = {Mather {$\beta$}-Function for Ellipses and Rigidity},
  journal = {Entropy},
  volume  = {24},
  number  = {11},
  pages   = {1600},
  year    = {2022},
  doi     = {10.3390/e24111600}
}

@article{BogachevMalofeev2014,
  author  = {Vladimir I. Bogachev and Ilya I. Malofeev},
  title   = {On the Distributions of Smooth Functions on Infinite-Dimensional Spaces with Measures},
  journal = {Doklady Mathematics},
  volume  = {89},
  number  = {1},
  pages   = {5--7},
  year    = {2014},
  doi     = {10.1134/S1064562414010025}
}

@article{CMS,
  author  = {Carlo Carminati and Stefano Marmi and David Sauzin},
  title   = {There Is Only One {KAM} Curve},
  journal = {Nonlinearity},
  volume  = {27},
  number  = {9},
  pages   = {2035--2062},
  year    = {2014},
  doi     = {10.1088/0951-7715/27/9/2035}
}

@article{CMSS,
  author  = {Carlo Carminati and Stefano Marmi and David Sauzin and Alfonso Sorrentino},
  title   = {On the Regularity of {Mather's} {$\beta$}-Function for Standard-Like Twist Maps},
  journal = {Advances in Mathematics},
  volume  = {377},
  pages   = {107460},
  year    = {2021},
  doi     = {10.1016/j.aim.2020.107460}
}

@article{DKW,
  author  = {De Simoi, Jacopo and Vadim Kaloshin and Qiaoling Wei},
  title   = {Dynamical Spectral Rigidity among {$\mathbb Z_2$}-Symmetric Strictly Convex Domains Close to a Circle},
  journal = {Annals of Mathematics},
  volume  = {186},
  number  = {1},
  pages   = {277--314},
  year    = {2017},
  doi     = {10.4007/annals.2017.186.1.7},
  note    = {Appendix B coauthored with Hamid Hezari}
}

@book{Dineen1999,
  author    = {Se{\'a}n Dineen},
  title     = {Complex Analysis on Infinite Dimensional Spaces},
  series    = {Springer Monographs in Mathematics},
  publisher = {Springer},
  address   = {London},
  year      = {1999},
  doi       = {10.1007/978-1-4471-0869-6}
}

@misc{FKT,
  author        = {Corentin Fierobe and Vadim Kaloshin and Frank Trujillo},
  title         = {Rigidity of {Mather's} {$\beta$}-Function on a {KAM} Set for Analytic Billiards-Like Maps and Unique Quasi-Analytic Continuation},
  year          = {2026},
  eprint        = {2608.15401},
  archiveprefix = {arXiv},
  primaryclass  = {math.DS},
  url           = {https://arxiv.org/abs/2608.15401},
  note          = {arXiv:2608.15401}
}

@article{GWW,
  author  = {Carolyn Gordon and David L. Webb and Scott Wolpert},
  title   = {One Cannot Hear the Shape of a Drum},
  journal = {Bulletin of the American Mathematical Society},
  volume  = {27},
  number  = {1},
  pages   = {134--138},
  year    = {1992},
  doi     = {10.1090/S0273-0979-1992-00289-6}
}

@article{Hamilton1982,
  author  = {Richard S. Hamilton},
  title   = {The Inverse Function Theorem of {Nash} and {Moser}},
  journal = {Bulletin of the American Mathematical Society (New Series)},
  volume  = {7},
  number  = {1},
  pages   = {65--222},
  year    = {1982},
  doi     = {10.1090/S0273-0979-1982-15004-2}
}

@article{HuangKaloshinSorrentino2018,
  author  = {Guan Huang and Vadim Kaloshin and Alfonso Sorrentino},
  title   = {On the Marked Length Spectrum of Generic Strictly Convex Billiard Tables},
  journal = {Duke Mathematical Journal},
  volume  = {167},
  number  = {1},
  pages   = {175--209},
  year    = {2018},
  doi     = {10.1215/00127094-2017-0038}
}

@article{MS,
  author  = {Daniel Massart and Alfonso Sorrentino},
  title   = {Differentiability of {Mather's} Average Action and Integrability on Closed Surfaces},
  journal = {Nonlinearity},
  volume  = {24},
  number  = {6},
  pages   = {1777--1793},
  year    = {2011},
  doi     = {10.1088/0951-7715/24/6/005}
}

@article{Mather82,
  author  = {John N. Mather},
  title   = {Existence of Quasi-Periodic Orbits for Twist Homeomorphisms of the Annulus},
  journal = {Topology},
  volume  = {21},
  number  = {4},
  pages   = {457--467},
  year    = {1982},
  doi     = {10.1016/0040-9383(82)90023-4}
}

@article{mather1990differentiability,
  author  = {John N. Mather},
  title   = {Differentiability of the Minimal Average Action as a Function of the Rotation Number},
  journal = {Boletim da Sociedade Brasileira de Matem\'atica},
  volume  = {21},
  number  = {1},
  pages   = {59--70},
  year    = {1990},
  doi     = {10.1007/BF01236280}
}

@book{Mujica1986,
  author    = {Jorge Mujica},
  title     = {Complex Analysis in Banach Spaces: Holomorphic Functions and Domains of Holomorphy in Finite and Infinite Dimensions},
  series    = {North-Holland Mathematics Studies},
  volume    = {120},
  publisher = {North-Holland},
  address   = {Amsterdam},
  year      = {1986}
}

@article{Poschel,
  author  = {J{\"u}rgen P{\"o}schel},
  title   = {Integrability of {Hamiltonian} Systems on {Cantor} Sets},
  journal = {Communications on Pure and Applied Mathematics},
  volume  = {35},
  number  = {5},
  pages   = {653--696},
  year    = {1982},
  doi     = {10.1002/cpa.3160350504}
}

@book{Schmeding2023,
  author    = {Alexander Schmeding},
  title     = {An Introduction to Infinite-Dimensional Differential Geometry},
  series    = {Cambridge Studies in Advanced Mathematics},
  volume    = {202},
  publisher = {Cambridge University Press},
  address   = {Cambridge},
  year      = {2023},
  doi       = {10.1017/9781009091251}
}

@incollection{Poschel2001,
  author    = {J{\"u}rgen P{\"o}schel},
  title     = {A Lecture on the Classical {KAM} Theorem},
  booktitle = {Smooth Ergodic Theory and Its Applications},
  editor    = {Anatole Katok and Rafael de la Llave and Yakov Pesin and Howard Weiss},
  series    = {Proceedings of Symposia in Pure Mathematics},
  volume    = {69},
  pages     = {707--732},
  publisher = {American Mathematical Society},
  address   = {Providence, RI},
  year      = {2001},
  doi       = {10.1090/pspum/069/1858551}
}

@article{SVeselov,
  author  = {Alfonso Sorrentino and Alexander P. Veselov},
  title   = {Markov Numbers, {Mather's} {$\beta$} Function and Stable Norm},
  journal = {Nonlinearity},
  volume  = {32},
  number  = {6},
  pages   = {2147--2156},
  year    = {2019},
  doi     = {10.1088/1361-6544/ab047d}
}

@article{Sor14,
  author  = {Alfonso Sorrentino},
  title   = {Computing {Mather's} {$\beta$}-Function for {Birkhoff} Billiards},
  journal = {Discrete and Continuous Dynamical Systems},
  volume  = {35},
  number  = {10},
  pages   = {5055--5082},
  year    = {2015},
  doi     = {10.3934/dcds.2015.35.5055}
}

@article{hunt1992prevalence,
  author  = {Brian R. Hunt and Tim Sauer and James A. Yorke},
  title   = {Prevalence: A Translation-Invariant ``Almost Every'' on Infinite-Dimensional Spaces},
  journal = {Bulletin of the American Mathematical Society},
  volume  = {27},
  number  = {2},
  pages   = {217--238},
  year    = {1992},
  doi     = {10.1090/S0273-0979-1992-00328-2},
  note    = {Addendum: Bulletin of the American Mathematical Society 28 (1993), 306--307}
}

@article{koval2026local,
  author  = {Illya Koval},
  title   = {Local Strong {Birkhoff} Conjecture and Local Spectral Uniqueness of Almost Every Ellipse},
  journal = {Inventiones Mathematicae},
  volume  = {244},
  number  = {1},
  pages   = {221--298},
  year    = {2026},
  doi     = {10.1007/s00222-025-01397-y}
}

@article{lazutkin1973existence,
  author  = {Vladimir F. Lazutkin},
  title   = {The Existence of Caustics for a Billiard Problem in a Convex Domain},
  journal = {Mathematics of the USSR-Izvestiya},
  volume  = {7},
  number  = {1},
  pages   = {185--214},
  year    = {1973},
  doi     = {10.1070/IM1973v007n01ABEH001932}
}

@incollection{levi2001lagrangian,
  author    = {Mark Levi and J{\"u}rgen Moser},
  title     = {A Lagrangian Proof of the Invariant Curve Theorem for Twist Mappings},
  booktitle = {Smooth Ergodic Theory and Its Applications},
  editor    = {Anatole Katok and Rafael de la Llave and Yakov Pesin and Howard Weiss},
  series    = {Proceedings of Symposia in Pure Mathematics},
  volume    = {69},
  pages     = {733--746},
  publisher = {American Mathematical Society},
  address   = {Providence, RI},
  year      = {2001},
  doi       = {10.1090/pspum/069/1858552}
}

@misc{li2025deformations,
  author        = {Yunzhe Li},
  title         = {Deformations of the {Standard} Map with Prescribed Actions and {Lyapunov} Exponents},
  year          = {2025},
  eprint        = {2512.03865},
  archiveprefix = {arXiv},
  primaryclass  = {math.DS},
  url           = {https://arxiv.org/abs/2512.03865},
  note          = {arXiv:2512.03865}
}

@incollection{mather2006action,
  author    = {John N. Mather and Giovanni Forni},
  title     = {Action Minimizing Orbits in {Hamiltonian} Systems},
  booktitle = {Transition to Chaos in Classical and Quantum Mechanics},
  editor    = {Sandro Graffi},
  series    = {Lecture Notes in Mathematics},
  volume    = {1589},
  pages     = {92--186},
  publisher = {Springer},
  address   = {Berlin},
  year      = {1994},
  doi       = {10.1007/BFb0074076}
}

@book{siburg2004principle,
  author    = {Karl Friedrich Siburg},
  title     = {The Principle of Least Action in Geometry and Dynamics},
  series    = {Lecture Notes in Mathematics},
  volume    = {1844},
  publisher = {Springer},
  address   = {Berlin},
  year      = {2004},
  doi       = {10.1007/978-3-540-40985-4}
}

@book{sorrentino2015action,
  author    = {Alfonso Sorrentino},
  title     = {Action-Minimizing Methods in Hamiltonian Dynamics: An Introduction to {A}ubry--{M}ather Theory},
  series    = {Mathematical Notes},
  volume    = {50},
  publisher = {Princeton University Press},
  address   = {Princeton, NJ},
  year      = {2015},
  doi       = {10.1515/9781400866618}
}

@techreport{chirikov1971research,
  title={Research concerning the theory of non-linear resonance and stochasticity},
  author={Chirikov, Boris Valerianovich},
  year={1971},
  institution={CM-P00100691}
}

@inproceedings{kolmogorov1954general,
  title={The general theory of dynamical systems and classical mechanics},
  author={Kolmogorov, AN},
  booktitle={Proceedings of the International Congress of Mathematicians},
  volume={1},
  pages={315--333},
  year={1954},
  organization={Amsterdam}
}

@article{whitney1934analytic,
  title={Analytic extensions of differentiable functions defined in closed sets},
  author={Whitney, Hassler},
  journal={Transactions of the American Mathematical Society},
  volume={36},
  number={1},
  pages={63--89},
  year={1934}
}

@article{moser1962invariant,
  title={On invariant curves of area-preserving mappings of an annulus},
  author={M{\"o}ser, Jurgen},
  journal={Nachr. Akad. Wiss. G{\"o}ttingen, II},
  pages={1--20},
  year={1962}
}

@article{mcmullen2009uniformly,
  title={Uniformly Diophantine numbers in a fixed real quadratic field},
  author={McMullen, Curtis T},
  journal={Compositio Mathematica},
  volume={145},
  number={4},
  pages={827--844},
  year={2009},
  publisher={London Mathematical Society}
}

@book{BogachevGaussian,
  title={Gaussian measures},
  author={Bogachev, Vladimir Igorevich},
  number={62},
  year={1998},
  publisher={American Mathematical Soc.}
}

@article{guillarmou2025marked,
  title={Marked length spectrum rigidity for Anosov surfaces},
  author={Guillarmou, Colin and Lefeuvre, Thibault and Paternain, Gabriel P},
  journal={Duke Mathematical Journal},
  volume={174},
  number={1},
  pages={131--157},
  year={2025},
  publisher={Duke University Press}
}

@article{HezariHamid,
	author = {Hezari, Hamid and Zelditch, Steve},
	year = {2010},
	journal = {Geometric and Functional Analysis},
	number = {1},
	pages = {160--191},
	title = {Inverse Spectral Problem for Analytic {\$}{\$}{\{}{\{}({$\backslash$}mathbb{\{}Z{\}}/2 {$\backslash$}mathbb{\{}Z{\}})\^{}{\{}n{\}}{\}}{\}}{\$}{\$}-Symmetric Domains in {\$}{\$}{\{}{\{}{$\backslash$}mathbb{\{}R{\}}\^{}{\{}n{\}}{\}}{\}}{\$}{\$}},
	volume = {20}
}

@article{fierobe2025billiard,
  title={A billiard table close to an ellipse is deformationally spectrally rigid among dihedrally symmetric domains},
  author={Fierobe, Corentin and Kaloshin, Vadim and Sorrentino, Alfonso},
  journal={arXiv preprint arXiv:2511.20062},
  year={2025}
}

@article{Z2,
  title={Spectral determination of analytic bi-axisymmetric plane domains},
  author={Zelditch, Steve},
  journal={Geometric \& Functional Analysis GAFA},
  volume={10},
  number={3},
  pages={628--677},
  year={2000},
  publisher={Springer}
}

@article{GM,
  title={An inverse spectral result for elliptical regions in R2},
  author={Guillemin, Victor and Melrose, Richard},
  journal={Advances in Mathematics},
  volume={32},
  number={2},
  pages={128--148},
  year={1979},
  publisher={Elsevier}
}

@article{FT,
  title={Rigidity of the Suris' potential in the Frenkel-Kontorova Model},
  author={Fierobe, Corentin and Tsodikovich, Daniel},
  journal={arXiv preprint arXiv:2601.15058},
  year={2026}
}

@article{suris,
  title={Integrable mappings of the standard type},
  author={Suris, Yu B},
  journal={Functional Analysis and Its Applications},
  volume={23},
  number={1},
  pages={74--76},
  year={1989},
  publisher={Springer}
}

@article{GL,
  title={The marked length spectrum of Anosov manifolds},
  author={Guillarmou, Colin and Lefeuvre, Thibault},
  journal={Annals of Mathematics},
  volume={190},
  number={1},
  pages={321--344},
  year={2019},
  publisher={JSTOR}
}

@article{Nardi,
  title={Higher order terms of Mather's $\beta$-function for symplectic and outer billiards},
  author={Baracco, Luca and Bernardi, Olga and Nardi, Alessandra},
  journal={Journal of Mathematical Analysis and Applications},
  volume={537},
  number={2},
  pages={128353},
  year={2024},
  publisher={Elsevier}
}

@article{AbbondandoloMazzucchelli,
  title={Length spectrum rigidity and flexibility of spheres of revolution with one equator},
  author={Abbondandolo, Alberto and Mazzucchelli, Marco},
  journal={arXiv preprint arXiv:2601.16804},
  year={2026}
}

@article{HZ22,
  title={One can hear the shape of ellipses of small eccentricity},
  author={Hezari, Hamid and Zelditch, Steve},
  journal={Annals of Mathematics},
  volume={196},
  number={3},
  pages={1083--1134},
  year={2022},
  publisher={Department of Mathematics, Princeton University Princeton, New Jersey, USA}
}

@article{Ruelle,
  title={Differentiation of SRB states},
  author={Ruelle, David},
  journal={Communications in Mathematical Physics},
  volume={187},
  number={1},
  pages={227--241},
  year={1997},
  publisher={Springer}
}

@article{Ruellereview,
  title={A review of linear response theory for general differentiable dynamical systems},
  author={Ruelle, David},
  journal={Nonlinearity},
  volume={22},
  number={4},
  pages={855--870},
  year={2009}
}

@inproceedings{Baladi,
  author    = {Baladi, Viviane},
  title     = {Linear Response, or Else},
  booktitle = {Proceedings of the International Congress of
               Mathematicians---Seoul 2014},
  volume    = {III},
  pages     = {525--545},
  publisher = {Kyung Moon Sa},
  address   = {Seoul},
  year      = {2014}
}

@article{GS,
  author  = {Galatolo, Stefano and Sorrentino, Alfonso},
  title   = {Quantitative Statistical Stability and Linear Response
             for Irrational Rotations and Diffeomorphisms of the Circle},
  journal = {Discrete and Continuous Dynamical Systems},
  volume  = {42},
  number  = {2},
  pages   = {815--839},
  year    = {2022},
}

@article{SKL,
  title={Marked Length Spectral determination of analytic chaotic billiards with axial symmetries},
  author={De Simoi, Jacopo and Kaloshin, Vadim and Leguil, Martin},
  journal={Inventiones mathematicae},
  volume={233},
  number={2},
  pages={829--901},
  year={2023},
  publisher={Springer}
}

@article{CD, 
  title={Sur les longueurs des trajectoires p{\'e}riodiques d'un billard},
  author={Colin de Verdi{\`e}re, Yves},
  journal={S{\'e}minaire de th{\'e}orie spectrale et g{\'e}om{\'e}trie},
  volume={1},
  pages={1--18},
  year={1984}
}
\end{document}